\documentclass{amsart}
\usepackage{tikz-cd, hyperref, cleveref}
\usepackage[maxbibnames=9]{biblatex}
\usepackage{latexsym,mathrsfs} 
\usepackage{a4wide}
\usepackage[normalem]{ulem}

\usepackage{xcolor}
\usepackage[colorinlistoftodos]{todonotes}
\usepackage{bbm}
\usepackage{fullpage}

\usepackage{amssymb, mathtools, bm}
\usepackage{amsthm}
\usepackage{aliascnt}
\usepackage{hyperref}
\usepackage{cleveref}

\newtheorem{lemma}{Lemma}[section]

\newaliascnt{theorem}{lemma}
\newtheorem{theorem}[theorem]{Theorem}
\aliascntresetthe{theorem}

\newaliascnt{claim}{lemma}

\aliascntresetthe{claim}

\newaliascnt{corollary}{lemma}
\newtheorem{corollary}[corollary]{Corollary}
\aliascntresetthe{corollary}

\newaliascnt{conjecture}{lemma}

\aliascntresetthe{conjecture}

\newaliascnt{proposition}{lemma}
\newtheorem{proposition}[proposition]{Proposition}
\aliascntresetthe{proposition}

\theoremstyle{remark}

\newaliascnt{remark}{lemma}
\newtheorem{remark}[remark]{Remark}
\aliascntresetthe{remark}

\theoremstyle{definition}

\newaliascnt{definition}{lemma}
\newtheorem{definition}[definition]{Definition}
\aliascntresetthe{definition}

\newaliascnt{question}{lemma}

\aliascntresetthe{question}

\newaliascnt{questions}{lemma}

\aliascntresetthe{questions}

\newaliascnt{example}{lemma}
\newtheorem{example}[example]{Example}
\aliascntresetthe{example}

\numberwithin{equation}{section}

\newcommand{\bsigma}{\boldsymbol{\sigma}}

\newcommand{\tr}[1]{\vphantom{#1}^t #1}

\DeclareMathOperator{\rank}{rank}

\makeatletter
\@for\cs:={A,B,C,D,E,F,G,H,I,J,K,L,M,N,O,P,Q,R,S,T,U,V,W,X,Y,Z}\do{
  \expandafter\edef\csname \cs\endcsname{\noexpand\mathbb{\cs}}
  \expandafter\edef\csname c\cs\endcsname{\noexpand\mathcal{\cs}}
  \expandafter\edef\csname c\cs\endcsname{\noexpand\mathcal{\cs}}
}
\makeatother 
\newcommand{\btau}{\boldsymbol\tau}
\newcommand{\bt}{\boldsymbol\tau}
\newcommand{\brho}{\boldsymbol\rho}
\newcommand{\brhobar}{\boldsymbol{\bar{\rho}}}
\newcommand{\bs}{\boldsymbol\sigma}

\newcommand{\Add}{\mathrm{Add}}
\newcommand{\First}{\mathsf{f}}
\newcommand{\prefixes}{\mathrm{pref}}
\newcommand{\diam}{\mathrm{diam}}
\newcommand{\gt}{>}
\newcommand{\lt}{<}
\newcommand{\boldone}{\boldsymbol{1}}
\newcommand{\vertiii}[1]{{\left\vert\kern-0.25ex\left\vert\kern-0.25ex\left\vert #1 
    \right\vert\kern-0.25ex\right\vert\kern-0.25ex\right\vert}}

\title[Eigenvalues from S-adic representations]{Continuous eigenvalues of minimal subshifts via S-adic representations and coboundaries}

\author[V.~Berth\'e]{Val\'erie Berth\'e}
\address{Universit\'e de Paris, IRIF, CNRS, F-75013 Paris, France}
\email{berthe@irif.fr}
\thanks{Val\'erie Berth\'e was supported by the ERC grant DynAMiCs (101167561) of the European Research Council,  by the bilateral grant SYMDYNAR (ANR-23-CE40-0024 and FWF~I~6750) of the Agence Nationale de la Recherche and the Austrian Science Fund, and by the ANR project IZES (ANR-22-CE40-0011).
}
\author[P.~Cecchi Bernales]{Paulina Cecchi Bernales}
\address{Universidad de Chile, Departamento de Matem\'aticas, Santiago, Chile.}
\email{pcecchi@uchile.cl}
\thanks{Paulina Cecchi Bernales was Supported by ANID/Fondecyt Iniciación
11240886, Vicerrectoría de Investigación y Desarrollo (VID)-Universidad de Chile/U-Inicia Project UI-003/23 and ANID/STIC Amsud AMSUD230049.
}
\author[B.~Espinoza]{Basti\'an Espinoza}
\address{Universit\'e de Li\`ege, Département de Mathématique, Li\`ege, Belgium} 
\email{baespinoza@uliege.be}
\thanks{Bastián Espinoza is a postdoctoral researcher supported by the Fonds de la Recherche Scientifique -- FNRS}

\date{\today}

\keywords{symbolic dynamics; topological dynamics; substitutions; $S$-adic shifts; spectral theory; eigenvalues; coboundaries; extension graphs}
\subjclass[2010]{37B10, 05A05, 37A30}

\usepackage{enumitem}
\begin{document}

\begin{abstract} 
We provide characterizations of continuous eigenvalues for minimal symbolic dynamical systems. These characterizations rely on a description in terms  of   $S$-adic structures (given as    infinite compositions of morphisms)  satisfying natural mild conditions, such as recognizability and primitiveness.  
Under the additional assumptions of finite alphabet rank or decisiveness of the directive sequence, these characterizations  involve  sequences of local letter-coboundaries. 
We emphasize the role of combinatorics in the study of continuous eigenvalues through the interplay between letter-oboundaries and extension graphs, and we give several types of sufficient conditions for having trivial coboundaries only.  These results  are applied  among others  to linear involutions and  to the Thue--Morse  system in the rational  base $3/2.$
We also  illustrate the versatility of  the notion of coboundaries  in the context of bounded  symbolic  discrepancy.
In particular we recover a simple characterization of letter-balance for primitive substitutive subshifts.
Finally, we refine known descriptions of the possible values of eigenvalues in terms of the measures of bases of Kakutani--Rohklin towers provided by the $S$-adic representation.
\end{abstract}

\maketitle

\section{Introduction}

The set of continuous eigenvalues associated to a dynamical system is an important conjugacy invariant that has been studied extensively in topological dynamics for several decades; see \emph{e.g.} \cite{ellis}. 
Beyond its intrinsic interest, it provides valuable information, for instance, on the possible topological factors of the system and on their independence (in the sense of joinings) from other dynamical systems \cite{Glasner2003}. In the symbolic setting, spectral properties, and in particular continuous eigenvalues, are well understood for substitutive subshifts (see \cite{Dekking1978}, \cite{Host86} and the books \cite{Que,Pytheas}). 
More generally, there is a substantial literature devoted to the spectral theory of minimal Cantor systems \cite{eigenvalues_LR_2003,Bressaud-Durand-Maass:2005,CORTEZ_DURAND_PETITE_2016,ADE:23,DFM19} and tiling spaces \cite{Solomyak:07,Fusion:14}.

In this article, we study continuous eigenvalues of minimal symbolic systems equipped with an $S$-adic structure. Recall that an $S$-adic structure on a subshift is given by a sequence of morphisms\footnote{Morphisms are called substitutions when the images of letters belong to the same alphabet.}
$\tau_n:\mathcal A_{n+1}^*\to\mathcal A_n^*,$
called a \emph{directive sequence}, whose iterated composition   generates the language of the subshift. 
Such representations naturally encode hierarchical structures and are closely related to Kakutani--Rokhlin partitions and Bratteli--Vershik systems \cite{GPS92}. 
 More precisely, every properly ordered Bratteli--Vershik representation gives rise to an $S$-adic expansion~\cite{DDMP}. The converse, however, does not hold: not every $S$-adic expansion arises from a properly ordered Bratteli--Vershik representation.
The prototypical examples of $S$-adic structure are those for substitutive subshifts
(for which the directive sequence is constant), and for Sturmian subshifts (for which the $S$-adic structure encodes the continued fraction of the irrational number defining the system).
Over the last decade, $S$-adic systems have become a central object in symbolic dynamics of low complexity \cite{Sadic_conj,DDMP,BST:23} and in tiling theory \cite{Fusion:14,BD14}, providing a flexible  framework  for representing a wide variety of dynamical systems.

The starting point of the present work is the classical theory developed by Host \cite{Host86} for  a primitive substitution  $\tau$ (inspired by  earlier work of Dekking \cite{Dekking1978} on the {\em constant-length} case, \emph{i.e.}, when the images by the substitution  $\tau$ of  all letters have the same length). 
His characterization of continuous eigenvalues is based on the asymptotic behavior modulo~$1$ of the quantities
$\alpha |\tau^n(a)|$, 
where $\alpha$ is a candidate (additive) eigenvalue and $|\tau^n(a)|$ denotes the length of the image of the letter  $a$ under the $n$-th iterate of the substitution $\tau$. 
A  crucial insight of Host's work relies in the understanding of the role played by  the asymptotic behavior of those quantities,   together with the fact   that  continuous eigenvalues are  governed by  \emph{letter-coboundaries}. Coboundaries are classcially   considered in topological dynamics (see \Cref{rem:coboundary}). Here, the coboundaries under consideration   have   radius-$0$, they  depend only on individual letters,  and  are based on the  combinatorics of two-letter words. Note that Host's original motivation was to prove that measurable  and topological  eigenvalues coincide for substitutions.

Several extensions of Host's ideas  have since appeared, notably for substitution tilings, interval exchange transformations, and more generally Cantor dynamical systems described by Bratteli--Vershik representations. 
In particular, a series of works \cite{eigenvalues_LR_2003,Bressaud-Durand-Maass:2005,CORTEZ_DURAND_PETITE_2016,ADE:23,DFM19} provides complete characterizations of continuous eigenvalues for minimal Cantor systems in terms of properly ordered Bratteli--Vershik diagrams. 

While every minimal Cantor system (hence every minimal subshift) admits a proper representation, obtaining one that is explicit or adapted to a concrete class of systems is often a difficult task.
An important example is provided by linear involutions. 
These systems admit Bratteli--Vershik representations through their realization as interval exchange transformations \cite{Gjerde-Johansen-IET}, but this description is poorly suited to their specific dynamics:
indeed, linear involutions form a meager subset of interval exchanges, so since many of the available renormalization techniques only describe generic interval exchanges, they fail to capture the phenomena specific to linear involutions. 
On the other hand, linear involutions possess a natural hierarchical structure arising   for instance from a Rauzy induction, which is most naturally expressed through an $S$-adic expansion
(but it cannot be expressed by a properly ordered Bratteli--Vershik diagram),
for which we have much better understanding \cite{rodolfo_quadratic,Boissy-Lanneau,Avila-Resende}. 
See also  \Cref{sec:specular} where we exhibit  $S$-adic expansions for linear involutions based on return words, and the recent article \cite{Arbulu}, where our criteria are used to analyze continuous eigenvalues of linear involutions.
Similar situations occur for many other classes of symbolic and geometric dynamical systems, where the available hierarchical representation is $S$-adic rather than  provided by a properly ordered Bratteli--Vershik system. In  fact, renormalization  for interval exchanges and their generalizations, such as described in the survey \cite{Skrip:2023}, is a rich source of  $S$-adic representations. Let us quote    \cite{Bruin} in the setting of  interval translation maps,   and  \cite{AHS:26};   see  also \cite{Solomyak:25} and its appendix  for  weak mixing properties for random $S$-adic  systems inspired by interval  exchanges,  or else   see \cite{CP:23}.  

This motivates the recent development of criteria formulated directly in terms of $S$-adic representations \cite{BCBY,BMY,mercat}. 
These works aim at extending the Bratteli--Vershik approach by removing the properly ordered condition;
at the $S$-adic level, this amounts to removing the assumption that the directive sequence is  \emph{proper} (\emph{i.e.}, each morphism assigns the same initial and same final letter to the image of every letter; see \Cref{susbsec:morphisms} for precise definitions).
Unlike the proper case, non-proper directive sequences naturally give rise to non-trivial letter-coboundaries (in Host's sense \cite{Host86}), making the analysis substantially more delicate. 
The existing results cover several important situations, but they typically require additional assumptions (such as \emph{finitariness} of the directive sequence,  which means that the directive sequence consists of finitely many morphisms), and provide only sufficient or only necessary conditions for the existence of eigenvalues.

\subsubsection*{Main results}
The main objective of the present paper is to provide complete characterizations of continuous eigenvalues directly from a given $S$-adic representation under natural hypotheses, thereby extending both the Bratteli--Vershik theory, and the existing $S$-adic results from \cite{BCBY,BMY,mercat}.

Our first main result (\Cref{theo:EigCharac:Mult}) applies to every primitive recognizable directive sequence and characterizes continuous eigenvalues through quantitative convergence conditions involving  the  heights of the   Kakutani--Rohklin towers   provided the $S$-adic expansion (see Section \ref{subsec:KR}),
together with  sequences of real numbers  $(\rho_n(a))_n$, 
whose role  is to compensate  variations  in the heights of towers; they can  be compared  \emph{e.g.} with similar  quantities  involved in the  criterion \cite[Theorem 17]{DFM19}  for  measurable eigenvalues.

We then introduce a new combinatorial notion, called \emph{decisiveness} (see Definition~\ref{def:decisive}), inspired by ideas for Bratteli--Vershik systems \cite{karpel_downa}, and prove that under this additional assumption these convergence conditions acquire more structure, in terms of letter-coboundaries at each level of the hierarchical decomposition.  
 (Together with recognizability, decisiveness captures the property that the sequence
 of  Kakutani--Rohklin partitions provided by the $S$-adic structure generates the topology; see \Cref{prop:decisive:dyn_inter}.)
This considerably strengthens the theory, as the coboundary formulation naturally admits an abelian encoding, allowing the data to be transported by the incidence matrices of the directive sequence and making tools from linear algebra and the theory of Lyapunov exponents available.

It is important to note that decisiveness is necessary for such a coboundary description, as shown by~\Cref{thm:NoCobsInftyRank}. 
However, this condition is remarkably mild: we prove in~\Cref{prop:decisive:suff_conds} that every primitive recognizable directive sequence can be transformed into a decisive one by a simple sliding-block construction. 
In fact, most of the examples in this article already satisfy this condition.
Consequently, our results apply to a broad class of $S$-adic structures for symbolic systems, allowing to handle  in particular  the  \emph{infinite alphabet rank} case (\emph{i.e.,}
 the directive sequence consists of  infinitely many morphisms, defined on  infinitely many alphabets).  
 In particular, this allows us to go beyond the proper case   which yields  trivial coboundaries (see \Cref{theo:EigCharac:proper}).  
In particular,  we prove that 
for any minimal substitutive subshift with an irrational eigenvalue, there exists a substitution generating a subshift conjugate to the original one in which all letter-coboundaries associated to that irrational eigenvalue are non-trivial (see \Cref{prop:exs_nontrival_cobs}).

Letter-coboundaries, beyond their role in spectral theory, also emerge as a versatile combinatorial tool in their own right. 
Indeed we  relate letter-coboundaries to extension graphs, balancedness, symbolic discrepancy, and return words. 
In particular, we show that the space of letter-coboundaries is determined by the connected components of the extension graph of the empty word of the subshift (\Cref{thm:manifold}), we obtain in \Cref{prop:balance_char_Sadic} a characterization of bounded symbolic discrepancy functions in terms of suitable coboundaries along an $S$-adic representation,  we
deduce in~\Cref{balanced=>spaces_decomposition}
an alternative characterization to that of Adamczewski \cite{adam03,adam05}
of balanced substitutive systems, 
and  we derive in \Cref{sec:studyofcoboundaries:conditions_for_trivial_only} simple sufficient conditions ensuring that all letter-coboundaries are trivial. 
These results illustrate that the notion of  a letter-coboundary captures structural features of symbolic systems extending beyond the study of eigenvalues.

Finally, although the present article is devoted exclusively to \emph{continuous} eigenvalues, we expect many of the techniques introduced here to extend to the measurable setting. 
Existing criteria for measurable eigenvalues based on Bratteli--Vershik representations already have a form closely related to our first characterization theorem (see \emph{e.g;}~\cite[Theorem 17]{DFM19}), suggesting that a suitable notion of letter-coboundary should lead to analogous refinements in the measurable setting.
 
\subsubsection*{Overall  strategy}

We now describe the main ideas underlying our approach. 
Let $X$ be a minimal subshift generated by a directive sequence $\btau=(\tau_n)_n$. 
As a first step, we characterize continuous eigenvalues under the sole assumptions that $\btau$ is primitive and recognizable. 
The characterization given in~\Cref{theo:EigCharac:Mult} is formulated in terms of summability conditions involving the quantities
\[	h_n(a)=\left|\tau_0\tau_1\cdots\tau_{n-1}(a)\right|,	\]
which are the sums of the entries of the columns of the corresponding products of incidence matrices. 
Equivalently, $h_n(a)$ is the height of the tower indexed by the letter $a$ in the associated Kakutani--Rokhlin partition. 
The criterion consists in finding a sequence $\brho=(\rho_n \in \R^{\cA_n})_n$ of real vectors
such that we need to control the distances to the nearest integer of expressions of the form
\[	\rho_n(u_0)-\rho_n(u_k)+\alpha\bigl(h_n(u_0)+\cdots+h_n(u_{k-1})\bigr),\]
where $u_0\cdots u_k$ belongs to the language of the level-$n$ subshift and $\alpha$ is a candidate additive eigenvalue. 
Thus, the problem amounts to understanding the convergence modulo~$1$ of the quantities $\alpha h_n(a)$, uniformly over the letters $a$, after correcting them by the terms $\rho_n(a)$. 
The role of $\rho_n$ is to compensate for possible oscillations that prevent the sequence $\alpha h_n(a)$ itself from converging modulo~$1$.

The proof is based on the structure in terms of towers provided by the recognizability of the directive sequence (recognizability guarantees  that the towers form a partition; see Section \ref{sec:preliminaries} for the corresponding definitions).
A continuous eigenfunction is first approximated on the bases of the towers, 
and the values of these approximations define the vectors $\rho_n$. 
The compatibility of the eigenfunction with the shift then translates into the convergence conditions above. Conversely, given a sequence $\brho$ satisfying the appropriate summability assumptions, one constructs a sequence of locally constant functions whose successive differences are summable. 
Their uniform limit is a continuous eigenfunction. 
In this way, the characterization isolates precisely the information needed to control the dynamics on the bases of the towers without assuming that the directive sequence is proper. Overall, our approach builds on techniques developed in \cite{Host86,DFM19,Solomyak:07,eigenvalues_LR_2003}, while introducing the additional arguments needed to account for the oscillation between towers.

As a second step, we show that, under either decisiveness or finite alphabet rank, the sequence $\brho=(\rho_n)_n$ can be chosen with an additional morphic compatibility. This compatibility turns the functions $\rho_n$ into a sequence of local letter-coboundaries; see~\Cref{def: coboundary}. The resulting characterizations are given in Theorems~\ref{theo:EigCharac:FAR:Mult} and~\ref{theo:EigCharac:decisive:positive}, the latter being the coboundary counterpart of~\Cref{theo:EigCharac:positive}.

This is the point at which the algebraic role of coboundaries becomes fully visible. 
The compatibility between consecutive levels allows the sequence $(\rho_n)_n$ to be encoded as letter-coboundaries, which permits one to transport data by the incidence matrices of the directive sequence. 
Consequently, questions about continuous eigenvalues can be studied using tools from linear algebra, including invariant subspaces, duality, and arguments involving Lyapunov exponents. 
This framework extends the one developed for finitary directive sequences (\emph{i.e.,} sequences containing only finitely many different substitutions) found in \cite{mercat,BCBY}, 
thus allowing us to treat systems of infinite topological rank, provided the directive sequence is decisive. 
The latter situation is illustrated in Section~\ref{sec:TM} by the subshift generated by the Thue--Morse word in base $3/2$,
which cannot be generated by a finitary, primitive directive sequence.

For proper directive sequences (see \Cref{susbsec:morphisms} for the definition), the coboundary contribution becomes trivial.
More precisely, when every morphism of a directive sequence generating a minimal $S$-adic subshift is proper, the associated letter-coboundaries may be taken to be trivial in our criteria; see \Cref{theo:EigCharac:proper}. 
Our characterizations then reduce to the previously known criteria obtained from properly ordered Bratteli--Vershik representations, such as \cite[Theorem~2]{DFM19}. Thus, the coboundary formalism does not replace the proper theory, but extends it by identifying and controlling the additional combinatorial obstruction that appears in the non-proper setting.

A central tool in the study of these coboundaries is the extension graph, which records the possible left and right extensions of words in the language; see Section~\ref{subsec:extensiongraphs}. We establish a precise relation between the topology of the extension graph of the empty word and the space of letter-coboundaries. In particular, if this extension graph has $r$ connected components, then the real vector space of letter-coboundaries has dimension $r-1$; see Theorem~\ref{thm:manifold}. This result makes explicit the extent to which the combinatorics of the language governs the possible coboundary corrections, and therefore the structure of continuous eigenvalues.

We exploit this relation throughout \Cref{sec:studyofcoboundaries}. In \Cref{prop:dimeigenvalues}, we obtain an upper bound for the rational dimension of the group of continuous eigenvalues in terms of the behaviour of extension graphs and factor complexity. In \Cref{thm:tijdeman}, we give a proof, in the language of extension graphs, of a theorem of Tijdeman \cite{tijdeman} providing a lower bound for the factor complexity of a transitive subshift in terms of the rational dimension of the vector space generated by its letter frequencies. As a consequence, we recover a result of Andrieu and Cassaigne \cite{andrieu-notes} concerning dendric subshifts; see \Cref{def:dendric}. Our argument follows the same general strategy as theirs, but replaces the flow-matrix formalism by the combinatorics of extension graphs.


The final sections illustrate the scope and the sharpness of the theory. In \Cref{sec:specular}, we study rational eigenvalues of specular subshifts and linear involutions, where non-trivial coboundaries arise naturally from the underlying non-proper $S$-adic representation. Section~\ref{sec:TM}  handles 
the Thue--Morse word in base $3/2$, whose natural directive sequence has infinite alphabet rank. Further examples are developed in \Cref{sec:examples}. In particular, the construction of \Cref{subsec:ex:nocoboundary} shows that the additional assumptions in the coboundary characterization cannot simply be omitted: for a directive sequence that is neither decisive, nor of finite alphabet rank, the existence of a continuous eigenvalue need not imply the existence of a sequence of letter-coboundaries satisfying the convergence condition of \Cref{theo:EigCharac:FAR:Mult}. We also consider constant-length directive sequences of finite alphabet rank in \Cref{subsec:constantlength}.
Using Theorem~\ref{theo:EigCharac:Mult}, we recover the fact that the corresponding systems have only rational additive eigenvalues \cite{BMY}.

\subsubsection*{Organization}
Section~\ref{sec:preliminaries} recalls the necessary background on symbolic dynamical systems, continuous eigenvalues, and directive sequences. In Section~\ref{sec:coboundaries}, we introduce letter-coboundaries, symbolic discrepancy, extension graphs, and decisiveness. Section~\ref{sec:carac} contains the general characterizations of continuous eigenvalues for primitive recognizable directive sequences; the overall strategy is explained in \Cref{subsec:strategy}. In Section~\ref{sec:carac_finite_rank}, these characterizations are refined under the additional assumptions of decisiveness or finite alphabet rank, and reformulated in terms of  letter-coboundaries. Section~\ref{sec:studyofcoboundaries} develops the relations among coboundaries, extension graphs, factor complexity, frequencies, discrepancy, and balancedness. In particular, we prove that, for a proper, unimodular, and primitive directive sequence on $d$ letters, the abelianizations of the return words generate $\Z^d$; see \Cref{thm:proper+unimod=>RW_gen_Zd}. We also collect in \Cref{cor:EigCharac:FAR:Mult:cocoboundary} several practical sufficient and necessary conditions for a real number to be an additive eigenvalue. The applications to specular subshifts, linear involutions, and to the Thue--Morse subshift in base $3/2$ are presented in Sections~\ref{sec:specular} and~\ref{sec:TM}, respectively, while additional examples are gathered in Section~\ref{sec:examples}.

\bigskip

\paragraph{{\bf Acknowledgements}}  
We would like to thank  Clemens M\"ullner  for  his very  valuable   insight  which inspired  \Cref{thm:manifold}, as well as  Samuel Petite and Maria Clara Werneck for bringing  \cite{tijdeman} to our attention. 
We also would like to thank Paul Mercat who pointed out a mistake in Proposition \ref{prop:decisive:suff_conds} in an earlier version of the paper.

\section{Preliminaries}\label{sec:preliminaries}

\subsection{Topological dynamical systems}\label{subsec:topologicalds}

A \emph{topological dynamical system} (or simply a \emph{system}) is a pair $(X,T)$ with $X$ a compact metric space and $T\colon X\to X$ a homeomorphism. 
The system is \emph{minimal} if it has no non-empty proper closed $T$-invariant subset; equivalently, if every $T$-orbit $\{T^n x : n\in\Z\}$ is dense in $X$. 
It is \emph{transitive} if it there is a dense $T$-orbit, and \emph{aperiodic} if it has no periodic points, {\em i.e.}, $T^n x\neq x$ for all $x\in X$ and $n\ge 1$. 
Two systems $(X_1,T_1)$ and $(X_2,T_2)$ are \emph{conjugate} if there exists a homeomorphism $\varphi\colon X_1\to X_2$ such that $\varphi\circ T_1 = T_2\circ \varphi$.

A real number $\alpha$ is an {\em additive continuous eigenvalue} of $(X,T)$ if there exists a continuous function $g \colon X  \rightarrow \R / \Z$ such that $g(Tx) = g(x) + \alpha \pmod{\Z}$ for every $x \in X$. 
In this article, this is the only type of eigenvalues we consider, so we usually refer to them simply as {\em eigenvalues} of $(X,T)$.
We denote by $E(X,T)$ the  set of eigenvalues of $(X,T)$; it is known to be  an additive subgroup of $(\R,+)$ , containing $\Z$, that is  moreover invariant by conjugacy.

A Borel probability measure $\mu$ on $X$ is said to be {\em $T$-invariant} if $\mu(T^{-1} B) = \mu(B)$ for every Borel subset $B$ of $X$.
We denote by $\cM(X,T)$ the set of all $T$-invariant Borel probability measures on $X$. 
A measure $\mu\in\mathcal{M}(X,T)$ is {\em ergodic} if the only Borel subsets $B$ satisfying $T(B) = B$ are those such that $\mu(B)=0$ or $\mu(B)=1$. 
It is well known that any topological dynamical system admits an ergodic invariant measure~\cite{KrBo}. 
A topological dynamical system $(X,T)$ is {\em uniquely ergodic} if there exists a unique $T$-invariant probability measure on $X$.

Let $C(X,\Z)$ (resp.  $C(X,\R)$) stand for the set of continuous  functions  
$f \colon X  \rightarrow  \Z$
(resp. $\R$). The {\em image subgroup} of the system $(X,T)$ is  the additive group defined by
\[  I(X,T)=\bigcap_{\mu\in\mathcal{M}(X,T)}\biggl\lbrace \int f d\mu \mid f\in C(X,\mathbb{Z})\biggr\rbrace. \]
It is known that $E(X,T)$ is a subgroup of $I(X,T)$ by~\cite{itzaortiz,CORTEZ_DURAND_PETITE_2016,ghh18}.
See also Remark \ref{rem:measures} below.

\subsection{Subshifts}\label{subsec:subshifts}

A finite set $\cA$ with at least two elements is called the \emph{alphabet}.
We write $\cA^*$ for the set of all {\em finite words} over $\cA$ (including the empty word $\varepsilon$), with concatenation as the operation; this is the free monoid on $\cA$, and its identity element is $\varepsilon$.
Elements $x=(x_n)_{n\in\Z}\in\cA^\Z$ are viewed as bi-infinite words over $\cA$, and we write $x = \dots x_{-2} x_{-1} \boldsymbol{.} x_0 x_1 \dots$, where the dot marks coordinate $0$.
For integers $i \leq j$, define 
$x_{[i,j]} = x_i x_{i+1}\cdots x_{j}$,
and $x_{[i,j)} = x_i x_{i+1}\cdots x_{j-1}$, with the convention $x_{[i,i+1)} = \varepsilon$.

For a finite word $w = w_0\cdots w_{\ell-1}\in\cA^\ell$, its \emph{length} is $|w| = \ell$, and $|w|_a$, for a letter $a$, stands for the number of occurrences of  the letter $a$ in $w$. The notation $\First(w)$  stands  for the first letter of a non-empty  word $w$.
A word $u$ is a \emph{factor} of $w$ if $w=pus$ for some (possibly empty) $p,s\in\cA^*$.
If $p=\varepsilon$ (resp.\ $s=\varepsilon$), then $u$ is a \emph{prefix} (resp.\ \emph{suffix}) of $w$.
The set of prefixes of  a word $w$ is denoted as $\prefixes(w)$.
An index $j$ with $0 \le j \le |w|-|u|$ such that $w_j w_{j+1} \dots w_{j+|u|-1} = u$ is an \emph{occurrence} of $u$ in $w$.
The same terminology applies to elements of $\cA^\Z$.

The set $\cA^\Z$ endowed with the prodiscrete topology is called the \emph{full shift}, and is homeomorphic to the Cantor set.
The \emph{shift map} $S \colon \cA^{\mathbb Z}\to\cA^{\mathbb Z}$, defined by $S((x_n)_{n\in\mathbb Z}) = (x_{n+1})_{n\in\mathbb Z}$, is a homeomorphism, so $(\cA^\Z,S)$ is a topological dynamical system.
A \emph{subshift} (also called \emph{shift}) is a closed subset $X \subseteq \cA^\Z$ such that $S(X) = X$.
Thus, $(X, S|_X$), where $S|_X \colon X \to X$ is the restriction of $S$ to $X$, is a system as well.
By convention we often identify the pair $(X,S|_X)$ with $X$

For $x\in X$, let $\cL(x)$ be the set of all factors of $x$.
The \emph{language} of a subshift $X$ is $\cL(X) = \bigcup_{x\in X}\cL(x)$.
It is classical that $X$ is minimal if and only if $\cL(x) = \cL(x')$ for all $x,x'\in X$.
Let $\cL_n(X)$ be  the set of  factors of $X$ of length $n$.
The \emph{factor complexity} of $X$ is the map $p_X \colon \N \to \N$ defined by $$p_X(n) = |\cL_n(X)|.$$
The {\em cylinder} associated to $u \in \cL(X)$ is the set $$[u] = \{x \in X : x_{[0,|u|)} = u\}.$$

\subsection{Morphisms and directive sequences}
\label{susbsec:morphisms}

Let $\tau \colon \cA^{\ast} \to \cB^{\ast}$ be a monoid morphism, {\em i.e.}, $\tau(a_1 a_2 \dots a_\ell) = \tau(a_1) \tau(a_2) \dots \tau(a_\ell)$ for all $a_1,a_2,\dots,a_\ell \in \cA$ and $\ell \geq 1$.
We say that $\tau$ is {\em non-erasing} if the image of any letter is a non-empty word, and is {\em letter-onto} if for each $b \in \cB$ there is $a \in \cA$ for which $b$ occurs in $\tau(a)$. 
A morphism that is both non-erasing and letter-onto is called a {\em substitution}.
If moreover $\cA = \cB$, $\tau$ is a substitution {\em on} $\cA$, and $\tau^n$ stands for its $n$-fold self-composition $\tau \circ \tau \circ \dots \circ \tau$.

The {\em incidence matrix} of a substitution $\tau \colon \cA^* \to \cB^*$ is the $\cB \times \cA$ matrix $M_\tau$ whose entry at a position $(b,a)$ is the number of times that $b$ occurs in $\tau(a)$.
We say that $\tau$ is {\it left-proper} (resp. {\it right-proper}) if there exists a single letter $b\in\cB$ such that $\tau(a)$ starts with $b$ (resp. ends with $b$) for every $a\in\cA$; it is {\it proper} if it is both left- and right-proper. 

A substitution $\tau\colon \cA^{\ast}\to \cB^{\ast}$ can be extended by concatenation to $\cA^\N$ and $\cA^\Z$. 
For elements $x$ of $\cA^\Z$, we apply $\tau$ to the negative and positive parts of $x$ and then concatenate them at position zero; explicitly,
\[  \tau(x) = \tau(\dots x_{-2} x_{-1} \boldsymbol{.} x_0 x_1 \dots) \coloneqq 
    \dots \tau(x_{-2}) \tau(x_{-1}) \boldsymbol{.} \tau(x_0) \tau(x_1) \dots. \]

An $S$-adic representation of a subshift comes from iterating substitutions; we outline the basics here and refer to \cite[Sec.~7.4]{DP} and \cite{BSTY,BD14} for details.
A {\em directive sequence} $\bt = (\tau_n)_{n\geq 0}$ consists of substitutions $\tau_n \colon \cA_{n+1}^{\ast}\to \cA_n^{\ast}$ so that consecutive domains and codomains match.
This allows us to define the compositions
\begin{equation} \label{eq:taunl}
    \tau_{n,m} \coloneqq \tau_n\circ\tau_{n+1}\circ\cdots\circ \tau_{m-1}
    \enspace \text{for every $m \gt n \geq 0$.}
\end{equation}
We say that $\bt$ is {\it everywhere growing} if $\min_{a\in \cA_n}|\tau_{0,n}(a)|$ tends to $\infty$ as $n\to\infty$; 
$\bt$ is {\it primitive} if for every $n \geq 0$ there exists $m > n$ with $\tau_{n,m}$ positive.
Given a subsequence $(n_k)_{k\geq 0}$, the {\em contraction of $\bt$ along $(n_k)_{k\geq 0}$} is the directive sequence $(\tau_{0,n_0}, \tau_{n_0,n_1}, \tau_{n_1,n_2}, \dots)$, where $\tau_{0,n_0}$ is the identity map if $n_0 = 0$.
If for all $n \geq 0$ the incidence matrix of $\tau_n$ has strictly positive entries, $\bt$ is called {\em positive}.
Hence, a positive directive sequence is always primitive. 

We now introduce a set of conditions that are used repeatedly in different contexts throughout the paper.
\begin{definition}\label{def:far}
The directive sequence $\btau$
has {\em finite alphabet rank} if $(|\cA_{n_k}| : k \geq 0)$ is bounded for some subsequence $n_0 < n_1 < \dots$.  
It is said  {\em left proper} (resp. {\em right proper, proper}) if every morphism $\tau_n$ is left proper (resp. right proper, proper). 
Finally, it is said to be {\em unimodular} if each incidence matrix $M_{\tau_n}$ has determinant of modulus 1.
\end{definition}

For $n\geq 0$, the {\em level-$n$ language} $\cL_{{\bt}}^{(n)}$ generated by ${\bt}$ is the set of all words $w \in \cA_n^\ast$ for which there exist $m > n$ and $a \in \cA_m$ such that $w$ occurs in $\tau_{n,m}(a)$.
For each $n\geq 0$, we let $X_{{\bt}}^{(n)}$ be the set of infinite words $x\in\cA_n^\Z$ whose factors belong to $\cL_{{\bt}}^{(n)}$. 
In general $\cL(X_{\bt}^{(n)})$ is only a subset of $\cL_{\bt}^{(n)}$, but if $\btau$ is primitive, then $X_{\tau}^{(n)}$ is minimal and $\cL(X_{\tau}^{(n)}) = \cL_{\tau}^{(n)}$ for every $n \geq 0$ (see {\em e.g.} \cite[Theorem 5.2]{BD14}).
We set $X_{\btau}\coloneqq X_{\btau}^{(0)}$ and call this set the subshift {\it generated} by ${\btau}$. We also set $\cA:=\cA_0$. One has  $X_{\bt}\subseteq \cA^{\mathbb Z}.$
A subshift $X$ is said to be  {\em generated} by $\btau$ if $X = X_{\btau}$.
The fundamental relation between the different subshifts $X_{\btau}^{(n)}$ is that 
\begin{equation}
    \label{eq:Sadic:connection_levels}
    X_{\btau}^{(n)} = 
    \bigl\{S^k \tau_{n,m}(x) : x \in X_{\btau}^{(m)},\, 
                        0 \leq k < |\tau_{n,m}(x_0)| \bigr\}
\end{equation}
for all $m > n \geq 0$ \cite{BSTY}. The subshift
$X_{\bt}^{(n)}$ is called  the level-$n$  subshift associated to $\bt$. 

The classical setting of substitutive subshifts can be viewed as a special case of the $S$-adic formalism, as we now explain.
Let $\tau$ be a substitution on the alphabet $\cA$, and let $M_{\tau}$ denote its incidence matrix.
The substitution $\tau$ is said to be \emph{primitive} if there exists a positive integer $k$ such that
$M_{\tau}^k$ has strictly positive entries.
Consider the constant directive sequence $\btau = (\tau)_{n \geq 0}$.
This directive sequence is then (strongly) primitive.
The substitutive subshift $X_{\tau}$ is defined as the subshift $X_{\btau}$ generated by the constant directive sequence $\btau$.
In particular, $X_{\btau}$ is minimal and uniquely ergodic  (see \emph{e.g.} \cite{Que}).

The following lemma is useful for constructing limit points. An alterative formulation using the notion of {\em nested sequences} is proved in \cite[Lemma 4.4]{mercat}. See also \cite{ICALP} where the term \emph{congenial} is used.

\begin{lemma}
    \label{lem:existence_limit_points}
    Let $\btau=(\tau_n:\cA_{n+1}^*\to\cA_n^*)_{n\ge0}$ be a primitive directive sequence.
    Then, there exists a sequence $(a_n \in \cA_n)_{n\ge0}$ such that, for every $n\ge0$, the word $\tau_n(a_{n+1})$ starts with $a_n$.
\end{lemma}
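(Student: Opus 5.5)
We argue by compactness, via König's lemma on a finitely branching tree. For each $n$, consider finite chains $(b_0,b_1,\dots,b_n)$ with $b_k\in\cA_k$ such that $\tau_k(b_{k+1})$ starts with $b_k$ for every $k<n$. Equivalently, a chain is determined by its top letter $b_n$, since each $b_k$ is then forced to be $\First(\tau_k(b_{k+1}))$. Thus $b_k=\First(\tau_{k,n}(b_n))$ for all $k<n$. Here we use that the morphisms are non-erasing, so first letters are well defined and $\First(\tau_{k,n}(b))=\First(\tau_k(\First(\tau_{k+1,n}(b))))$.

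Now define, for each $k$, the set
\[ F_{k,n}=\{\First(\tau_{k,n}(b)) : b\in\cA_n\}\subseteq \cA_k \quad (n>k). \]
These sets are non-empty. They are nested decreasing in $n$: indeed $F_{k,n+1}=\{\First(\tau_{k,n}(\First(\tau_n(c)))) : c\in\cA_{n+1}\}\subseteq F_{k,n}$. Since $\cA_k$ is finite, the sequence stabilizes to a non-empty set $F_k=\bigcap_n F_{k,n}$. The key compatibility to check is that every $a\in F_k$ has a preimage in $F_{k+1}$, namely some $a'\in F_{k+1}$ with $\First(\tau_k(a'))=a$.

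To verify this, choose $N$ large enough that $F_{k,N}=F_k$ and $F_{k+1,N}=F_{k+1}$. Given $a\in F_k=F_{k,N}$, we have $a=\First(\tau_{k,N}(b))$ for some $b$. Then $a':=\First(\tau_{k+1,N}(b))$ lies in $F_{k+1,N}=F_{k+1}$ and satisfies $\First(\tau_k(a'))=a$.

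The sequence is then built inductively. Pick any $a_0\in F_0$, and given $a_n\in F_n$, choose $a_{n+1}\in F_{n+1}$ with $\tau_n(a_{n+1})$ starting with $a_n$. This yields the desired sequence.

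Primitivity is not really needed beyond guaranteeing non-empty alphabets and non-erasing morphisms. The only subtle step is the stabilization argument, which requires choosing a single $N$ good for both levels $k$ and $k+1$ simultaneously.
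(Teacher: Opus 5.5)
Your proof is correct, and it takes a different route from the paper's. The paper fixes an arbitrary top letter $b_n\in\cA_n$ for each $n$ and forms the forced chain $a_{i,n}=\First(\tau_{i,n}(b_n))$ for $i\le n$. It then uses compactness of $\prod_{i\ge0}\cA_i$, via a diagonal extraction along a subsequence $(n_k)$, to get coordinatewise limits $a_i$. The relation $\First(\tau_i(a_{i+1}))=a_i$ passes to the limit because it holds for all large $k$.

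You work instead with the eventual images $F_k=\bigcap_{n>k}F_{k,n}$; this is essentially the Mittag-Leffler proof that an inverse limit of nonempty finite sets is nonempty. The step you isolate is the key one: taking a single $N$ at which both levels $k$ and $k+1$ have stabilized shows that $\First\circ\tau_k$ maps $F_{k+1}$ \emph{onto} $F_k$. That surjectivity is exactly what keeps the inductive choice from getting stuck.

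What each route buys: the paper's argument is shorter. Yours avoids subsequence extraction and yields slightly more information. It shows that every letter of $F_0$ starts such a sequence, and more generally that $F_k$ is precisely the set of letters that can occur as $a_k$ in one (any valid sequence has $a_k\in F_{k,n}$ for all $n>k$).

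Your closing remark is also accurate. Neither argument uses primitivity, only that the alphabets are nonempty and the morphisms are non-erasing, which holds for any directive sequence of substitutions in the paper's sense.
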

\begin{proof}
    Fix $n\ge 0$ and $b_n\in\cA_n$.
    We inductively define $a_{n,n}=b_n$, and for $0\le i<n$, let $a_{i,n}$ be the first letter of $\tau_i(a_{i+1,n})$.
    Since the product $\prod_{i\ge0} \cA_i$ is compact in the profinite topology, there is an increasing sequence $(n_k)_{k\ge 0}$ such that, for every $i \ge 0$, the sequence $(a_{i,n_k})_{k\ge i}$ is eventually constant; denote its limiting value by $a_i\in\cA_i$.
    Then, for each $i$ and all sufficiently large $k$, the letter $a_{i,n_k}$ is the first letter of $\tau_i(a_{i+1,n_k})$.
    Passing to the limit shows that $a_i$ is the first letter of $\tau_i(a_{i+1})$.
    Thus $(a_i)_{i\ge 0}$ satisfies the required property.
\end{proof}

\subsection{Recognizability and Kakutani--Rohklin partitions}
\label{subsec:KR}

In this article we often ask the directive sequences to be recognizable, which means roughly speaking that one can desubstitute  subshifts in a unique way (for more on this subject, we refer the reader \emph{e.g.} to \cite{BSTY,DP}).
We  express here recognizability
(see Definition \ref{def:recognizable} below)
in terms of  Kakutani--Rohklin partitions. These  are partitions associated to minimal  subshifts (and  even Cantor systems) in order  to  provide representations  as adic transformations on ordered Bratteli diagrams (see \cite{GPS92}).

Let $\btau = (\tau_n \colon \cA_{n+1}^* \to \cA_n^* : n \geq 0)$ be a directive sequence.
Throughout the article, we use the {\em heights} of $\btau$, which are defined by
\[  h_n(u) = |\tau_{0,n}(u)|
    \enspace \text{for $u \in \cA_n^*$ and $n \geq 1$,}  \]
and $h_0(u) = |u|$ for $u \in \cA_0^*$.
Note that $h_n(u) = \sum_{a\in\cA_n} h_n(a) \, |u|_a$, where $|u|_a$ is the number of times that $a$ occurs in $u$.
We remark that the heights $h_n(u)$ defined in this article are not related to the {\em height} of a constant-length substitution, as defined in \cite{Que}.

We define the sets 
\begin{equation}
    \label{eq:Sadic:defi_Bn(a)}
    B_n(a) = \bigl\{ \tau_{0,n}(x) : x \in X_{\btau}^{(n)}, \, x_0 = a \bigr\}
    \enspace \text{for $n \geq 1$ and $a \in \cA_n$.}
\end{equation}
In view of \eqref{eq:Sadic:connection_levels}, $X_{\btau}$ is equal to the union of the sets in 
\begin{equation}
\label{eq:P_n}
\cP_n \coloneqq \{ S^k B_n(a) : a \in \cA_n,\, 0 \leq k < h_n(a) \}.   
\end{equation}

\begin{definition}\label{def:recognizable}
The directive sequence $\btau$ is  said to be {\em recognizable} if the sets in $\cP_n$ are  pairwise disjoint; equivalently, if $\cP_n$ is a partition of $X_{\btau}$.
When $\btau$ is recognizable, $\cP_n$ is called the {\em $n$-th Kakutani--Rohklin partition} of $\btau$.
\end{definition}

\begin{remark}\label{rem:recog_substitution}
In the case of substitutive subshifts, if a substitution $\tau$ is primitive and aperiodic, the constant directive sequence $\bt=(\tau)_{n\geq0}$ is recognizable by Moss\'e’s theorem~\cite{mosse92}; see also~\cite{BSTY}.
\end{remark}

Let us assume that $\btau$ is recognizable. Let us fix $n\geq 0$.
A \emph{tower} of $X_{\btau}$  is a  union of sets of the form  $S^k B_n(a), \, 0 \leq k < h_n(a)$
for a fixed  letter $a$. Such a tower  has   height $h_n(a)$  and  \emph{basis} $B_n(a)$. It consists of $h_n(a)$ levels of the form
$S^k B_n(a)$.
Towers are well adapted to the study of eigenvalues. 
Indeed, let $f$ be an eigenfunction associated to the  additive eigenvalue $\alpha$.
Assume that  $f$ takes constant values on the  atoms of the   partition $\cP_n$.
Once the value of $f$ is fixed on  the basis  $B_n(a)$, then $f$
takes value $f + k\alpha$ on the $k$-th   level of the  corresponding tower.



We now give a combinatorial description of the Kakutani--Rohklin partitions $\cP_n$.
Equation \eqref{eq:Sadic:connection_levels} ensures that, for every point $x^{(n)} \in X_{\btau}^{(n)}$, there exist $x^{(n+1)} \in X_{\btau}^{(n+1)}$ and $0 \leq k_n < |\tau_n(x^{(n+1)}_0)|$ such that $x^{(n)} = S^{k_n} \tau_n(x^{(n+1)})$.
So, for any fixed $x^{(0)} \in X_{\btau}$, one can inductively construct a sequence $(k_n, x^{(n+1)})_{n\geq0}$, with $x^{(n)} \in X_{\btau}^{(n)}$, $0 \leq k_n < |\tau_n(x^{(n+1)}_0)|$, and $x^{(n)} = S^{k_n} \tau_n(x^{(n+1)})$.
By decomposing $\tau_n(x^{(n+1)}_0)$ as $ \tau_n(x^{(n+1)}_0)= u_n \, a_n \, v_n$, with $u_n$ of length $k_n$, $a_n = x^{(n)}_0$ and $v_n \in \cA_n^\ast$, we obtain
\begin{multline}
    \label{eq:unroll_address}
    x_{[-\sum_{0\leq j \leq n} h_j(u_j),\sum_{0\leq j \leq n} h_j(v_j)]} \\ = 
        \tau_{0,n}(u_n) \, \tau_{0,n-1}(u_{n-1}) \dots \tau_0(u_1) \, u_0 
        \, \boldsymbol{.} a_0 \,
        v_0 \, \tau_0(v_1) \dots \, \tau_{0,n}(v_n). 
\end{multline}
This hierarchical decomposition of the point $x^{(0)} \in X_{\btau}$ is analogous to that obtained for substitutive sequences using the prefix-suffix automaton \cite{CanSie}, or  equivalently the Dumont--Thomas representation \cite{Dumont-Thomas}.
It is also central in our analysis, so, to handle it efficiently, we introduce the following notion.

\begin{definition} \label{def:address}
A {\em $\btau$-address} is a sequence $(u_n,a_n)_{n\geq0}$, with $u_n \in \cA_n^*$ and $a_n \in \cA_n \cap \cL(X_{\btau}^{(n)})$, such that $u_n a_n$ is a prefix of $\tau_n(a_{n+1})$ for all $n \geq 0$.
The {\em address space} of $\btau$, denoted $\Add(\btau)$, is the set of all $\btau$-addresses.
\end{definition}
We remark that the condition $a_n \in \cL(X_{\btau}^{(n)})$ is superfluous for primitive directive sequences, as in this case $\cA_n \subseteq \cL(X_{\btau}^{(n)})$ for every $n \geq 0$.

A $\btau$-address represents a path in the Bratteli diagram defined by the directive sequence; see \emph{e.g.} \cite{Durand:10}.
Note that, however, this Bratteli diagram might not have a continuous Vershik action since $\btau$ is not assumed to be proper. 
We refer the reader to  \cite{Durand:10,DDMP,BSTY} for additional details on this connection.

The basic properties of $\btau$-addresses are given in \Cref{prop:addresses} and \Cref{lem:address}.

\begin{proposition} \label{prop:addresses}
    Let $\btau = (\tau_n \colon \cA_{n+1}^* \to \cA_n^* : n \geq 0)$ be a directive sequence.
We use the notation
     $j_n \coloneqq \sum_{0 \leq i < n} h_i(u_i) $
      for every $\btau$-address $(u_n,a_n)_{n\geq0} \in \Add(\btau)$.
    \begin{enumerate}
        \item For every 
        $(u_n,a_n)_{n\geq0} \in \Add(\btau)$, there exists $x \in X_{\btau}$ such that,  for all $n \geq 0$, $x \in S^{j_n} B_n(a_n)$, with $0 \leq j_n < h_n(a_n)$. 
        
        \item For every $x \in X_{\btau}$, there exists 
        $(u_n,a_n)_{n\geq 0} \in \Add(\btau)$ such that $x \in S^{j_n} B_n(a_n)$, where  $j_n = \sum_{0 \leq i < n} h_i(u_i)$, for all $n \geq 0$.
    \end{enumerate}
    Furthermore, the $\btau$-address $(u_n,a_n)_{n\geq0}$ in Item (2) is unique if $\btau$ is recognizable.
\end{proposition}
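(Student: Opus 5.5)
Item (1) is a compactness argument using \eqref{eq:Sadic:connection_levels}. Item (2) comes from the inductive desubstitution already described before \eqref{eq:unroll_address}. Uniqueness comes from disjointness of the atoms of the partitions $\cP_n$. Throughout, the key identity is that $u_n a_n$ being a prefix of $\tau_n(a_{n+1})$ gives
\[ j_{n+1} = j_n + h_n(u_n) < j_n + h_n(u_n a_n) \le h_n(\tau_n(a_{n+1})) = h_{n+1}(a_{n+1}). \]
By induction from $j_0=0<1=h_0(a_0)$, this yields $0\le j_n<h_n(a_n)$ for all $n$.

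For (1), fix an address $(u_n,a_n)_n$ and $N\ge 0$. Since $a_N\in\cL(X_{\btau}^{(N)})$, pick $y\in X_{\btau}^{(N)}$ with $y_0=a_N$ and set $x^{(N)}=S^{j_N}\tau_{0,N}(y)\in X_{\btau}$, using \eqref{eq:Sadic:connection_levels} and $j_N<h_N(a_N)$. I claim $x^{(N)}\in S^{j_n}B_n(a_n)$ for every $n\le N$. Write $y'=\tau_{n,N}(y)$; its word starting at position $0$ begins with $\tau_{n,N}(a_N)$, which has prefix $\tau_{n,N-1}(u_{N-1})\cdots\tau_n(u_{n+1})u_n a_n$ (unroll the prefix relations downward). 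Hence $S^{p}y'$, with $p=|u_n|+|\tau_n(u_{n+1})|+\dots$, has $0$-th letter $a_n$ and lies in $X_{\btau}^{(n)}$ by shift invariance. Then $\tau_{0,n}(S^p y')=S^{h_n(\text{prefix})}\tau_{0,N}(y)$, and the length $h_n$ of that prefix equals $\sum_{n\le i<N}h_i(u_i)=j_N-j_n$. So $x^{(N)}=S^{j_n}\tau_{0,n}(S^py')\in S^{j_n}B_n(a_n)$. Each $B_n(a)$ is closed, being the image of a compact set under a continuous map, so each $S^{j_n}B_n(a_n)$ is closed. Any accumulation point $x$ of $(x^{(N)})_N$ lies in all of them.

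For (2), take $x\in X_{\btau}$ and build $(k_n,x^{(n+1)})$ as in the paragraph before \eqref{eq:unroll_address}, with $x^{(0)}=x$, $x^{(n)}=S^{k_n}\tau_n(x^{(n+1)})$ and $\tau_n(x^{(n+1)}_0)=u_na_nv_n$, $|u_n|=k_n$. Then $a_n=x^{(n)}_0$ and $a_{n+1}=x^{(n+1)}_0$, so $u_na_n$ is a prefix of $\tau_n(a_{n+1})$ and we have an address. By induction, $x=S^{j_n}\tau_{0,n}(x^{(n)})$: indeed $\tau_{0,n}(x^{(n)})=\tau_{0,n}(S^{k_n}\tau_n(x^{(n+1)}))=S^{h_n(u_n)}\tau_{0,n+1}(x^{(n+1)})$. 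This gives $x\in S^{j_n}B_n(a_n)$.

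For uniqueness under recognizability, suppose two addresses $(u_n,a_n)$ and $(u'_n,a'_n)$ work for $x$. For each $n$, $x$ lies in the atoms $S^{j_n}B_n(a_n)$ and $S^{j'_n}B_n(a'_n)$ of $\cP_n$, using the range bounds above. Disjointness then forces $a_n=a'_n$ and $j_n=j'_n$ for all $n$. Hence $h_n(u_n)=j_{n+1}-j_n=h_n(u'_n)$. Since $u_n$ and $u'_n$ are both prefixes of $\tau_n(a_{n+1})$ and the map $w\mapsto h_n(w)$ is strictly increasing along prefixes, which needs non-erasing, we get $u_n=u'_n$. The main subtlety I expect is the index bookkeeping in (1), checking the prefix length equals $j_N-j_n$.
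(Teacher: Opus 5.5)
Your argument is correct and is essentially the paper's own: the paper gives no separate proof, and your Item (2) is exactly the desubstitution paragraph preceding \eqref{eq:unroll_address}, with Item (1) and uniqueness following by the compactness and disjointness-of-atoms arguments you give. One small fix is needed: in your displayed chain the step $j_n + h_n(u_na_n) \le h_n(\tau_n(a_{n+1}))$ is false in general (take $j_n \ge 1$ and $u_na_n = \tau_n(a_{n+1})$), so the induction should read $j_{n+1} = j_n + h_n(u_n) < h_n(a_n) + h_n(u_n) = h_n(u_na_n) \le h_{n+1}(a_{n+1})$, using the inductive hypothesis $j_n < h_n(a_n)$.
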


We remark that in Item (1) of \Cref{prop:addresses}, the point $x$ might not be unique, that is, a $\btau$-address might not uniquely determine a point of the subshift.
This happens exactly when the partition $(\cP_n)_{n\geq 0}$ does not generate the topology of $X_{\btau}$.
We will introduce in Section \ref{subsec:decisive} sufficient conditions guaranteeing that the  sequence of 
partitions $(\cP_n)_n$ generates  the topology of $X_{\btau}$, based on the notion of decisiveness.  This  yields  in particular that,
for any continuous  function in $C(X,\Z)$,  there exists  a positive  integer $n$ such that $f$ takes constant values on the atoms of  $\cP_n$  (see  Proposition \ref{prop:decisive:dyn_inter}
for such  sufficient conditions).
Note that for primitive directive sequences, we can use the centered 1-block presentation of $\btau$ to recover decisive  directive sequences,  as discussed  below in \Cref{subsec:block_presentations} (see also \Cref{prop:decisive:suff_conds}).

\begin{lemma}
    \label{lem:address}
    Let $\btau = (\tau_n \colon \cA_{n+1}^* \to \cA_n^* : n \geq 0)$ be a recognizable and primitive directive sequence.
    Fix $x \in X_{\btau}$, with $(u_n,a_n)_{n\geq 0} \in \Add(\btau)$ being  its unique $\btau$-address. 
    Then, for every $n \geq 0$, $x_{[-\sum_{0 \leq j < n} h_j(u_j), 0]}$ equals
        $\tau_{0,n}(u_n) \, \tau_{0,n-1}(u_{n-1}) \dots \tau_0(u_1) \, u_0 \, a_0$
        and is a prefix $\tau_{0,n}(a_n)$.
\end{lemma}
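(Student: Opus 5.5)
We argue by induction on $n$, using the uniqueness part of \Cref{prop:addresses} (here recognizability enters) to identify the address of $x$ with the one built in \eqref{eq:unroll_address}. The intended statement is that $x_{[-j_n,0]}$, with $j_n=\sum_{0\le i<n}h_i(u_i)$, equals $\tau_{0,n-1}(u_{n-1})\cdots\tau_0(u_1)\,u_0\,a_0$. This word is a prefix of $\tau_{0,n}(a_n)$, since its length is $j_n+1\le h_n(a_n)$ by \Cref{prop:addresses}. The extra block $\tau_{0,n}(u_n)$ in the displayed formula only matches once the left endpoint is taken to be $-j_{n+1}$, in which case the word is a prefix of $\tau_{0,n+1}(a_{n+1})$. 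I will prove the statement in this indexing and note that the two readings differ by a shift of $n$.

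First step: recover the address combinatorially. Starting from $x^{(0)}=x$, choose successively $x^{(m)}=S^{k_m}\tau_m(x^{(m+1)})$ as in the discussion before \eqref{eq:unroll_address}, and write $\tau_m(x^{(m+1)}_0)=u'_m a'_m v'_m$ with $|u'_m|=k_m$ and $a'_m=x^{(m)}_0$. Then $(u'_m,a'_m)_m$ is a $\btau$-address, because $a'_{m+1}=x^{(m+1)}_0$, so $u'_m a'_m$ is a prefix of $\tau_m(a'_{m+1})$. Iterating, $x = S^{j'_n}\tau_{0,n}(x^{(n)})$ with $x^{(n)}_0=a'_n$ and $j'_n=\sum_{i<n}h_i(u'_i)$, so $x\in S^{j'_n}B_n(a'_n)$ for all $n$. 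By the uniqueness in \Cref{prop:addresses}, $(u'_m,a'_m)=(u_m,a_m)$ for all $m$.

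Second step: read off coordinates. From $x=S^{j_n}\tau_{0,n}(x^{(n)})$, the block $x_{[-j_n,\,h_n(a_n)-j_n)}$ is exactly $\tau_{0,n}(a_n)$. Its prefix of length $j_n+1$ is then computed by induction on $n$, using $\tau_{0,n+1}(a_{n+1})=\tau_{0,n}(\tau_n(a_{n+1}))$. Since $u_n a_n$ is a prefix of $\tau_n(a_{n+1})$, the word $\tau_{0,n}(u_n)\tau_{0,n}(a_n)$ is a prefix of $\tau_{0,n+1}(a_{n+1})$. The word $\tau_{0,n}(a_n)$ begins with the level-$n$ expression $\tau_{0,n-1}(u_{n-1})\cdots u_0a_0$ by the induction hypothesis, so prepending $\tau_{0,n}(u_n)$ gives the level-$(n+1)$ expression as a prefix. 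The base case $n=0$ is the trivial statement $x_0=a_0$.

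The main obstacle is only bookkeeping: matching the indices and lengths $h_i(u_i)$ correctly, and invoking uniqueness of addresses so that the $x^{(m)}$ constructed here really carry the given address. Primitivity is used only to ensure $a_m\in\cL(X^{(m)}_{\btau})$, so that the objects are well defined.
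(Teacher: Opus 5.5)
Your proof is correct and follows essentially the paper's route: unroll a desubstitution chain as in \eqref{eq:unroll_address}, use recognizability to identify it with the given address, and conclude by induction. The paper gets the identification from uniqueness of each desubstitution $(k'_n,x^{(n+1)})$, whereas you get it from the uniqueness clause of \Cref{prop:addresses}. You are also right that the statement is off by one. As written, either the left endpoint should be $-\sum_{0\le j\le n}h_j(u_j)$ with the word a prefix of $\tau_{0,n+1}(a_{n+1})$ (as in \eqref{eq:unroll_address}), or the block $\tau_{0,n}(u_n)$ should be dropped; your induction covers both readings.
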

\begin{proof} 
The proof works as previously for \eqref{eq:unroll_address}.
Inductively define $x^{(0)} = x$ and $(k'_n, x^{(n+1)})$ as the unique tuple such that $x^{(n)} = S^{k'_n} \tau_n(x^{(n+1)})$.
Then, $k'_n = k_n$ and $x^{(n)}_{[-k_n,0]} = u_n \, a_n$, from which the result follows by induction.
\end{proof}


The following result is folklore relating the values taken by the measure  $\mu$ and the  measures $\mu_n$. It implies in particular that the vectors  with entries   of the form  $\mu(B_n(a))$ and   $\mu_n([a]_n)$ are proportional.
We refer the reader to \cite{BKMS12} for more details.

\begin{proposition}
    \label{prop:measure_transfer}
    Let $\btau = (\tau_n \colon \cA_{n+1}^* \to \cA_n^* : n \geq 0)$ be a recognizable directive sequence, and let $n \geq 1$ be fixed.
    We denote by $[a]_n$ the cylinder associated to $a \in \cA_n$ in $X_{\btau}^{(n)}$.
    Define, for each invariant probability measure $\mu_n \in \cM(X_{\btau}^{(n)}, S)$ and Borel set $U \subseteq X_{\btau}$,
    \begin{equation}
        \label{eq:prop:measure_transfer:statement}
        \mu(U) = 
        \! \! \! \!
        \left. 
        \sum_{\substack{a \in \cA_n\\ 0 \leq k \lt h_n(a)}} 
        \! \! \! \!
        \mu_n(\{x \in [a]_n : S^k\tau_{0,n}(x) \in U\}) 
        \middle/ \sum_{a\in\cA_n} \!\! h_n(a) \, \mu_n([a]_n)
        \right. .
    \end{equation}
    Then, map $\mu_n \mapsto \mu$ is an affine bijection between $\cM(X_{\btau}^{(n)}, S)$ and $\cM(X_{\btau}, S)$.
\end{proposition}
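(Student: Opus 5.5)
The plan is to check directly that the formula \eqref{eq:prop:measure_transfer:statement} defines an invariant probability measure on $X_{\btau}$. After that, we build an explicit inverse map and check that both maps are affine.

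\emph{Well-definedness and invariance.} For fixed $a$ and $k$, the map $x\mapsto S^k\tau_{0,n}(x)$ from $[a]_n$ to $X_{\btau}$ is continuous. Hence each term $\nu_{a,k}(U)=\mu_n(\{x\in[a]_n : S^k\tau_{0,n}(x)\in U\})$ is a Borel measure. The numerator is therefore a finite measure $\tilde\mu$, with total mass $\sum_a h_n(a)\mu_n([a]_n)$, and so $\mu$ is a probability measure.

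For invariance we compare $\tilde\mu(S^{-1}U)$ with $\tilde\mu(U)$. The term $(a,k)$ in $\tilde\mu(S^{-1}U)$ equals the term $(a,k+1)$ in $\tilde\mu(U)$ when $k+1<h_n(a)$. The only unmatched terms are the top levels $k=h_n(a)-1$ in $\tilde\mu(S^{-1}U)$ and the bottom levels $k=0$ in $\tilde\mu(U)$. Since $S^{h_n(a)}\tau_{0,n}(x)=\tau_{0,n}(Sx)$ for $x\in[a]_n$, we get
\[
\sum_a \nu_{a,h_n(a)-1}(S^{-1}U)=\mu_n(\{x:\tau_{0,n}(Sx)\in U\})=\mu_n(\{x:\tau_{0,n}(x)\in U\})=\sum_a\nu_{a,0}(U),
\]
using the $S$-invariance of $\mu_n$. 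Thus $\mu$ is $S$-invariant. The map $\mu_n\mapsto\mu$ is affine because it is a ratio of linear expressions, and convex combinations are sent to convex combinations with the weights reweighted by the denominators. This is the usual meaning of "affine" for such induced/tower constructions; to be careful I would phrase the claim as: the unnormalized map $\mu_n\mapsto\tilde\mu$ is linear, and normalization preserves extreme points and faces.

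\emph{Inverse.} Recognizability makes $\cP_n$ a partition into clopen sets. In particular $B_n=\bigsqcup_a B_n(a)$ is clopen, and the map $\tau_{0,n}\colon X^{(n)}_{\btau}\to B_n$ is a bijection. It is continuous, hence a homeomorphism. Given $\mu\in\cM(X_{\btau},S)$, set $\mu_n(V)=\mu(\tau_{0,n}(V))/\mu(B_n)$. Minimality of $X_{\btau}$ (or its nonemptiness plus the fact that the towers cover) gives $\mu(B_n)>0$. Invariance of $\mu_n$ comes from the first-return structure: the return map of $S$ to $B_n$ is conjugated by $\tau_{0,n}$ to $S$ on $X^{(n)}_{\btau}$, so $\mu_n$ is the normalized induced measure, which is invariant by Kac's lemma. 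Then one checks that the two constructions are mutually inverse. Starting from $\mu$, the measure $\tilde\mu$ built from $\mu_n$ evaluated on $U\cap S^kB_n(a)$ gives $\mu(U\cap S^kB_n(a))/\mu(B_n)$, by $S$-invariance of $\mu$. Summing over the partition $\cP_n$ recovers $\mu(U)$, and the normalizations match because $\mu(X_{\btau})=1$. Conversely, starting from $\mu_n$, the $k=0$ terms restricted to $B_n$ recover $\mu_n$ up to the constant factor.

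\emph{Main obstacle.} The key step is identifying the first-return map to $B_n$ with the shift on $X^{(n)}_{\btau}$, and ensuring that the pieces $S^kB_n(a)$ are disjoint so that no double counting occurs. Both facts come from recognizability, via the partition $\cP_n$ and the uniqueness of the $\btau$-address (\Cref{prop:addresses}). I would therefore state this return-map identification as a lemma first; the rest is bookkeeping.
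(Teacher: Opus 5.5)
The paper does not prove this proposition: it states it as folklore and refers to \cite{BKMS12}, where such correspondences are handled in the Bratteli-diagram framework. Your direct Kakutani--Rokhlin argument is a different, self-contained route. It is correct, and in fact it proves the accurate form of the statement (see the second point below).

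It also shows where the hypothesis enters. The forward construction uses no recognizability at all: you spread $\mu_n$ over the $h_n(a)$ levels of each tower, match consecutive levels, and absorb the top/bottom mismatch using $S^{h_n(a)}\tau_{0,n}(x)=\tau_{0,n}(Sx)$ and the $S$-invariance of $\mu_n$. Recognizability is only needed for the inverse, through identifying the first-return map to $B_n$ with $S$ on $X_{\btau}^{(n)}$. Your argument works at a single level without primitivity or minimality. The Bratteli approach, in exchange, describes all levels simultaneously, as in \eqref{eq:mubn}.

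Two points need tightening.

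First, injectivity of $\tau_{0,n}$ on $X_{\btau}^{(n)}$ does not follow from uniqueness of $\btau$-addresses in \Cref{prop:addresses}, which records positions inside towers rather than preimages. Derive it from disjointness of $\cP_n$ instead. Suppose $\tau_{0,n}(x)=\tau_{0,n}(x')=y$. Disjointness of the bases forces $x_0=x'_0$, so $\tau_{0,n}(Sx)=S^{h_n(x_0)}y=\tau_{0,n}(Sx')$, and you induct forward. Backward, $S^{-1}y$ lies in the top level $S^{h_n(x_{-1})-1}B_n(x_{-1})$, which determines $x_{-1}$; then $\tau_{0,n}(S^{-1}x)=\tau_{0,n}(S^{-1}x')$ and you induct again. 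Relatedly, invariance of $\mu_n$ is the invariance of the induced measure, not Kac's lemma. Here it is immediate: whenever $W\subseteq B_n$, the set $S^{-1}W$ lies in the union of the top levels.

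Second, drop the sentence saying the map is affine because it is a ratio of linear expressions. Set $Z(\mu_n)=\sum_a h_n(a)\mu_n([a]_n)$, which equals $1/\mu(B_n)$. As you observe, the image of $t\nu+(1-t)\nu'$ is the convex combination of the images with weights proportional to $tZ(\nu)$ and $(1-t)Z(\nu')$. By injectivity, the normalized map is therefore affine exactly when $Z$ is constant on $\cM(X_{\btau}^{(n)},S)$, for instance when all $h_n(a)$ coincide or when $X_{\btau}$ is uniquely ergodic. So the word ``affine'' in the statement is too strong in general.

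What your computations actually prove, if you run them before normalizing, is this: $\mu_n\mapsto\tilde\mu$ and $\mu\mapsto\bigl(V\mapsto\mu(\tau_{0,n}(V))\bigr)$ are mutually inverse linear bijections between the cones of finite invariant measures. Hence the normalized correspondence is a bijection mapping segments onto segments and ergodic measures onto ergodic measures. That is all the paper uses later (bijectivity and the proportionality \eqref{eq:prop:measure_transfer:remark}), so nothing downstream is affected.
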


We recall here the following immediate consequence of \Cref{prop:measure_transfer}:
if $\mu_n \in \cM(X_{\btau}^{(n)}, S)$, then, with the notation of \Cref{eq:Sadic:defi_Bn(a)}, one has 
\begin{equation}
    \label{eq:prop:measure_transfer:remark}
    \mu(B_n(a)) = 
    \left. 
    \mu_n([a]_n) 
    \middle/ \sum_{b\in\cA_n} h_n(b) \, \mu_n([b]_n) \right.
    \enspace \text{for every $a \in \cA_n$.}
\end{equation}

One  deduces that  (see also \cite{BD14})
\begin{equation}\label{eq:mubn}
\mu (B_n(a)) \in  \bigcap_{m \geq n} M_n\cdots M_m   \R_+^d, \text{ for all } n.
\end{equation}

\subsection{Centered block presentations}
\label{subsec:block_presentations}

We consider a centered version of the notion of a block substitution from \cite{Que}, focusing on the central letter $a_k$ of a word $a_0 \dots a_k \dots a_{2k}$ of length $2k+1$.

Let $\btau = (\tau_n \colon \cA_{n+1}^* \to \cA_n^* : n \geq 0)$ be a directive sequence and fix $k \geq 1$.
Define $\cA_n^{[k]} = \cL_{2k+1}(X_{\btau}^{(n)})$ for every $n \ge 0$.
We consider each word $a_0 \dots a_{2k} \in \cA_n^{[k]}$ as a single ``bracketed'' letter, written $[a_0 \dots a_{2k}]$.
This bracketed letter represents the letter $a_k$ together with its ``context'', given by the preceding letters $a_0 \dots a_{k-1}$ and the following letters $a_{k+1} \dots a_{2k}$.
Next, we define a substitution $\tau_n^{[k]} \colon \cA_{n+1}^{[k]} \to (\cA_{n}^{[k]})^*$ whose image on $[a_0 \dots a_{2k}]$ encodes $\tau_n(a_k)$ with the context inherited from $\tau_n(a_0 \dots a_{2k})$, as follows.
Given $[a_0 \dots a_{2k}] \in \cA_n^{[k]}$, write $\tau_n(a_0 \dots a_{2k}) = b_0 b_1 \dots b_{\ell-1}$, where $\ell \coloneqq \sum_{0 \leq i \leq 2k} |\tau_n(a_i)|$ is the length of $\tau_n(a_0 \dots a_{2k})$.
Set $r \coloneqq |\tau_n(a_0 \dots a_{k-1})|$, which is the position at which $\tau_n(a_k)$ occurs in $\tau_n(a_0 \dots a_{2k})$.
We then define $\tau_n^{[k]}([a_0 \dots a_{2k}])$ to be the word of bracketed letters obtained by sliding a window of length $2k+1$ across $\tau_n(a_0 \dots a_{2k})$, centered successively at each position of $\tau_n(a_k)$, namely
\[  [b_{r-k} b_{r-k+1} \dots b_{r+k}]  [b_{r-k+1}  b_{r-k+2}  \dots  b_{r+k+1}]  \dots 
    [b_{r+|\tau_n(a_k)|-1-k}  b_{r+|\tau_n(a_k)|-k}  \dots  b_{r+|\tau_n(a_k)|-1+k}].     \]
The word $\tau^{[k]}_n([a_0 \dots a_{2k}])$ is thus the centered $k$-sliding block presentation of the subword of $\tau_n(a_0 \dots a_{2k})$ of length $|\tau_n(a_k)|$ starting at position $|\tau_n(a_0 \dots a_{k-1})|$ (with positions indexed from $0$) together with its context.

This yields substitutions $\tau_n^{[k]} \colon (\cA_{n+1}^{[k]})^* \to (\cA_n^{[k]})^*$, and therefore a directive sequence $\btau^{[k]} = (\tau_n^{[k]} : n \geq 0)$, called the \emph{centered $k$-block presentation} of $\btau$.

\begin{proposition}
Let $\btau = (\tau_n \colon \cA_{n+1}^* \to \cA_n^* : n \geq 0)$ be a directive sequence and let $\btau^{[k]}$ be its centered $k$-block presentation.
Then, the map $\pi \colon \cA_0^{[k]} \to \cA_0$ given by $\pi([a_0 \dots a_{2k}]) = a_k$ defines a conjugacy $\pi \colon X_{\btau^{[k]}} \to X_{\btau}$.
\end{proposition}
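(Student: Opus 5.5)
The plan is to compare both subshifts with the sliding-block image of $X_{\btau}$. Let $\Phi \colon \cA_0^\Z \to (\cA_0^{[k]})^\Z$ be the centered $k$-block map $\Phi(x)_i = [x_{i-k} \dots x_{i+k}]$, and let $\pi$ also denote the induced letter-to-letter map on sequences. Then $\pi\circ\Phi=\mathrm{id}$, both maps commute with the shift, and both are continuous. It therefore suffices to prove $\Phi(X_{\btau}) = X_{\btau^{[k]}}$: $\Phi$ is then a continuous shift-commuting bijection onto $X_{\btau^{[k]}}$ with inverse $\pi$, so $\pi$ restricted to $X_{\btau^{[k]}}$ is the conjugacy. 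Both inclusions reduce to a language statement: for each $m$, the words of $\cL^{(0)}_{\btau^{[k]}}$ should be exactly the words $w$ that are sliding-block images $[c_{i-k}\dots c_{i+k}]_i$ of suitable words $c$ in $\cL^{(0)}_{\btau}$.

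The key step is a compatibility identity, proved by induction on $m$. For a letter $[a_0\dots a_{2k}]\in\cA_m^{[k]}$, the word $\tau_{0,m}^{[k]}([a_0\dots a_{2k}])$ should equal the centered $k$-block encoding of the occurrence of $\tau_{0,m}(a_k)$ inside $\tau_{0,m}(a_0\dots a_{2k})$, with context read from that surrounding word. For $m=1$ this is the definition. For the inductive step, write $\tau_{0,m+1}=\tau_{0,m}\circ\tau_m$. We apply the induction hypothesis to each letter $[b_{j-k}\dots b_{j+k}]$ of $\tau_m^{[k]}([a_0\dots a_{2k}])$, where $b=\tau_m(a_0\dots a_{2k})$. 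The blocks $\tau_{0,m}(b_{j-k}\dots b_{j+k})$ are consistent windows of $\tau_{0,m}(b)$, so concatenating them yields the block encoding of $\tau_{0,m}(\tau_m(a_k))$ inside $\tau_{0,m}(b)$.

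There is one subtlety. The context window of radius $k$ around a position of $\tau_{0,m}(b_j)$ must lie inside $\tau_{0,m}(b_{j-k}\dots b_{j+k})$. This holds because morphisms are non-erasing: every $|\tau_{0,m}(b_i)|\ge 1$, so $k$ letters on each side cover at least $k$ positions. The same non-erasing argument also shows that $\tau_n^{[k]}$ is well defined, since $r-k\ge0$ and $r+|\tau_n(a_k)|-1+k\le \ell-1$.

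With the identity in hand, the inclusion $X_{\btau^{[k]}}\subseteq\Phi(X_{\btau})$ follows. Any word in $\cL^{(0)}_{\btau^{[k]}}$ is a factor of some $\tau^{[k]}_{0,m}([a_0\dots a_{2k}])$, and $a_0\dots a_{2k}\in\cL(X^{(m)}_{\btau})\subseteq\cL^{(m)}_{\btau}$. So that factor is the block encoding of a factor of $\tau_{0,m}(a_0\dots a_{2k})$, which lies in $\cL^{(0)}_{\btau}$ because $\tau_{0,m}$ maps $\cL^{(m)}_{\btau}$ into $\cL^{(0)}_{\btau}$. Hence every $y\in X_{\btau^{[k]}}$ has consistent overlapping brackets, $\pi(y)\in X_{\btau}$, and $y=\Phi(\pi(y))$.

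For the reverse inclusion, take $x\in X_{\btau}$ and a window $x_{[i-k,i+k']}$. Its extension $x_{[i-2k,i+k'+k]}$ lies in $\cL^{(0)}_{\btau}$, hence occurs in some $\tau_{0,m}(c)$ with $c\in\cA_m$. Pick $m$ large and use the address decomposition of \Cref{lem:address}, or simply \eqref{eq:Sadic:connection_levels}, to place this occurrence inside $\tau_{0,m}(a_k)$ for a word $a_0\dots a_{2k}\in\cL_{2k+1}(X^{(m)}_{\btau})$ taken from a preimage point. Any point of $X^{(m)}_{\btau}$ has its central letter surrounded by such a word, and everything lies in $X_{\btau}$, so the context is correct. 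The identity then shows that the block window is a factor of $\tau^{[k]}_{0,m}([a_0\dots a_{2k}])$, so $\Phi(x)\in X_{\btau^{[k]}}$.

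I expect the main obstacle to be this last step. The window may straddle the boundary between images of consecutive letters, so it need not fit inside the image of one letter at a fixed level. The plan is to use \eqref{eq:Sadic:connection_levels} at level $m$ to write $x=S^j\tau_{0,m}(z)$, and to note that the window lies in $\tau_{0,m}(z_{[-1,1]})$ when $m$ is large, provided the sequence is everywhere growing. Without that assumption, I would instead absorb the straddling into a longer level-$m$ word: $\tau^{[k]}_{0,m}$ applied to the block encoding of $z_{[-N,N]}$ is again a block encoding by the identity, and it belongs to $\cL^{(0)}_{\btau^{[k]}}$ once the letters of $\cA_m^{[k]}$ occur in $\cL^{(m)}_{\btau^{[k]}}$. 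Proving that last fact leads to the same induction at higher levels, so this is the step that needs care.
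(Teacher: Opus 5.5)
Your framework is sound as far as it goes. The compatibility identity for $\tau^{[k]}_{0,m}$ is correct, and your induction using non-erasingness proves it. It gives $X_{\btau^{[k]}}\subseteq\Phi(X_{\btau})$ and $\Phi\circ\pi=\mathrm{id}$ on $X_{\btau^{[k]}}$, so $\pi$ is an injective factor map into $X_{\btau}$.

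The gap is surjectivity, i.e.\ $\Phi(X_{\btau})\subseteq X_{\btau^{[k]}}$. You leave this open, and neither proposed repair closes it. The definition of $\cL^{(0)}_{\btau}$ already places $x_{[i-k,j+k]}$ inside a single image $\tau_{0,m}(c)$, so straddling is not the issue. What you actually need is that this $c$ be the centre of some letter of $\cA^{[k]}_m=\cL_{2k+1}(X^{(m)}_{\btau})$, i.e.\ that $c$ occurs in $X^{(m)}_{\btau}$. Going through a preimage point $z$ with $x=S^j\tau_{0,m}(z)$ brings the straddling back at every level, and everywhere growing does not prevent this. Getting the block encoding of $z_{[-1,1]}$ or $z_{[-N,N]}$ into $\cL^{(m)}_{\btau^{[k]}}$ is the same problem one level up.

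In fact, for an arbitrary directive sequence the statement is false, so this step cannot be proved in the generality you work in. Take $k=1$ and the constant, everywhere growing sequence $\tau\colon a\mapsto aa,\ b\mapsto bb,\ c\mapsto cab$.
\begin{itemize}
\item Here $\tau^m(c)=c\,ab\,a^2b^2\cdots a^{2^{m-1}}b^{2^{m-1}}$, and $c$ is never preceded by a letter.
\item Hence $X_{\btau}$ consists of $a^{\Z}$, $b^{\Z}$, and the shifts of the point with $x_n=a$ for $n<0$ and $x_n=b$ for $n\ge 0$, and of the point with $a,b$ exchanged.
\item So $c$ does not occur in $X_{\btau}$, and $\cA^{[1]}=\{aaa,aab,abb,bbb,bba,baa\}$.
\item One computes $\tau^{[1]}([aab])=[aaa][aab]$, $\tau^{[1]}([abb])=[abb][bbb]$, $\tau^{[1]}([bba])=[bbb][bba]$ and $\tau^{[1]}([baa])=[baa][aaa]$.
\item Thus $[aab]$ and $[bba]$ only occur at the end, and $[abb]$ and $[baa]$ only at the start, of the words $\tau^{[1]}_{0,m}(\cdot)$.
\item Therefore $X_{\btau^{[1]}}=\{[aaa]^{\Z},[bbb]^{\Z}\}$, and $\pi$ is not onto.
\end{itemize}

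The missing hypothesis is primitivity, which is how the paper uses this proposition; it states it without proof. Under primitivity, $\cL(X^{(m)}_{\btau})=\cL^{(m)}_{\btau}$ contains every letter of $\cA_m$. So any $c\in\cA_m$ is the centre of some $[c_0\dots c_{2k}]\in\cA^{[k]}_m$. Your identity then shows directly that $\Phi(x)_{[i,j]}$ is a factor of $\tau^{[k]}_{0,m}([c_0\dots c_{2k}])$, with no straddling argument needed.
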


\section{Coboundaries, extension graphs and decisiveness} \label{sec:coboundaries}
This section  is devoted to combinatorial properties which will play a key role for the following. Section  \ref{subsec:coboundaries} first introduces letter-coboundaries and describes their basic properties, by  focusing on  the notion  of symbolic discrepancy and on  its  spectral relevance.  We then consider  extension graphs  and  highlight their significance in terms of coboundaries
  in Section \ref{subsec:extensiongraphs}.
The notion of decisiveness is  then introduced in Section 
\ref{subsec:decisive} with its interpretation in terms of extension graphs.

\subsection{Letter-coboundaries and symbolic discrepancy}\label{subsec:coboundaries}
The notion of a \emph{letter-coboundary} was introduced by B.~Host in \cite{Host86}, who showed how  crucial they are  for computing the eigenvalues of  substitutive minimal subshifts.
This  notion will also prove to  play a central role in this article.

Let $X \subseteq \cA^\Z$ be a subshift and fix $u \in \cA$. 
A word $w \in \cA^*$ is a \emph{return word to $u$} if $wu \in \cL(X)$ and $wu$ contains exactly two occurrences of $u$, one as a prefix and one as a suffix. 

\begin{definition} \label{def: coboundary}
A \emph{letter-coboundary on $X$} is a morphism $c \colon \cA^* \to \R$ (with respect to the additive structure of $\R$) such that, for every $a \in \cA$ and every return word $u$ to $a$ in $X$, we have $c(u) = 0$. 
We call $c$ \emph{trivial} if $c(a) = 0$ for all $a \in \cA$.
\end{definition}

\begin{remark}
Host defined letter-coboundaries as   morphisms with values in $\S^1$ instead of $\R$ \cite{Host86}.
It can be shown that both approaches are equivalent  \cite{mercat}.
The present additive version is adapted for notational simplicity, especially in \Cref{sec:carac,sec:carac_finite_rank}, and for its closer connection to the usual matrix computations that are necessary for computing eigenvalues in concrete examples, such as the ones in \Cref{sec:examples}
\end{remark}

The following two lemmas are widely used throughout the article when dealing with letter-coboundaries. \Cref{lem:cobord&rho} gives a key description of them in terms of associated functions. Its version for coboundaries with values in $\S^1$ was given by Host \cite{Host86}; see also  the proof in \cite[Lemma 3.17]{BCBY}.  For the sake of clarity, we provide here a proof for letter-coboundaries with values in $\R$.


\begin{lemma}\label{lem:compositioncoboundary}
    Let $\tau\colon \cA^*\to \cB^*$ be a monoid morphism, $X\subseteq \cA^\Z$ be a subshift and set $Y=\bigcup_{k\in \Z}S^k\tau(X)$. If $c \colon \cB^* \to \R$ is a letter-coboundary on $Y$, then $c\circ \tau$ is a letter-coboundary on $X$.
\end{lemma}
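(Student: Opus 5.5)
We need to show: for every $a\in\cA$ and every return word $w$ to $a$ in $X$, $c(\tau(w))=0$. The plan is to turn the single return word $wa$ into a factor of $Y$ and then split its image into a concatenation of return words to one fixed letter of $\cB$. Fix such $w$, so that $wa\in\cL(X)$ and $wa$ contains exactly two occurrences of $a$, one as a prefix and one as a suffix. If $\tau(a)$ is empty (the lemma only says monoid morphism), the choice of letter below fails; I discuss this at the end.

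\emph{Choosing the letter.} Suppose $\tau(a)$ is non-empty and let $b$ be its first letter. Pick $x\in X$ with $x_{[0,|wa|)}=wa$. Then $\tau(x)\in Y$, and $\tau(wa)=\tau(w)\tau(a)$ is a factor of $\tau(x)$, hence lies in $\cL(Y)$. The word $\tau(w)$ is a prefix of $\tau(wa)$, and it starts with $b$ because $w$ starts with $a$ (if $w$ is empty it is not a return word, so $|w|\ge1$). Moreover $\tau(w)b$ is a prefix of $\tau(wa)$, since $\tau(a)$ starts with $b$.

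\emph{Factoring into return words.} List the occurrences of $b$ in $\tau(w)b$ as positions $0=p_0<p_1<\dots<p_r=|\tau(w)|$. This list includes the final $b$, and the first position is $0$. Then
\[
\tau(w)=v_1v_2\cdots v_r, \qquad v_i=(\tau(w)b)_{[p_{i-1},p_i)} .
\]
Each $v_ib$ is the factor $(\tau(w)b)_{[p_{i-1},p_i]}$, so it lies in $\cL(Y)$. It contains exactly two occurrences of $b$: one as a prefix and one as a suffix, because $p_{i-1},p_i$ are consecutive occurrences. Hence each $v_i$ is a return word to $b$ in $Y$, and $c(v_i)=0$. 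Additivity of $c$ then gives $c(\tau(w))=\sum_i c(v_i)=0$, so $c\circ\tau$ vanishes on return words to $a$. Since $c\circ\tau$ is clearly a morphism $\cA^*\to\R$, it is a letter-coboundary on $X$.

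\emph{Main obstacle: empty images.} If $\tau(a)=\varepsilon$, use a different letter. Among the letters of $w$, take the first one $a'$ with $\tau(a')\neq\varepsilon$. If there is none, then $\tau(w)=\varepsilon$ and $c(\tau(w))=0$ trivially. Otherwise write $w=w'a'w''$ with $\tau(w')=\varepsilon$. By shift-invariance of the language, $wa\,w'$ extends to the right inside $x$ as $wa\cdots$, since the next return to $a$ again begins with $w'a'$. Then $\tau(w)$ is the image of the conjugate word $a'w''w'$, which starts with a letter whose image is non-empty, and the next $a'$ occurs right after it. So $\tau(w)\,b'$, with $b'$ the first letter of $\tau(a')$, is again a factor of $\tau(x)$, and the same factoring applies.

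This bookkeeping with erasing letters is the only delicate point. In the paper's setting, where $\tau$ is non-erasing as in all substitutions considered, it disappears.
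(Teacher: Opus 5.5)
\textbf{Main case.} Your argument for $\tau(a)\neq\varepsilon$ is correct and is the paper's argument. You take $b$ to be the first letter of $\tau(a)$ and use $\tau(w)b\in\cL(Y)$. It is more careful than the paper's one-line proof, which claims that $\tau(u)$ is itself a return word to $b$. That claim fails whenever $b$ occurs again inside $\tau(u)$, for instance when $\tau(a)=bb$. Your splitting of $\tau(w)$ at consecutive occurrences of $b$ into return words to $b$ is exactly the repair needed.

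\textbf{Erasing case.} Your last paragraph is wrong, and it cannot be fixed. The step ``the next return to $a$ again begins with $w'a'$'' has no justification, since consecutive return words to $a$ need not coincide. Worse, the statement itself is false for erasing morphisms. Here is a counterexample.
\begin{itemize}
\item Let $Y=X_\sigma$ be the subshift of Example~\ref{ex:extension_graph_CM}, with $\cL_2(Y)=\{01,10,21,02\}$.
\item It carries the non-trivial letter-coboundary $c(0)=-1$, $c(1)=1$, $c(2)=0$.
\item Let $e$ be a new letter and set $\phi(0)=e0$, $\phi(1)=1$, $\phi(2)=e2$.
\item Let $X$ be the shift-orbit of $\phi(Y)$, and let $\tau$ erase $e$ and fix $0,1,2$. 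Then $\bigcup_k S^k\tau(X)=Y$.
\end{itemize}
Since $02\in\cL(Y)$, we have $\phi(02)=e0e2\in\cL(X)$, so $e0$ is a return word to $e$ in $X$. Yet $c(\tau(e0))=c(0)=-1\neq 0$.

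This is exactly where your argument breaks. With $w=e0$, $w'=e$, $a'=0$, the word $\tau(w)\,b'=00$ is not in $\cL(Y)$: the next return to $e$ begins with $e2$, not $e0$.

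So the lemma has to be read for non-erasing $\tau$. The paper's proof assumes this implicitly when it takes ``the first letter of $\tau(a)$'', and all its applications are to substitutions. Delete the erasing-case paragraph and state the non-erasing hypothesis instead.
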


\begin{proof}
    It suffices to note that if $u$ is a return word to the letter $a\in\cA$ in $X$, and 
    $b\in\cB$ is the first letter of $\tau(a)$,
    then $\tau(u)$ is a return word to $b$ in $Y$, so $c\circ\tau(u)=0$.
\end{proof}

\begin{lemma}
\label{lem:cobord&rho}
Let $X\subseteq \cA^\Z$ be a minimal subshift. 
A morphism $c \colon \cA^*\to \R$ is a letter-coboundary on $X$ if and only is there exists a map $\rho \colon \cA \to \R$ such that $c(a) = \rho(b) - \rho(a)$ for every length-$2$ word $ab \in \cL(X)$.
\end{lemma}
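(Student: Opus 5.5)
We prove the two implications separately. The easy direction is the converse, so we start there. Suppose $\rho\colon\cA\to\R$ satisfies $c(a)=\rho(b)-\rho(a)$ for every $ab\in\cL(X)$. Let $w=w_0\cdots w_{k-1}$ be a return word to a letter $u$, so $wu\in\cL(X)$ and $w_0=u$. Every length-$2$ factor $w_iw_{i+1}$ of $wu$ (with $w_k:=u$) lies in $\cL(X)$, so the sum telescopes:
\[ c(w)=\sum_{i=0}^{k-1}\bigl(\rho(w_{i+1})-\rho(w_i)\bigr)=\rho(u)-\rho(w_0)=0. \]
Hence $c$ is a letter-coboundary.

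For the forward direction, assume $c$ is a letter-coboundary. The plan is to define $\rho$ by summing $c$ along paths in the language and to use the vanishing on return words to show the result is well defined. Fix a reference letter $a_\ast\in\cA$. By minimality, every letter $b$ occurs in some word of $\cL(X)$ that starts with $a_\ast$. For such a word $w$ ending in $b$, write $w=w'b$ and set
\[ \rho(b):=c(w') \]
(with $w'=\varepsilon$ and $\rho(a_\ast)=0$ when $b=a_\ast$).

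The key step, and the expected obstacle, is independence of the choice of $w$. Call a word $v\in\cL(X)$ a \emph{closed loop at $a$} if it starts with $a$ and $va\in\cL(X)$. We first show that $c(v)=0$ for every such $v$. Decompose $va$ at the successive occurrences of $a$: this writes $v$ as a concatenation of return words to $a$, each of which satisfies $c=0$, so $c(v)=0$.

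Now take two words $w'b$ and $w''b$ in $\cL(X)$, both starting with $a_\ast$. By minimality (uniform recurrence), there is a word $z$ such that $bza_\ast\in\cL(X)$. Using uniform recurrence once more, we try to arrange that $w'bz$ and $w''bz$ are themselves closed loops at $a_\ast$. If both are, then $c(w')+c(bz)=0=c(w'')+c(bz)$, which gives $c(w')=c(w'')$.

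The difficulty is that concatenations of language words need not stay in the language. To get around this, we instead use a single point $x\in X$. In $x$, both $w'b$ and $w''b$ occur (minimality), and $a_\ast$ occurs syndetically. We then define a potential along $x$: let $P(i)$ be the sum of $c$ over $x_{[i_0,i)}$ for a fixed occurrence $i_0$ of $a_\ast$, with the convention $P(i)=-c(x_{[i,i_0)})$ for $i<i_0$.

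Between any two occurrences $i<j$ of the same letter $a$ in $x$, the word $x_{[i,j)}$ is a closed loop at $a$, so $c(x_{[i,j)})=0$ and $P(i)=P(j)$. Thus $P(i)$ depends only on the letter $x_i$, and we set $\rho(x_i):=P(i)$. Since $x$ is in a minimal subshift, every letter of $\cA$ appears in $x$, so $\rho$ is defined on all of $\cA$.

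Finally, we check the defining identity. Every $ab\in\cL(X)$ occurs in $x$ at some position $i$, and then
\[ \rho(b)-\rho(a)=P(i+1)-P(i)=c(x_i)=c(a). \]
This completes the forward direction.

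The hardest point is the well-definedness, namely that loops have zero $c$. Working inside a single point $x$ handles it cleanly, because it avoids having to concatenate language words.
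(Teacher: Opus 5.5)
Your proof is correct and is essentially the paper's argument: accumulate $c$ along a single point $x\in X$, use that $c(x_{[i,j)})=0$ whenever $x_i=x_j$ to get a well-defined $\rho$ (your splitting into return words makes explicit a step the paper states loosely), and read off $c(a)=\rho(b)-\rho(a)$ at an occurrence of $ab$ in $x$, the converse being the same telescoping sum. Delete the abandoned reference-letter detour, and note, as the paper does by assuming $\cA\subseteq\cL(X)$, that letters not occurring in $X$ do not appear in $x$, so $\rho$ may be given arbitrary values on them.
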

\begin{proof}
Suppose that  there exists $\rho$ with $c(a)=\rho(b)-\rho(a)$ for all $ab\in\cL(X)$.  
We can compute, for any $u=u_0\cdots u_k\in\cL(X)$ with $u_0=u_k$,
\[
c(u)=\sum_{i=0}^{k-1} c(u_i)
    =\sum_{i=0}^{k-1}\rho(u_{i+1})-\rho(u_i)
    =\rho(u_k)-\rho(u_0)=0,
\]
so in particular $c(v)=0$ for every return word $v$ to a letter.

Conversely, suppose that $c$ is a letter-coboundary.
Without loss of generality, we also assume that $\cA \subseteq \cL(X)$.
Fix $x \in X$ and define $r_k = c(x_0 x_1 \dots x_{k-1})$ for $k \ge 1$.
Observe that $r_k = r_j$ for any $j > k \geq 1$ such that $x_k = x_j$, for $x_k x_{k+1} \dots x_{j-1}$ is a return word to $x_k$ in $X$ in this case, which implies
\[  c(x_0 x_1 \dots x_{j-1}) =
    c(x_0 x_1 \dots x_{k-1}) + 
    c(x_j x_{j+1} \dots x_{k-1}) = 
    c(x_0 x_1 \dots x_{k-1}).    \]
Then, since $X$ is minimal, we can define $\rho\colon\cA \to \R$ by $\rho(a) = r_j$ where $j \ge 1$ is any position at which $a$ occurs in $x$.

Let us check that $\rho$ satisfies the desired property.
Consider $ab \in \cL(X)$ of length 2.
By minimality, $x_j x_{j+1} = ab$ for some $j \ge 1$.
Then, $\rho(x_{j+1}) = c(x_0\dots x_j) = c(x_0\dots x_{j-1}) + c(x_j) = \rho(x_j) + c(x_j)$, that is, $c(a) = \rho(b) - \rho(a)$.
\end{proof}

\begin{lemma}
    \label{lem:cob_zero_integral}
    Let $X \subseteq \cA^\Z$ be a minimal subshift and let $c \colon \cA^* \to \R$ be a letter-coboundary in $X$. Then,
    \begin{equation*}
        \sum_{a \in \cA} \mu([a]) \, c(a) = 0
        \enspace \text{for every $\mu \in \cM(X,S)$.}
    \end{equation*}
\end{lemma}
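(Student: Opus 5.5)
We will use \Cref{lem:cobord&rho} to write $c$ as a ``discrete derivative'', and then let the telescoping sum be averaged against $\mu$. By that lemma there is a map $\rho\colon\cA\to\R$ with $c(a)=\rho(b)-\rho(a)$ for every length-$2$ word $ab\in\cL(X)$. Consider the continuous (locally constant) functions $f,g\colon X\to\R$ given by $f(x)=c(x_0)$ and $g(x)=\rho(x_0)$. Since $x_0x_1\in\cL(X)$ for every $x\in X$, we obtain the pointwise identity
\[ f(x)=\rho(x_1)-\rho(x_0)=g(Sx)-g(x) \quad\text{for all } x\in X, \]
so $f$ is a continuous coboundary in the usual dynamical sense.

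Next we integrate against an arbitrary $\mu\in\cM(X,S)$. Because $g$ is bounded and $\mu$ is $S$-invariant, $\int g\circ S\,d\mu=\int g\,d\mu$, and hence $\int f\,d\mu=0$. Since $f$ is constant on each cylinder $[a]$, $a\in\cA$, and these cylinders partition $X$, we have $\int f\,d\mu=\sum_{a\in\cA}\mu([a])\,c(a)$. This gives the claimed identity.

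The only point needing care is letters $a\in\cA$ that do not occur in $\cL(X)$. For such $a$ the cylinder $[a]$ is empty, so $\mu([a])=0$ and the term contributes nothing; $\rho$ only needs to be defined on letters of $\cL(X)$, exactly as in the proof of \Cref{lem:cobord&rho}. Otherwise the argument is routine, and the main work is already contained in \Cref{lem:cobord&rho}.

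An alternative that avoids $\rho$ is to fix a letter $a$ and use ergodic averages along a generic point for an ergodic $\mu$. One decomposes a long prefix into return words to $a$, which each contribute $c=0$, plus a boundary term that is bounded and so vanishes after dividing by the length. The general case then follows by ergodic decomposition. The coboundary argument above is shorter, so it is the one we adopt.
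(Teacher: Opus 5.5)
Your proposal is correct and is essentially the paper's proof: both take $\rho$ from \Cref{lem:cobord&rho}, note that $x\mapsto c(x_0)$ equals $\bar\rho\circ S-\bar\rho$ with $\bar\rho(x)=\rho(x_0)$, and integrate against the invariant measure $\mu$. Your remark about letters not occurring in $\cL(X)$ is a harmless refinement; it matches the ``without loss of generality'' reduction in the proof of \Cref{lem:cobord&rho}.
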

\begin{proof}
\Cref{lem:cobord&rho} provides a map $\rho \colon \cA \to \R$ satisfying $c(a) = \rho(b) - \rho(a)$ for all $a,b \in \cA$ with $ab \in \cL(X)$.
So, if we define the maps 
\begin{equation}\label{eq:chro}
\overline{c}, \overline{\rho} \colon X \to \R \mbox{ by }\overline{c}(x) = c(x_0)  \mbox{ and }\overline{\rho}(x) = \rho(x_0),
\end{equation}
then $\overline{c} = \overline{\rho} \circ S - \overline{\rho}$.
Therefore, for any $\mu \in \cM(X,S)$,
\begin{equation*}
    \sum_{a \in \cA} \mu([a]) \, c(a) = 
    \int \overline{c} \, \mathrm{d}\mu = 
    \int (\overline{\rho} \circ S - \overline{\rho}) \, \mathrm{d}\mu = 0.
\end{equation*}
\end{proof}

\begin{remark}  \label{rem:coboundary}  
Letter-coboundaries as defined here are related to the classical notion of a  coboundary in topological dynamics in the following way. 
Given a minimal system $(X,T)$, the {\em coboundary map $\beta$} is the endomorphism of the additive group $C(X,\R)$ of real valued continuous function from $X$ to $\R$ defined by $\beta f = f\circ T - f$.
A map $f \in C(X,\R)$ is said to be a {\em real coboundary of $(X,T)$} if it lies in the image of $\beta$, that is, if there exists $g\in C(X,\R)$ such that $f=g\circ T-g$. 
As  an illustration, the map $\overline{c}$ defined in \eqref{eq:chro} is  a  real coboundary of the subshift  $(X,S)$.
Suppose now that $X \subseteq \cA^\Z$ is a subshift and that $c \colon \cA^* \to \R$ is a letter-coboundary on $X$. 
One can check that the map $c \mapsto \overline{c}$ defines a one-to-one correspondence between letter-coboundaries on $(X,S)$ and real coboundaries
$f= \beta g$ of $(X,S)$ such   that both $f(x)$  and $g(x)$  depend only on the first letter  $x_0$ of $x$,  for every $x \in X$.
\end{remark}

We recall a classical theorem from topological dynamics that highlights the role played by   coboundaries.

\begin{theorem}
[Gottshalk--Hedlund's Theorem \cite{GotHed:55}]
\label{theo:GH} 
Let $(X,T)$ be a minimal topological dynamical system, and fix a continuous map $f \colon X \to \R$.
Then, the following conditions are equivalent:
\begin{enumerate}
    \item there exists a continuous map $g \colon X \to \R$ such that $f = g \circ T - g$;
    \item there exists $x_0 \in X$ such that  the Birkhoff sums $ \sum_{0 \leq n \lt N} f(T^n x_0) $
    remain bounded as $N \to \infty$.
\end{enumerate}
\end{theorem}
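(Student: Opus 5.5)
The direction (1) $\Rightarrow$ (2) is immediate. If $f = g\circ T - g$ with $g$ continuous, then for any $x_0 \in X$ the Birkhoff sum telescopes:
\[ \sum_{0\le n<N} f(T^n x_0) = g(T^N x_0) - g(x_0). \]
This is bounded by $2\|g\|_\infty$, which is finite because $X$ is compact. Minimality is not needed here.

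For (2) $\Rightarrow$ (1) I would use the classical skew-product argument. Let $M$ bound $|\sum_{0\le n<N} f(T^n x_0)|$ for all $N \ge 0$. Consider the homeomorphism $\hat T \colon X\times\R\to X\times\R$ given by $\hat T(x,t) = (Tx, t+f(x))$. Then $\hat T^N(x_0,0) = (T^N x_0, S_N f(x_0))$, so the forward orbit of $(x_0,0)$ stays in the compact set $X\times[-M,M]$. Its $\omega$-limit set $\Omega$ is therefore nonempty, compact and $\hat T$-invariant.

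By Zorn's lemma, $\Omega$ contains a nonempty compact minimal $\hat T$-invariant set $K$. The group $\R$ acts on $X\times\R$ by vertical translations $\phi_s(x,t) = (x,t+s)$. These commute with $\hat T$, so each $\phi_s(K)$ is again minimal, and two such sets are either equal or disjoint. The projection of $K$ to $X$ is compact and $T$-invariant, so it equals $X$ by minimality of $(X,T)$.

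The key step, and the main obstacle, is to show that every fibre $K_x = \{t : (x,t)\in K\}$ is a single point. Suppose $t, t+s \in K_x$ with $s\ne 0$. Then $\phi_s(K)\cap K \ni (x,t+s)$, so by minimality $\phi_s(K) = K$. Iterating gives $\phi_{ms}(K) = K$ for all $m\in\Z$. This contradicts the boundedness of $K$, since $K\subseteq X\times[-M,M]$. Hence $K$ is the graph of a function $g\colon X\to\R$ with $|g|\le M$.

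The graph of $g$ is closed, and its values lie in a compact interval. By the closed graph theorem for maps into a compact Hausdorff space, $g$ is continuous. Finally, $\hat T$-invariance of $K$ gives $\hat T(x,g(x)) = (Tx, g(x)+f(x)) \in K$. Since the fibre over $Tx$ is the single point $g(Tx)$, this yields $g(Tx) = g(x)+f(x)$, i.e.\ $f = g\circ T - g$, which is (1).
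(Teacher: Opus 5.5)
Your proof is correct and complete. The paper gives no proof of this statement to compare against: it quotes the theorem from \cite{GotHed:55} and uses it as a black box. Every application in the paper uses only the implication (2)$\Rightarrow$(1), always with Birkhoff sums bounded uniformly in the starting point, and with the sign convention $\xi-\xi\circ S$ (take $\xi=-g$).

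Your skew-product argument is the standard proof of that implication, and each step holds:
\begin{itemize}
\item The forward orbit of $(x_0,0)$ stays in $X\times[-M,M]$, so any minimal set $K$ inside its $\omega$-limit set is bounded.
\item The vertical translations commute with $\hat T$. A fibre of $K$ containing two points would therefore make $K$ invariant under a nontrivial translation group, which is impossible for a bounded set.
\item Surjectivity of the projection $K\to X$ follows from minimality of $(X,T)$.
\end{itemize}

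For continuity of $g$, you can equivalently note that the first-coordinate projection $K\to X$ is a continuous bijection from a compact space onto a Hausdorff space, hence a homeomorphism. As a small bonus, your construction gives a transfer function with $\sup|g|\le M$.
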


Sequences with bounded  Birkhoff sums  are at the heart of  Gottshalk--Hedlund's Theorem. 
We end this section by recalling the related notions  of
\emph{symbolic discrepancy}  and \emph{balancedness}, 
which we then revisit with    \Cref{prop:balance_char_Sadic}; see also  \Cref{balanced=>spaces_decomposition} and \Cref{ex:TM:freqs&eigs}.
For the equivalence below, see \cite{unimodular}.

\begin{definition}\label{def:balance}
Let $X$ be a minimal subshift.
We say  that $f \in C (X , \mathbb{R} )$ is {\em balanced} for $X$    whenever  there exists a  constant $C_f>0$ 
such that 
\[
\Big|\sum_{i=0}^n f(S^ix) - f(S^iy) \Big| \leq C_f 
\enspace \text{for all $x,y \in X$ and $n \ge 1$,} 
\]
or equivalently, if it has {\em finite discrepancy}, \emph{i.e.}, if there exists  a constant $\alpha_f$ such that 
\[  \sup_{n \ge 1} \Big| \sum_{i=0}^n f(S^i x) -  \alpha_f \Big|  < \infty  \mbox { for all  }x \in X. \]
\end{definition}

The notion of balancedness for continuous functions  was introduced in \cite{unimodular}.
It was inspired by the classical notion of balancedness in word combinatorics  which is stated in combinatorial terms for subshifts as follows: a minimal subshift $(X,S)$ on the alphabet $\cA$ is said to be {\em balanced on the factor} $v\in\cL(X)$ whenever there exists a constant $C_v$ such that for all words $w,w'\in\cL(X)$ of the same length, the number of occurrences of $v$ in $w$ and $w'$ is bounded from above by $C_v$. 
This is equivalent to the fact that the characteristic function $\boldsymbol{1}_{[v]}:X\to\{0,1\}$ is balanced in the sense of \Cref{def:balance}. 
The subshift $(X,S)$ is said to be {\em balanced on letters} if it is balanced on $a$ for all $a\in \cA$ and {\em balanced on factors} if it is balanced on $v$ for all $v \in \cL(X)$. 
As a consequence of \Cref{theo:GH}, if $(X,S)$ is a minimal subshift balanced on a factor $v\in \cL(X)$, then the frequency $\alpha$ of $v$ equals $\mu([v])$ for every ergodic measure $\mu$.
The relations between balance and  symbolic discrepancy have been developed in \cite{adam03,adam05}.

\begin{remark} \label{rem:discrepancy}
One checks that  if a minimal subshift $X$  is balanced on all its factors, then it is uniquely ergodic. Let $\mu$ stand for its unique invariant measure.   For every $u \in \cL(X)$, the {\em normalized characteristic function} $\boldone_{[u]} - \mu([u])$ has bounded Birkhoff sums, and thus,  by Theorem  \ref{theo:GH},  $\boldone_{[u]} - \mu([u])$ is a real coboundary.   This implies  that $\mu([u])$ is an  eigenvalue, by considering the  cohomological equation modulo 1, and also that $E(X,S) = I(X,S)$.
In this case, the system has the maximal continuous eigenvalue group property according to the terminology of \cite{CORTEZ_DURAND_PETITE_2016,DURAND_HOST_SKAU_1999}.
\end{remark}

\subsection{Extension graphs}\label{subsec:extensiongraphs}
We recall here the definition of the extension graphs associated to subshifts, and then use the extension graph of the empty word to introduce the notion of {\em decisiveness} of a directive sequence, which will play an important role in Section \ref{sec:carac_finite_rank}.

Let $X \subseteq \cA^\Z$ be a subshift.
We consider two copies $\cA_L$ and $\cA_R$ of the alphabet $\cA$, with canonical bijections $a \mapsto a_L$ and $a \mapsto a_R$ from $\cA$ onto $\cA_L$ and $\cA_R$, respectively.
For $w \in \cL(X)$, define 
\[  L_X(w) = \{ a_L \in \cA_L : a w \in \cL(X) \}
    \enspace \text{and} \enspace
    R_X(w) = \{ b_R \in \cA_R : w b \in \cL(X) \}. \]
The {\em extension graph} $\Gamma_X(w)$ of $w$ in $X$ is the bipartite undirected graph with vertex set $L_X(w) \cup R_X(w)$ and edge set
\[  \big\{ (a_L,b_R) \in \cA_L \times \cA_R  :  
       a w b \in \cL(X)  \big\}.  \]
Note that we use ordered pairs to represent undirected edges. 
This introduces no ambiguity because edges always connect a vertex in $\cA_L$ to a vertex in $\cA_R$, allowing us to canonically represent any edge as a tuple $(a_L,b_R)$ with $a_L \in \cA_L$ and $b_R \in \cA_R$.
The number of edges in $\Gamma_X(w)$ is denoted $|\Gamma_X(w)|$.

Extension graphs allow the characterization of wide  families of subshifts, and in particular, the class of dendric  subshifts  defined  below.
This  class of subshifts includes Sturmian and Arnoux--Rauzy subshifts, as well as codings of regular interval exchange transformations, and exhibits strong rigidity properties \cite{dendric}.
\begin{definition} \label{def:dendric}
A minimal subshift $X$
is said  \emph{dendric}
if all the extension graphs of all its factors are trees (\emph{i.e.},  they are connected and acyclic).
On a two-letter alphabet,  dendric shifts are called 
\emph{Sturmian  shifts}. A Sturmian shift $X$ is  a symbolic coding of an irrational translation  of the form $x \mapsto x +\alpha \mod 1$, acting on ${\mathbb T}={\mathbb R}/{\mathbb Z}$; the  irrational number $\alpha$ is called the 
\emph{parameter} of  the Sturmian shift.
\end{definition}
For a  Sturmian shift with parameter $\alpha$, one  has $E(X,S)=\alpha {\mathbb Z}+{\mathbb Z}= I(X,S)$, and for dendric shifts $E(X,S)=I(X,S)$ by \cite{unimodular}. For more  on Sturmian shifts, see \emph{e.g.} \cite{Pytheas}.

We are mostly interested in the extension graph $\Gamma_X(\varepsilon)$ of the empty word. 
A connected component of $\Gamma_X(\varepsilon)$ is an inclusion-maximal set $K \subseteq \cA_L \cup \cA_R$ such that every pair of vertices in $K$ is connected by a path in $\Gamma_X(\varepsilon)$.
We denote by $\cP_X^R$ (resp.\@ $\cP_X^L$) the collection of all subsets $C$ of $\cA$ such that $C_R = K \cap \cA_R$ (resp.\@ $C_L = K \cap \cA_L$) for some connected component $K$ of $\Gamma_X(\varepsilon)$.
Note that $\cP_X^R$ and $\cP_X^L$ are partitions of $\cA$.

\begin{remark}\label{rem:connectedminimal}
Let $X$ be a subshift with a point $x$ whose orbit is dense.
Then, for every $K \in \cK_X(\varepsilon)$, neither of the sets $K \cap \cA_L$ and $K \cap \cA_R$ is contained in the other (seen as subsets of $\cA$), unless $\Gamma_X(\varepsilon)$ is connected.
Indeed, if one inclusion held, then $x$ would eventually see only letters in $K \cap \cA_R$ or in $K \cap \cA_L$.
But when $\Gamma_X(\varepsilon)$ is disconnected, these sets are strictly contained in $\cA_L$ and $\cA_R$, respectively, which contradicts the density of the orbit of $x$.
\end{remark}


\begin{example}\label{ex:extension_graph_CM}
    Consider the substitution $\sigma \colon \cA^* \to \cA^*$ on $\cA = \{0,1,2\}$ that is given by
    \[   \sigma : \begin{cases} 
            0 & \mapsto 010 \\
            1 & \mapsto 21  \\
            2 & \mapsto 210
        \end{cases}     \]
    Denote by $X_\sigma$ the substitutive subshift generated by $\sigma$. 
    One can check by hand that $\{ 01, 10, 21, 02 \}$ is the set of words of length 2 in the language of $X_\sigma$.
    So, the extension graph $\Gamma_{X_\sigma}(\varepsilon)$ has two connected components, as illustrated in Figure \ref{fig:extensiongraph}.
One has $\cP_X^L=\{\{0,2\},\{1\}\}$  and $\cP_X^R=\{\{0\},\{1,2\}\}.$
    Moreover observe that  any letter-coboundary  $c$
    satisfies  that $\rho(1)=\rho(2)$,      with the notation of  Lemma
    \ref{lem:cobord&rho}.
    Indeed, $c(0)=\rho(1)-\rho(0)=\rho(2)-\rho(0).$ This example is continued in \Cref{ex:CM}.
    
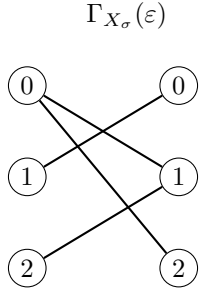
\begin{figure}[ht]
 \tikzset{node/.style={circle,draw,minimum size=0.5cm,inner sep=0pt}}
 \tikzset{title/.style={minimum size=0.5cm,inner sep=0pt}}

 \begin{center}
  \begin{tikzpicture}
   \node[title](ee) {$\Gamma_{X_{\sigma}}(\varepsilon)$};
   \node[node](eal) [below left= 0.5cm and 0.6cm of ee] {$0$};
   \node[node](ebl) [below= 0.7cm of eal] {$1$};
   \node[node](ecl) [below= 0.7cm of ebl] {$2$};
   \node[node](ear) [right= 1.5cm of eal] {$0$};
   \node[node](ebr) [below= 0.7cm of ear] {$1$};
   \node[node](ecr) [below= 0.7cm of ebr] {$2$};
   \path[draw,thick]
    (eal) edge node {} (ebr)
    (ebl) edge node {} (ear)
    (ecl) edge node {} (ebr)
    (eal) edge node {} (ecr);
  \end{tikzpicture}
 \end{center}

 \caption{The extension graph $\Gamma_{X_{\sigma}}(\varepsilon)$ of $X_{\sigma}$ admits two connected components.}
 \label{fig:extensiongraph}
\end{figure}
\end{example}


We now  quantify the size of the  letter-coboundary space using the subshift’s extension graph of the empty word.  Our main result in this section is Theorem \ref{thm:manifold},
from which, 
in particular, we characterize in \Cref{cor:connectedtrivial} when a minimal shift has trivial coboundaries in terms of this graph. It is based on the following simple observation.
\begin{remark}\label{rem:constantK}
    Let $c$ be a letter-coboundary  and $\rho$  the associated map given by Lemma \ref{lem:cobord&rho}.  The map $\rho$  takes the  same value in all the letters of  $K \cap \cA_R$, for any connected component $K$
of $\Gamma_X(\varepsilon)$, that is, $\rho$ is constant in the elements of the partition $\cP_X^R$.
\end{remark}

To state \Cref{thm:manifold}, we first introduce some notation.
Let $X \subseteq \cA^\Z$ be a minimal subshift.
Denote by $\cC_X$ the set of letter-coboundaries $c \colon \cA^* \to \R$ in $X$.
We let $\cF_X$ be the set of maps $\rho \colon \cA \to \R$ that are constant in each $C \in \cP_X^R$ ({\em i.e.}, $\rho(a) = \rho(b)$ for all $a,b \in C$ and $C \in \cP_X^R$) and $\cF^0_X \subseteq \cF_X$ be the subset consisting of constant maps ({\em i.e.}, $\rho(a) = \rho(b)$ for all $a,b \in \cA$).
These three sets have the structure of $\R$-vector space under the operations of point-wise addition and scalar multiplication.

For $\rho \in \cF_X$, let $c_\rho \colon \cA^* \to \R$ be the morphism defined by $c_\rho(a) = \rho(b) - \rho(a)$, where $b$ is any letter such that $ab \in \cL(X)$.
This definition does not depend on the chosen $b$ because, since $\rho$ is constant in each $C \in \cP_X^R$, $\rho$ attains the same value for every different choice of $b$.
One easily checks that the map $\rho \mapsto c_\rho$ is $\R$-linear.

The following result is inspired by discussions with Clemens M\"ullner; see  \cite{Sadic7}.

\begin{theorem}
\label{thm:manifold}
Let $X \subseteq \cA^\Z$ be a transitive subshift.
Then, there is a short exact sequence
\[  0 \longrightarrow \cF^0_X
      \overset{\iota}{\longrightarrow} \cF_X 
      \overset{\eta}{\longrightarrow} \cC_X
      \longrightarrow 0   \]
where $\iota$ is the inclusion map and $\eta$ maps $\rho$ to $c_\rho$.
In particular, as an $\R$-vector space, $\cC_X$ has dimension $r-1$, where $r$ is the number of connected components of $\Gamma_X(\varepsilon)$.
\end{theorem}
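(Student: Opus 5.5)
The plan is to verify the three exactness conditions directly, replacing the minimality used in \Cref{lem:cobord&rho} by a point $x \in X$ with dense two-sided orbit. As in the proof of \Cref{lem:cobord&rho}, I assume $\cA \subseteq \cL(X)$. Since points of $X$ are bi-infinite, every letter $a$ then has at least one right extension $b$ with $ab \in \cL(X)$ and one left extension. Density of the orbit of $x$ means that every word of $\cL(X)$, in particular every word of length $2$, occurs somewhere in $x$ (possibly at negative positions).

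\emph{$\eta$ is well defined.} Let $\rho \in \cF_X$. By the discussion preceding the theorem, $c_\rho$ does not depend on the choice of $b$. Let $u = u_0 \cdots u_{k-1}$ be a return word to $a$, so that $ua = u_0\cdots u_{k-1}u_k \in \cL(X)$ with $u_k = a = u_0$. Each $u_iu_{i+1}$ lies in $\cL(X)$, so $c_\rho(u_i) = \rho(u_{i+1}) - \rho(u_i)$. The sum telescopes to $\rho(u_k)-\rho(u_0) = 0$, hence $c_\rho \in \cC_X$. Linearity of $\eta$ has already been noted, and $\eta\circ\iota = 0$ is immediate.

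\emph{$\ker\eta = \cF^0_X$.} Suppose $c_\rho = 0$. Then $\rho(a) = \rho(b)$ whenever $ab \in \cL(X)$. Applying this along consecutive positions of $x$ gives $\rho(x_i) = \rho(x_{i+1})$ for all $i \in \Z$. Every letter occurs in $x$, so $\rho$ is constant. Injectivity of $\iota$ is trivial.

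\emph{Surjectivity of $\eta$.} This is the main step, since \Cref{lem:cobord&rho} is stated only for minimal shifts and its proof uses forward occurrences. Fix $c \in \cC_X$ and define $r_k$ for $k\in\Z$ by $r_0 = 0$ and $r_k - r_j = c(x_j\cdots x_{k-1})$ for $j<k$.
\begin{itemize}
\item If $x_j = x_k$ with $j<k$, then $x_j\cdots x_{k-1}$ splits at the successive occurrences of $x_j$ into a concatenation of return words to $x_j$. Hence $c(x_j\cdots x_{k-1}) = 0$, so $r_j = r_k$.
\item Therefore $\rho(a) := r_k$ for any $k$ with $x_k = a$ is well defined on all of $\cA$.
\item For every $ab \in \cL(X)$, choose an occurrence $x_kx_{k+1} = ab$, which exists by density. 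Then $\rho(b) - \rho(a) = r_{k+1}-r_k = c(a)$.
\end{itemize}
It remains to see that $\rho \in \cF_X$. If $ab$ and $a'b$ and $a'b'$ lie in $\cL(X)$, the previous identity gives $\rho(b) = \rho(a) + c(a) = \rho(a')+c(a') = \rho(b')$. Iterating this along paths in $\Gamma_X(\varepsilon)$ shows that $\rho$ is constant on each $C \in \cP_X^R$, as in \Cref{rem:constantK}. Hence $\eta(\rho) = c_\rho = c$.

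\emph{Dimension count.} By exactness, $\dim \cC_X = \dim\cF_X - \dim\cF^0_X = |\cP_X^R| - 1$. Every connected component of $\Gamma_X(\varepsilon)$ contains a right vertex, because every vertex $a_L$ has an edge to some $b_R$. So components correspond bijectively to the elements of $\cP_X^R$, giving $|\cP_X^R| = r$ and $\dim\cC_X = r-1$.
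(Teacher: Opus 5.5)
Your proof is correct and follows the same route as the paper: surjectivity by building $\rho$ from partial sums of $c$ along a point, and the kernel computation via a point with dense orbit. The one substantive difference is that the paper simply cites \Cref{lem:cobord&rho} for surjectivity, even though that lemma is stated for minimal shifts and its proof only uses positions $j\ge 1$. Your two-sided indexing along a dense-orbit point is exactly what makes that step valid for a merely transitive $X$, and your check that every component of $\Gamma_X(\varepsilon)$ meets $\cA_R$ justifies the equality $|\cP_X^R| = r$, which the paper leaves implicit.
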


\begin{proof}
The map $\eta$ is onto by \Cref{lem:cobord&rho}, and it is immediate from the definition of $\eta$ that $\cF^0_X \subseteq \ker \eta$.
So, we only need to prove that $\ker \eta \subseteq \cF^0_X$.

Let $\rho \in \ker \eta$.
Then, $\rho(b) - \rho(a) = c_\rho(a) = 0$ for all edges $(a_L,b_R)$ of $\Gamma_X(\varepsilon)$.
This implies that if $x$ is any point of $X$, then $\rho(x_{i+1}) = \rho(x_i)$ for every $i \in \Z$, and thus that $\rho(x_i) = \rho(x_0)$.
The subshift $X$  being transitive, we may choose $x$ to have a dense orbit; we obtain $\rho(a) = \rho(x_0)$ for all $a \in \cA$.
Therefore, $\rho \in \cF^0_X$, finishing the proof.
\end{proof}

There are natural families of subshifts where the extension graph of the empty word has several connected components, and hence the space $\cC_X$ is not one-dimensional. One of them is the class of  {\em specular subshifts} which  arise  as symbolic codings of linear involutions,   the latter  generalizing  interval exchange transformations (see \cite{BDDPRR:17}).  In a specular subshift, the extension graph of every non-empty word is a tree, and the extension graph of the empty word is a union of two connected components. 
For these subshifts, \Cref{thm:manifold} implies that the space $\cC_X$ has dimension $2$. We handle specular subshifts in more detail in \Cref{sec:specular}.

\begin{corollary}
\label{cor:connectedtrivial}
Let $X \subseteq \cA^\Z$ be a minimal shift. 
Then, the extension graph $\Gamma_X(\varepsilon)$ is connected if and only if every letter-coboundary in $X$ is trivial.
\end{corollary}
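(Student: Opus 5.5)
The plan is to read the corollary off \Cref{thm:manifold}, after checking that its hypothesis holds. A minimal subshift is transitive: by minimality every orbit is dense, and $X$ is nonempty, so some point has a dense orbit. \Cref{thm:manifold} therefore applies. It gives the short exact sequence $0\to\cF^0_X\to\cF_X\to\cC_X\to 0$, and hence $\dim_\R \cC_X = r-1$, where $r$ is the number of connected components of $\Gamma_X(\varepsilon)$.

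Next we need $r\ge 1$ and the value of $\dim \cF_X$. Since $\cA\subseteq\cL(X)$ (we may assume this, as in the proof of \Cref{lem:cobord&rho}), every letter has a right extension and a left extension. So $\Gamma_X(\varepsilon)$ is nonempty and $r\geq 1$. The space $\cF_X$ consists of the maps constant on the blocks of $\cP_X^R$, so its dimension is $|\cP_X^R| = r$. This holds because each component $K$ meets $\cA_R$: any vertex $a_L\in K$ has a neighbour $b_R$. The space $\cF^0_X$ has dimension $1$. This confirms $\dim\cC_X=r-1$ independently of the dimension clause of the theorem.

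The corollary then follows. Every letter-coboundary is trivial exactly when $\cC_X=\{0\}$, that is, when $\dim\cC_X=0$, that is, when $r=1$. And $r=1$ means $\Gamma_X(\varepsilon)$ is connected.

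No step is really an obstacle. The only points needing care are that minimality implies transitivity, and that every connected component contains a right vertex, so that the count $|\cP_X^R|=r$ is correct. Both follow from $\cA\subseteq\cL(X)$ together with the existence of a dense orbit.
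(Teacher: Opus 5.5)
Your proposal is correct and is essentially the paper's argument: the corollary is read off from the dimension formula $\dim_\R \cC_X = r-1$ of \Cref{thm:manifold}, with minimality supplying the transitivity hypothesis. Your extra checks are exactly the tacit ingredients of that dimension count: that each component of $\Gamma_X(\varepsilon)$ contains a right vertex, so $\dim \cF_X = r$, and that $\cA\subseteq\cL(X)$. The latter is a standing convention rather than a genuine without-loss-of-generality reduction, since a letter absent from $X$ would carry a nontrivial letter-coboundary.
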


Note that  this article, we define letter-coboundaries with values in $\R$, in contrast to other works where the codomain is $\R/\Z$ (equivalently, the unit circle in the complex plane). 
The following lemma, which is a consequence of \Cref{thm:manifold}, shows that this distinction makes no difference.

\begin{lemma}\label{lem:cob a la host}
Let $X \subseteq \cA^\Z$ be a transitive subshift.
Suppose that $\tilde c \colon \cA^*\to\R/\Z$ is a morphism such that $\tilde c(u) = 0\pmod\Z$ for every return word $u$ to some letter.
Then there exists a letter-coboundary $c\colon \cA^*\to\mathbb R$ in $X$ with $c(a) = \tilde c(a) \pmod{\Z}$ and $|c(a)| \le 1$ for all $a \in \cA$
\end{lemma}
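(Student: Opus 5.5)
Our plan is to reproduce the proof of \Cref{lem:cobord&rho} with values in $\R/\Z$, and then lift to $\R$ by means of the constancy structure from \Cref{thm:manifold}.

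\emph{Step 1: a circle-valued $\tilde\rho$.} Fix $x \in X$ with dense orbit and put $\tilde r_k = \tilde c(x_0\cdots x_{k-1}) \in \R/\Z$. If $x_k = x_j$ with $j>k\ge 1$, then $x_k\cdots x_{j-1}$ is a concatenation of return words to $x_k$. Hence $\tilde r_j = \tilde r_k$. Since the orbit of $x$ is dense, every letter $a$ occurs at some positive position (the forward half of a dense orbit in a transitive subshift suffices here; alternatively, use occurrences on both sides and the same telescoping). We may therefore define $\tilde\rho(a) = \tilde r_j$ for any such position $j$. The computation of \Cref{lem:cobord&rho} then gives
\[
\tilde c(a) = \tilde\rho(b)-\tilde\rho(a) \pmod{\Z} \quad\text{for every } ab\in\cL(X).
\]
As in \Cref{rem:constantK}, $\tilde\rho$ is constant on each class $C\in\cP_X^R$: if $ab, ab' \in \cL(X)$, then $\tilde\rho(b)=\tilde\rho(a)+\tilde c(a)=\tilde\rho(b')$, and connectivity of the components propagates this.

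\emph{Step 2: lift and define $c$.} For each class $C \in \cP_X^R$, choose a representative $\rho(C)\in[0,1)$ of the common value of $\tilde\rho$ on $C$. This gives $\rho\in\cF_X$. Set $c = c_\rho = \eta(\rho)$, which by \Cref{thm:manifold} is a real letter-coboundary in $X$. Concretely, $c(a)=\rho(b)-\rho(a)$ for any $b$ with $ab\in\cL(X)$, and this is well defined because $\rho$ is constant on the right classes. Then $c(a)\equiv\tilde\rho(b)-\tilde\rho(a)\equiv\tilde c(a) \pmod{\Z}$, as required.

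\emph{Step 3: the bound.} Since $\rho$ takes values in $[0,1)$, we get $|c(a)|=|\rho(b)-\rho(a)|<1$. The only subtle point, and the one to check carefully, is Step~1 under mere transitivity. The letters are in $\cL(X)$ (after restricting $\cA$), and we need every letter and every length-$2$ word to occur to the right of position $0$ in $x$. If forward density fails, we instead index from a far-left position: choose $x$ whose full orbit is dense and work with the prefix sums starting at position $-N$. Every word of $\cL(X)$ occurs in $x$, so consistency follows from the same telescoping argument, now with the base point at $-N$.
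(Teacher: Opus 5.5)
Your proof is correct and follows essentially the paper's route. You obtain a circle-valued $\tilde\rho$ with $\tilde c(a)=\tilde\rho(b)-\tilde\rho(a)$ for $ab\in\cL(X)$, observe that it is constant on the right classes of $\Gamma_X(\varepsilon)$, lift it to a real $\rho$ constant on those classes with values in an interval of length $1$, and set $c(a)=\rho(b)-\rho(a)$. The paper instead cites a circle-valued version of \Cref{lem:cobord&rho} from the literature and lifts into $[-1/2,1/2]$, whereas you reprove it inline and lift into $[0,1)$.

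One small correction: your parenthetical claim that the forward half of a dense orbit always contains every letter is false for merely transitive subshifts. A counterexample is the orbit closure of $\cdots 000\,1\,222\cdots$ with the $1$ at position $-1$. Your Step~3 fix is the right repair: choose the base point $-N$ so that every word of $\cL_2(X)$ occurs at some position $\ge -N$, which is possible because $\cL_2(X)$ is finite and all of it occurs in $x$.
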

\begin{proof}
By \cite[Lemma 3.17]{BCBY} (cf. \Cref{lem:cobord&rho}), there is $\tilde\rho:\cA\to\R/\Z$ such that
$\tilde c(a) = \tilde\rho(b)-\tilde\rho(a)\pmod{\Z}$ for every $ab\in\cL(X)$ of length $2$.
As in the proof of \Cref{thm:manifold}, $\tilde\rho$ is constant modulo $\mathbb Z$ on each connected component of $\Gamma_X(\varepsilon)$.
Choose a map $\rho:\cA\to\mathbb R$, constant on each connected component of $\Gamma_X(\varepsilon)$, such that $\rho(a) = \tilde \rho(a) \pmod{\Z}$ and $|\rho(a)| \le 1/2$ for every $a \in \cA$.
Then, by \Cref{lem:cobord&rho} there is a coboundary $c\colon\cA^*\to\R$ with
$c(a)=\rho(b)-\rho(a)$ for all $ab\in\cL(X)$ of length $2$.
Therefore $c(a) = \tilde c(a)\pmod{\Z}$ and $|c(a)| \le 1$ for every $a\in\cA$.
\end{proof}

\subsection{Decisiveness}\label{subsec:decisive}
We now introduce the  concept of {\em decisiveness} for directive sequences, which plays a crucial role in \Cref{sec:carac_finite_rank}.
This notion is inspired by the work of Karpel and Downarowicz \cite{karpel_downa} and their notion of decisiveness for Bratteli diagrams. In their setting, an ordered Bratteli diagram is called decisive if the corresponding Vershik map  can be extended in a unique way to a homeomorphism of the whole path space of the Bratteli diagram, where attention must be paid to maximal and minimal paths.  The  definition  introduced below is not  equivalent to the notion of decisiveness of  \cite{karpel_downa}, but  can be considered as  a  combinatorial counterpart.

In the present $S$-adic setting, it is natural to require for our directive sequences that each unidirectional limit point $\lim_{n\to\infty} \tau_{0,n}(a_n)$ in $\cA^\N$ admits a unique  extension $\lim_{n\to\infty} \tau_{0,n}(b_n.a_n)$   as a  two-sided sequence in 
$\cA^\Z$. 
The following definition aims to provide a sufficient condition for  guaranteeing this  extension property.

\begin{definition} \label{def:decisive}
Let $\btau = (\tau_n \colon \cA_{n+1}^* \to \cA_n^* : n \geq 0)$ be an everywhere growing  directive sequence.
We call $\btau$ {\em decisive} if, for every $n \geq 0$, there exist maps $\ell,r \colon \cA_{n+1} \to \cA_n$ such that $\tau_n(b)$ begins with $r(a)$ and $\tau_n(a)$ ends with $\ell(b)$ for all $ab \in \cL(X_{\btau}^{(n+1)})$.
\end{definition}


Large classical families of subshifts admit decisive $S$-adic representations as a consequence of  Proposition \ref{prop:decisive:suff_conds} below. 
In particular, dendric shifts  (see Definition \ref{def:dendric}) admit decisive $S$-adic representations  since  they are proper by \cite{dendric}.
Natural non-decisive examples include, for instance,  
$\bt$ taking as a constant value a \emph{left-permutative} substitution $\tau$, that is, a substitution such that   for any letter $a$, the set of  first letters of $\tau(a)$ is in bijection with the alphabet $\cA,$
as for instance the \emph{Thue-Morse  substitution} $$\tau: a\mapsto ab, \ b \mapsto ba$$ (see also Section  \ref{subsec:Itau}).


Decisiveness  has a natural dynamical interpretation, as shown by next proposition.
\begin{proposition}
    \label{prop:decisive:dyn_inter}
Let $\btau = (\tau_n \colon \cA_{n+1}^* \to \cA_n^* : n \geq 0)$ be an everywhere growing directive sequence.
We define $B_{n,m}(a) = \{\tau_{n,m}(x) : x \in X_{\btau}^{(m)},\, x_0 = a\}$ for $m > n \geq 0$ and $a \in \cA_m$. 
Then, the following conditions are equivalent:
\begin{enumerate}
    \item there exists a contraction of $\btau$ that is decisive;
    \item for any $(u_n,a_n)_{n\geq0} \in \Add(\btau)$ and $n \geq 0$, the nested sequence of sets
    \[  \left( S^{\sum_{n \leq j \lt m} |\tau_{n,j}(u_j)|} B_{n,m}(a_m) : m \gt n\right) \]
    contains a single element in its intersection;
    \item the sequence of Kakutani--Rohklin covers $$\left(\{ S^k B_{n,m}(a) : a \in \cA_m,\, 0 \leq k < h_m(a),\ m > n \}\right)_{m}$$ generates the topology of $X_{\btau}^{(n)}$ for each $n \ge 0$.
\end{enumerate}
\end{proposition}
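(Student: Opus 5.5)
The plan is to prove (1)$\Rightarrow$(2)$\Rightarrow$(3)$\Rightarrow$(2)$\Rightarrow$(1). The guiding observation is that a set $S^{j}B_{n,m}(b)$ with $b\in\cA_m$ and $0\le j<|\tau_{n,m}(b)|$ consists of points $y\in X_{\btau}^{(n)}$ whose window $y_{[-j,\,|\tau_{n,m}(b)|-j)}$ equals $\tau_{n,m}(b)$. Thus the nested intersection in (2) is a singleton exactly when the coordinates of $y$ determined by these windows exhaust $\Z$. Note that the word $\tau_{n,m}(a_m)$ alone need not be long on both sides of the origin, since $j$ may stay near $0$ or near $|\tau_{n,m}(a_m)|$. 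The extra information on both sides will come from the letters adjacent to $\tau_{n,m}(a_m)$, and decisiveness is precisely what fixes them.

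\textbf{(1)$\Rightarrow$(2).} Let $(\tau_{n_k,n_{k+1}})_k$ be a decisive contraction. Since the sets in (2) are nested, it is enough to consider $m=n_{k+1}$ with $n_k\ge n$.
\begin{itemize}
\item Take $y=\tau_{n,m}(x)\in B_{n,m}(a_m)$. The word $x_{-1}a_m$ lies in $\cL(X^{(m)}_{\btau})$, so $\tau_{n_k,m}(x_{-1})$ ends with $\ell(a_m)$ and $\tau_{n_k,m}(x_1)$ begins with $r(a_m)$.
\item Consequently every such $y$ contains $\tau_{n,n_k}(\ell(a_m))\,\tau_{n,m}(a_m)\,\tau_{n,n_k}(r(a_m))$ at a fixed position.
\item After shifting by $j$, the determined window contains $[-h,h)$, where $h=\min_{c\in\cA_{n_k}}|\tau_{n,n_k}(c)|$.
\item Since $\btau$ is everywhere growing, $h\to\infty$ as $k\to\infty$. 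Hence the intersection is a single point; it is nonempty as a nested intersection of nonempty compact sets.
\end{itemize}

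\textbf{(2)$\Leftrightarrow$(3).} Every point of $X^{(n)}_{\btau}$ has an address by \Cref{prop:addresses} applied to the shifted sequence $(\tau_{n+i})_i$. Hence (2) says that the atoms of the covers containing a given point shrink to that point along some chain, and this gives (3). For the converse and for uniformity, I would show that the maximal diameter of atoms at level $m$ tends to $0$.
\begin{itemize}
\item Suppose instead that infinitely many atoms of diameter at least $\varepsilon$ occur.
\item The addresses truncated at level $m$ form finitely branching trees, and atoms are decreasing along an address. A K\"onig-type compactness argument then produces a single address all of whose atoms have diameter at least $\varepsilon$.
\item This contradicts (2), because the diameter of a decreasing sequence of compact sets converges to the diameter of the intersection.
\end{itemize}
Conversely, if (3) holds and some nested intersection contained two distinct points, no atom of that chain separates them, contradicting generation. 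In that direction I would state "generates" as "mesh tends to $0$ along each chain", which is standard.

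\textbf{(2)$\Rightarrow$(1).} I expect this to be the main obstacle. Fix $n$. For $b\in\cA_m$, let $L_m(b)\subseteq\cA_n$ be the set of last letters of $\tau_{n,m}(a)$ over all $a$ with $ab\in\cL(X^{(m)}_{\btau})$. The key claim is that for $m$ large, $|L_m(b)|=1$ for every $b\in\cA_m$, and symmetrically on the right.
\begin{itemize}
\item Suppose the claim fails for infinitely many, hence (by monotonicity below) all, $m$.
\item If $b'$ is the first letter of $\tau_m(b)$, then $L_m(b')\supseteq L_{m+1}(b)$.
\item So the letters with $|L_m(b)|\ge2$, linked by the relation ``$b'$ is the first letter of $\tau_m(b)$'', form an infinite finitely branching tree. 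A compactness argument as in \Cref{lem:existence_limit_points} gives a chain $(a_m)$ with $u_m=\varepsilon$ and $|L_m(a_m)|\ge2$ for all $m$.
\item For this address, $B_{n,m}(a_m)$ contains points that differ at coordinate $-1$ for every $m$. By compactness the intersection contains two distinct points, contradicting (2).
\item For the right-hand version, use the chain whose $u_m$ is $\tau_m(a_{m+1})$ with its last letter removed, and argue in the same way.
\end{itemize}
Finally, choose $n_0=0$ and inductively $n_{k+1}$ so large that the claim holds from $n_k$ to $n_{k+1}$. The maps $\ell,r$ are then read off from the unique determined letters, and the contraction along $(n_k)$ is decisive and still everywhere growing.
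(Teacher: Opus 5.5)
Your proposal is correct and takes the same route as the paper. For (1)$\Rightarrow$(2) you flank $\tau_{n,m}(a_m)$ by $\tau_{n,n_k}(\ell(a_m))$ and $\tau_{n,n_k}(r(a_m))$ and use everywhere growth; for (2)$\Rightarrow$(1) you show by compactness that the boundary letters become determined at a later level. Your monotone sets $L_m(b)$ with the König-type first-letter (resp.\ last-letter) chains, and your uniform-mesh argument for (2)$\Leftrightarrow$(3), spell out the compactness steps that the paper only asserts.
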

\begin{proof}
Throughout this proof, we use the notation $[u \boldsymbol{.} v]_n = \{x \in X_{\btau}^{(n)} : x_{[-|u|, |v|)} = uv\}$ for $u,v \in \cA_{n}^*$ and $n \geq 0$.

Assume first that some contraction $\btau'$ of $\btau$ is decisive.
Note that if Item (2) holds for $\btau'$, then it also holds for $\btau$ by \eqref{eq:Sadic:connection_levels}.
Thus, we may assume without loss of generality that $\btau$ itself is decisive.

Let $\ell_n,r_n \colon \cA_{n+1} \to \cA_n$ be the maps for which $\tau_n(b)$ starts with $r_n(a)$ and $\tau_n(a)$ ends with $\ell_n(b)$ for all $ab \in \cL(X_{\btau}^{(n+1)})$.
Then, $B_{n,n+1}(a) \subseteq [\ell_n(a) \boldsymbol{.} \tau_n(a) \, r_n(a)]$ for any $a \in \cA_{n+1}$.
Therefore,
\begin{equation}
    \label{eq:prop:decisive:dyn_inter:1}
    B_{n,m}(a) \subseteq 
    [\, \tau_{n,m-1}(\ell_n(a)) \boldsymbol{.}
     \tau_{n,m}(a) \, \tau_{n,m-1}(r_n(a))\,]
\end{equation}
for every $m \gt n \geq 0$ and $a \in \cA_m$.

Set $q_{n,m} =\min\{ |\tau_{n,m}(a)| : a \in \cA_m \}$.
\Cref{eq:prop:decisive:dyn_inter:1} implies that for any $a \in \cA_m$, $0 \leq k \lt |\tau_{n,m}(a)|$ and $x,x' \in S^k B_{n,m}(a)$, we have $x_{[-q_{n,m},q_{n,m})} = x'_{[-q_{n,m},q_{n,m})}$. 
Since $\btau$ is everywhere growing, $q_{n,m} \to \infty$ as $m \to \infty$.
Therefore, two points in the same $S^k B_{n,m}(a)$ for infinitely many $m$ must be equal.
Item (2) follows.
\medskip

Next, we assume that Item (2) holds.
A compactness argument shows that for every $n \geq 0$ one can find $m(n) > n$ such that, for all $a \in \cA_{m(n)}$, $B_{n,m}(a) \subseteq [\ell_n(a) \boldsymbol{.} \tau_{n,m}(a) r_n(a)]$ for some letters $\ell_n(a), r_n(a) \in \cA_n$.
This implies that $\tau_{n,m(n)}(b)$ starts with $r_n(a)$ and that $\tau_{n,m(n)}(a)$ ends with $\ell_n(b)$ for every $ab \in \cL(X_{\btau}^{(m(n))})$.
So, with $(n_k)_{k\geq0}$ defined by $n_0 = 0$ and $n_{k+1} = m(n_k)$ for $k \geq 1$, the contraction of $\btau$ along $(n_k)_{k\geq0}$ is decisive.

Items (2) and (3) are clearly equivalent since, for each fixed $n \ge 0$, the covers $\cP_{n,m} = \{ S^k B_{n,m}(a) : a \in \cA_m,\, 0 \leq k < h_m(a)\}$ of $X_{\btau}^{(n)}$ are nested for $m > n$; that is, every element of $\cP_{n,m'}$ is a union of elements of $\cP_{n,m}$ for every $m' \ge m > n$.
\end{proof}

\begin{remark} \label{rem:anyminimal}
\emph{Every} minimal  subshift $X$ admits  an  $S$-adic expansion  with a directive sequence being  everywhere growing, decisive,  and  even recognizable, if $X$  is assumed to be infinite.  Indeed, any minimal subshift has a proper and everywhere growing  $S$-adic representation  by \cite{GPS92} (see also \cite{DDMP,espinoza22}). Recognizability then comes from \cite{BSTY}. However this  $S$-adic representation might be  of infinite alphabet rank. 
\end{remark}
Next proposition  provides a  further  decisive $S$-adic representation in terms of centered  $1$-block presentations, also possibly with   infinite alphabet rank.   We recall that centered $k$-block presentations are introduced in  Section \ref{subsec:block_presentations}. 
\begin{proposition}
    \label{prop:decisive:suff_conds}
Let $\btau = (\tau_n \colon \cA_{n+1}^* \to \cA_n^* : n \geq 0)$ be an everywhere growing directive sequence. 
\begin{enumerate}
    \item If $\btau$ is proper, then it is decisive.
    \item If $|\tau_n(a)| \geq 2$ for all $n \geq 0$ and $a \in \cA_{n+1}$, then the centered $1$-block presentation $\btau^{[1]}$ is decisive.
\end{enumerate}
\end{proposition}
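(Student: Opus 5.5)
Item (1) follows straight from the definition. If $\tau_n$ is proper, there are letters $f_n, g_n \in \cA_n$ such that every $\tau_n(a)$, $a \in \cA_{n+1}$, begins with $f_n$ and ends with $g_n$. We take the constant maps $r \equiv f_n$ and $\ell \equiv g_n$. Then for every $ab \in \cL(X_{\btau}^{(n+1)})$, the word $\tau_n(b)$ begins with $r(a)=f_n$ and $\tau_n(a)$ ends with $\ell(b)=g_n$. Since $\btau$ is everywhere growing by hypothesis, this is exactly \Cref{def:decisive}.

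For Item (2), we first need that $\btau^{[1]}$ is everywhere growing. This holds because $|\tau_n^{[1]}([a_0a_1a_2])| = |\tau_n(a_1)|$, so heights are preserved under $\pi$. The main step is to define $\ell,r\colon \cA_{n+1}^{[1]} \to \cA_n^{[1]}$. Take a bracketed letter $[a_0a_1a_2]$ and write $\tau_n(a_0a_1a_2) = b_0\cdots b_{L-1}$. Let $s = |\tau_n(a_0)|$ and $t = |\tau_n(a_0a_1)|$. We set
\[ r([a_0a_1a_2]) = [b_{t-1}\, b_t\, b_{t+1}], \qquad \ell([a_0a_1a_2]) = [b_{s-2}\, b_{s-1}\, b_s]. \]
Here $b_t$ is the first letter of $\tau_n(a_2)$ and $b_{s-1}$ is the last letter of $\tau_n(a_0)$. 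The hypothesis $|\tau_n(a)|\ge 2$ guarantees that the needed neighbours exist inside $\tau_n(a_0a_1a_2)$: the letter $b_{t+1}$ still lies in $\tau_n(a_2)$, and $b_{s-2}$ still lies in $\tau_n(a_0)$. So each of these windows is determined by the bracketed letter alone and is a factor of the level-$n$ language, hence a genuine letter of $\cA_n^{[1]}$.

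Next we check the defining property. Let $AB \in \cL(X_{\btau^{[1]}}^{(n+1)})$ with $A=[a_0a_1a_2]$, $B=[a_1'a_2'a_3']$. Consecutive bracketed letters in the block presentation overlap, so $a_1' = a_1$, $a_2' = a_2$, and $a_0a_1a_2a_3'$ is a level-$(n+1)$ word. This is the one point to justify carefully: by \Cref{prop:decisive:dyn_inter}-type conjugacy (the map $\pi$) and the construction, the level-$(n+1)$ language of $\btau^{[1]}$ consists of the $1$-block codings of $\cL(X_{\btau}^{(n+1)})$, which gives the overlap. The image $\tau_n^{[1]}(B)$ is the sliding-window coding of $\tau_n(a_2)$ with context from $\tau_n(a_1a_2a_3')$. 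Its first letter is the window centred at the first letter of $\tau_n(a_2)$, namely (last letter of $\tau_n(a_1)$, $b_t$, $b_{t+1}$), which equals $r(A)$. Symmetrically, the last letter of $\tau_n^{[1]}(A)$ is the window centred at the last letter of $\tau_n(a_1)$, whose right neighbour is the first letter of $\tau_n(a_2) = \tau_n(a_1')$. We need this to be determined by $B$ alone: if we instead define $\ell(B)$ from $B=[a_1a_2a_3']$ using the last two letters of $\tau_n(a_1)$ and the first letter of $\tau_n(a_2)$, it is. I will therefore state the definitions in this form, with $\ell$ indexed by the right-hand letter and $r$ by the left-hand letter, as in \Cref{def:decisive}.

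The main obstacle is this bookkeeping: making sure each window uses only information contained in the single bracketed letter on the correct side. That is exactly where length at least $2$ is needed, because it keeps the second-to-last letter of $\tau_n(a_1)$ inside $\tau_n(a_1)$ and the second letter of $\tau_n(a_2)$ inside $\tau_n(a_2)$.
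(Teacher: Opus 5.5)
Your proof is correct and follows the paper's argument: constant maps in (1), and in (2) the observation that the first letter of $\tau_n^{[1]}([a_1a_2a_3])$ is the window made of the last letter of $\tau_n(a_1)$ and the first two letters of $\tau_n(a_2)$, which depends only on $[a_0a_1a_2]$ because $|\tau_n(a_2)|\ge 2$, plus the symmetric left-hand statement (which you write out explicitly, together with the everywhere-growing check that the paper leaves implicit). Two small slips: your first formula $\ell([a_0a_1a_2])=[b_{s-2}b_{s-1}b_s]$, evaluated at the right-hand letter $B$, is already the correct one, so no redefinition is needed (and $\tau_n(a_1')$ should read $\tau_n(a_2')$); also, the overlap of consecutive bracketed letters comes from the block-presentation construction and the conjugacy $\pi$, not from \Cref{prop:decisive:dyn_inter}.
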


\begin{proof}
Assume $\btau$ is proper, so that there exist letters $\ell_n, r_n \in \cA_n$ such that $\tau_n(a)$ starts with $\ell_n$ and ends with $r_n$ for any $a \in \cA_{n+1}$.  
Fix $n$ and any $ab\in\cL(X_{\btau}^{(n+1)})$.
Then in the concatenation $\tau_n(a)\tau_n(b)$, the last letter of $\tau_n(a)$ is always $r_n$ and the first letter of $\tau_n(b)$ is always $\ell_n$, independently of $a$ and $b$. Thus the boundary symbol at each cut is fixed from both sides, which is exactly decisiveness.

Next, we prove (2).
Let $[a_0a_1a_2][a_1a_2a_3]\in\cL(X_{\btau^{[1]}}^{(n+1)})$, so that $a_0a_1a_2a_3\in\cL(X_{\btau}^{(n+1)})$. 
Write $\tau_n(a_0 a_1 a_2 a_3) = b_0b_1\cdots b_{\ell-1}$.
By  definition, $\tau_n^{[1]}([a_1a_2a_3])$ begins with $[b_{s-1}b_sb_{s+1}]$, where $s = |\tau_n(a_0a_1)|$.  
In particular, $b_{s-1}$ is the last letter of $\tau_n(a_1)$ and $b_s b_{s+1}$ is the length-2 prefix of $\tau_n(a_2)$ (since $|\tau_n(a_2)| \ge 2$). 
Therefore, the first letter $[b_{s-1}b_sb_{s+1}]$ of $\tau_n^{[1]}([a_1a_2a_3])$ depends only on $[a_0 a_1 a_2]$.
A similar argument works at the left-hand side, so $\btau^{[1]}$ is decisive.
\end{proof}

We finish this section with an interpretation of decisiveness in terms of extension graphs.

\begin{lemma}
    \label{lemma:decisive:same_first_letter_in_each_CC}
    Let $\btau=(\tau_n:\cA_{n+1}^*\to\cA_n^*)_{n\ge0}$ be an everywhere growing directive sequence.
    Then, $\btau$ is decisive if and only if, for every $n\ge0$, the map sending $a \in \cA_{n+1}$ to the first (resp.\ last) letter of $\tau_n(a)$ is constant on the right (resp.\ left) vertices of each connected component of $\Gamma_{\!X_{\btau}^{(n+1)}}(\varepsilon)$.
\end{lemma}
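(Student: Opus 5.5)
The proof unwinds both definitions at a fixed level $n$, with $X' := X_{\btau}^{(n+1)}$ and $\Gamma := \Gamma_{X'}(\varepsilon)$. Write $\First(\tau_n(b))$ for the first letter and $\mathsf{l}(\tau_n(a))$ for the last letter of an image. The idea is that the existence of a map $r\colon \cA_{n+1}\to\cA_n$ with $\First(\tau_n(b)) = r(a)$ for every edge $(a_L,b_R)$ of $\Gamma$ is equivalent to $b\mapsto \First(\tau_n(b))$ being constant on the right vertices of each connected component. The left-hand statement, involving the maps $\ell$ and last letters, is symmetric under reversal.

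For ($\Rightarrow$), assume $\btau$ is decisive with maps $\ell,r$ at level $n$. Take two right vertices $b_R, b'_R$ in the same connected component $K$ of $\Gamma$. They are joined by a path $b_R = c^0_R, a^1_L, c^1_R, a^2_L, \dots, c^k_R = b'_R$, alternating between right and left vertices. Each edge $(a^i_L, c^{i-1}_R)$ means $a^i c^{i-1}\in\cL(X')$, so $\First(\tau_n(c^{i-1})) = r(a^i)$. Likewise the edge $(a^i_L, c^i_R)$ gives $\First(\tau_n(c^i)) = r(a^i)$. Hence consecutive right vertices along the path have images with the same first letter, and by induction $\First(\tau_n(b)) = \First(\tau_n(b'))$. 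The last-letter claim is obtained in the same way, using left vertices joined through right ones and the map $\ell$.

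For ($\Leftarrow$), we construct $r$ by hand. For $a\in\cA_{n+1}$ with $a_L$ a vertex of $\Gamma$, let $K$ be the component containing $a_L$. Since $\btau$ is primitive (or at least every letter occurs), $a$ has some right extension, so $K$ contains at least one right vertex. Define $r(a)$ as the common value of $\First(\tau_n(b))$ over the right vertices $b_R$ of $K$; this is well defined by hypothesis. For letters that do not occur, define $r$ arbitrarily. Now if $ab\in\cL(X')$, then $(a_L,b_R)$ is an edge, so $b_R$ lies in $K$ and $\First(\tau_n(b)) = r(a)$, which is exactly the condition in Definition~\ref{def:decisive}. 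The map $\ell(b) := \mathsf{l}(\tau_n(a))$, with $a_L$ any left vertex in the component of $b_R$, is handled symmetrically.

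There is no real obstacle. The only points needing care are the matching of conventions, since the map $r$ is indexed by the left letter $a$ while it controls the first letter of the right letter $b$, and the degenerate case of letters with no left or right extension. The latter cannot occur when every letter of $\cA_{n+1}$ appears in the minimal subshift $X'$ (primitive case); otherwise, assigning arbitrary values to $\ell$ and $r$ on those letters suffices. The standing everywhere-growing hypothesis is not used in this argument; it is there only to make Definition~\ref{def:decisive} meaningful.
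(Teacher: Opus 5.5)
Your proof is correct and follows the paper's argument. In the forward direction, consecutive right (resp.\ left) vertices along a path share a left (resp.\ right) neighbour and so, by decisiveness, have images starting (resp.\ ending) with the same letter. For the converse, the paper argues by contrapositive (two right extensions $ab, ab'$ with different first letters give two right vertices in one component with different first letters); this is the same observation that underlies your direct construction of $r$ from the component of $a_L$ and of $\ell$ from the component of $b_R$.
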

\begin{proof}
Assume $\btau$ is decisive and fix $n\ge0$.
If $(a_L,b_R)$ and $(a_L,b'_R)$ are edges with the same left vertex, decisiveness gives that $\tau_n(b)$ and $\tau_n(b')$ start with the same letter.
Along any path between $b_R$ and $b'_R$ the left vertex is shared at each step, so the first letter is preserved; thus it is constant on each connected component of right vertices.
The proof for left vertices $b_L$ and $b'_L$ is symmetric.

Assume that $\btau$ is not decisive.
Then, without loss of generality, there exists a level $n \ge 0$ and words $ab,ab' \in \cL(X_{\btau}^{(n)})$ such that $\tau_n(b)$ and $\tau_n(b')$ do not start with the same letter.
The letters $b_R$ and $b'_R$ being in the same connected component, we deduce that the first letter of $\tau_n(\cdot)$ is  not constant on the right vertices of some connected component of $\Gamma_{X_{\btau}^{(n)}}(\varepsilon)$.
\end{proof}

\section{General characterization of eigenvalues}
\label{sec:carac}

We provide characterizations of eigenvalues
for minimal subshifts described by very general directive sequences, including  subshifts with infinite alphabet rank.
This requires introducing suitable sequences of real numbers $\brho = (\rho_n(a) : n \geq 0,\, a \in \cA_n)$ that allows one  to compensate 
the  oscillatory behaviour of the morphisms $\alpha h_n$ modulo $1$, for $\alpha$  being an eigenvalue.
In \Cref{sec:carac_finite_rank}, we show that in many situations,  under further assumptions on the directive sequence $\bt$, $\brho$ can be replaced by a sequence of letter-coboundaries $(c_n : n \geq 0)$ on the different subshifts $X_{\btau}^{(n)}$ generated by $\btau$,  with the sharpness of 
 our assumptions  on $\bt$  being  proved in \Cref{thm:NoCobsInftyRank}.

\subsection{Statements}\label{subsec:eigenvalues:results}

For $x \in \R$, let $\|x\|$ denote the distance from $x$ to the nearest integer, and define the {\em centered fractional part} $\{x\}$ as the unique element of $[-1/2,1/2)$ such that $x + \{x\}$ is an integer.
We have in particular $|\{x\}| = \|x\|$.

\begin{theorem} \label{theo:EigCharac:Mult}
Let $\btau = (\tau_n \colon \cA_{n+1}^* \to \cA_n^*)_{n\geq0}$ be a primitive and recognizable directive sequence.
A real number $\alpha$ is a continuous eigenvalue of $X_{\btau}$ if and only if there exists  a sequence of real numbers $\brho = (\rho_n(a) : n \geq 0, a \in \cA_n)$ such that
\begin{equation} \label{eq:convergence:theo:EigCharac:Mult}
    \sup \Big\{
    \big\| \rho_n(u_0) - \rho_n(u_k) + 
        \alpha h_n(u_0 \cdots u_{k-1}) \big\| :
        u_0 \cdots u_k \in \cL(X_{\btau}^{(n)})
        \Big\}
\end{equation}
converges to $0$ as $n \to \infty$.
\end{theorem}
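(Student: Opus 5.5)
The plan is to prove the two implications separately. Both rest on the tower structure given by recognizability: $\cP_n$ is a partition, and every $x$ has a unique $\btau$-address by \Cref{prop:addresses}.

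For the direct implication, let $g\colon X_{\btau}\to\R/\Z$ be a continuous eigenfunction for $\alpha$. Uniform continuity gives, for every $\varepsilon>0$, an integer $N$ such that points agreeing on $[-N,N]$ have $g$-values within $\varepsilon$. I would define $\rho_n(a)$ as a real lift of $g(x_a)$ for some chosen point $x_a\in B_n(a)$. Now take $u_0\cdots u_k\in\cL(X_{\btau}^{(n)})$ and pick $y\in X_{\btau}^{(n)}$ with $y_{[0,k]}=u_0\cdots u_k$. The point $z=\tau_{0,n}(y)$ lies in $B_n(u_0)$, and $S^{h_n(u_0\cdots u_{k-1})}z$ lies in $B_n(u_k)$. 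The eigenfunction relation gives
\[ g(S^{h_n(u_0\cdots u_{k-1})}z)-g(z)=\alpha h_n(u_0\cdots u_{k-1}) \pmod 1. \]
The quantity in \eqref{eq:convergence:theo:EigCharac:Mult} is then bounded by the oscillation of $g$ on $B_n(u_0)$ plus its oscillation on $B_n(u_k)$.

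The main obstacle is exactly here. Without decisiveness, the bases $B_n(a)$ need not shrink, since points of $B_n(a)$ agree only to the right of $0$ on $\tau_{0,n}(a)$. To fix this I would use the eigen-relation to move to the right: pick $x\in B_n(a)$ and compare it with $S^{m}x$ for $m$ around $\min_b h_n(b)/2$. The point $S^m x$ sits deep inside the word $\tau_{0,n}(a)$, so it is determined on a window of size about $\min_b h_n(b)/2$, which tends to infinity by primitivity (everywhere growing, after contraction). So $g$ is nearly constant on $S^mB_n(a)$, and the relation $g(S^m x)=g(x)+m\alpha$ shows $g$ is nearly constant on $B_n(a)$ as well. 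The oscillation of $g$ on each base therefore tends to $0$ uniformly in $a$, which proves the direct implication.

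For the converse, given $\brho$ with the supremum $\delta_n\to0$, I would define locally constant functions $g_n\colon X_{\btau}\to\R/\Z$ by $g_n=\rho_n(a)+k\alpha$ on $S^kB_n(a)$. These are continuous because recognizability makes $\cP_n$ a clopen partition. I would then check two estimates.

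First, the near-eigen relation. $g_n(Sx)=g_n(x)+\alpha$ holds except when $x$ is at the top level of its tower and $Sx\in B_n(b)$ with $ab\in\cL(X_{\btau}^{(n)})$. There the error is $\|\rho_n(a)+\alpha h_n(a)-\rho_n(b)\|\le\delta_n$, which is the case $k=1$.

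Second, $g_n$ converges uniformly. For $x$ with address $(u_j,a_j)$, write $\tau_n(a_{n+1})=u_na_nv_n$. Then $g_{n+1}(x)=\rho_{n+1}(a_{n+1})+\alpha j_{n+1}$ and $g_n(x)=\rho_n(a_n)+\alpha j_n$, and we need to compare them. I would choose $w$ in $\cL(X_{\btau}^{(n)})$ of the form $\tau_n(a_{n+1}')\,u_na_n$ (with $a_{n+1}'a_{n+1}$ a legal word), or simply use the first letters. Since $u_na_n$ is a prefix of $\tau_n(a_{n+1})$, the condition with $u_0\cdots u_k=u_na_n$ gives $\rho_n(\First\tau_n(a_{n+1}))+\alpha h_n(u_n)\approx\rho_n(a_n)$. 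This reduces the comparison to $\rho_{n+1}(a)$ versus $\rho_n(\First\tau_n(a))$, which are not a priori compatible. I would handle this by a gauge normalization: replace $\rho_{n+1}(a)$ by $\rho_n(\First\tau_n(a))$. The condition at level $n+1$ stays valid with error at most $2\delta_n$, because $h_{n+1}$ of a word $v$ equals $h_n(\tau_n(v))$ and $\tau_n(u_0\cdots u_k)\in\cL(X_{\btau}^{(n)})$.

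After this renormalization, $\|g_{n+1}-g_n\|_\infty\le\delta_n$. Summability is not automatic, so I would pass to a subsequence of levels with $\sum\delta_{n_i}<\infty$, using the contracted sequence; the same compatibility argument applies across $\tau_{n_i,n_{i+1}}$. The uniform limit $g$ is then continuous and satisfies $g\circ S=g+\alpha$.
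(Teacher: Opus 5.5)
Your direct implication is correct and is essentially the paper's argument: shift to the middle level of the tower, use uniform continuity there, and transport the small oscillation back to $B_n(a)$ through the isometry $z\mapsto z+m\alpha$.

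\textbf{The gap.} The converse fails at the gauge-normalization step. Replacing $\rho_{n+1}$ by $\phi_{n+1}:=\rho_n\circ\First\circ\tau_n$ discards the level-$(n+1)$ data, since the level-$(n+1)$ error of $\phi_{n+1}$ is governed by $\delta_n$ and not by $\delta_{n+1}$.

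For the bounds $\|g_{n+1}-g_n\|_\infty$ to chain, level $n+2$ must be renormalized against the \emph{renormalized} level $n+1$. Inductively, every level then becomes $\rho_{n_0}\circ\First\circ\tau_{n_0,m}$. Its errors are bounded only by $\delta_{n_0}$, and the hypothesis $\delta_m\to0$ is never used beyond $n_0$. So the estimate $\|g_{n+1}-g_n\|_\infty\le\delta_n$, with the original $\delta_n$, is not available. Passing to a subsequence with $\sum\delta_{n_i}<\infty$ gains nothing, because the same freezing happens along $\tau_{n_i,n_{i+1}}$.

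If instead you renormalize each level against the original $\rho_n$, then "$g_{n+1}$" denotes two different functions in the comparisons with $g_n$ and with $g_{n+2}$. The incompatibility between $\rho_{n+1}$ and $\rho_n\circ\First\circ\tau_n$ then simply reappears one level down.

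\textbf{The missing idea: cross-level compatibility.} This is exactly what the hypothesis does not provide, and the paper supplies it in \Cref{lem:straight_rho}. That lemma builds a new sequence $\brhobar$ from return words along a coherent sequence of first letters (\Cref{lem:existence_limit_points}). For it, $\|\bar\rho_m(b)-\bar\rho_n(\First(\tau_{n,m}(b)))\|$ is controlled by $\varepsilon_n(\brho)$ when $m\ge r(n)$. The paper then proves only that the $g_m$ have uniformly small oscillation on the atoms of $\cP_n$, and concludes with Arzel\`a--Ascoli rather than with uniform convergence.

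\textbf{A cheaper repair of your plan.} Shift by a constant instead of replacing.
\begin{itemize}
\item Both $\rho_{n+1}$ and $\phi_{n+1}$ satisfy the level-$(n+1)$ condition, with errors $\delta_{n+1}$ and $\delta_n$.
\item Hence $D=\rho_{n+1}-\phi_{n+1}$ satisfies $\|D(u_0)-D(u_k)\|\le\delta_n+\delta_{n+1}$ on every word of $\cL(X_{\btau}^{(n+1)})$.
\item By minimality, any two letters are the first and last letters of some such word. So $D$ is within $\delta_n+\delta_{n+1}$ of a single constant $C_{n+1}$ modulo $1$.
\item Subtract $C_{n+1}$ from $\rho_{n+1}$, recursively in $n$. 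This leaves the level-$(n+1)$ error unchanged.
\end{itemize}
This gives $\|g_{n+1}-g_n\|_\infty\le 2\delta_n+\delta_{n+1}$. Along a subsequence with $\sum\delta_{n_i}<\infty$, you then get the uniform convergence you wanted, by a route somewhat simpler than the paper's.
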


With a stronger primitivity assumption on the directive sequence (expressed in terms of a positivity assumption), we can replace \eqref{eq:convergence:theo:EigCharac:Mult} by a simpler criterion that relies only on  bounds that depend  on finite sets  defined  for each   substitution $\tau_n \circ \tau_{n+1}$ independently; we do not have  to consider the language globally $\cL(X_{\btau}^{(n)})$
as in \eqref{eq:convergence:theo:EigCharac:Mult}, whereas  we  sum over a finite set in 
\Cref{eq:theo:EigCharac:positive}.
This refinement is carried out in \Cref{theo:EigCharac:positive} below.
It is important to note that Condition \eqref{eq:theo:EigCharac:positive} in this theorem is strictly stronger than  Condition   \eqref{eq:convergence:theo:EigCharac:Mult} in \Cref{theo:EigCharac:Mult}.

We recall that a   directive sequence $\btau = (\tau_n \colon \cA_{n+1}^* \to \cA_n^*)_{n \geq 0}$ is  \emph{positive} if for all $n \ge 0$, $a \in \cA_n$ and $b \in \cA_{n+1}$, $a$ occurs in $\tau_n(b)$.
We recall that for a substitution $\tau$, the notation $\prefixes(\tau(a))$ below stands for the set of prefixes of $\tau(a)$.

\begin{theorem} \label{theo:EigCharac:positive}
Let $X$ be a subshift generated by a recognizable and positive directive sequence $\btau = (\tau_n \colon \cA_{n+1}^* \to \cA_n^*)_{n \geq 0}$.
Then, a real number $\alpha$ is a continuous eigenvalue of $X$ if and only if there exists a sequence of  real numbers $\brho = (\rho_n(a) : n \geq 0, a \in \cA_n)$ such that
\begin{equation}
    \label{eq:theo:EigCharac:positive} 
    \sum_{n\geq0} \max \big\{
    \|\rho_n (u_0) - \rho_n(u_k)+ \alpha h_n(u_0\cdots u_{k-1}) \| :
    a \in \cA_{n+2},\, u_0\cdots u_k \in \prefixes(\tau_n(\tau_{n+1}(a)))
    \big\}  < \infty.
\end{equation}
\end{theorem}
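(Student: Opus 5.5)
The plan is to reduce both directions to \Cref{theo:EigCharac:Mult}. Sufficiency is a telescoping argument; necessity needs a coherent choice of $\brho$ made from a fixed eigenfunction. Write
\[
\varepsilon_n(\brho)=\max\big\{\|\rho_n(u_0)-\rho_n(u_k)+\alpha h_n(u_0\cdots u_{k-1})\| : a\in\cA_{n+2},\ u_0\cdots u_k\in\prefixes(\tau_n\tau_{n+1}(a))\big\}.
\]

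\emph{Sufficiency.} Assume $\sum_n\varepsilon_n<\infty$. We show that the supremum in \eqref{eq:convergence:theo:EigCharac:Mult} is at most $C\sum_{m\ge n}\varepsilon_m$, which tends to $0$; \Cref{theo:EigCharac:Mult} then gives that $\alpha$ is an eigenvalue.

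\begin{enumerate}
\item Fix $w=u_0\cdots u_k\in\cL(X^{(n)}_{\btau})$. By primitivity and \eqref{eq:Sadic:connection_levels}, $w$ occurs in some $\tau_{n,m}(a)$.
\item Desubstitute $w$ level by level, as in the address decomposition \eqref{eq:unroll_address}. This writes $w$ as a suffix of some $\tau_{n,j}(v_j)$'s, followed by a central block, followed by prefixes of images of letters at the levels $j\ge n$.
\item Estimate each piece by one term $\varepsilon_j$. A prefix $u_0\cdots u_i$ of $\tau_j(b)$ is a prefix of $\tau_j\tau_{j+1}(c)$ when $b$ is the first letter of $\tau_{j+1}(c)$. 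By positivity every $b\in\cA_{j+1}$ occurs in every $\tau_{j+1}(c)$, so a general prefix of $\tau_j(b)$ is a difference of two prefixes of some $\tau_j\tau_{j+1}(c)$.
\item Combine the pieces. The quantity $\rho_j(\text{first letter})-\rho_j(\text{last letter})+\alpha h_j(\cdot)$ is additive under concatenation: the intermediate $\rho$'s cancel. It also transports across levels, since $h_j(v)=h_{j+1}(v')$ when $v=\tau_j(v')$. The remaining mismatch is between $\rho_j(b)$ and $\rho_{j-1}$ of the first letter of $\tau_{j-1}(b)$. That mismatch is itself a length-one instance controlled by $\varepsilon_{j-1}$, namely the prefix consisting of $\tau_{j-1}(b)$'s first letter.
\end{enumerate}
The triangle inequality for $\|\cdot\|$ then gives the bound $C\sum_{m\ge n}\varepsilon_m$ with a uniform constant $C$.

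\emph{Necessity.} Let $g\colon X\to\R/\Z$ be a continuous eigenfunction. A single value $g(x_n(a))$ with $x_n(a)\in B_n(a)$ gives $\varepsilon_n\lesssim$ the oscillation of $g$ on level $n$. That oscillation tends to $0$ but need not be summable, so we choose $\rho_n$ coherently across levels instead.

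\begin{enumerate}
\item Use \Cref{lem:existence_limit_points} (or simply positivity) to fix, for each $n$ and each $b\in\cA_{n+1}$, an occurrence of each $a\in\cA_n$ inside $\tau_n(b)$.
\item Define lifts $\tilde\rho_n(a)\in\R$ of $g$-values recursively from level $n+1$: set $\rho_n(a)=\rho_{n+1}(b)+\alpha h_n(p)$, where $p$ is the prefix of $\tau_n(b)$ preceding the chosen occurrence of $a$.
\item This recursion involves infinitely many levels, so truncate it at depth $N$ and let $N\to\infty$ by compactness of $\R/\Z$.
\item With this choice, $\rho_n(u_0)-\rho_n(u_k)+\alpha h_n(u_0\cdots u_{k-1})$ equals $g(y)-g(y')$ for two points $y,y'$. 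For a prefix of $\tau_n\tau_{n+1}(a)$ these points lie in the same atom of a Kakutani--Rohklin partition of level much deeper than $n$.
\end{enumerate}

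The main obstacle is this necessity direction. The difficulty is arranging the depth so that the resulting errors are summable, and not merely tending to $0$. My first attempt would be to exploit the exact relation $g(S^{h_n(w)}x)=g(x)+\alpha h_n(w)$. Using it, the errors should form a telescoping sequence $\delta_n-\delta_{n+1}$ with $\delta_n$ bounded. If that fails, I would fall back on the flexibility of shifting $\rho_n$ by letter-dependent constants to absorb the non-summable part.
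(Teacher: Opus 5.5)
Your necessity direction is missing the idea that makes it work, and the route you sketch cannot supply it. For \emph{any} $\brho$, the $n$-th term of \eqref{eq:theo:EigCharac:positive} is at least $\tfrac12\|\alpha h_n(u)\|$ for every word $u$ such that $u\,\First(u)$ occurs in some $\tau_n\tau_{n+1}(a)$: compare two prefixes, as in \Cref{lem:rho_controls_retwords}. Conversely, the coherent choice you want needs no eigenfunction. Take $\bar\rho_n(b)=\alpha h_n(p)$, with $pb$ a prefix of $\tau_n(a_{n+1})$ for a fixed sequence $(a_n)$ from \Cref{lem:existence_limit_points}; by \Cref{lem:straight_rho} (with $r(n)=n+1$ by positivity), all its terms are bounded by such return-word quantities. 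So necessity reduces to the summability of $\|\alpha h_n(u)\|$ over return words, and re-choosing $\rho_n$, by letter-dependent constants or otherwise, cannot help.

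Your step 4 does not give this summability. The occurrences you fix once per letter are unrelated to the occurrence of $u_0\cdots u_k$ inside $\tau_n\tau_{n+1}(a)$. So the error is only bounded by the oscillation of $g$ on the bases $B_n(\cdot)$, which tends to $0$ but need not be summable. A telescoping bound $\delta_n-\delta_{n+1}$ with $(\delta_n)$ bounded does not help either: it controls signed sums, not $\sum_n\|\cdot\|$.

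The missing ingredient is \Cref{lem:EigCharac:positive:returnWords}, a Bressaud--Durand--Maass argument. Suppose the return-word quantities had infinite sum.
\begin{enumerate}
\item Pick sparse levels $n_k$ and maximizing return words $u_k$ whose centered fractional parts $\{\alpha h_{n_k}(u_k)\}$ all have the same sign and still have infinite sum.
\item Positivity lets you build two $\btau$-addresses with the same letters at every level whose prefixes differ exactly by $u_k$ at each level $n_k$; the level above each $n_k$ is used to re-synchronize.
\item The convergence in \eqref{eq:convergence:theo:EigCharac:Mult} from \Cref{theo:EigCharac:Mult} then bounds $\|\sum_{k\le i<\ell}\alpha h_{n_i}(u_i)\|$ by a quantity tending to $0$ with $k$, uniformly in $\ell$.
\item Since the summands are small and of one sign, the real sums $\sum_{k\le i<\ell}\{\alpha h_{n_i}(u_i)\}$ stay bounded, a contradiction.
\end{enumerate}
It is this sign selection, possible because each $u_k$ can be inserted independently, that upgrades bounded signed sums to absolute summability.

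Your sufficiency direction also has a false step. In step 4 you treat the mismatch $\mu_{j-1}(b)\coloneqq\rho_j(b)-\rho_{j-1}(\First(\tau_{j-1}(b)))$ as a length-one instance of the hypothesis. But every term of \eqref{eq:theo:EigCharac:positive} involves a single level, and the instance with $k=0$ is identically $0$. Moreover the sum is invariant under $\rho_n\mapsto\rho_n+t_n$ for arbitrary constants $t_n$, so it cannot bound $\mu_{j-1}$.

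What positivity does give is that $\mu_{j-1}(b)$ is nearly independent of $b$. For $b,c\in\cA_j$ there is (after possibly swapping $b$ and $c$) a factor $vc$ of some $\tau_j(d)$ with $v$ beginning with $b$. Compare the instance $vc$ at level $j$ with the instance $\tau_{j-1}(v)\,\First(\tau_{j-1}(c))$ at level $j-1$, which is a factor of $\tau_{j-1}\tau_j(d)$. Using $h_{j-1}(\tau_{j-1}(v))=h_j(v)$, this gives $\|\mu_{j-1}(b)-\mu_{j-1}(c)\|\le2\varepsilon_{j-1}+2\varepsilon_j$ in your notation. After subtracting suitable constants level by level, all mismatches are summably small and your telescoping goes through.

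Run the telescoping on prefixes of $\tau_{n,m}(a)$ (Dumont--Thomas decomposition) and reach general factors through \eqref{trivial_bound_eps_n_ge_eps_nm}. A suffix piece followed by its next letter need not lie inside any single $\tau_j\tau_{j+1}(c)$, so it is not directly an instance of the hypothesis. The paper instead replaces $\brho$ by the $\brhobar$ of \Cref{lem:straight_rho}. Its cross-level consistency (Item (1)) is bounded by return-word quantities, and hence, through \Cref{lem:rho_controls_retwords}, by the terms of \eqref{eq:theo:EigCharac:positive}.
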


As observed in the proof of \Cref{theo:EigCharac:positive} (see \Cref{subsec:proofsofsec4}), Condition \eqref{eq:theo:EigCharac:positive} is sufficient for $\alpha$ to be an eigenvalue of $X_{\btau}$ even if one replaces positive by primitive.

In \Cref{theo:EigCharac:proper} below, we show that, both in \Cref{theo:EigCharac:Mult} and \Cref{theo:EigCharac:positive}, if the directive sequence $\btau$ is additionally proper, then the numbers $\rho_n(a)$ can always be chosen to be zero.
The resulting criterion for positive directive sequences is precisely the one obtained in \cite[Theorem 2]{DFM19}.
Thus, \Cref{theo:EigCharac:Mult} and \Cref{theo:EigCharac:positive} can be seen as natural generalizations of \cite[Theorem 2]{DFM19} to the non-proper and non-negative
case.

\begin{theorem} \label{theo:EigCharac:proper}
Let $X_{\btau}$ be a subshift generated by a recognizable and primitive directive sequence $\btau = (\tau_n \colon \cA_{n+1}^* \to \cA_n^*)_{n \geq 0}$.
Suppose that each $\tau_n$ is left-proper.
Then, a real number $\alpha$ is a continuous eigenvalue of $X_{\btau}$ if and only if
\begin{equation} \label{eq:theo:EigCharac:proper}
    \sup \Big\{
    \big\| 
    \alpha h_n(u) \big\| :
    u \in \cL(X_{\btau}^{(n)})
    \Big\}
\end{equation}
converges to $0$ as $n \to \infty$.
If $\btau$ is additionally positive, then $\alpha$ is an additive eigenvalue of $X_{\tau}$ if and only if
\begin{equation} 
    \label{eq2:theo:EigCharac:proper}
    \sum_{n\geq0} \max\big\{
    \| \alpha h_n(u) \| :
    a \in \cA_{n+2}, u \in \prefixes(\tau_n\tau_{n+1}(a))
    \big\}  < \infty. 
\end{equation}
\end{theorem}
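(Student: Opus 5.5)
The plan is to derive \Cref{theo:EigCharac:proper} from \Cref{theo:EigCharac:Mult} and \Cref{theo:EigCharac:positive}. The direction "condition implies eigenvalue" is immediate: take $\rho_n\equiv 0$. Then \eqref{eq:convergence:theo:EigCharac:Mult} reduces to $\sup\|\alpha h_n(u_0\cdots u_{k-1})\|$. Every prefix $u_0\cdots u_{k-1}$ of a word of $\cL(X_{\btau}^{(n)})$ is itself in the language, so this quantity is bounded by \eqref{eq:theo:EigCharac:proper}. The same substitution turns \eqref{eq:theo:EigCharac:positive} into \eqref{eq2:theo:EigCharac:proper}, and \Cref{theo:EigCharac:positive} is used in the sufficient direction only (which holds even under primitivity).

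For the converse, let $\alpha$ be an eigenvalue and $\brho$ be given by \Cref{theo:EigCharac:Mult}. Write $\epsilon_n$ for the supremum in \eqref{eq:convergence:theo:EigCharac:Mult}. The key point is that left-properness makes $\rho_{n}$ almost constant. Let $b_{n}$ be the common first letter of the words $\tau_{n}(a)$, $a\in\cA_{n+1}$. Take any $u\in\cL(X_{\btau}^{(n+1)})$ and a letter $c$ with $uc\in\cL(X_{\btau}^{(n+1)})$ (one exists by minimality). Then $\tau_{n}(u)b_{n}\in\cL(X_{\btau}^{(n)})$, and $\tau_{n}(u)b_{n}$ also begins with $b_{n}$. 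Applying the level-$n$ estimate to this word gives
\[ \|\rho_{n}(b_{n})-\rho_{n}(b_{n})+\alpha h_{n}(\tau_{n}(u))\| = \|\alpha h_{n+1}(u)\|\le \epsilon_{n}, \]
since $h_n\circ\tau_n=h_{n+1}$. Thus $\sup_{u\in\cL(X^{(n+1)}_{\btau})}\|\alpha h_{n+1}(u)\|\le\epsilon_n\to0$, which is \eqref{eq:theo:EigCharac:proper}. The only care needed is that every word of $\cL(X_{\btau}^{(n+1)})$ is right-extendable and that the empty word contributes $0$.

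For the positive case, let $\delta_n$ denote the $n$-th summand in \eqref{eq:theo:EigCharac:positive}, so that $\sum\delta_n<\infty$. I would repeat the argument at the finite level. Take $a\in\cA_{n+3}$ and a prefix $u$ of $\tau_{n+1}\tau_{n+2}(a)$. If $u$ is proper, the next letter $c$ of $\tau_{n+1}\tau_{n+2}(a)$ exists, and $\tau_n(u)b_n$ is a prefix of $\tau_n\tau_{n+1}\tau_{n+2}(a)$. The difficulty is that \eqref{eq:theo:EigCharac:positive} at level $n$ only controls prefixes of $\tau_n\tau_{n+1}(a')$ with $a'\in\cA_{n+2}$, not of a triple composition. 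I would handle this by using $\tau_n(v)b_n$ with $v$ a prefix of $\tau_{n+1}(a')$ followed by a letter, which gives $\|\alpha h_{n+1}(v)\|\le\delta_n$ for such $v$. A general prefix of $\tau_{n+1}\tau_{n+2}(a)$ decomposes as $\tau_{n+1}(w)v$, with $w$ a proper prefix of $\tau_{n+2}(a)$ and $v$ a prefix of $\tau_{n+1}(\cdot)$. Then $\alpha h_{n+1}(\tau_{n+1}(w))=\alpha h_{n+2}(w)$ is controlled by $\delta_{n+1}$ through the same trick one level up. This yields a bound of the form $\delta_n+\delta_{n+1}$, which is summable.

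The remaining boundary case is when $u$ is the full word $\tau_n\tau_{n+1}(a)$. Here I expect to use positivity: append the first letter coming from a right extension, which lies in the language and begins with the proper letter. Alternatively, I would note that full images are differences of prefixes by the next level's properness. Making this boundary bookkeeping match exactly the index range in \eqref{eq2:theo:EigCharac:proper} is the step I expect to be most fiddly. If needed, I would shift indices and absorb finitely many levels, which does not affect summability.
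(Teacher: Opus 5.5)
Your argument for the first equivalence is correct, and it is shorter than the paper's. The paper takes the optimal sequence $\brhobar$ of \Cref{lem:straight_rho} and uses \Cref{lem:rho_controls_retwords} to make the return-word quantities $\delta_{n,r(m)}(\btau)$ small. It then uses left-properness (through the eventually common first letter of $\tau_{i,n}(\cdot)$) to show that $\bar\rho_n$ is almost constant, so that it can be dropped from \eqref{eq:convergence:theo:EigCharac:Mult}. You instead note that $\tau_n(u)b_n\in\cL(X_{\btau}^{(n)})$ begins and ends with $b_n$. Hence the $\rho_n$-terms cancel exactly, and $\|\alpha h_{n+1}(u)\|\le\varepsilon_n(\brho)$ for any $\brho$. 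This is the same mechanism as case $(C_4)$ of \Cref{cor:EigCharac:FAR:Mult:cocoboundary}, and it avoids all cross-level bookkeeping. In the positive case, your bound $\delta_n+\delta_{n+1}$ for proper prefixes of $\tau_{n+1}\tau_{n+2}(a)$ is also correct. It uses the necessity half of \Cref{theo:EigCharac:positive}, where the paper appeals directly to \Cref{lem:EigCharac:positive:returnWords} for $\delta_{n,n+4}(\btau)$.

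The boundary case is a genuine gap: it is left open, and neither suggested fix works as stated. For $a\in\cA_{n+2}$, the full-word term is $\|\alpha h_{n+2}(a)\|$.

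\emph{Appending a right extension.} This only places the word in $\cL(X_{\btau}^{(n)})$, not inside a window $\prefixes(\tau_n\tau_{n+1}(a'))$. So it is controlled only by the supremum in \eqref{eq:convergence:theo:EigCharac:Mult}, which tends to $0$ but need not be summable.

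\emph{Differences of prefixes.} Writing the full image as a difference of the prefixes $\tau_{n,n+2}(w)$ and $\tau_{n,n+2}(wa)$ of $\tau_{n,n+3}(a')$ requires $wa$ to be a \emph{proper} prefix of $\tau_{n+2}(a')$. Positivity does not give this: $a$ may be the common last letter of all $\tau_{n+2}(a')$ and occur nowhere else in them (e.g.\ $0\mapsto 01$, $1\mapsto 001$). You then regress to $\|\alpha h_{n+3}(a')\|$, and iterating this blindly only yields tail sums $\sum_{j\ge n}\delta_j$, which are not summable in $n$.

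The regress stops after one step if $a'$ is chosen well. Call $x\in\cA_m$ exceptional if, in each $\tau_m(c)$ with $c\in\cA_{m+1}$, it occurs only in final position; by positivity there is at most one such letter.
\begin{itemize}
\item A non-exceptional $x$ is the difference of two proper prefixes of some $\tau_m(c)$. Your proper-prefix estimate then gives $\|\alpha h_m(x)\|\le 2\delta_{m-1}$.
\item For the exceptional $x$, choose a non-exceptional $c\in\cA_{m+1}$ (possible since $|\cA_{m+1}|\ge 2$) and write $\tau_m(c)=wx$. Then $\|\alpha h_m(x)\|\le\|\alpha h_{m+1}(c)\|+\|\alpha h_m(w)\|\le 2\delta_m+\delta_{m-1}$.
\end{itemize}
With $m=n+2$, this bounds the full-word term by $2\delta_{n+2}+2\delta_{n+1}$, which is summable.

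The paper's own proof also only bounds $\|\alpha h_n(v_0\cdots v_{k-1})\|$ with $v_0\cdots v_k$ a prefix, i.e.\ proper prefixes. It treats \eqref{eq2:theo:EigCharac:proper} as \eqref{eq:theo:EigCharac:positive} with $\brho\equiv 0$. Under that reading your argument is already complete. The extra step above is needed only if the full images in $\prefixes(\tau_n\tau_{n+1}(a))$ are counted, as the literal statement requires.
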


From \eqref{eq:theo:EigCharac:proper} we recover the classical necessary condition for $\alpha$ to be an additive eigenvalue of $X_{\btau}$ when $\btau$ is proper (cf.~\cite{Host86,Dekking1978,DFM19,BCBY}):
\begin{equation} \label{eq:EigCharac:necessary:proper}
    \lim_{n\to\infty} 
    \max\{ \|\alpha h_n(a)\| : a \in \cA_n \} = 0.
\end{equation}

From \Cref{theo:EigCharac:Mult} and Item (2) in \Cref{lem:straight_rho} we also recover the following characterization of eigenvalues in terms of return words (cf. \cite{Ferenczi-Mauduit-Nogueira}, \cite[Proposition 4.5]{BCBY}).

\begin{theorem} \label{theo:EigCharac:mot_retour}
    Let $X_{\btau}$ be a subshift generated by a recognizable and primitive directive sequence $\btau = (\tau_n \colon \cA_{n+1}^* \to \cA_n^*)_{n \geq 0}$.
    A real number $\alpha$ is a continuous eigenvalue of $X_{\btau}$ if and only if
    \[  \sup \Big\{
        \big\| 
        \alpha h_n(u_0 \cdots u_{k-1}) \big\| :
        u_0 \cdots u_{k-1} u_0 \in \cL(X_{\btau}^{(n)})
        \Big\}
    \]  
    converges to $0$ as $n \to \infty$.
\end{theorem}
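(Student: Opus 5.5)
We derive this from \Cref{theo:EigCharac:Mult}, treating the two directions separately. Throughout, $u_0\cdots u_{k-1}$ is a return word to $u_0$ in $X_{\btau}^{(n)}$ (more generally any word with $u_0\cdots u_{k-1}u_0 \in \cL(X_{\btau}^{(n)})$), and we write $\delta_n$ for the supremum in the statement.

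\emph{Necessity.} Suppose $\alpha$ is an eigenvalue. \Cref{theo:EigCharac:Mult} gives a sequence $\brho$ for which the quantity \eqref{eq:convergence:theo:EigCharac:Mult} tends to $0$. Apply it to the word $u_0\cdots u_{k-1}u_0$, whose first and last letters coincide. The terms $\rho_n(u_0)-\rho_n(u_0)$ cancel, so $\|\alpha h_n(u_0\cdots u_{k-1})\|$ is bounded by the supremum in \eqref{eq:convergence:theo:EigCharac:Mult}. Hence $\delta_n \to 0$.

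\emph{Sufficiency.} Suppose $\delta_n\to0$. We construct $\rho_n$ directly, mimicking the proof of \Cref{lem:cobord&rho} modulo $1$.

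1. Fix $n$ and a point $x\in X_{\btau}^{(n)}$, which has a dense orbit by minimality. For each letter $a$, pick a position $j_a\ge0$ at which $a$ occurs in $x$, and set
\[\rho_n(a) := -\alpha h_n(x_{[0,j_a)}).\]

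2. Take any $u_0\cdots u_k\in\cL(X_{\btau}^{(n)})$ and choose an occurrence of it at some position $p$ of $x$. This is possible since the language is uniformly recurrent; we may take $p$ larger than all the $j_a$, or handle $p<j_a$ symmetrically. Then, modulo $1$,
\[\rho_n(u_0)-\rho_n(u_k)+\alpha h_n(u_0\cdots u_{k-1}) \equiv \alpha h_n(x_{[j_{u_0},p)}) - \alpha h_n(x_{[j_{u_k},p+k)}).\]

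3. The word $x_{[j_{u_0},p)}$ runs from an occurrence of $u_0$ to the next chosen occurrence of $u_0$. It therefore factors as a concatenation of return words to $u_0$.

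\emph{Main obstacle.} The number of return words in this factorization grows with $p$, so summing $\|\cdot\|\le\delta_n$ over them gives no uniform bound. This is where Item (2) of \Cref{lem:straight_rho} is meant to enter, and the step must use a better mechanism. The plan is to fix $n$ and a finite window $W_n=x_{[0,N_n)}$ containing every word of length $L$ of the language, with $N_n$ bounded by the recurrence function. The choice of $\rho_n$ will be restricted to $W_n$, comparing the target word to an occurrence inside $W_n$.

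The difficulty is that the number of return-word steps is still bounded by $N_n$, which depends on $n$. To remove this dependence, exploit the hierarchical structure: recognizability implies that return words at level $n$ decompose into images under $\tau_{n,m}$. Alternatively, note that any two occurrences of the same letter differ by a single word $w$ with $w u_0\in\cL$, not necessarily a return word. The hypothesis already allows arbitrary words of this form, not only first returns, since it is stated for $u_0\cdots u_{k-1}u_0\in\cL$. Using this, the difference $x_{[j_{u_0},p)}$ is a single such word, contributing error at most $\delta_n$, and likewise for $u_k$. This yields a total bound of $2\delta_n$, uniform over the language, which verifies \eqref{eq:convergence:theo:EigCharac:Mult}. \Cref{theo:EigCharac:Mult} then concludes that $\alpha$ is an eigenvalue.
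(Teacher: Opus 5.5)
Your proof is correct and is essentially the paper's argument. Necessity is read off from \Cref{theo:EigCharac:Mult} as you do. For sufficiency, your choice $\rho_n(a)=-\alpha h_n(x_{[0,j_a)})$ is the same device as $\bar\rho_n$ in Item (2) of \Cref{lem:straight_rho}: the paper takes $\alpha$ times the height of the prefix preceding a fixed occurrence of each letter in the reference word $\tau_{n,r(n)}(a_{r(n)})$, where you use a point of $X_{\btau}^{(n)}$, and both give the uniform bound $2\delta_n$. The detour through first-return factorizations and windows $W_n$ can be dropped: as you eventually note, $x_{[j_{u_0},p)}$ and $x_{[j_{u_k},p+k)}$ are each a single word directly covered by the hypothesis.
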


\subsection{General strategy} \label{subsec:strategy}

It is convenient to first introduce the following notation, which is used throughout the lemmas and proofs.
We recall that $\First(u)$ stands for the first letter of a non-empty word $u$.
Given a sequence of  real numbers $\brho = (\rho_n(a) : n \ge 0,\, a \in \cA_n)$ and $\alpha \in \R$, we let
\begin{equation}
	\label{defi_eps_nm}
    \varepsilon_{n,m}(\brho) = \max \big\{
    \|\rho_n(\First(ub)) + \alpha h_n(u) - \rho_n(b)\| : 
    a \in \cA_m,\, ub \in \prefixes(\tau_{n,m}(a))
    \big\}
\end{equation}
for $m > n \ge 0$, and 
\begin{equation}
	\label{defi_eps_n}
    \varepsilon_n(\brho) = \sup \big\{
        \|\rho_n(\First(ub)) + \alpha h_n(u) - \rho_n(b)\| : 
        ub \in \cL(X_{\btau}^{(n)})
    \big\}
\end{equation}
for $n \ge 0$.
The dependence of $\varepsilon_{n,m}(\brho)$ and $\varepsilon_n(\brho)$ on $\btau$ and $\alpha$ will always be clear from the context.
Note that $\varepsilon_n(\brho)$ is the quantity appearing in \eqref{eq:convergence:theo:EigCharac:Mult}, while $\varepsilon_{n,n+2}$ is the $n$-th term of the sum in \eqref{eq:theo:EigCharac:positive}.
We also remark that 
\begin{equation}
	\label{trivial_bound_eps_n_ge_eps_nm}
	\varepsilon_{n,m}(\brho) \leq \varepsilon_n(\brho)
    \leq 2 \sup_{\ell \gt n} \varepsilon_{n,\ell}(\brho)
    \enspace \text{for all $m > n \ge 0$.}
\end{equation}

The  sequence  $\brho$  in \Cref{theo:EigCharac:Mult} and \Cref{theo:EigCharac:positive} is {\em a priori} used to  approximate  the values taken by an eigenfuction $g$   on  the  basis $B_n(a)$ of the Kakutani--Rohklin towers ${\mathcal P}_n$ defined in \Cref{def:recognizable}, \emph{i.e.},   
for a point $x \in B_n(a)$, with $n \ge 0$, one  has $g(x) \approx \rho_n (a)$.  The values  on the other levels are given by using the eigenvalue relation $g(S^k(x)) - g(x) \equiv  k \alpha \mod \Z$. 
Suppose now that we want to test  whether  some $\alpha $ is an eigenvalue.
It  is  then natural to  consider  the quantities involved in the definition of   $\varepsilon_n (\brho)$ in \eqref{defi_eps_n}.
Indeed, if $ x \in  X_{\btau}^{(n)}$, then 
$\rho_n(x_0) \approx g(\tau_{0,n}(x))$, 
whereas  $$\rho_n(x_k) \approx  g( S^{h_n(x_0 \cdots x_{k-1})}\tau_{0,n}(x)) \equiv 
\alpha h_n(x_0 \cdots x_{k-1})+  g(\tau_{0,n}(x)),$$  which translates into
$$\rho_n(x_0) - \rho_n(x_k) + \alpha h_n(x_0 \cdots x_{k-1}) \approx 0. $$ 

In other words, if  $\brho$ is a good approximation of $g$, then  $\varepsilon_{n,m}(\brho)$, and thus $\varepsilon_n(\brho)$ by \eqref{trivial_bound_eps_n_ge_eps_nm}, is small.

A simple  lower bound on how small we expect $\varepsilon_{n,m}(\brho)$ to be for a given $\alpha$ is  first given by \Cref{lem:rho_controls_retwords} below, in terms of return words.
For a directive sequence $\btau = (\tau_n \colon \cA_{n+1}^* \to \cA_n^* : n \ge 0)$, let $R_{n,m}(\btau)$ be the set of non-empty words $u \in \cA_n^*$ such that $u \, \First(u)$ occurs in $\tau_{n,m}(a)$ for all $a \in \cA_m$.
In other words, this is the set of  concatenations of return words to letters that occur in every $\tau_{n,m}(a)$.
Observe that the union   $\cup_{m < n} R_{n,m} (\btau)$ is equal to
the set of all  concatenations of return words 
in $X_{\btau}^{(n)}$.




\begin{lemma}
\label{lem:rho_controls_retwords}
Let $\btau = (\tau_n \colon \cA_{n+1}^* \to \cA_n^*)_{n\geq 0}$ be a primitive directive sequence and $\alpha \in \R$.
For $n \ge 0$, let $r(n) > n$ be the least integer such that every letter $a \in \cA_n$ appears in $\tau_{n,r(n)}(b)$ for every $b \in \cA_{r(n)}$.
Denote 
\begin{equation}
    \label{eq:defi_delta_ret_words}
    \delta_{n,m}(\btau) = \max\big\{ \|\alpha h_n(u)\| : u \in R_{n,r(m)}(\btau) \big\}
    \enspace \text{for $m > n \ge 0$.}
\end{equation}
Then, for any sequence $\brho = (\rho_n(a) : n \ge 0,\, a \in \cA_n)$, we have
\begin{equation}
    \label{eq:rho_controls_retwords}
    \delta_{n,r(m)}(\btau) \leq 2\varepsilon_{n,r(m)}(\brho)
    \enspace \text{for all $n \ge 0$, $m \ge r(n)$.}
\end{equation}
\end{lemma}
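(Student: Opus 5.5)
The plan is a telescoping argument along an occurrence of $u\,\First(u)$ inside a single image $\tau_{n,r(m)}(a)$. (The indices in the statement are a bit tangled, with $\delta_{n,r(m)}$ built from $R_{n,r(r(m))}$; the argument below only uses that $u\,\First(u)$ occurs inside some $\tau_{n,M}(a)$ for the same $M$ that appears in $\varepsilon_{n,M}(\brho)$, so it adapts to whichever indexing is intended.)

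Fix $u \in R_{n,M}(\btau)$ with $M$ the relevant index, and choose any $a \in \cA_M$. By definition of $R_{n,M}$, the word $u\,\First(u)$ occurs in $\tau_{n,M}(a)$. Hence there is a prefix of $\tau_{n,M}(a)$ of the form $p\,u\,b$, where $b = \First(u)$. Write $c = \First(p\,u\,b)$, i.e. the first letter of $\tau_{n,M}(a)$. This requires $pub$ to be nonempty, which it is.

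Next, apply the definition \eqref{defi_eps_nm} of $\varepsilon_{n,M}(\brho)$ to two prefixes of $\tau_{n,M}(a)$.
\begin{enumerate}
\item The prefix $p\,b'$ with $b' = \First(u)$. This is a prefix, since $p\,u$ is one and $u$ begins with $\First(u)$. It gives $\|\rho_n(c) + \alpha h_n(p) - \rho_n(\First(u))\| \le \varepsilon_{n,M}(\brho)$.
\item The prefix $(p u)\, b$. It gives $\|\rho_n(c) + \alpha h_n(pu) - \rho_n(b)\| \le \varepsilon_{n,M}(\brho)$.
\end{enumerate}
If $p$ is empty, the first inequality still holds with $c = \First(u)$, and it is trivial.

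Subtract the first quantity from the second. Additivity gives $h_n(pu) = h_n(p) + h_n(u)$, and $\rho_n(b) = \rho_n(\First(u))$ because $b = \First(u)$. So the $\rho_n$ terms cancel, and the triangle inequality for $\|\cdot\|$ yields $\|\alpha h_n(u)\| \le 2\varepsilon_{n,M}(\brho)$. Taking the maximum over $u$ gives \eqref{eq:rho_controls_retwords}.

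The only delicate point is checking that both words used above are genuine prefixes of one and the same image $\tau_{n,M}(a)$, so that both telescoped terms share the same initial letter $c$ and cancel. This is exactly what membership in $R_{n,M}(\btau)$ guarantees, since $u\,\First(u)$ occurs in every $\tau_{n,M}(a)$. Primitivity and the choice $m \ge r(n)$ only serve to make $R_{n,r(m)}$ nonempty and meaningful; they play no role in the inequality itself.
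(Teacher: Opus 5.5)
Your proof is correct and is essentially the paper's argument: both pick an occurrence of $u\,\First(u)$ inside a single image $\tau_{n,r(m)}(a)$, apply the definition of $\varepsilon_{n,r(m)}(\brho)$ to the two prefixes $v\,\First(u)$ and $vu\,\First(u)$, which share the same first letter, and subtract. Your remark on the indexing is also apt: the paper's proof actually bounds the maximum over $u \in R_{n,r(m)}(\btau)$ by $2\varepsilon_{n,r(m)}(\brho)$, which is exactly your version with $M = r(m)$.
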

\begin{proof}
Let $\brho=\bigl(\rho_n(a): n\ge 0,\, a\in \cA_n\bigr)$ be arbitrary, and fix $n\ge 0$ and $m\ge r(n)$. 
Consider $u\in R_{n,r(m)}(\btau)$ and set $b=\First(u)$.
Since $ub$ occurs in $\tau_{n,r(m)}(a_{r(m)})$, there exists a prefix $v$ of $\tau_{n,r(m)}(a_{r(m)})$ such that $v u b\in \prefixes\bigl(\tau_{n,r(m)}(a_{r(m)})\bigr)$.
By the definition of $\varepsilon_{n,r(m)}$, one has
\[
\bigl\|\rho_n(a_n)+\alpha\, h_n(v)-\rho_n(b)\bigr\|\le \varepsilon_{n,r(m)}
\quad\text{and}\quad
\bigl\|\rho_n(a_n)+\alpha\, h_n(vu)-\rho_n(b)\bigr\|\le \varepsilon_{n,r(m)}.
\]
Subtracting the two expressions inside and applying the triangle inequality gives $\|\alpha h_n(u)\|\le 2\varepsilon_{n,r(m)}$.
\end{proof}

The fact that the sequence $\brho$ approximates an eigenfunction $g$ imposes further structural constraints on it.  
In particular, one must take into account the relationship between the bases of the partition at different levels. 
Since  $$B_m(a) \subseteq B_n (\First(\tau_{n,m}(a))) \mbox{ for any }m \gt n,$$ then the eigenfunction $g$ satisfies that $g(B_m(a))$ is close to $g(B_n (\First(\tau_{n,m}(a))))$.
This translates into $\rho_m(a) \approx \rho_n(\First(\tau_{n,m}(a)))$.
Note  that \eqref{eq:convergence:theo:EigCharac:Mult} alone is not sufficient to guarantee  such  a  constraint
on $\brho$. 
However, \Cref{lem:straight_rho} below  shows that it is always possible to construct a sequence  $\brhobar$ for which both the structural constraints relating  various levels
(see Item (1)) and the  quantities $\varepsilon_{n,m}(\brhobar)$  (see Item (2)) are controlled by return words. 
As a consequence, by \Cref{lem:rho_controls_retwords}, this sequence $\brhobar$ is optimal with respect to the convergence in \eqref{eq:convergence:theo:EigCharac:Mult}, in the sense that $\varepsilon_{n,m}(\brhobar) \leq 4\varepsilon_{n,r(m)}(\brho)$ for any given sequence $\brho$ (see \Cref{eq:compare} below). In other words,  the optimality of the sequence 
$\brhobar$ allows one to inherit convergence properties   
on  the sequence $\brhobar$  from  corresponding 
convergence properties on  a  sequence $\brho$.

\begin{lemma}
\label{lem:straight_rho}
Let $\btau = (\tau_n \colon \cA_{n+1}^* \to \cA_n^*)_{n\ge0}$ be a primitive directive sequence, $\alpha \in \R$.
Consider the notation $r(n)$ and $\delta_{n,m}(\btau)$ from \Cref{lem:rho_controls_retwords}.
There exists a sequence $\brhobar = (\bar{\rho}_n(a) : n \ge 0,\, a \in \cA_n)$ satisfying the following:
\begin{enumerate}
    \item $\|\bar{\rho}_m(b) - \bar{\rho}_n(\First(\tau_{n,m}(b)))\| \leq \delta_{n,r(m)}(\btau)$ for all $n \ge 0$, $m \ge r(n)$ and $b \in \cA_m$;
    \item $\|\bar{\rho}_n(v_0) + \alpha h_n(v_0 \cdots v_{k-1})- \bar{\rho}_n(v_k)\| \leq 2\delta_{n,r(m)}(\btau)$ for all $n \ge 0$, $m \ge r(n)$ and $v_0\cdots v_k$ occurring in $\tau_{n,m}(b)$ for some $b \in \cA_m$.
\end{enumerate}
\end{lemma}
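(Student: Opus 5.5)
The plan is to build $\brhobar$ from a single one-sided limit word, measuring each letter by the $\alpha$-length of the prefix preceding its first occurrence. Every discrepancy then becomes $\alpha h_n$ of a concatenation of return words.

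\textbf{Construction.} By \Cref{lem:existence_limit_points}, fix $(a_n\in\cA_n)_{n\ge0}$ such that $\tau_n(a_{n+1})$ starts with $a_n$ for every $n$. Then $\tau_{n,m}(a_m)$ is a prefix of $\tau_{n,m'}(a_{m'})$ whenever $m'\ge m>n$. By the definition of $r(n)$, every letter $e\in\cA_n$ occurs in $\tau_{n,r(n)}(a_{r(n)})$. Let $p_n(e)$ be the prefix of $\tau_{n,r(n)}(a_{r(n)})$ preceding the first occurrence of $e$, and set
\[
\bar\rho_n(e)=\alpha\, h_n(p_n(e)).
\]

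\textbf{Two facts used throughout.} First, $R_{n,m}(\btau)\subseteq R_{n,m'}(\btau)$ for $m'\ge m$: any $\tau_{n,m'}(c)$ contains some $\tau_{n,m}(c')$, so an occurrence in every $\tau_{n,m}(c')$ gives one in every $\tau_{n,m'}(c)$. Second, by the notation of \Cref{lem:rho_controls_retwords}, $\delta_{n,r(m)}(\btau)$ is the maximum of $\|\alpha h_n(u)\|$ over $u\in R_{n,r(r(m))}(\btau)$. This extra application of $r$ is the slack the argument needs. I will also use repeatedly that if $w$ lies strictly between two occurrences of the same letter $e$, with $we$ a factor, then $w$ is a concatenation of return words to letters.

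\textbf{Item (1).} Fix $m\ge r(n)$ and $b\in\cA_m$, and let $e=\First(\tau_{n,m}(b))$. Since $p_m(b)\,b$ is a prefix of $\tau_{m,r(m)}(a_{r(m)})$, the word $\tau_{n,m}(p_m(b))\,e$ is a prefix of $\tau_{n,r(m)}(a_{r(m)})$. Moreover $\alpha h_m(p_m(b))=\alpha h_n(\tau_{n,m}(p_m(b)))$.

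The word $p_n(e)\,e$ is a prefix of $\tau_{n,r(n)}(a_{r(n)})$, and hence also a prefix of $\tau_{n,r(m)}(a_{r(m)})$. So the two occurrences of $e$ determine a word $w$ with
\[
\bar\rho_m(b)-\bar\rho_n(e)=\pm\,\alpha h_n(w),
\]
where $w\,e$ occurs in $\tau_{n,r(m)}(a_{r(m)})$. Each $\tau_{m,r(r(m))}(c)$, for $c\in\cA_{r(r(m))}$, contains the letter $a_{r(m)}$. Hence $w\,e$ occurs in every $\tau_{n,r(r(m))}(c)$, so $w\in R_{n,r(r(m))}(\btau)$, and the bound $\delta_{n,r(m)}(\btau)$ follows.

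\textbf{Item (2).} Let $v_0\cdots v_k$ occur in $\tau_{n,m}(b)$, with $m\ge r(n)$. For each $c\in\cA_{r(m)}$, the word $\tau_{m,r(m)}(c)$ contains both $a_m$ and $b$. So some factor of $\tau_{n,r(m)}(c)$ contains the block $\tau_{n,m}(a_m)$ and the block $\tau_{n,m}(b)$, in some order.

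The block $\tau_{n,m}(a_m)$ has $\tau_{n,r(n)}(a_{r(n)})$ as a prefix, hence contains the occurrences $p_n(v_0)v_0$ and $p_n(v_k)v_k$. The segment between the occurrence of $v_0$ marked by $p_n(v_0)$ and the $v_0$ in the chosen occurrence of $v_0\cdots v_k$ is a word $w_0$ (up to orientation) with $w_0v_0$ a factor. Likewise for $v_k$ we get a word $w_k$ with $w_kv_k$ a factor. We then have
\[
\bar\rho_n(v_0)+\alpha h_n(v_0\cdots v_{k-1})-\bar\rho_n(v_k)=\pm\alpha h_n(w_0)\mp\alpha h_n(w_k).
\]

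There is one subtlety. The relative positions of the two blocks may depend on $c$, so $w_0$ and $w_k$ need not be the same word for all $c$. I will fix the whole configuration once, inside a single $\tau_{n,r(m)}(c_0)$. Since every $\tau_{n,r(r(m))}(c)$ contains $\tau_{n,r(m)}(c_0)$, both $w_0$ and $w_k$ lie in $R_{n,r(r(m))}(\btau)$. Each term is then at most $\delta_{n,r(m)}(\btau)$, giving the bound $2\delta_{n,r(m)}(\btau)$.

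The main obstacle I expect is this bookkeeping of which level the return words live at, namely exhibiting each difference word inside every $\tau_{n,r(r(m))}(c)$. It is resolved by the fact that $\delta_{n,r(m)}$ already refers to $R_{n,r(r(m))}$.
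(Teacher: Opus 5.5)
Your proof is correct and is essentially the paper's proof. The construction is the same: $\bar\rho_n(e)=\alpha h_n(p_n(e))$, read off the limit word of \Cref{lem:existence_limit_points}. Item (1) uses the same two-occurrence comparison. Your Item (2) is the paper's argument written out directly: the paper takes $c_0=a_{r(m)}$ and subtracts two instances of its prefix inequality $\|\bar\rho_m(b)+\alpha h_n(u)-\bar\rho_n(d)\|\le\delta_{n,r(m)}(\btau)$, for prefixes $ud$ of $\tau_{n,m}(b)$, so that $\bar\rho_m(b)$ cancels. Your explicit use of $R_{n,r(r(m))}(\btau)$ is exactly what the formal definition of $\delta_{n,r(m)}$ supports; the paper's claim ``$w\in R_{n,r(m)}(\btau)$'' is looser than what is actually shown.

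One slip to fix: in Item (1), it is $\tau_{r(m),r(r(m))}(c)$, not $\tau_{m,r(r(m))}(c)$, that contains the letter $a_{r(m)}$.
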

\begin{proof}
We start by defining $\brhobar$.
Using \Cref{lem:existence_limit_points} we can find letters $(a_n)_{n\ge 0}$ with $\tau_n(a_{n+1})$ beginning with $a_n$ for every $n\ge 0$.
By the definition of $r(n)$, for each $n\ge 0$ and $b\in \cA_n$ there exists a word $u_n(b)$ such that $u_n(b)\,b\in \prefixes(\tau_{n,r(n)}(a_{r(n)}))$. 
As $\tau_{n,r(n)}(a_{r(n)})$ starts with $a_n$, we can take $u_n(a_n)$ to be the empty word.
Define $\bar\rho_n(b) = \alpha h_n\bigl(u_n(b)\bigr)$.
Remark that $\bar\rho_n(a_n) = 0$ for every $n \ge 0$.

Next, we prove Item (1).
Fix $n\ge 0$, $m\ge r(n)$, $a\in \cA_m$, and a prefix $u b\in \prefixes\bigl(\tau_{n,m}(a)\bigr)$.
Both $u_n(b)\,b$ and $\tau_{n,m}(u_m(a))\,u\,b$ are prefixes of $\tau_{n,r(m)}(a_{r(m)})$, with the first one shorter; hence there exists a word $w$ starting with $b$ such that
\[
\tau_{n,m}(u_m(a))\,u\,b = u_n(b)\,w\,b.
\]
In particular, $w \in R_{n,r(m)}(\btau)$, therefore
\[
\bigl\|\alpha\, h_n\bigl(\tau_{n,m}(u_m(a)\,u)\bigr)-\alpha h_n(u_n(b))\bigr\|
=\bigl\|\alpha\, h_n(w)\bigr\|
\le \delta_{n,r(m)}(\btau).
\]
Using $h_n\bigl(\tau_{n,m}(u_m(a)\,u)\bigr) = h_m\bigl(u_m(a)\bigr)+h_n(u)$ and $\bar\rho_n(b) = \alpha h_n(u_n(b))$ (by definition), the last inequality rewrites as
\begin{equation}
\label{eq:1:straight_rho}
\bigl\|\bar\rho_m(a)+\alpha\, h_n(u)-\bar\rho_n(b)\bigr\|\le \delta_{n,r(m)},
\end{equation}
which holds for every $n \ge 0$, $m \ge r(n)$, $a \in \cA_m$ and $u b\in \prefixes\!\bigl(\tau_{n,m}(a)\bigr)$.
Specializing \eqref{eq:1:straight_rho} to $ub$  being the length-1 prefix of $\tau_{n,m}(a)$, one gets  
\[
\bigl\|\bar\rho_m(a)-\bar\rho_n\bigl(\First(\tau_{n,m}(a))\bigr)\bigr\|\le \delta_{n,r(m)},
\]
proving Item (1).
Moreover, if $v = v_0\cdots v_k$ occurs in $\tau_{n,m}(a)$, then $v' v \in \prefixes(\tau_{n,m}(a))$ for some $v' \in \cA_n^*$, so \eqref{eq:1:straight_rho} gives that both
\[  \bigl\|  \bar\rho_m(a) + \alpha h_n(v') - \bar\rho_n(v_0)   \bigr\|
    \enspace \text{and} \enspace 
    \bigl\| \bar \rho_m(a) + \alpha h_n(v' \, v_0\cdots v_{k-1}) - \bar\rho_n(v_k)   \bigr\|
\]
are bounded by $\delta_{n,r(m)}$.
Hence, by taking differences,
\[  \bigl\| \bar \rho_n(v_k) + \alpha h_n(v_0\cdots v_{k-1}) - \rho_n(v_0)   \bigr\| \le 
    2 \delta_{n,r(m)},
\]
which proves Item (2).
\end{proof}

One useful consequence of \Cref{lem:rho_controls_retwords} is that one can control the sequence $\brhobar$ constructed in \Cref{lem:straight_rho} using any  given sequence $\brho$:
by Item (2) in \Cref{lem:straight_rho} and \eqref{eq:rho_controls_retwords}, we have
\begin{equation} \label{eq:compare}
    \varepsilon_{n,m}(\brhobar) \leq 2\delta_{n,m}(\btau) \leq 4\varepsilon_{n,r(m)}(\brho)
\end{equation}
for any $n \ge 0$ and $m \ge r(n)$, and by Item (1) in \Cref{lem:straight_rho} and \eqref{eq:rho_controls_retwords},
\begin{equation}
    \|\bar{\rho}_m(a) - \bar{\rho}_n(\First(\tau_{n,m}(a)))\| \leq 
    \delta_{n,m}(\btau) \leq 2\varepsilon_{n,r(m)}(\brho)
\end{equation}
for any $n \ge 0$, $m \ge r(n)$ and $a \in \cA_m$.

\subsection{\texorpdfstring{Proofs of \Cref{theo:EigCharac:Mult}, \Cref{theo:EigCharac:positive} and \Cref{theo:EigCharac:proper}}{}}\label{subsec:proofsofsec4}
We now prove the main results of this section.

\begin{proof}[Proof of \Cref{theo:EigCharac:Mult}]
Suppose that $\alpha$ is an eigenvalue of $X_{\btau}$.
Let $g \colon X_{\btau} \to \R/\Z$ be an eigenfunction associated to $\alpha$.
We  first define a sequence  $\brho$ by taking any value $\rho_n(a) \in g(B_n(a))$, for $n \ge 0$ and $a \in \cA_n$.
We are going to show that $\varepsilon_n(\brho)$ converges to $0$ as $n \to \infty$.
The first step is to show that  the sequence  $(\diam(g(B_n(a)))_n$ converges to $0$, where $\diam$ denotes the diameter of a subset of $\R/\Z$.

For $n \ge 0$ and $a \in \cA_n$, let $r_n(a) \coloneqq \lfloor h_n(a)/2 \rfloor$. 
Then any $x,y \in S^{r_n(a)} B_n(a)$ agree on a window of length $2r_n(a)$ centered at $0$.
Since $\btau$ is primitive, $\min\{r_n(a) : a \in \cA_n\} \to \infty$ as $n \to \infty$, so, by the uniform continuity of $g$,
\[
\varepsilon'_n \coloneqq \sup\bigl\{ \diam\bigl(g(S^{r_n(a)} B_n(a))\bigr) : a \in \cA_n \bigr\}
\]
converges to $0$ as $n \to \infty$.
Now, $g(S^k B_n(a)) = \{g(x) + k \alpha : x \in B_n(a)\}$ and the map $z \mapsto z + \alpha$ is an isometry of $\R/\Z$.
Therefore,
\begin{equation}
\label{eq:1:theo:EigCharac:Mult}
\varepsilon'_n = \sup\bigl\{ \diam\bigl(g(B_n(a))\bigr) : a \in \cA_n \bigr\}.
\end{equation}

We now prove that $\varepsilon_n(\brho) \leq 2\varepsilon'_n$, which would prove $\varepsilon_n(\brho) \to 0$ as $n \to \infty$.
Let $n \ge 0$, $ub \in \cL(X_{\btau}^{(n)})$ with $b \in \cA$, and let $a$ denote the first letter of $ub$.
Then there exists $y \in X_{\btau}^{(n)}$ such that $y_{[0,|u|)} = u$.
We set $x \coloneqq \tau_{0,n}(y)$ and observe that $x \in B_n(a)$ and $S^{h_n(u)} x \in B_n(b)$.
By \eqref{eq:1:theo:EigCharac:Mult}, $\|g(x) - \rho_n(a)\| \leq \varepsilon'_n$ and $\|g(S^{h_n(u)} x) - \rho_n(b)\| \leq \varepsilon'_n$.
Since $g(S^{h_n(u)} x) = x + \alpha h_n(u)$, we obtain
\[	\|\rho_n(a) + \alpha h_n(u) - \rho_n(b)\| \leq 2\varepsilon'_n,	\]
which holds for every $n \ge 0$ and $ub \in \cL(X_{\btau}^{(n)})$.
We conclude that $\varepsilon_n(\brho) \leq 2\varepsilon'_n$, and thus that $\varepsilon_n(\brho) \to 0$ as $n \to \infty$.
\medskip

Conversely, let $\brho = (\rho_n(a) : n \ge 0,\, a \in \cA_n)$ be a family of real numbers satisfying $\varepsilon_n(\brho) \to 0$ as $n \to \infty$.
We will now  construct an eigenfunction $g$ as the limit of continuous maps $g_n \colon X \to \R/\Z$.  A  priori    they  could be    defined from the coefficients $(\rho_n(a) : a \in \cA_n)$.
However, to ensure that the limit $g$ is continuous, we need the family $(g_n : n \ge 0)$ to be equicontinuous.
Establishing this equicontinuity requires estimates across levels of $\btau$, so we work instead with the optimal sequence $\brhobar$ from \Cref{lem:straight_rho}.

Let $\brhobar = (\bar{\rho}_n(a) : n \ge 0,\, a \in \cA_n)$ be given by \Cref{lem:straight_rho}.
Define $g_n \colon X_{\btau} \to \R/\Z$ by $g_n(x) = \bar{\rho}_n(a) + k\alpha$ for $x \in S^k B_n(a)$, where $a \in \cA_n$ and $0 \le k < h_n(a)$.
Since $\btau$ is recognizable, the collection $\{S^k B_n(a) : a \in \cA_n,\, 0 \le k < h_n(a)\}$ is a partition of $X$ into clopen sets, hence $g_n$ is well-defined and continuous.

We now show that $\{g_n : n \ge 0\}$ is equicontinuous.
Fix $n \ge 0$, $m \ge r(n)$, and $x,x' \in X$ lying in the  atom $S^k B_n(a)$, for some $a \in \cA_n$ and $0 \le k < h_n(a)$.
It suffices to show that $\| g_m(x) - g_m(x') \| \le 12 \varepsilon_n(\brho)$.
Since $\|g_m(x) - g_m(x')\| = \| g_m(S^{-k}x) - g_m(S^{-k}x') \|$, we may assume $k = 0$, \emph{i.e.}, $x,x' \in B_n(a)$.

Choose $b \in \cA_m$ and $0 \le \ell < h_m(b)$ with $x \in S^\ell B_m(b)$.
Then, there exists a prefix $v_0 \dots v_k \in \prefixes(\tau_{n,m}(b))$ with $v_k = a$ and 
$h_n(v_0 \dots v_{k-1}) = \ell$.
By Item (2) of \Cref{lem:straight_rho},
\begin{equation}
\label{eq:2.0:theo:EigCharac:Mult}
    \bigl\|\bar\rho_n(v_0) + \alpha\ell - \bar\rho_n(a)\bigr\|
    = \bigl\|\bar\rho_n(v_0) + \alpha h_n(v_0 \dots v_{k-1}) - \bar\rho_n(v_k)\bigr\|
    \le 2 \delta_{n,r(m)}(\btau).
\end{equation}
Since $v_0$ is the first letter of $\tau_{n,m}(b)$, Item (1) of \Cref{lem:straight_rho} gives
$\|\bar\rho_m(v_0) - \bar\rho_m(b)\| \le \delta_{n,r(m)}(\btau)$.
Substituting this into \eqref{eq:2.0:theo:EigCharac:Mult} yields
\[
   \bigl\|\bar\rho_m(b) + \alpha\ell - \bar\rho_m(a)\bigr\|
   \le 3 \delta_{n,r(m)}(\btau).
\]
By definition, $g_m(x) = \bar\rho_m(b) + \alpha\ell$, so $g_m(x)$ lies within 
$3 \delta_{n,r(m)}(\btau)$ of $\bar\rho_m(a)$.
A similar argument shows the same for $g_m(x')$.
Thus,
\[
   \|g_m(x) - g_m(x')\| \le 6 \delta_{n,r(m)}(\btau).
\]

By \Cref{lem:rho_controls_retwords}, 
$\delta_{n,r(m)}(\btau) \le 2 \varepsilon_{n,r(m)}(\brho)$, and since
$\varepsilon_{n,r(m)}(\brho) \le \varepsilon_n(\brho)$ by definition,
we obtain the desired bound
\[
   \|g_m(x) - g_m(x')\| \le 12 \varepsilon_n(\brho).
\]

\smallskip 

We have shown that $(g_n : n \ge 0)$ is an equicontinuous family.
By the Arzelà–Ascoli Theorem, there is a convergent subsequence $(g_{n_k} : k \ge 0)$ converging to a continuous map $g \colon X \to \R/\Z$.
We show next that $g$ is an eigenfunction.

Let $x \in X$, with $\btau$-address $(u_n,a_n)_{n\ge0}$ (see \Cref{def:address}), and set $p_{n_k} = h_{n_k}(u_{n_k})$ for $k \ge 0$.
We check that $g(Sx) = g(x) + \alpha$ by considering two cases.
\begin{itemize}
\item
If there are infinitely many $n \in \{n_k : k \ge 0\}$ such that $p_n + 1 < h_n(a_n)$, then the definition of $g_n$ ensures that $g_n(S x) = g_n(x) + \alpha$ for such values of $n$, thus $g(Sx) = g(x) + \alpha$ by taking the limit.
\item
Assume that $p_n + 1 = h_n(a_n)$ for all large enough $n \in \{n_k : k \ge 0\}$.
For any such $n$, there exist letters $a'_n \in \cA_n$ such that $a_n a'_n \in \cL(X_{\btau}^{(n)})$ and $Sx \in B_n(a'_n)$.
In particular, $g_n(Sx) = \rho_n(a'_k)$, which, together with $p_n = h_n(a_n) - 1$, gives
\[  g_n(Sx) - g_n(x) - \alpha  = 
    \bar{\rho}_n(a'_n) - \bar{\rho}_n(a_n) - \alpha h_n(a_n).  \]
The last term is bounded by $4\varepsilon_n(\brhobar)$ by Item (2) of \Cref{lem:straight_rho} and \Cref{lem:rho_controls_retwords}.
Therefore, $g(Sx) = g(x) + \alpha$ by taking the limit.
\end{itemize}
We conclude that $g$ is a continuous eigenfunction of $X_{\btau}$ for the eigenvalue $\alpha$.
\end{proof}


We continue with the proof of \Cref{theo:EigCharac:positive}.
A key ingredient is the following lemma, which adapts the argument used in the proof of \cite[Theorem 3]{Bressaud-Durand-Maass:2005}.
We recall that the set $R_{n,m}(\btau)$ consists of all non-empty words $u \in \cA_n^*$ such that $u \, \First(u)$ occurs in $\tau_{n,m}(a)$ for every $a \in \cA_m$.

\begin{lemma}
\label{lem:EigCharac:positive:returnWords}
Let $\btau = (\tau_n \colon \cA_{n+1}^* \to \cA_n^* : n \ge 0)$ be a recognizable, positive directive sequence.
Assume that $\alpha$ is an eigenvalue of $X_{\btau}$.
Then, for every $k \ge 1$,
\begin{equation}  
\label{eq:theo:EigCharac:positive:returnWords} 
    \sum_{n \ge 0} \delta_{n,n+k}(\btau) < \infty,
\end{equation}
with 
$\delta_{n,m} = 
\max\big\{ \|\alpha  h_n(u)\| : u \in R_{n,r(m)}(\btau) \big\}$  being
the same quantity defined in \eqref{eq:defi_delta_ret_words} in \Cref{lem:rho_controls_retwords}.
\end{lemma}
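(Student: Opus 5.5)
The plan is a Bressaud--Durand--Maass type "free choice of signs" argument. From the eigenfunction we get a uniform bound on $\|\alpha h_n(\cdot)\|$ over all concatenations of return words at a deep enough level. We then assemble, from the words in $R_{n,r(n+k)}(\btau)$ at many different levels, a single such concatenation that includes or omits each level at will, and deduce summability from the uniform bound.

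\textbf{Step 1 (uniform bound at one level).} Let $g$ be an eigenfunction for $\alpha$ and let $\brho$ be the sequence produced in the first half of the proof of \Cref{theo:EigCharac:Mult}, so that $\varepsilon_n(\brho)\to0$. Take any $W=w_0\cdots w_{j-1}\in\cA_{n_0}^*$ with $W\,w_0\in\cL(X_{\btau}^{(n_0)})$, i.e.\ $W$ is a concatenation of return words to $w_0$. Apply the definition of $\varepsilon_{n_0}(\brho)$ to the word $W w_0$: its first and last letters are both $w_0$, so the two $\rho_{n_0}$ terms cancel and we get
\[
\|\alpha h_{n_0}(W)\|\le \varepsilon_{n_0}(\brho).
\]
(This is the same cancellation as in \Cref{lem:rho_controls_retwords}.) Fix $n_0$ large enough that this bound is below $1/8$.

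\textbf{Step 2 (the nested construction).} Since $\btau$ is positive, $r(n)=n+1$. Fix a residue class modulo $K\coloneqq k+2$. Take levels $n_0<n_1<\dots<n_N$ in that class, so $n_{i+1}\ge n_i+k+1=r(n_i+k)$.

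For each $i$, choose $u_{n_i}\in R_{n_i,r(n_i+k)}(\btau)$ realizing $\delta_{n_i,n_i+k}(\btau)$, and put $b_i=\First(u_{n_i})$. By definition of $R$, for every letter $c\in\cA_{n_{i+1}}$ the image $\tau_{n_i,n_{i+1}}(c)$ contains $u_{n_i}b_i$ (the extra levels only help, by positivity). It therefore contains two occurrences of $b_i$ separated exactly by $u_{n_i}$.

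Descend from a letter $c_N$ at the top level. At each level $n_i$, pick inside the image of the current letter either the first or the second of these two occurrences of $b_i$. Then continue downward from $b_i$, which is legitimate because the next lower level only requires \emph{some} letter at level $n_i$.

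Do this for the choice set $F=\emptyset$ and for a chosen set $F\subseteq\{1,\dots,N\}$. This gives two prefixes of $\tau_{n_0,n_N}(c_N)$, both followed by the same letter $b_0$. Their difference is a word $W$ with $W b_0$ in the language, so $W$ is a concatenation of return words to $b_0$, and
\[
h_{n_0}(W)=\sum_{i\in F}h_{n_i}(u_{n_i}).
\]
By Step 1, $\bigl\|\alpha\sum_{i\in F}h_{n_i}(u_{n_i})\bigr\|\le1/8$ for every finite $F$.

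\textbf{Step 3 (summation).} Take $F$ to be the set of indices with $\{\alpha h_{n_i}(u_{n_i})\}\ge0$. Each term is at most $1/8$ in modulus, by Step 1 applied at level $n_i$ to the single return-word concatenation $u_{n_i}$. If the partial sums of these nonnegative terms ever exceeded $1/8$, the first one to do so would lie in $(1/8,1/4]$, so its distance to $\Z$ would exceed $1/8$. That contradicts Step 2. Hence these sums are bounded uniformly in $N$.

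Repeat with the negative parts, and over the $K$ residue classes. The finitely many levels below $n_0$ contribute a finite amount. This yields $\sum_n\delta_{n,n+k}(\btau)<\infty$.

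\textbf{Main obstacle.} The delicate point is Step 2. One must check that the two descending choices at each level remain compatible with everything below; this is where positivity over $k+1$ steps and the spacing $K$ are used. One must also check that the resulting difference is genuinely a concatenation of level-$n_0$ return words, so that Step 1 applies.
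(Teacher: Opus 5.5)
Your argument is correct and is essentially the paper's own Bressaud--Durand--Maass argument. You use two descents with the same letters whose level-$n_0$ prefixes differ exactly by the selected words $u_{n_i}$ (the paper phrases this with two infinite $\btau$-addresses), combine this with $\varepsilon_n(\brho)\to 0$ from \Cref{theo:EigCharac:Mult}, and sum same-sign centered fractional parts. The differences are cosmetic: you bound directly instead of arguing by contradiction, handle general $k$ via residue classes instead of contracting to $k=1$, and apply $\varepsilon_{n_0}(\brho)$ once to the difference word $W$ (whose first and last letters agree) rather than to each address separately. Two small fixes: let $F$ range only over the intermediate levels, since no choice is made at the base $n_0$ or at the top level carrying $c_N$; and take the single-term bound $\|\alpha h_{n_i}(u_{n_i})\|\le 1/8$ from Step~2 with $F=\{i\}$, or choose $n_0$ with $\sup_{n\ge n_0}\varepsilon_n(\brho)<1/8$.
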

\begin{proof}
It is enough to prove \eqref{eq:theo:EigCharac:positive:returnWords} with $k = 1$, since then we can apply this particular case to the $k$ different contractions of $\bt$ along $(nk+i)_{n\geq 1}$, for $0\leq i\leq k-1$
So, assume that $k = 1$ and set $\delta_n = \delta_{n,n+1}$ for $n \ge 0$.

We argue by contradiction and assume that $(\delta_n)_{n\geq0}$ has infinite sum.
Let us start the proof with some simplifications.
Note that there exists a sequence $(n_k)_{k\geq0}$ of positive integers such that $n_{k+1} \geq n_k + 2$ and  $(\delta_{n_k})_{k\geq0}$ has infinite sum.
Also, we can find $a_k \in \cA_{n_k+1}$ and $u_k \in R_{n_k,r(n_k+1)}(a_k)(\bt)$ such that $\delta_{n_k} = \|\alpha h_{n_k}(u_k)\|$.
For $x \in \R$, we recall that  $\{x\}$ is  the unique real number in $[-1/2,1/2)$ such that $\| x \| = |\{ x \}|$.
Then, up to taking a subsequence of $(n_k)_k$, we may assume all the numbers $\{\alpha h_{n_k}(u_k)\}$ are nonnegative or all of them are nonpositive.
Let us assume that all of them are nonnegative; the other case can be handled in a similar way.
\smallskip

Next, 
we construct two special $\btau$-addresses $(v_n,c_n)_{n\geq0}$ and $(v'_n,c'_n)_{n\geq0}$ (see \Cref{def:address}).
In a first step, we define the $n$-th term of the addresses for $n \in \{n_k : k \geq 0\}$; then, for $n \geq 0$ such that $n_k + 1 < n < n_{k+1}$ for some $k \geq 0$; finally, for $n \in \{n_k + 1 : k \geq 0\}$.
Since $n_{k+1} \geq n_k + 2$, the three steps do not overlap and cover every $n \geq 0$, which ensures that the addresses will be well-defined.

Let $n = n_k$, with $k \geq 0$.
\begin{enumerate}
\item 
Let us  handle the first step.
Since $u_k \in R_{n_k}(a_k)$, we have that $u_k$ starts with a letter $b_k$ for which there exist prefixes $v_{n_k} b_k, v'_{n_k} b_k$ of $\tau_{n_k}(a_k)$ such that $v'_{n_k} u_k = v_{n_k}$.
This defines $v_{n_k}$ and $v'_{n_k}$.
We also set $c_{n_k} = c'_{n_k} = b_k$.
\item
For the second step, we take $k \geq 0$ and inductively define, for $n_k + 1 < n < n_{k+1}$, $c_n,c'_n$ as the first letter of $\tau_n(c_{n+1})$ and $v_n,v'_n$ as the empty word.
\item
Finally, for $n = n_k + 1$ with $k \geq 0$, we note that since $\btau$ is positive, $a_k$ occurs in every $\tau_n(c)$, $c \in \cA_{n_k+2}$.
Hence, we can put $c_n = c'_n = a_{n_k}$ and let $v_n, v'_n$ to be such that $v_n a_k = v'_n a_k \in \prefixes(\tau_n(c_{n+1}))$.
\end{enumerate}
Let us show that $(v_n,c_n)_{n\geq0}$ is indeed a $\btau$-address.
If $n \geq 0$ satisfies $n_k < n < n_{k+1}$ for some $k \geq 0$, then $v_n$ and $c_n$ were defined in Step (2); thus, it directly follows that $v_n c_n \in \prefixes(\tau_n(c_{n+1}))$.
Consider next the case $n = n_k$ for some $k \geq 0$.
Then, $c_n = b_k$ and $v_n = u_k$, where $u_k$ and $b_k$ satisfy $u_k b_k \in \prefixes(\tau_n(a_k))$.
Hence, $v_n c_n \in \prefixes(\tau_n(a_k))$.
Now, since $n_{k+1} \geq n_k + 2$, $c_{n+1}$ was defined in Step (3), so $c_{n+1} = a_k$.
We obtain $v_n c_n \in \prefixes(\tau_n(c_{n+1}))$.
Therefore, $(v_n,c_n)_{n\geq0}$ is a $\btau$-address.
A similar argument shows that $(v'_n,c'_n)_{n\geq0}$ is a $\btau$-address as well.
\smallskip

The final part of the proof is to use \Cref{theo:EigCharac:Mult} with the addresses that we have constructed to get a contradiction.

Let $\First_{n,m}(a) = \First(\tau_{n,m}(a))$ for $m > n \ge 0$ and $a \in \cA_m$, and define 
$\eta_{n,m} = \rho_n(\First_{n,m}(c_m)) + \alpha h_m(v_m) - \rho_n(c_n)$ and 
$\eta'_{n,m} = \rho_n(\First_{n,m}(c'_m)) + \alpha h_m(v'_m) - \rho_n(c'_n)$.
If follows from the construction that $c_n = c'_n$ for every $n \ge 0$, $v_n = v'_n$ if $n \notin \{n_k : k \ge 0\}$, and $v_{n_k} = u_{n_k} \, v_{n_k}$ for every $k \ge 0$.
Therefore, $\eta_{n,m} - \eta'_{n,m} = 0$ if $m \notin \{n_k : k \ge 0\}$ and $\eta_{n,n_k} - \eta'_{n,n_k} = \alpha h_{n_k}(u_k)$ for $k \ge 0$, yielding
\begin{equation}
    \label{proof:lem:EigCharac:positive:returnWords:1}
    \sum_{n_k \leq m < n_\ell} \!\!
    \eta_{n_k,m} - \eta'_{n_k,m} =
    \sum_{k \leq i < \ell} 
    \alpha h_{n_i}(u_i)
    \quad \text{for any $\ell > k \geq 0$}.
\end{equation}

We now relate the quantities $\eta_{n,m}$ with the hypothesis on $\alpha$.
Since $\alpha$ is an eigenvalue of $X_{\btau}$, we can apply \Cref{theo:EigCharac:Mult}, which gives a family $\brho = (\rho_n(a) : n \ge 0,\, a\in\cA_n)$ such that $\varepsilon_n(\brho)$ tends to 0 as $n \to \infty$,  with $\varepsilon_n(\brho)$ being defined in   (\ref{defi_eps_n}).
Then, we  get
\begin{equation}
    \label{proof:lem:EigCharac:positive:returnWords:2}
    \sum_{n_k \leq m < n_\ell} \!\! \eta_{n_k,m} = 
    \rho_{n_k}(\First_{n_k,n_\ell}(c_{n_\ell})) + \alpha h_{n_k}(\boldsymbol{v}_{n_k,n_\ell}) - \rho_{n_k}(c_{n_k}),
\end{equation}
where 
\[  \boldsymbol{v}_{n,m} =
    \tau_{n,m}(v_m) \,  \tau_{n,m-1}(v_{m-1}) \dots 
    \tau_n(v_{n+1}) \, v_n \, c_n. \]
Since $(v_n,c_n)_{n\geq0}$ is a $\btau$-address, $\boldsymbol{v}_{n_k,n_\ell}$ is in the language of $X_{\btau}^{(n_k)}$.
This permits to bound \eqref{proof:lem:EigCharac:positive:returnWords:2}  as
\[  \big\| \sum_{n_k \leq m < n_\ell} \!\! \eta_{n_k,m} \big\| \leq 
    \varepsilon_{n_k}(\brho). \]
A similar argument shows that $\varepsilon_{n_k}(\brho)$ also  provides an upper bound for  $\|\sum_{n_k \leq m < n_\ell} \eta'_{n_k,m}\|$. Hence, by \eqref{proof:lem:EigCharac:positive:returnWords:1},
\[  \big\| \!
    \sum_{k \leq i < \ell} 
    \alpha h_{n_i}(u_i) \, \big\| 
    \leq 
    \big\|\sum_{n_k \leq m < n_\ell} \!\! \eta_{n_k,m}\, \big\| + 
    \big\|\sum_{n_k \leq m < n_\ell} \!\! \eta'_{n_k,m}\, \big\|
    \leq 2 \varepsilon_{n_k}(\brho)
    \]
for all $\ell > k \geq 0$.
In particular, since $\varepsilon_n(\brho)$ converges to $0$ as $n$ tends to infinity, there exists $\tilde{k} \geq 0$ such that $\| \sum_{\tilde{k} \leq i < \ell} \alpha h_{n_i}(u_i) \| < 1/16$ for every $\ell > \tilde{k}$.
This implies that 
\[  \sum_{\tilde{k} \leq i < \ell} \{ \alpha h_{n_i}(u_i) \} = 
        \Bigl\{\sum_{\tilde{k} \leq i < \ell} \alpha h_{n_i}(u_i) \Bigr\}.
\]
Hence, as the numbers $\{ \alpha h_{n_i}(u_i) \}$ are nonnegative, it follows that
    \[  \sum_{\tilde{k} \leq i < \ell} \{ \alpha h_{n_i}(u_i) \} = 
        \Bigl|\Bigl\{ \sum_{\tilde{k} \leq i < \ell} \alpha h_{n_i}(u_i) \Bigr\} \Bigr| =
         \big\| \sum_{\tilde{k} \leq i < \ell} \alpha h_{n_i}(u_i) \ \big\| 
         \leq 1/8  \quad \text{for all $\ell > \tilde{k}$.} \]
But $\delta_{n_i} = \{ \alpha h_{n_i}(u_i) \}$, contradicting the fact  that $(\delta_{n_i})_{i\geq0}$ has infinite sum.
\end{proof}

We can now prove \Cref{theo:EigCharac:positive}.

\begin{proof}[Proof of \Cref{theo:EigCharac:positive}.]
Let $\brho = (\rho_n(a) : n \ge 0,\, a \in \cA_n)$ be a sequence such that $(\varepsilon_{n,n+2}(\brho))_{n \ge 0}$ has finite sum.
Denote by $\brhobar = (\bar{\rho}_n(a) : n \ge 0,\, a \in \cA_n)$ the sequence provided by \Cref{lem:straight_rho}.
Since $\btau$ is positive, we have $r(n) = n+1$ for all $n \ge 0$ in \Cref{lem:rho_controls_retwords}, and therefore $\delta_{n,n+2}(\btau) \le 2 \varepsilon_{n,n+2}(\brho)$ for all $n$.
Combined with \Cref{lem:straight_rho}, this yields:
\begin{enumerate}[label=(\roman*)]
    \item $\|\bar{\rho}_m(b) - \bar{\rho}_n(\First(\tau_{n,m}(b)))\| \le 2\varepsilon_{n,n+2}(\brho)$ for all $n \ge 0$ and $b \in \cA_{n+2}$;
    \item $\|\bar{\rho}_n(v_0) + \alpha h_n(v_0 \cdots v_{k-1}) - \bar{\rho}_n(v_k)\|
           \le 4\varepsilon_{n,n+2}(\brho)$ 
          for all $n \ge 0$ and all $v_0\cdots v_k$ occurring in $\tau_{n,n+2}(b)$ for some $b \in \cA_m$.
\end{enumerate}

We now show that
\begin{equation}
\label{eq:1:theo:EigCharac:positive}
    \varepsilon_{n,m}(\brhobar)
    \le 6 \sum_{i \ge n} \varepsilon_{i,i+2}(\brho)
    \qquad\text{for all } m > n \ge 0.
\end{equation}
By \eqref{trivial_bound_eps_n_ge_eps_nm}, this implies $\varepsilon_n(\brhobar) \le 12 \sum_{i \ge n} \varepsilon_{i,i+2}(\brho)$, which tends to $0$ since $(\varepsilon_{n,n+2}(\brho))$ has finite sum.
Thus, once \eqref{eq:1:theo:EigCharac:positive} is established, \Cref{theo:EigCharac:Mult} implies that $\alpha$ is an eigenvalue.

Fix $m > n \ge 0$, $a \in \cA_m$, and a prefix $v = v_0 \cdots v_k \in \prefixes(\tau_{n,m}(a))$.
Then there is a Dumont--Thomas type decomposition (see \cite{Dumont-Thomas}) of $v_0 \cdots v_{k-1}$
\[  v_0 \cdots v_{k-1} = \tau_{n,m-1}(u_{m-1}) \, \tau_{n,m-2}(u_{m-2}) \dots \tau_n(u_{n+1})\, u_n,  \]
where $u_i a_i \in \cA_i^*$ (with $a_i$ a letter),   $u_i a_i \in \prefixes(\tau_i(a_{i+1}))$ for $n \le i < m$, and $a_m = a$.
For $m \ge i > n$, set $b_i = \First(\tau_{n,i}(a_i)) \in \cA_n$, and let $b_n = a_n$.
Since
\[
   h_n(u) = \sum_{n \le i < m} h_i(u_i),
\]
we obtain
\[
   \rho_n(b_m) + \alpha h_n(u) - \rho_n(b_n)
   = \sum_{n \le i < m} \rho_n(b_{i+1}) + \alpha h_i(u_i) - \rho_n(b_i).
\]
Now, by Item (i), $\|\rho_i(a_i) - \rho_i(b_i)\| \le 2\varepsilon_{i,i+2}(\brho)$, and by Item (ii), $\|\rho_i(a_i) + \alpha h_i(u_i) - \rho_i(b_i)\| \le 4\varepsilon_{i,i+2}(\brho)$.
Combining these bounds in the last identity yields
\[
   \|\rho_n(b_m) + \alpha h_n(u) - \rho_n(b_n)\|
   \le 6 \sum_{i \ge n} \varepsilon_{i,i+2}(\brho).
\]
This proves \eqref{eq:1:theo:EigCharac:positive}, and therefore that $\varepsilon_n(\brhobar) \to 0$ as $n \to \infty$.
By \Cref{theo:EigCharac:Mult}, $\alpha$ is an eigenvalue of $X$, which ends the proof.
\medskip

Assume now that $\alpha$ is an additive eigenvalue.
Let $\brhobar = (\bar{\rho}_n(a) : n \ge 0,\, a \in \cA_n)$ be the sequence given by \Cref{lem:straight_rho}.
Since $\btau$ is positive, $r(n) = n+1$ in that lemma.
Item (2) then gives $\varepsilon_{n,n+2}(\brhobar) \le \delta_{n,n+2}(\btau)$ for every $n$.
By \Cref{lem:EigCharac:positive:returnWords}, the series $(\delta_{n,n+2}(\btau))_{n \ge 0}$ is summable, 
and hence so is $(\varepsilon_{n,n+2}(\brhobar))_{n \ge 0}$.
\end{proof}

\begin{proof}[Proof of \Cref{theo:EigCharac:proper}.]
The fact that \eqref{eq:theo:EigCharac:proper} converges to $0$ as $n \to \infty$ means that the hypothesis of \Cref{theo:EigCharac:Mult} are satisfied by $\brho = (\rho_n(a) \coloneqq 0 : n \geq 0, a \in \cA_n)$; hence $\alpha$ is an additive eigenvalue in this case.

Conversely, let us assume that $\alpha$ is an additive eigenvalue.
By \Cref{theo:EigCharac:Mult}, there exists a sequence of real numbers $\brho = (\rho_n(a) : n \geq 0, a \in \cA_n)$ such that $\varepsilon_n(\brho)$ converges to $0$ as $n \to \infty$.
By \Cref{lem:rho_controls_retwords}, this implies  that for every $n \ge 0$ and $m \ge r(n)$, $\delta_{n,r(m)}(\btau) \le \varepsilon_{n,r(m)}(\brho)$.
Since $\varepsilon_{n,r(m)}(\brho) \le 2 \varepsilon_n(\brho)$ by \eqref{trivial_bound_eps_n_ge_eps_nm}, we obtain that
\begin{equation}
    \label{eq:00:theo:EigCharac:proper}
    \text{$\sup_{m \ge r(n)} \delta_{n,r(m)}(\btau)$ converges to $0$ as $n \to \infty$.}
\end{equation}

Let now $\brhobar = (\bar{\rho}_n(a) : n \ge 0, \, a \in \cA_n)$ be the  optimal  sequence given by \Cref{lem:straight_rho}.
Since $\btau$ is left-proper, for every $i \geq 0$, there exist  an integer $n_0(i) > i$ and $\alpha_i \in \cA_i$ such that $\First(\tau_{i,n}(a)) = \alpha_i$ for all $n \geq n_0(i)$ and $a \in \cA_n$.
Then, for every $n \geq \max(n_0(i), r(i))$ and $a,b \in \cA_n$, Item (1) of \Cref{lem:straight_rho} gives the bound
\[  \| \rho_n(a) - \rho_n(b) \| \leq 
    \| \rho_n(a) - \rho_i(\alpha_k) \| + 
    \| \rho_n(b) - \rho_i(\alpha_k) \| \leq 
    2\delta_{i,r(n)}(\btau).
\]
From this and  Item (2) of \Cref{lem:straight_rho} we get, for any $m \ge r(n)$, $a \in \cA_m$ and $v_0 \cdots v_k \in \prefixes(\tau_{n,m}(a))$,
\begin{multline}
    \label{eq:2:theo:EigCharac:proper}
    \big\| \alpha h_n(v_0 \cdots v_{k-1}) \big\| \leq 
    \big\| \rho_n(v_0) - \rho_n(v_k) + \alpha h_n(v_0 \cdots v_{k-1}) \big\| +
    \big\| \rho_n(v_0) - \rho_n(v_k) \big\| \\ \leq 
    2\delta_{i,r(n)}(\btau) + 2\delta_{i,r(m)}(\btau) \le 
    4 \delta_{i,r(m)}(\btau).
\end{multline}
Since $\sup_{m\ge r(i)} \delta_{i,m}(\btau) \to 0$ as $m \to \infty$ by \eqref{eq:00:theo:EigCharac:proper}, we deduce that \eqref{eq:theo:EigCharac:proper} converges to $0$ as $n \to \infty$.

It is left to consider the case in which $\btau$ is positive and every $\tau_n$ is left-proper.
Observe that \eqref{eq2:theo:EigCharac:proper} is equivalent to \eqref{eq:theo:EigCharac:positive} with $\brho = (\rho_n(a) \coloneqq 0 : n \geq 0, a \in \cA_n)$, so $\alpha$ is an additive eigenvalue of $X_{\btau}$ whenever \eqref{eq2:theo:EigCharac:proper} holds.
Assume now that $\alpha$ is an additive eigenvalue.
We use a variation of the argument used in the non-positive case.
Let $\brhobar = (\bar{\rho}_n(a) : n \ge 0, \, a \in \cA_n)$ be the sequence given by \Cref{lem:straight_rho}.
Then, in \eqref{eq:2:theo:EigCharac:proper}, the current hypothesis gives $n_0(i) = i + 1$ and $r(n) = n + 1$, so 
\[  \big\| \alpha h_n(v_0 \cdots v_{k-1}) \big\| \leq 
    4\delta_{n-1,n+3}(\btau)
\]
for all $n \geq 1$ and $v_0 \cdots v_k \in \prefixes(\tau_{n,n+2}(a))$ with $a \in \cA_{n+2}$.
Since $(\delta_{n,n+4}(\btau) : n \ge 0)$ has finite sum by \Cref{lem:EigCharac:positive:returnWords}, \eqref{eq2:theo:EigCharac:proper} follows.
\end{proof}


\section{Characterization of eigenvalues through letter-coboundaries}
\label{sec:carac_finite_rank}

In this section, we impose further  mild assumptions on the directive sequence $\btau$. \Cref{theo:EigCharac:decisive:Mult,theo:EigCharac:FAR:Mult,theo:EigCharac:decisive:positive}  then state that  the sequences $\brho=( \rho_n(a): n \geq 0, \ a \in \cA_n )$ appearing in \Cref{theo:EigCharac:Mult,theo:EigCharac:positive} can be chosen to define letter-coboundaries on the corresponding levels $X_{\btau}^{(n)}$. 
In particular, this allows one to exploit linear-algebra duality, as shown in \Cref{cor:EigCharac:FAR:eigs&freqs}, which relates   the possible  values that eigenvalues can take  with   measures of  the basis $\mu(B_n(a))$. We also derive in Proposition~\ref{prop:balance_char_Sadic}  a characterization of balancedness.

We consider two wide subclasses of directive sequences: those with bounded alphabet rank, and those that are decisive. 
The former corresponds, in dynamical terms, to subshifts of \emph{finite topological rank}, an important class of low-complexity subshifts \cite{DM:08}. These are, roughly speaking,   systems  that  admit a Bratteli-Vershik  representation with a bounded number of vertices at each level; see \emph{e.g.} \cite{BDM10,espinoza22}. Concerning the later, 
we recall that decisiveness is a mild hypothesis, satisfied for example by proper directive sequences and by the  centered $1$-block presentations of primitive directive sequences (see Proposition \ref{prop:decisive:suff_conds}).  As  stressed in Remark \ref{rem:anyminimal},
every  infinite minimal subshift is generated by a primitive, 
 decisive, and  recognizable  directive sequence.

Our results  can be compared with  those in \cite{BCBY}, where a single, global coboundary is considered on $X_{\bt}$. 
Here we focus on local coboundaries, that is, coboundaries defined on  level-$n$ subshifts $X_{\bt}^{(n)}$.

We first state the corresponding characterizations in Section \ref{subsec:statements_FAR}.
We then discuss  relations with  symbolic discrepancy  in Section \ref{subsec:balance_FAR}. After  having stated a series of prepatory lemmas in Section \ref{subsec:proofsLemmaFAR},
proofs are   given in Section \ref{subsec:proofsFAR}.

\subsection{Characterizations of  continuous eigenvalues}
\label{subsec:statements_FAR}

We begin with the most general criterion of this section, which only requires the directive sequence to be decisive.  


\begin{theorem}
\label{theo:EigCharac:decisive:Mult}
Let $X$ be a subshift generated by a primitive and recognizable directive sequence $\btau = (\tau_n \colon \cA_{n+1}^* \to \cA_n^*)_{n\geq0}$.
Assume that $\btau$ is decisive, or that $(|\cA_n| : n \ge 0)$ is  bounded.
Then, a given real number $\alpha$ is an eigenvalue of $X$ if and only if there exist letter-coboundaries $c_n \colon \cA_n^* \to \R$ in $X_{\btau}^{(n)}$ for each $n \ge 0$ such that
\begin{equation}
    \label{eq:1:theo:EigCharac:decisive:Mult}
    \sup \big\{
        \big\| c_n(u) - \alpha h_n(u)  \big\| :
        u \in \cL(X_{\btau}^{(n)})  \big\}
\end{equation}
converges to $0$ as $n \to \infty$.
\end{theorem}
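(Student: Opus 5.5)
The plan is to reduce both directions to \Cref{theo:EigCharac:Mult} through the dictionary of \Cref{lem:cobord&rho} between letter-coboundaries $c_n$ on $X_{\btau}^{(n)}$ and maps $\rho_n\colon\cA_n\to\R$. The maps $\rho_n$ that arise this way are exactly those constant on the blocks of $\cP^R_{X_{\btau}^{(n)}}$ (\Cref{rem:constantK}, \Cref{thm:manifold}).

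\emph{Sufficiency.} Let $c_n$ be letter-coboundaries such that \eqref{eq:1:theo:EigCharac:decisive:Mult} tends to $0$. \Cref{lem:cobord&rho} (level-$n$ subshifts are minimal by primitivity) gives $\rho_n$ with $c_n(a)=\rho_n(b)-\rho_n(a)$ for every $ab\in\cL(X_{\btau}^{(n)})$. Let $u_0\cdots u_k\in\cL(X_{\btau}^{(n)})$ and put $u=u_0\cdots u_{k-1}$. The sum telescopes to $c_n(u)=\rho_n(u_k)-\rho_n(u_0)$. Hence the quantity in \eqref{eq:convergence:theo:EigCharac:Mult} equals $\|c_n(u)-\alpha h_n(u)\|$, which tends to $0$ uniformly, and \Cref{theo:EigCharac:Mult} shows that $\alpha$ is an eigenvalue. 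This direction uses neither decisiveness nor bounded rank.

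\emph{Necessity.} Let $\alpha$ be an eigenvalue. Take $\brho$ from \Cref{theo:EigCharac:Mult}, so that $\varepsilon_n(\brho)\to0$. Let $\brhobar$ be the sequence of \Cref{lem:straight_rho}. By \eqref{eq:compare} and \eqref{trivial_bound_eps_n_ge_eps_nm}, $\varepsilon_n(\brhobar)\to0$ as well. The task is to replace $\bar\rho_n$ by a map $\rho'_n$ that is constant on each block of $\cP^R_{X_{\btau}^{(n)}}$ and satisfies $\max_b\|\rho'_n(b)-\bar\rho_n(b)\|\to0$. Granting this, set $c_n(a)=\rho'_n(b)-\rho'_n(a)$ for any $ab$ in the language. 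This is well defined and is a letter-coboundary by \Cref{lem:cobord&rho}. Telescoping as above, $\|c_n(u)-\alpha h_n(u)\|\le\varepsilon_n(\brhobar)+2\max_b\|\rho'_n(b)-\bar\rho_n(b)\|\to0$. Only distances modulo $1$ matter, so I will freely choose real lifts.

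\emph{Bounded alphabet case.} Suppose $ab,ab'\in\cL(X_{\btau}^{(n)})$. Comparing $\bar\rho_n(a)+\alpha h_n(a)$ with $\bar\rho_n(b)$ and with $\bar\rho_n(b')$ gives $\|\bar\rho_n(b)-\bar\rho_n(b')\|\le2\varepsilon_n(\brhobar)$. Two right vertices of one component of $\Gamma_{X_{\btau}^{(n)}}(\varepsilon)$ are joined by a path with at most $|\cA_n|$ such steps. So if $\rho'_n$ equals $\bar\rho_n$ at a fixed representative on each block, then $\|\rho'_n-\bar\rho_n\|\le2|\cA_n|\,\varepsilon_n(\brhobar)\to0$ since $|\cA_n|$ is bounded.

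\emph{Decisive case.} This is the main obstacle, since path lengths are no longer bounded. First I check that decisiveness passes to compositions. If $ab\in\cL(X_{\btau}^{(m)})$, decisiveness at level $m-1$ fixes the last letter of $\tau_{m-1}(a)$ and the first letter of $\tau_{m-1}(b)$ as functions of $a$ alone. These two letters form a word of the level-$(m-1)$ language, so one can iterate downward. The conclusion is that for every $k<n$, the map $b\mapsto\First(\tau_{k,n}(b))$ depends only on a left neighbour of $b$. Hence, as in \Cref{lemma:decisive:same_first_letter_in_each_CC}, it is constant on each block of $\cP^R_{X_{\btau}^{(n)}}$. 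Next, choose $k(n)\to\infty$ with $r(k(n))\le n$; this is possible by primitivity. Define $\rho'_n(b)=\bar\rho_{k(n)}(\First(\tau_{k(n),n}(b)))$. It is constant on blocks, and Item (1) of \Cref{lem:straight_rho} gives $\|\rho'_n(b)-\bar\rho_n(b)\|\le\delta_{k(n),r(n)}(\btau)$. By \Cref{lem:rho_controls_retwords} and \eqref{trivial_bound_eps_n_ge_eps_nm}, this is at most $2\varepsilon_{k(n),r(n)}(\brho)\le2\varepsilon_{k(n)}(\brho)$, which tends to $0$. This completes the plan.
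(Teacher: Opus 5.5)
Your proof is correct, and its skeleton matches the paper's. Sufficiency goes by telescoping into \Cref{theo:EigCharac:Mult}. Necessity replaces $\rho_n$ by a map that is constant on the blocks of $\cP^R_{X_{\btau}^{(n)}}$: in the bounded case via the path argument, and in the decisive case by pulling back along first letters of images. Where you differ is in how the pulled-back map is controlled.

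The paper, via \Cref{lemma:EigCharac:rho2cobs}, keeps the original $\brho$ and goes down only one level. For $v_0\cdots v_k\in\cL(X_{\btau}^{(n)})$, the word $\tau_{n-1}(v_0\cdots v_{k-1})\,\First(\tau_{n-1}(v_k))$ lies in $\cL(X_{\btau}^{(n-1)})$, and $h_{n-1}(\tau_{n-1}(w))=h_n(w)$ for every $w\in\cA_n^*$. Hence $\rho_{n-1}\circ\First\circ\tau_{n-1}$ already satisfies the level-$n$ estimate with error $\varepsilon_{n-1}(\brho)$. Under decisiveness this map is constant on blocks outright, so no comparison with a second sequence is needed; the same map is what feeds the path argument in the bounded case.

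You instead compare $\bar\rho_{k(n)}\circ\First\circ\tau_{k(n),n}$ with the optimal sequence $\brhobar$ through Item (1) of \Cref{lem:straight_rho}. This costs the bookkeeping $r(k(n))\le n$, and the error is governed by a level $k(n)$ that may lag well behind $n$. That suffices for the present statement. However, the paper's one-level estimate is what gets reused for the summable criterion of \Cref{theo:EigCharac:decisive:positive}. It is also reused, with $|\cdot|$ in place of $\|\cdot\|$, in \Cref{lem:balance=>small_abs_val}, a setting that \Cref{lem:straight_rho} does not cover.

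Finally, your propagation of decisiveness to compositions needs no iteration. Since $\First(\tau_{k,n}(b))=\First(\tau_{k,n-1}(\First(\tau_{n-1}(b))))$, decisiveness at level $n-1$ alone already makes $b\mapsto\First(\tau_{k,n}(b))$ depend only on a left neighbour of $b$.
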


In the case where the alphabets of the  directive sequences  have bounded cardinality, the criterion  from \Cref{theo:EigCharac:decisive:Mult} can be simplified  in such  a way that it only involves  a single letter-coboundary $c_n$ at some level $n \ge 0$.  
Moreover, in Item~(3) of next theorem, we show that $c_n$ captures all the oscillatory behavior modulo $1$  of the quantity $\alpha h_n$ arising from jumps between  tops of  towers of the Kakutani--Rohklin partitions. In other words, $\alpha h_n$ decomposes into a letter-coboundary $c_n$ and a residue function $\gamma_n$, where the  residue function $\gamma_n$ satisfies Item (3) below, a condition that is analogous to the criterion from \Cref{theo:EigCharac:proper} for proper directive sequences.

\begin{theorem}
\label{theo:EigCharac:FAR:Mult}
Let $X$ be a subshift generated by a primitive and recognizable directive sequence $\btau = (\tau_n \colon \cA_{n+1}^* \to \cA_n^*)_{n\geq0}$.
Assume that $(|\cA_n| : n \geq 0)$ is bounded.
Then, for a given real number $\alpha$, the following conditions are equivalent:
\begin{enumerate}
    \item $\alpha$ is an additive eigenvalue of $X$;
    
    \item there exist an integer $n \geq 0$ and a letter-coboundary $c_n \colon \cA_n^* \to \R$ in $X_{\btau}^{(n)}$ such that
    \begin{equation}
        \label{eq:hip_cob:theo:EigCharac:FAR:Mult}
        \sup \big\{
            \big\| c_n(\tau_{n,m}(u)) - \alpha h_m(u)  \big\| :
            u \in \cL(X_{\btau}^{(m)})  \big\}
    \end{equation}
    converges to $0$ as $m \to \infty$;

    \item there exist $n \geq 0$ and a letter-coboundary $c_n \colon \cA_n^* \to \R$ in $X_{\btau}^{(n)}$ such that the morphism $\gamma_n \colon \cA^*_n \to \R$, given by $\gamma_n(a) = \{c_n(a) - \alpha h_n(a)\}$ for  $a \in \cA_n$, satisfies that 
    \begin{equation}
        \label{eq:hip_gamma:theo:EigCharac:FAR:Mult}
        \sup\bigl\{ |\gamma_n(\tau_{n,m}(u))| : u \in \cL(X_{\btau}^{(m)}) \bigr\}
    \end{equation}
    converges to 0 as $m \to \infty$.
\end{enumerate}
\end{theorem}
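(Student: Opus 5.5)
The plan is to prove the cycle (1)$\Rightarrow$(2)$\Rightarrow$(3)$\Rightarrow$(1), with \Cref{theo:EigCharac:Mult} and \Cref{theo:EigCharac:decisive:Mult} doing most of the analytic work and the bounded-alphabet hypothesis providing a compactness argument.

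\emph{(3)$\Rightarrow$(1).} Fix $n$ and $c_n$ as in (3). By construction $c_n(a)-\alpha h_n(a)-\gamma_n(a)\in\Z$ for every $a\in\cA_n$. Hence, for $u\in\cL(X_{\btau}^{(m)})$,
\[
\alpha h_m(u)=\alpha h_n(\tau_{n,m}(u))\equiv c_n(\tau_{n,m}(u))-\gamma_n(\tau_{n,m}(u)) \pmod{\Z}.
\]
By \Cref{lem:cobord&rho}, write $c_n(a)=\rho(b)-\rho(a)$ for $ab\in\cL(X^{(n)}_{\btau})$. Define $\rho_m(a)=\rho(\First(\tau_{n,m}(a)))$ for $a\in\cA_m$. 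For $u_0\cdots u_k\in\cL(X^{(m)}_{\btau})$, the word $\tau_{n,m}(u_0\cdots u_{k-1})\First(\tau_{n,m}(u_k))$ lies in $\cL(X^{(n)}_{\btau})$. Telescoping therefore gives
\[
c_n(\tau_{n,m}(u_0\cdots u_{k-1}))=\rho_m(u_k)-\rho_m(u_0).
\]
Consequently the quantity in \eqref{eq:convergence:theo:EigCharac:Mult} at level $m$ is bounded by $\sup|\gamma_n(\tau_{n,m}(u))|$, which tends to $0$. \Cref{theo:EigCharac:Mult} then yields (1).

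\emph{(2)$\Rightarrow$(3).} Set $\gamma_n(a)=\{c_n(a)-\alpha h_n(a)\}$. Then $\gamma_n(\tau_{n,m}(u))$ differs from $c_n(\tau_{n,m}(u))-\alpha h_m(u)$ by an integer, so it is small modulo $1$. The task is to upgrade this to smallness in absolute value. I would argue inductively on the length of $u$: the values $\gamma_n(\tau_{n,m}(a))$, for $a\in\cA_m$, move by small steps when $m$ is replaced by $m+1$ and $u$ by its letter decomposition.

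This is where I expect trouble, because absolute values are not controlled by small-mod-$1$ data alone. The intended way around it is to use the freedom in choosing $c_n$ and $n$: alter $c_n$ by an integer-valued coboundary, in the spirit of \Cref{lem:cob a la host}. Concretely, I would first pass to a level $n$ where $\sup_{m\geq n}$ of \eqref{eq:hip_cob:theo:EigCharac:FAR:Mult} is less than $1/4$. Then I would show that the integer parts $N_m(u)=c_n(\tau_{n,m}(u))-\alpha h_m(u)-\{\cdot\}$ define an integer-valued morphism on the level-$n$ words that is constant along the hierarchy. Absorbing it into $c_n$ (integer shifts do not affect the coboundary property modulo $\Z$, and \Cref{lem:cob a la host} restores a real coboundary) should make $N_m\equiv0$.

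\emph{(1)$\Rightarrow$(2).} This is the main step. By \Cref{theo:EigCharac:decisive:Mult} there are coboundaries $c_m$ on $X^{(m)}_{\btau}$ with $\sup_u\|c_m(u)-\alpha h_m(u)\|\to0$. These are local, one per level, and the task is to replace them by pullbacks of a single $c_n$. By \Cref{lem:compositioncoboundary}, $c_n\circ\tau_{n,m}$ is a coboundary at level $m$. By \Cref{thm:manifold} together with $|\cA_m|\leq d$, the coboundary spaces have dimension at most $d-1$. After passing to a subsequence along which the alphabets and the partitions $\cP^R$ are fixed (which only changes the sequence by a contraction), I would compare $c_m$ with $c_n\circ\tau_{n,m}$.

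The difference $c_m-c_n\circ\tau_{n,m}$ is a coboundary that is small modulo $\Z$ on the whole language. Its $\rho$-function is then close modulo $\Z$ to a function constant on components; taking representatives via \Cref{lem:cob a la host}, this difference is approximately an integer-valued coboundary. The hardest point is to guarantee that these integer corrections stabilise in $m$, so that one fixed $c_n$ works for all $m$. I would try a compactness argument (finitely many component configurations, bounded integer vectors by \Cref{lem:cob a la host}'s bound $|c|\leq1$) to extract a level $n$ at which they become eventually zero.
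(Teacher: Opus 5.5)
The genuine gap is in (1)$\Rightarrow$(2), where you are fighting the wrong obstruction. For fixed $n$, the difference $d_m=c_m-c_n\circ\tau_{n,m}$ only satisfies $\|d_m(u)\|\le\varepsilon_n+\varepsilon_m$. After you split off an integer-valued coboundary, which is invisible modulo $\Z$ anyway, what remains is a \emph{real} coboundary of size of order $\varepsilon_n$, and this does not tend to $0$ as $m\to\infty$. Making the integer corrections eventually vanish therefore proves nothing.

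Indeed, no $c_n$ coming out of \Cref{theo:EigCharac:decisive:Mult} need satisfy (2). Take \Cref{ex:sturmian_avec_cobord}, where $c\circ\tau=c$ and the coboundary $z$ with vector $\vec z$ satisfies $z\circ\tau=z$. The sequence $c_m=c+\eta_m z$ with $0<|\eta_m|<1/2$, $\eta_m\to0$, is an admissible output, and all your integer corrections are zero. Yet $(c+\eta_n z)\circ\tau^{m-n}=c+\eta_n z$, so on the letter $1$ its distance to $\alpha h_m$ modulo $\Z$ tends to $|\eta_n|\neq0$.

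The missing idea is to build a \emph{new} coboundary as a limit of real data; this is \Cref{existence_limit_coboundary}.
First, normalise the $\rho$-functions $\rho_\ell$ of $c_{n_\ell}$; they are bounded thanks to \Cref{lem:cob a la host}.
Next, pass to a subsequence on which the alphabets, the graphs $\Gamma_{X^{(n_\ell)}_{\btau}}(\varepsilon)$ and the first-letter maps $a\mapsto\First(\tau_{n_\ell,n_k}(a))$, $k>\ell$, are all constant, the latter equal to some $\First_\infty$. Along this subsequence also require $\rho_\ell\to\rho$.
The coboundary $c$ induced by $\rho\circ\First_\infty$ then satisfies $c\circ\tau_{n_\ell,n_k}=c$ and $|c(v)-c_{n_\ell}(\tau_{n_\ell,n_{\ell+1}}(v))|\le2\max_a|\rho(a)-\rho_\ell(a)|$, in absolute value.
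Finally, take an arbitrary $m$, not only one along the contraction, and let $\ell$ be maximal with $n_{\ell+1}\le m$. Write $c\circ\tau_{n_0,m}(v)=c(\tau_{n_{\ell+1},m}(v))$ and compare with $c_{n_\ell}\circ\tau_{n_\ell,m}$ at this \emph{moving} level. This gives an error at most $2\max_a|\rho(a)-\rho_\ell(a)|+\varepsilon_{n_\ell}\to0$, which comparison with a fixed level cannot achieve.

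In (2)$\Rightarrow$(3) you reach the decisive observation and then discard it. Replace $(n,c_n)$ by $(n',c_n\circ\tau_{n,n'})$ so that $\|c_n(v)-\alpha h_n(v)\|<1/4$ on all of $\cL(X^{(n)}_{\btau})$. Then the integer parts $N$ are additive on the language. Since $\gamma_n$ is defined letterwise as the centred fractional part, this already gives $\gamma_n(v)=\{c_n(v)-\alpha h_n(v)\}$ for every $v\in\cL(X^{(n)}_{\btau})$. Hence $|\gamma_n(\tau_{n,m}(u))|=\|c_n(\tau_{n,m}(u))-\alpha h_m(u)\|\to0$, which is exactly \Cref{lem:small_gamma=>morphism}.

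Absorbing $N$ into $c_n$ to get $N\equiv0$ is not only unnecessary but impossible when $\alpha\neq0$. Additivity would then give $|\alpha h_n(w)|<1/4$ for arbitrarily long concatenations $w$ of return words to a letter, on which $c_n$ vanishes. Up to sign, these integers are precisely the coefficients in $\alpha=\sum_a w_a\,\mu(B_n(a))$ of \Cref{lemma:EigCharac:gamma2freqs&eig}.

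Your (3)$\Rightarrow$(1) is correct. It is the direct route through \Cref{theo:EigCharac:Mult} that the paper mentions right after the statement; the formal proof instead goes through \Cref{lemma:EigCharac:gamma2freqs&eig} and the Gottschalk--Hedlund theorem.
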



Any of Items (2) or (3) of \Cref{theo:EigCharac:FAR:Mult} implies Item (1), even without assuming that the alphabets of $\btau$ are bounded, as shown next. 
Indeed, the map $c_n \circ \tau_{n,m} \colon \cA_m^* \to \mathbb{R}$ is a letter-coboundary in $X_{\btau}^{(m)}$,   by  Lemma \ref{lem:compositioncoboundary}.
Therefore, by \eqref{eq:hip_cob:theo:EigCharac:FAR:Mult} or \eqref{eq:hip_gamma:theo:EigCharac:FAR:Mult}, the hypothesis of \Cref{theo:EigCharac:Mult} is satisfied for the sequence $(\rho_n(a) : m > n,\, a \in \cA_n)$ where $\rho_n \colon \cA_n \to \R$ is given by \Cref{lem:cobord&rho} when applied with $c_n \circ \tau_{n,m}$.

In both Items (2) and (3), we impose a condition on a morphism,  which is either $c_n - \alpha h_n$, or $\gamma_n$.
The main difference is that in Item (2) we consider the distance to the nearest integer $\|\cdot\|$, whereas in Item (3) we use the absolute value $|\cdot|$.
This crucial distinction allows the convergence in Item (3) to be analyzed using tools from linear algebra; see \Cref{subsec:nontrivial,subsec:nontrivialdynamical}
for illustrations.

Note that the properties stated in  Items (2) and (3) are asymptotic properties in the sense that if they hold for some $n \geq 0$, then they also hold for every $m \geq n$.
Finally, we note that Item (3) of \Cref{theo:EigCharac:FAR:Mult} generalizes \cite[Corollary 3.6]{mercat} by removing the finitary condition, namely that the substitutions in the directive sequence take only finitely many values.

As it happens with \Cref{theo:EigCharac:positive}, the criterion of \Cref{theo:EigCharac:decisive:Mult} can be simplified for directive sequences exhibiting strong recurrence properties.
We carry out this simplification in \Cref{theo:EigCharac:decisive:positive} below.
Note that this time we require a property slightly stronger than the strong primitivity of the directive sequence.

\begin{theorem}
\label{theo:EigCharac:decisive:positive}
Let $X$ be a subshift generated by a recognizable directive sequence $\btau = (\tau_n \colon \cA_{n+1}^* \to \cA_n^*)_{n\geq0}$ that is decisive.
Assume that for all $n \geq 0$ and $a \in \cA_{n+1}$, every word $u \in \cL(X_{\btau}^{(n)})$ of length $2$ occurs in $\tau_n(a)$.
Then, a real number $\alpha$ is an eigenvalue of $X$ if and only if there exist letter-coboundaries $c_n \colon \cA_n^* \to \R$ in $X_{\btau}^{(n)}$, for each $n \ge 0$, such that 
\begin{equation}
    \label{eq:1:theo:EigCharac:FAR:positive}
    \sum_{n \ge 0} \max \big\{
        \big\| c_n(u) - \alpha h_n(u) \| :
        a \in \cA_{n+2},\, 
        u \in \prefixes(\tau_{n,n+2}(a)) \big\}
        < \infty.
\end{equation}
\end{theorem}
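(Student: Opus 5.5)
We reduce \Cref{theo:EigCharac:decisive:positive} to \Cref{theo:EigCharac:positive}, treating the coboundaries $c_n$ as a way of packaging a sequence $\brho$. First note that the hypothesis (every length-$2$ word of $\cL(X_{\btau}^{(n)})$ occurs in every $\tau_n(a)$) implies that $\btau$ is positive. Hence \Cref{theo:EigCharac:positive}, \Cref{lem:straight_rho} and \Cref{lem:EigCharac:positive:returnWords} all apply, with $r(n)=n+1$.

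\emph{Sufficiency.} Suppose letter-coboundaries $c_n$ satisfy \eqref{eq:1:theo:EigCharac:FAR:positive}. By \Cref{lem:cobord&rho}, choose $\rho_n\colon\cA_n\to\R$ with $c_n(a)=\rho_n(b)-\rho_n(a)$ for every $ab\in\cL(X_{\btau}^{(n)})$. For any $u_0\cdots u_k\in\cL(X_{\btau}^{(n)})$, telescoping gives $c_n(u_0\cdots u_{k-1})=\rho_n(u_k)-\rho_n(u_0)$. Therefore
\[
\rho_n(u_0)-\rho_n(u_k)+\alpha h_n(u_0\cdots u_{k-1}) = \alpha h_n(u)-c_n(u), \qquad u=u_0\cdots u_{k-1}.
\]
Applying this to the prefixes $u_0\cdots u_k$ of $\tau_{n,n+2}(a)$, condition \eqref{eq:theo:EigCharac:positive} is dominated by \eqref{eq:1:theo:EigCharac:FAR:positive}, since $u$ is itself a prefix of $\tau_{n,n+2}(a)$. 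By the remark after \Cref{theo:EigCharac:positive}, this already yields that $\alpha$ is an eigenvalue.

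\emph{Necessity.} Suppose $\alpha$ is an eigenvalue, and take the optimal $\brhobar$ from \Cref{lem:straight_rho}. By \Cref{lem:EigCharac:positive:returnWords} the quantities $\delta_{n,n+k}$ are summable, so $\varepsilon_{n,n+2}(\brhobar)$ is summable. The task is to replace $\brhobar$ by a $\brho'$ that is constant on each set $C\in\cP^R_{X_{\btau}^{(n)}}$, so that $\brho'$ lies in $\cF_X$ and $c_n:=\eta(\rho'_n)$ is a letter-coboundary (via \Cref{thm:manifold}). The cost of this replacement must stay summable.

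To construct $\brho'$, use decisiveness through \Cref{lemma:decisive:same_first_letter_in_each_CC}. If $a_L b_R$ and $a_L b'_R$ are edges at level $n+1$, then $\tau_n(b)$ and $\tau_n(b')$ start with the same letter. By Item (1) of \Cref{lem:straight_rho} at scale $n+1 \to n+2$ (or $n \to n+1$), this gives $\bar\rho_{n+1}(b)\approx\bar\rho_n(\First\tau_n(b))=\bar\rho_n(\First\tau_n(b'))\approx\bar\rho_{n+1}(b')$, with error $O(\delta_{n,n+2})$. So $\bar\rho_{n+1}$ is nearly constant along each right component. Define $\rho'_{n+1}(b):=\bar\rho_n(\First\tau_n(b))$; this is exactly constant on right components, since the first letter is. Then $\|\rho'_{n+1}-\bar\rho_{n+1}\|\le\delta_{n,n+2}$, and the $\varepsilon_{n+1,n+3}$-type quantity for $\brho'$ is bounded by $\varepsilon_{n+1,n+3}(\brhobar)+2\delta_{n,n+2}$, which is summable. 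This $\rho'_{n+1}$ determines $c_{n+1}$ by $c_{n+1}(a)=\rho'_{n+1}(b)-\rho'_{n+1}(a)$ for any $ab$ in the language, and $c_{n+1}$ is well defined. We then need $\|c_{n+1}(u)-\alpha h_{n+1}(u)\|$ over prefixes $u$ of $\tau_{n+1,n+3}(a)$. Writing $u=u_0\cdots u_{k-1}$ and using the length-$2$ word $u_{k-1}u_k$, the sum telescopes to $\rho'(u_k)-\rho'(u_0)$, which is controlled as above. A prefix ending at the end of $\tau_{n+1,n+3}(a)$ is handled by extending with a next letter; that step uses the assumption that all length-$2$ words occur, or simply the language.

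The main obstacle is this necessity step: making sure the modification required to obtain an exact coboundary at each level costs only a summable error. The delicate point is choosing the level shift so that the bound for $\rho'$ is a $\delta$ term that \Cref{lem:EigCharac:positive:returnWords} makes summable. A secondary issue is where exactly the "all length-$2$ words occur in each $\tau_n(a)$" hypothesis enters. I expect it to be needed so that the coboundary relation $c(a)=\rho(b)-\rho(a)$, over every $ab$ in the language, is witnessed inside single images $\tau_{n,n+2}(a)$. That makes the bound over prefixes in \eqref{eq:1:theo:EigCharac:FAR:positive} genuinely local, as required.
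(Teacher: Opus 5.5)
Your proposal is correct and uses the same key idea as the paper: taking $\bar\rho_n\circ\First\circ\tau_n$ as the level-$(n+1)$ potential, which decisiveness makes exactly constant on right components, is precisely the decisive case ($Q=0$) of \Cref{lemma:EigCharac:rho2cobs}. The rest differs only in bookkeeping, with one wording point to fix:
\begin{itemize}
\item The paper obtains its summable $\brho$ by applying \Cref{theo:EigCharac:positive} to the even-level contraction, which secures a window $\tau_{n,n+4}$ after the one-level shift, and then treats odd and even levels separately.
\item You instead take $\brhobar$ directly and invoke \Cref{lem:EigCharac:positive:returnWords} with a larger fixed $k$. This avoids the parity split.
\item For the full-word prefix $\tau_{n+1,n+3}(a)$, it is the length-$2$ hypothesis, not mere membership in the language as your parenthetical suggests, that places the extended word inside some $\tau_{n+1,n+4}(e)$. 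That bounded window is what keeps the error summable.
\end{itemize}
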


\begin{remark} \label{rem:FAR_crit=>Host}
Condition \eqref{eq:1:theo:EigCharac:FAR:positive} implies Item (2) of \Cref{theo:EigCharac:FAR:Mult}, which in turn implies
\begin{equation}
    \label{eq:theo:EigCharac:FAR:Mult:necessary}
        \lim_{n\to\infty} 
        \max\big\{ \big\| c_n(u_n)
        - \alpha h_n(u_n) \big\| :
        a \in \cA_{n+1}, \, u_n \in \prefixes(\tau_n(a)) \big\}
        = 0.
\end{equation}
Condition \eqref{eq:theo:EigCharac:FAR:Mult:necessary} is analogous to the classical criterion of Host for substitutions \cite{Host86}.
However, as noted in \cite{eigenvalues_LR_2003}, \eqref{eq:theo:EigCharac:FAR:Mult:necessary} is not sufficient for $\alpha$ to be an additive eigenvalue in the $S$-adic setting.
\end{remark}
An interesting aspect of Theorems \ref{theo:EigCharac:FAR:Mult} and \ref{theo:EigCharac:decisive:positive} is that they cannot be extended to non-decisive directive sequences of infinite alphabet rank; see Section~\ref{subsec:ex:nocoboundary} for such an example.

\subsection{More on  eigenvalues and finite discrepancy}\label{subsec:balance_FAR}
In this section,  we  revisit the condition in Item~(3) of \Cref{theo:EigCharac:FAR:Mult} in terms of  finite discrepancy  and balancedness (see Definition~\ref{def:balance}).

We first recall the notation $B_n(a) = \{\tau_{0,n}(x) : x \in X_{\btau}^{(n)},\, x_0 = a\}$ as defined in \eqref{eq:Sadic:defi_Bn(a)}.  Assume that $\alpha$ is an eigenvalue.
The morphism $\gamma_n$ in Item~(3) of \Cref{theo:EigCharac:FAR:Mult} satisfies, by Item~(2), that $$|\gamma_n(v)| = \|c_n(v) - \alpha h_n(v)\|$$ for all $v \in \cL(X_{\btau}^{(n)})$.
Let  $w_n(a)$ be the nearest integer to $c_n(a) - \alpha h_n(a)$, \emph{i.e.},
$$c_n(a) - \alpha h_n(a)= w_n(a)+ \gamma_n(a).$$
According to to  Lemma \ref{lemma:EigCharac:gamma2freqs&eig} below,  the continuous map $f_n \colon X_{\btau} \to \Z$ defined by
\begin{equation}\label{eq:fna}
   f_n(x) = \begin{cases} 
   w_n(a) & \text{if $x \in B_n(a)$} \\
            0      & \text{otherwise}
        \end{cases}     
\end{equation} 
satisfies $\int f_n \,\mathrm{d}\mu = \alpha$ for all $\mu \in \cM(X,S)$,  and 
moreover,  $f_n$ is balanced, according to  Lemma \ref{lemma:EigCharac:gamma2freqs&eig}.
We recall that the connection between balancedness of integer functions and eigenvalues is an old topic, and the explicit description of such functions $f_n$ given here is similar to the one found in \cite[Theorem 3.2]{ghh18} for minimal Cantor systems and their proper Bratteli-Vershik models, where it is  proved  that,  for any eigenvalue $\alpha$,
there exists a clopen set $U$, with $\mu(U) = \alpha$ for all $\mu \in \cM(X,S)$, such that ${\mathbf 1}_U - \alpha {\mathbf 1}$ is a real coboundary, see also \cite{CORTEZ_DURAND_PETITE_2016}.
We recover here  the fact that any eigenvalue can be  realized  as 
the integral of a  balanced function. 
This  connection is  at the core  of next proposition 
that shows that eigenvalues can be described as  linear integer combinations of measures of the bases.

\begin{proposition}
\label{cor:EigCharac:FAR:eigs&freqs}
Let $X$ be a subshift generated by a primitive and recognizable directive sequence $\btau = (\tau_n \colon \cA_{n+1}^* \to \cA_n^*)_{n\geq0}$.
Assume that $\btau$ is decisive or that $(|\cA_n| : n \geq 0)$ is bounded.
Then, for every eigenvalue $\alpha$ of $X$ and sufficiently large $n$, there exists an integer vector $(w_a)_{a\in\cA_n} \in \Z^{\cA_n}$ such that
\begin{equation*}
    \alpha = \sum_{a\in\cA_n} w_a \, \mu(B_n(a))
    \enspace \text{for all $\mu \in \cM(X,S)$.}
\end{equation*}
\end{proposition}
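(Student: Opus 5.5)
Our plan is to combine \Cref{theo:EigCharac:decisive:Mult} with the measure-transfer formula of \Cref{prop:measure_transfer} and the vanishing of integrals of letter-coboundaries (\Cref{lem:cob_zero_integral}). Let $\alpha$ be an eigenvalue. By \Cref{theo:EigCharac:decisive:Mult}, there are letter-coboundaries $c_n$ in $X_{\btau}^{(n)}$ such that
\[
\eta_n \coloneqq \sup\{\|c_n(u)-\alpha h_n(u)\| : u\in\cL(X_{\btau}^{(n)})\}\to 0 .
\]
Fix $n$ with $\eta_n<1/4$, say. For $a\in\cA_n$, set $w_a$ to be the nearest integer to $c_n(a)-\alpha h_n(a)$ and $\gamma_n(a)=\{c_n(a)-\alpha h_n(a)\}$. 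These are the $w_n(a)$ and $\gamma_n$ of \eqref{eq:fna}; the sign of $w_a$ will be flipped at the end if needed.

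The first key step is to show that the morphism $\gamma_n\colon\cA_n^*\to\R$ has uniformly small values on the whole language: $|\gamma_n(u)|\le\eta_n$ for every $u\in\cL(X_{\btau}^{(n)})$. We prove this by induction on $|u|$. Write $u=u'a$. Then $\gamma_n(u)=\gamma_n(u')+\gamma_n(a)$, and by induction $|\gamma_n(u')|\le\eta_n<1/4$, while $|\gamma_n(a)|\le\eta_n$. Hence $|\gamma_n(u)|<1/2$. Moreover $\gamma_n(u)\equiv c_n(u)-\alpha h_n(u)\pmod 1$, so $|\gamma_n(u)|=\|c_n(u)-\alpha h_n(u)\|\le\eta_n$. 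This is the point where the distance-to-integer control becomes genuine absolute-value control, and it is the main subtle step.

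Next, we take an ergodic (or any) $\mu_n\in\cM(X_{\btau}^{(n)},S)$ and a $\mu_n$-generic point $y$. Averaging $\gamma_n(y_{[0,N)})/N$, which is bounded by $\eta_n/N$, we obtain $\sum_a\mu_n([a]_n)\gamma_n(a)=0$; by ergodic decomposition this holds for all $\mu_n$. Also $\sum_a \mu_n([a]_n)c_n(a)=0$ by \Cref{lem:cob_zero_integral}. Subtracting, we get
\[
\sum_a\mu_n([a]_n)\bigl(w_a+\alpha h_n(a)\bigr)=0,
\qquad\text{so}\qquad
\alpha\sum_a h_n(a)\mu_n([a]_n)=-\sum_a w_a\mu_n([a]_n).
\]
Dividing by $\sum_b h_n(b)\mu_n([b]_n)$ and using \eqref{eq:prop:measure_transfer:remark} gives $\alpha=\sum_a(-w_a)\mu(B_n(a))$. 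Since \Cref{prop:measure_transfer} says $\mu_n\mapsto\mu$ is onto $\cM(X,S)$, this holds for every $\mu$, and the integer vector is $(-w_a)_a$.

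The same argument works for every larger $n$, since $\eta_n\to0$, which gives the statement for all sufficiently large $n$. The only delicate point is the inductive lifting from $\|\cdot\|$ to $|\cdot|$, which requires the language to be factor-closed. This holds because $\cL(X_{\btau}^{(n)})$ is a language.
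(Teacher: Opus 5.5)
Your proof is correct and follows essentially the paper's route: \Cref{theo:EigCharac:decisive:Mult}, then lifting the $\|\cdot\|$-control of $c_n-\alpha h_n$ to absolute-value control of the rounded morphism $\gamma_n$, then the vanishing of $\sum_a \mu_n([a]_n)\gamma_n(a)$ and of $\sum_a \mu_n([a]_n)c_n(a)$, and finally the measure transfer of \Cref{prop:measure_transfer}. The differences are only in the sub-steps. Your induction on $|u|$ (threshold $1/4$) replaces the integer-part additivity argument of \Cref{lem:small_gamma=>morphism} (threshold $1/3$). Your direct averaging of uniformly bounded Birkhoff sums replaces the Gottschalk--Hedlund step in \Cref{lemma:EigCharac:gamma2freqs&eig}; the paper needs that step there to build an eigenfunction, but this statement does not require it.
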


\begin{remark}\label{rem:measures}
It is important to compare \Cref{cor:EigCharac:FAR:eigs&freqs} with \cite{itzaortiz,CORTEZ_DURAND_PETITE_2016,ghh18}, where it is shown that for a minimal Cantor system $(X,T)$ the group of eigenvalues $E(X,T)$ is contained in the image subgroup $I(X,T)$ (see \Cref{subsec:topologicalds}).
In the case of a minimal subshift on the alphabet $\cA$, the image subgroup turns out to be related to the additive group  generated by the measures of 
cylinders, that is,
\[
I(X,S) = 
\bigcap_{\mu \in \cM(X,T)}
\bigl\{  \sum_{a \in \cA} w_a \, \mu\bigl([u]\bigr)
: (w_a)_{a \in \cA} \in \Z^{\cA} \bigr\};
\]
see \emph{e.g.} \cite[Prop. 2.6]{unimodular}.

\Cref{cor:EigCharac:FAR:eigs&freqs} yields a sharper statement for minimal subshifts.  
Indeed, if a subshift $(X,S)$ admits an $S$-adic structure given by a primitive and recognizable directive sequence $\btau=(\tau_n \colon \cA_{n+1}^* \to \cA_n^*)_{n \ge 0}$, then
\begin{equation}\label{eq:Itau}
E(X,S) \subseteq I(\btau)
\coloneqq
\bigcap_{\mu \in \cM(X,S)} \ \bigcup_{n\ge 0}
\Bigl\{
\sum_{a \in \cA_n} w_a \, \mu\bigl(B_n(a)\bigr)
: (w_a)_{a \in \cA_n} \in \Z^{\cA_n}
\Bigr\}.
\end{equation}
It always holds that $I(\btau) \subseteq I(X,S)$. 
In certain natural cases (including the Thue--Morse substitution; see Section~\ref{subsec:Itau}), the inclusion is strict, $I(\btau) \subsetneq I(X,S)$. 
Thus, in the setting of minimal subshifts, \Cref{cor:EigCharac:FAR:eigs&freqs} provides tighter bounds than the general inclusion $E(X,S) \subseteq I(X,S)$.
\end{remark}

\Cref{cor:EigCharac:FAR:eigs&freqs} highlights the importance of linear algebra duality in the study of eigenvalues:  
the measures $\mu(B_n(a))$ arise as generalized right eigenvectors of the incidence matrices of the substitutions  $\tau_n$ (see \Cref{theo:EigCharac:positive}), while our criterion in \Cref{theo:EigCharac:FAR:Mult} relies on the heights $h_n$, corresponding to the left  action of the  row vector $\vec{\boldone}$ whose  entries are all equal to $1$ on these matrices.  
Exploiting this duality is essential for computing eigenvalues in concrete situations.  
We illustrate this in \Cref{ex:CM}, for a specific non-proper substitution.  
Remark that this connection is obtained under the additional assumptions of decisiveness or finite alphabet rank. In fact, it is only in these cases  that we can establish  a criterion based on coboundaries. \Cref{thm:NoCobsInftyRank} shows that   this is  not possible for all directive sequences. 
We need in particular the fact that coboundaries vanish   under integration, which is what allows us to express $\alpha$ in terms of a linear combination of measures $\mu(B_n(a))$ of the bases $B_n(a)$.

We even provide a characterization of balanced locally-constant functions $f \colon X \to \R$ in terms of coboundaries.
We remark that the hypotheses of the next proposition are satisfied for any continuous function $f \colon X \to \Z$ provided that the sequence of Kakutani–-Rohklin partitions defined by $\btau$ generates the topology (which occurs when $\btau$ is decisive by Proposition \ref{prop:decisive:dyn_inter}).

\begin{proposition}
    \label{prop:balance_char_Sadic}
Let $X$ be a minimal subshift generated by a primitive, recognizable directive sequence $\btau = (\tau_n \colon \cA^*_{n+1} \to \cA^*_n : n \geq 0)$, with $(|\cA_n|)_{n \ge 0}$ bounded. Let $f \colon X \to \R$ such that  there exists
$\ell \ge 0$  for which  $f$ takes  constant value,  denoted as $f_{\ell}(a,k) $, on each $S^k B_{\ell}(a)$, where $a \in \cA_{\ell}$ and $0 \leq k < h_{\ell}(a)$. 
Define the morphism $f_{\ell} \colon \cA_{\ell}^* \to \R$ by
\[  f_{\ell}(a) = 
    \sum_{0\leq k < h_{\ell}(a)} f_{\ell}(a,k) 
    \quad \text{for all } a \in \cA_{\ell}.
\]
Then, $f$ is balanced on $(X,S)$ if and only if there exist $n \ge \ell$, a letter-coboundary $c_n$ in $X_{\btau}^{(n)}$, and  an invariant measure $\mu$ of $X$ such that $\alpha \coloneqq \int f \, \mathrm{d}\mu$ satisfies that
\begin{equation}
    \label{eq:hip_gamma:prop:balance_char_Sadic}
    \sup\bigl\{ |f_\ell(\tau_{\ell,n}(u)) - \alpha h_n(u) + c_n(u)| : u = \tau_{n,m}(v),\, v \in \cL(X_{\btau}^{(m)}) \bigr\}
\end{equation}
tends to 0 as $m \to \infty$.
In this case, $\alpha = \int f \, \mathrm{d}\nu$ for every invariant measure $\nu$ of $X$, and $\alpha$ is an eigenvalue of $X$.
\end{proposition}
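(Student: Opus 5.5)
The plan is to reduce \Cref{prop:balance_char_Sadic} to the Gottschalk--Hedlund theorem (\Cref{theo:GH}) applied to $g = f - \alpha$. Throughout, the level-$n$ data are transported by the Kakutani--Rohklin partitions $\cP_n$ and by the hierarchical decomposition of \Cref{lem:address}. The basic observation is this. If $x = \tau_{0,n}(y) \in B_n(a_0)$ with $y \in X_{\btau}^{(n)}$, then the Birkhoff sum of $f$ along the orbit segment $x, Sx, \dots, S^{h_n(u)-1}x$ with $u = y_{[0,|u|)}$ equals $f_\ell(\tau_{\ell,n}(u))$. This holds because $f$ is constant on the levels of $\cP_\ell$, and $\cP_n$ refines $\cP_\ell$ through $\tau_{\ell,n}$. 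Consequently
\[ \sum_{0 \le i < h_n(u)} (f(S^i x) - \alpha) = f_\ell(\tau_{\ell,n}(u)) - \alpha h_n(u). \]

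\emph{Sufficiency.} Assume the supremum \eqref{eq:hip_gamma:prop:balance_char_Sadic} tends to $0$. Fix $m$ large enough that it is at most $1$. Set $D_n = f_\ell\circ\tau_{\ell,n} - \alpha h_n + c_n$; this is a morphism on $\cA_n^*$. Then $|D_n(\tau_{n,m}(v))| \le 1$ for every $v \in \cL(X_{\btau}^{(m)})$. Moreover $c_n$ is a letter-coboundary, so by \Cref{lem:cobord&rho} we have $c_n(u) = \rho_n(\mathrm{last\ successor}) - \rho_n(\First(u))$, and hence $|c_n(u)| \le 2\max|\rho_n|$ on the language. Together these bound $|f_\ell(\tau_{\ell,m}(v)) - \alpha h_m(v)|$ uniformly over $v \in \cL(X_{\btau}^{(m)})$.

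Now take any $x \in X$ and any $N$. Using \Cref{lem:address} at level $m$, write the orbit segment $x,\dots,S^{N-1}x$ as an initial partial tower piece, a concatenation of full level-$m$ towers indexed by a word $v$ in the language, and a final partial piece. The partial pieces have length less than $\max_a h_m(a)$, which is finite, so they contribute a bounded error. The middle block is bounded by the previous step. This gives bounded Birkhoff sums of $f - \alpha$, so $f$ is balanced with constant $\alpha$. Since the Birkhoff sums are bounded, $\int f\,d\nu = \alpha$ for every invariant measure $\nu$.

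For the eigenvalue claim, \Cref{theo:GH} gives a continuous $g$ with $f - \alpha = g\circ S - g$. If $f$ is integer-valued, then $e^{2\pi i g}$ is an eigenfunction for $\alpha$. In general I would instead pass through \Cref{theo:EigCharac:Mult}. Choose $\rho_n$ from $c_n\circ\tau_{n,m}$ via \Cref{lem:cobord&rho}. Note that $f_\ell\circ\tau_{\ell,m}$ is integer-valued whenever $f$ takes integer values; I expect the statement to implicitly have this case in mind, or the level-wise integers $w_n(a)$ as in \eqref{eq:fna}. Then the convergence of \eqref{eq:hip_gamma:prop:balance_char_Sadic} to $0$ gives exactly the hypothesis of \Cref{theo:EigCharac:Mult} modulo $1$.

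\emph{Necessity.} Assume $f$ is balanced, and let $\alpha$ be its common integral. By \Cref{theo:GH}, $f - \alpha = g\circ S - g$ with $g$ continuous. Approximate $g$ on the bases $B_n(a)$ by $\rho_n(a) \in g(B_n(a))$, exactly as in the first half of the proof of \Cref{theo:EigCharac:Mult}. This gives
\[ f_\ell(\tau_{\ell,n}(u)) - \alpha h_n(u) \approx \rho_n(\text{successor}) - \rho_n(\First(u)), \]
with error at most $2\sup_a \diam g(B_n(a))$, now in $\R$ rather than modulo $1$. The difficulty is that these diameters need not tend to $0$ without decisiveness. This is where bounded alphabet rank enters, and it is the step I expect to be the main obstacle. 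I would mimic the proof of \Cref{theo:EigCharac:FAR:Mult}. Pass to a subsequence along which the alphabets are constant and the connected-component structure of $\Gamma_{X_{\btau}^{(n)}}(\varepsilon)$ stabilizes. By compactness, choose a single level $n$ and a $\rho_n$ constant on each class of $\cP^R_{X_{\btau}^{(n)}}$, which defines a genuine coboundary $c_n$ through \Cref{thm:manifold}. Then show that the defect on images $\tau_{n,m}(v)$ tends to $0$ as $m \to \infty$, by approximating $g$ on the deeper bases $B_m(b) \subseteq B_n(\First(\tau_{n,m}(b)))$ and using uniform continuity of $g$ on central levels, as in the equicontinuity argument of \Cref{theo:EigCharac:Mult}. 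The sign convention in \eqref{eq:hip_gamma:prop:balance_char_Sadic} is absorbed by replacing $c_n$ with $-c_n$.
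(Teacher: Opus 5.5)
\textbf{Necessity has a genuine gap, and the obstacle you name is not the real one.}

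The diameters $\diam g(B_n(a))$ do tend to $0$ without decisiveness. By recognizability, for $x\in B_n(a)$ and $k<h_n(a)$ the sum $g(S^kx)-g(x)=\sum_{j<k}(f-\alpha)(S^jx)$ depends only on $(a,k)$. So $g(B_n(a))$ is a translate of $g(S^{\lfloor h_n(a)/2\rfloor}B_n(a))$, whose points agree on a long centred window. This is the same isometry trick as in the first half of the proof of \Cref{theo:EigCharac:Mult}. You therefore get, at every level $m$, potentials $\rho_m$ with $|\chi_m(v_0\cdots v_{k-1})-\rho_m(v_k)+\rho_m(v_0)|\to0$ uniformly on $\cL(X_{\btau}^{(m)})$, where $\chi_m=f_\ell\circ\tau_{\ell,m}-\alpha h_m$. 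Via \Cref{lemma:EigCharac:rho2cobs} this also gives exact level-wise coboundaries $c_m$; this is \Cref{lem:balance=>small_abs_val}.

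What the statement needs, and your sketch does not supply, is a single \emph{fixed} level $n$ and one coboundary there whose defect on $\tau_{n,m}$-images tends to $0$ as $m\to\infty$. Your mechanism compares $g$ on $B_m(b)$ with $g$ on $B_n(\First(\tau_{n,m}(b)))$. That only gives an error of order $\sup_a\diam g(B_n(a))$, a level-$n$ quantity that does not decrease with $m$.

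The missing idea is the Host-type invariance of \Cref{existence_limit_coboundary}. Using bounded alphabets, pass to a subsequence $(n_k)$ along which the following are constant: the alphabet, the graph $\Gamma_{X_{\btau}^{(n_k)}}(\varepsilon)$, and all first-letter maps $a\mapsto\First(\tau_{n_j,n_k}(a))$ for $j<k$. The latter then all equal one map $F$, which is necessarily idempotent. The limit potential $\rho\circ F$ is exactly constant on right components, and it defines a coboundary $c$ with $c\circ\tau_{n_j,n_k}=c$. (The $c_m$ are uniformly bounded because $|\chi_m(a)|$ is, by balancedness, so the limit exists.)

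This invariance is what carries the good approximation at the deep level $n_{\ell(m)}$ back to the fixed level $n_0$. Concretely, $c\circ\tau_{n_0,m}(v)=c(\tau_{n_{\ell+1},m}(v))$ is within $\varepsilon'_\ell$ of $c_{n_\ell}(\tau_{n_\ell,m}(v))$, hence within $\varepsilon'_\ell+\varepsilon_{n_\ell}$ of $\chi_m(v)$. Bounded alphabet rank is needed for this stabilization, not for the diameters.

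\textbf{The other parts are fine.} Your sufficiency argument is correct and is essentially the paper's: bound $f_\ell\circ\tau_{\ell,m}-\alpha h_m$ on $\cL(X_{\btau}^{(m)})$ using the bounded potential of $c_n$, then pad by fewer than $\max_a h_m(a)$ terms on each side. Working with $\tau_{n,m}$-images at level $m$ is slightly cleaner than the paper's version.

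You are also right to flag the eigenvalue clause, and you should state the counterexample outright rather than hedge. Take $f\equiv\beta$: it is balanced and satisfies \eqref{eq:hip_gamma:prop:balance_char_Sadic} with $c_n=0$ for any $\beta\notin E(X,S)$, so the clause is false for general real $f$. The paper's proof does not address this clause. For integer-valued $f$, Gottschalk--Hedlund alone suffices: $g\circ S=g+f-\alpha$ makes $-g \bmod \Z$ an eigenfunction for $\alpha$. There is no need to go through \Cref{theo:EigCharac:Mult}, which would not help for non-integer $f$ either.
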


We will deduce  a simple characterization  of letter-balancedness
when $X$ is generated by a primitive aperiodic  substitution in 
 \Cref{balanced=>spaces_decomposition}.
Furthermore,  we present  in \Cref{ex:sturmian_avec_cobord} an example of a directive  sequence and  a function  $f$  which requires a non-trivial letter-coboundary  so that \eqref{eq:hip_gamma:prop:balance_char_Sadic} holds.

\subsection{Preparatory lemmas}\label{subsec:proofsLemmaFAR}


We start by introducing some terminology and notation.
Fix a directive sequence $\btau = (\tau_n \colon \cA_{n+1}^* \to \cA_n^* : n \ge 0)$.
In \Cref{sec:carac} we defined, for each $n \ge 0$, $r(n)$ as the least $m \gt n$ such that every letter $b \in \cA_n$ occurs in $\tau_{n,m}(a)$ for every $a \in \cA_m$.
In this section, we use instead $r'(n)$, defined as the least $m \gt n$ such that every length-2 word $u \in \cL(X_{\btau}^{(n)})$ occurs in $\tau_{n,m}(a)$ for every $a \in \cA_m$.
Clearly, $r'(n) \ge r(n)$.

Many of the arguments used to deduce Item (1) from Item (3) of \Cref{theo:EigCharac:FAR:Mult} are likewise needed for \Cref{cor:EigCharac:FAR:eigs&freqs}. 
We therefore present a single lemma that encapsulates these arguments and yields both results.


\begin{lemma}
\label{lemma:EigCharac:gamma2freqs&eig}
Let $X$ be a subshift generated by a primitive and recognizable directive sequence $\btau = (\tau_n \colon \cA_{n+1}^* \to \cA_n^*)_{n\geq0}$.
Fix $\alpha \in \mathbb{R}$ and $n \geq 0$, and let $c_n \colon \cA_n^* \to \mathbb{R}$ be a letter-coboundary in $X_{\btau}^{(n)}$. 
Let $\gamma_n \colon \cA_n^* \to \R$ be the morphism defined by $\gamma_n(a) = \bigl\{\alpha h_n(a) - c_n(a)\bigr\}$ for $a \in \cA_n$, and assume that
\begin{equation}
    \label{eq:hip:lemma:EigCharac:gamma2freqs&eig}
    \gamma_n(v) = 
    \bigl\{\alpha h_n(v) - c_n(v)\bigr\}
    \quad \text{for all $v \in \cL(X_{\btau}^{(n)})$}.
\end{equation}
Then, $\alpha$ is an eigenvalue of $X$, and
\begin{equation}
    \label{eq:statement:lemma:EigCharac:gamma2freqs&eig}
    \alpha = \sum_{a\in\cA_n} w_a \, \mu(B_n(a))
    \quad \text{for every $\mu \in \cM(X, S)$,}
\end{equation}
where $w_a = \alpha h_n(a) - c_n(a) - \gamma_n(a) \in \mathbb{Z}$.
Moreover,  the continuous map $f_n \colon X_{\btau} \to \Z$ defined in \Cref{eq:fna})
satisfies $\int f_n \,\mathrm{d}\mu = \alpha$ for every $\mu \in \cM(X,S)$ and  is balanced.
\end{lemma}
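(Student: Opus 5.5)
We establish the three conclusions in the order eigenvalue, integral formula, balancedness. Throughout, the basic object is the morphism $\phi_n \coloneqq \alpha h_n - c_n - \gamma_n \colon \cA_n^* \to \R$. By hypothesis \eqref{eq:hip:lemma:EigCharac:gamma2freqs&eig}, $\phi_n(v)$ is an integer for every $v \in \cL(X_{\btau}^{(n)})$; in particular $w_a = \phi_n(a) \in \Z$ for each letter. Moreover $|\gamma_n(v)| = \|\alpha h_n(v) - c_n(v)\| \le 1/2$ for every $v$ in the language.

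\emph{Eigenvalue.} This step is not really needed for the rest, but the cleanest route is through \Cref{theo:EigCharac:Mult}. Take $\rho_m$ to be given by \Cref{lem:cobord&rho} for the letter-coboundaries $c_n\circ\tau_{n,m}$, exactly as in the remark following \Cref{theo:EigCharac:FAR:Mult}. The quantity in \eqref{eq:convergence:theo:EigCharac:Mult} is then $\sup\{|\gamma_n(\tau_{n,m}(u))|\}$.

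So the real point is that this supremum tends to $0$, and here I would argue as follows. The morphism $\gamma_n$ is additive. For $v$ in the language, $\gamma_n(v) = \{\alpha h_n(v) - c_n(v)\}$ holds exactly (not only modulo $1$), so $\gamma_n$ agrees with a real-valued function whose values on the language all lie in $[-1/2,1/2)$.

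By primitivity, long words of $\cL(X_{\btau}^{(n)})$ are concatenations of $\tau_{n,m}$-images. If some $u\in\cL(X_{\btau}^{(m)})$ had $|\gamma_n(\tau_{n,m}(u))| \ge \eta>0$ for infinitely many $m$, I would concatenate such blocks to exceed $1/2$ in absolute value, a contradiction. The concatenation must stay within the language: use $u,u$ repeated via recurrence, using also that $c_n$ vanishes on return words, so a return-word concatenation of $\tau_{n,m}$-images has controlled $\gamma_n$.

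This is the main obstacle. The additivity-plus-boundedness argument shows that $\gamma_n$ is bounded on the whole language. It also shows that $\gamma_n$ of any return word $r$ satisfies $k\,|\gamma_n(r)|\le 1/2$ for every $k$ with $r^k$ in the language. That is not immediate, so I would instead prove directly that $\gamma_n$ has bounded Birkhoff sums and deduce the convergence only if it is actually needed. For the eigenvalue conclusion alone, I can rely on the integral formula together with the balancedness proved below.

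\emph{Integral formula.} By \Cref{lem:cob_zero_integral}, $\sum_a \mu_n([a]_n)c_n(a)=0$ for every invariant $\mu_n$ on $X_{\btau}^{(n)}$. Next, the function $\bar\gamma_n(y)=\gamma_n(y_0)$ has Birkhoff sums $\gamma_n(y_{[0,N)})\in[-1/2,1/2]$, hence bounded. By \Cref{theo:GH} it is a real coboundary, so its integral vanishes. Therefore $\sum_a \mu_n([a]_n)(\alpha h_n(a)-w_a)=0$. Dividing by $\sum_b h_n(b)\mu_n([b]_n)$ and using \eqref{eq:prop:measure_transfer:remark} with the bijection of \Cref{prop:measure_transfer} gives $\alpha=\sum_a w_a\mu(B_n(a))$ for all $\mu$. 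Since $\int f_n\,d\mu=\sum_a w_a\mu(B_n(a))$, this also yields $\int f_n\,d\mu=\alpha$.

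\emph{Balancedness.} For $x\in X$ and $N$, split the orbit segment $x,\dots,S^{N-1}x$ along the tower partition $\cP_n$ (recognizability). Up to two partial towers, the visited bases read a word $v\in\cL(X_{\btau}^{(n)})$. The sum $\sum_{i<N} f_n(S^ix)$ equals $\phi_n(v)$ up to $O(\max|w_a|)$. Writing $\phi_n(v) = \alpha h_n(v) - c_n(v) - \gamma_n(v)$, we have $h_n(v)=N+O(\max h_n)$, $|\gamma_n(v)|\le1/2$, and $c_n(v)=\rho(\text{last})-\rho(\text{first})$ bounded by \Cref{lem:cobord&rho}. Hence $|\sum_{i<N} f_n(S^ix)-N\alpha|$ is uniformly bounded, which is balancedness. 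Then $f_n-\alpha$ is a real coboundary by \Cref{theo:GH}, and reducing modulo $\Z$ (with $f_n$ integer-valued, as in \Cref{rem:discrepancy}) gives a continuous eigenfunction for $\alpha$.
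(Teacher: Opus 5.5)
Your proof is correct, and your integral-formula step is the paper's: Gottschalk--Hedlund for $\bar\gamma_n$ on $X_{\btau}^{(n)}$, then \Cref{lem:cob_zero_integral} and \Cref{prop:measure_transfer}.

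The route to the eigenvalue is different. The paper builds the eigenfunction by hand on the Kakutani--Rohklin towers, setting $g(S^k\tau_{0,n}(x))=\xi_n(x)+\rho_n(x_0)+k\alpha$. Here $\xi_n$ is the level-$n$ transfer function of $\bar\gamma_n$ and $\rho_n$ is the potential of $c_n$; the only nontrivial check is at tower tops. Balancedness of $f_n$ is then only asserted.

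You instead prove the uniform bound $|\sum_{i<N}f_n(S^ix)-N\alpha|\le C$ directly at level $0$, by cutting the orbit along the towers. You then get the eigenfunction from \Cref{theo:GH} on $X$ reduced modulo $\Z$, as in \Cref{rem:discrepancy}.

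What each buys:
\begin{itemize}
\item The paper's construction exhibits the eigenfunction explicitly in terms of level-$n$ data.
\item Yours supplies the missing argument for balancedness.
\item Yours is insensitive to the sign convention of the transfer function, which the explicit construction must match with the convention for $\gamma_n$ at tower tops. With $\bar\gamma_n=\xi_n-\xi_n\circ S$ and $w_a\in\Z$ as in the statement, the displayed formula for $g$ actually needs $-\xi_n$ in place of $\xi_n$.
\end{itemize}
Your discrepancy bound alone already gives $\int f_n\,\mathrm{d}\mu=\alpha$ for every $\mu\in\cM(X,S)$: integrate $\frac1N\sum_{i<N}f_n\circ S^i$ against $\mu$ and let $N\to\infty$. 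So along your route the level-$n$ Gottschalk--Hedlund step, \Cref{lem:cob_zero_integral} and \Cref{prop:measure_transfer} can be dropped. The whole lemma then follows from that single estimate plus one application of \Cref{theo:GH}.

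You should delete the opening attempt via \Cref{theo:EigCharac:Mult} with $\rho_m$ taken from $c_n\circ\tau_{n,m}$. It is not merely hard; it is false in general. Uniform convergence of $\gamma_n(\tau_{n,m}(u))$ to $0$ is exactly the extra hypothesis of Item (3) in \Cref{theo:EigCharac:FAR:Mult}, and the lemma's hypothesis does not imply it.

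For a counterexample, take $\alpha=0$ and the substitution $\sigma$ of \Cref{ex:CM}. Let $c_0$ be a nontrivial letter-coboundary with $0<|c_0(0)|<1/2$. Then $c_0$ takes only values in $\{0,\pm c_0(0)\}$ on the language, so the hypothesis of the lemma holds with $\gamma_0=-c_0$. However, $c_0\circ\sigma=c_0$, so $|\gamma_0(\sigma^m(0))|=|c_0(0)|\neq 0$ for every $m$.
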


\begin{proof}
Let $\overline{\gamma}_n \colon X_{\btau}^{(n)} \to \R$ be the continuous map defined by $\overline{\gamma}_n(x) = \gamma_n(x_0)$ for $x \in X_{\btau}^{(n)}$.
Observe that, for any $x \in X_{\btau}^{(n)}$ and $N > 0$, \eqref{eq:hip:lemma:EigCharac:gamma2freqs&eig} gives
\[ \Bigl| \sum_{0 \leq j < N} \overline{\gamma}_n (S^j x) \Bigr| = 
    \bigl| \gamma_n(x_{[0,N)})\bigr| =
    \| \alpha h_n(x_{[0,N)})- c_n(x_{[0,N)}) \| \le 1/2.
    \]
This implies, by the Gottschalk-Hedlund Theorem (see \Cref{theo:GH}) that $\overline{\gamma}_n = \xi_n - \xi_n \circ S$ for some continuous map $\xi_n \colon X_{\btau}^{(n)} \to \R$.
Hence, for any $\mu \in \cM(X,S)$, the invariant measure $\mu_n$ on $X_{\btau}^{(n)}$ given by \Cref{prop:measure_transfer} satisfies
\begin{equation}
    \label{eq:0:lemma:EigCharac:gamma2freqs&eig}
    \sum_{a \in \cA_n} \gamma_n(a) \, \mu_n([a]_n) = 
    \int \overline{\gamma} \, \mathrm{d}\mu_n = 
    \int (\xi_n - \xi_n \circ S)\,\mathrm{d}\mu_n = 0.
\end{equation}
Further, \Cref{prop:measure_transfer} states that there exists  $\lambda_n$ such that  $\mu(B_n(a)) = \lambda_n \mu_n([a]_n) $ for every $b \in \cA_n$, hence $\sum_{a \in \cA_n} \gamma_n(a) \, \mu(B_n(a))=0.$
Now, if we let $w_a \in \Z$ be the nearest integer to $\alpha h_n(a) - c_n(a)$, then $ \alpha h_n(a) = c_n(a)+ \gamma_n(a) +w_a$ for every $a \in \cA_n$.
This yields
\begin{equation*}
    \alpha =  \alpha( \sum_{a \in \cA_n} \! h_n(a) \, \mu(B_n(a)))=
    \sum_{a \in \cA_n} \! c_n(a) \, \mu(B_n(a)) +
    \sum_{a \in \cA_n} \! w_n(a) \, \mu(B_n(a)).
\end{equation*}

The first term in the right-hand side is zero by \Cref{lem:cob_zero_integral}, so \eqref{eq:statement:lemma:EigCharac:gamma2freqs&eig} follows.
This  also implies that  the function $f_n$ defined in \Cref{eq:fna} 
satisfies $\int f_n \,\mathrm{d}\mu = \alpha$ for all $\mu \in \cM(X,S)$,  and  is balanced.
\smallbreak

Next, we use $\xi_n$ to construct an eigenfunction $g$ of $X_{\btau}$ with eigenvalue $\alpha$.
By \Cref{lem:cobord&rho}, there exists $\rho_n \colon \cA_n \to \R$ such that $c_n(a) = \rho_n(b) - \rho_n(a)$ for all $ab \in \cL(X_{\btau}^{(n)})$.
Then, we define $g \colon X_{\btau} \to \R/\Z$ by 
\[  g(S^k \tau_{0,n}(x)) = \xi_n(x) + \rho_n(x_0) + k \alpha
    \pmod \Z
    \quad\text{for $x \in X_{\btau}^{(n)}$ and $0 \leq k < h_n(x_0)$.}
\]
Because $\btau$ is recognizable, $g$ is well-defined and continuous.

Let us prove that $g$ is an eigenfunction.
Assume that $y = S^k \tau_{0,n}(x)$ for some $x \in X_{\btau}^{(n)}$ and $0 \leq k < h_n(x_0)$.
If $k \neq h_n(x_0) - 1$, then $g(S y) = \xi_n(x) + \rho_n(x_0) + (k+1) \alpha = g(y) + \alpha \pmod \Z$.
Suppose now that $k = h_n(x_0) - 1$.
In this case, $Sy = \tau_{0,n}(S x)$, so $g(S y) = \xi_n(S x) + \rho_n(x_1) \pmod \Z$.
Hence,
\begin{multline*}
    g(Sy) - g(y) = 
    (\rho_n(x_1) - \rho_n(x_0) - \alpha h_n(x_0)) - (\xi_n(x) - \xi_n(Sx)) + \alpha \\ = 
    c_n(x_0) - \alpha h_n(x_0) - \overline{\gamma}_n(x) + \alpha 
    \pmod \Z
\end{multline*}
where in the last step we used that $c_n(x_0) = \rho_n(x_1) - \rho_n(x_0)$ and $\overline{\gamma}_n(x) = \xi_n(x) - \xi_n(Sx)$.
This, together with $\overline{\gamma}_n(x) = \{c_n(x_0) - \alpha h_n(x_0)\}$ (by definition of $\overline{\gamma}_n$) gives $g(Sy) - g(y) = \alpha \pmod{\Z}$.
We conclude that $g$ is a continuous eigenfunction with eigenvalue $\alpha$.
\end{proof}

The following lemma is used in the proofs of \Cref{theo:EigCharac:decisive:Mult,theo:EigCharac:decisive:positive} with $\chi_n \coloneqq \alpha h_n $ and $\vertiii{\cdot} = \|\cdot\|$, 
and in the proof of \Cref{lem:balance=>small_abs_val} (as a preparatory step before establishing \Cref{prop:balance_char_Sadic}) with $\chi_\ell \coloneqq f_\ell - \alpha h_\ell $ and $\vertiii{\cdot} = |\cdot|$.

\begin{lemma}
\label{lemma:EigCharac:rho2cobs}
Let $\btau = (\tau_n \colon \cA_{n+1}^* \to \cA_n^*)_{n\geq0}$ be a primitive and recognizable directive sequence that is either decisive or has $(|\cA_n| : n \ge 0)$ bounded. 
Consider a family of real numbers $\brho = (\rho_n(a) : n \ge 0,\, a\in\cA_n)$ and morphisms $(\chi_n \colon \cA^*_n \to \R : n\ge0)$.
Let $\vertiii{\cdot}$ denote either the absolute value $|\cdot|$ or the distance to the nearest integer $\|\cdot\|$, and define
\begin{equation*}
    \varepsilon_{n,m} = 
    \max\Big\{
    \vertiii{\rho_n(v_0) + \chi_n(v_0 \cdots v_{k-1}) - \rho_n(v_k)} :
    a\in\cA_m, \, 
    v_0 \cdots v_k \in \prefixes(\tau_{n,m}(a))
    \Big\}.
\end{equation*}
Then, there exist letter-coboundaries $c_n \colon \cA_n^* \to \R$ in $X_{\btau}^{(n)}$, for each $n \ge 0$, and a constant $Q > 0$ such that
\begin{equation}
    \label{eq:statement:lemma:EigCharac:rho2cobs}
    \vertiii{ c_n(v) - \chi_n(v) } \le 
    Q\, \varepsilon_{n-1,m}
\end{equation}
for all $n \ge 0$, $m \ge r'(n)$, $a \in \cA_m$, and all $v$ occurring in $\tau_{n,m}(a)$.
\end{lemma}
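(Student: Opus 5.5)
The plan is to build $c_n$ from the level-$n$ data by pushing down the level-$(n+1)$ values of $\brho$ (or, in the bounded-alphabet case, from a limit of them), and to check the coboundary property with \Cref{lem:cobord&rho}. In the \emph{decisive} case, fix maps $\ell_n, r_n \colon \cA_{n+1} \to \cA_n$ as in \Cref{def:decisive}. By \Cref{lemma:decisive:same_first_letter_in_each_CC}, the first letter of $\tau_n(b)$ is constant on the right vertices of each connected component of $\Gamma_{X_{\btau}^{(n+1)}}(\varepsilon)$. Define $\sigma_n \colon \cA_n \to \R$ by
\[
\sigma_n(b) = \rho_{n-1}(\First(\tau_{n-1}(b))).
\]
The key point is that $\sigma_n$ depends only on $\First(\tau_{n-1}(b))$. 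Given $ab \in \cL(X_{\btau}^{(n)})$, decisiveness at level $n-1$ says that $\First(\tau_{n-1}(b)) = r_{n-1}(a)$ depends only on $a$. Hence $c_n(a) \coloneqq \sigma_n(b) - \sigma_n(a)$ depends only on $a$, and \Cref{lem:cobord&rho} makes $c_n$ a letter-coboundary on $X_{\btau}^{(n)}$. Concretely,
\[
c_n(a) = \rho_{n-1}(r_{n-1}(a)) - \rho_{n-1}(\First(\tau_{n-1}(a))).
\]

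Next I compare $c_n$ with $\chi_n$. I read $\chi$ as compatible across levels, $\chi_n = \chi_{n-1} \circ \tau_{n-1}$; this holds in both intended uses, $\alpha h_n$ and $f_\ell - \alpha h_\ell$ pushed forward. Take $v = v_0 \cdots v_k$ occurring in $\tau_{n,m}(a)$ with $m \ge r'(n)$, and assume first that $v$ is followed by a letter $v_{k+1}$ inside $\tau_{n,m}(a)$. Telescoping gives
\[
c_n(v) = \sigma_n(v_{k+1}) - \sigma_n(v_0).
\]
Now $\tau_{n-1}(v)\,\First(\tau_{n-1}(v_{k+1}))$ occurs in $\tau_{n-1,m}(a)$, so the definition of $\varepsilon_{n-1,m}$, applied to two prefixes and subtracted, yields
\[
\vertiii{\rho_{n-1}(\First\tau_{n-1}(v_0)) + \chi_{n-1}(\tau_{n-1}(v)) - \rho_{n-1}(\First\tau_{n-1}(v_{k+1}))} \le 2\varepsilon_{n-1,m}.
\]
This is exactly $\vertiii{\chi_n(v) - c_n(v)} \le 2\varepsilon_{n-1,m}$. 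If $v$ ends the word $\tau_{n,m}(a)$, I use $m \ge r'(n)$: every length-2 word, in particular $v_k b$ with $b = r_{n-1}$-compatible, occurs, and decisiveness fixes the next first letter as $r_{n-1}(v_k)$. So one can instead find another occurrence of $v_k$ followed by some letter inside $\tau_{n,m}(a)$ and split $v$ at that point. This costs at most a constant multiple of $\varepsilon_{n-1,m}$, giving $Q = 4$ or so. For $n = 0$ one sets $c_0 = 0$ or absorbs it into the constant via a shift of indices.

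In the \emph{bounded alphabet} case there is no decisiveness, so the above $c_n$ need not be well defined. Here I pass to a subsequence along which $|\cA_n|$ is constant. By compactness, I would also arrange that the extension graphs, the first-letter maps and the language of length-2 words stabilize. I then define $c_n$ directly via \Cref{lem:cobord&rho} from a $\rho$ obtained by choosing, for each connected component of $\Gamma_{X^{(n)}}(\varepsilon)$, one right vertex $b_K$, and setting $\sigma_n$ constant on the component equal to $\rho_{n-1}(\First\tau_{n-1}(b_K))$. The error is the discrepancy between different right vertices in a component. Bounding it uses a path in the extension graph of length at most $2|\cA_n|$: each edge $ab, ab'$ is realized inside $\tau_{n,m}(\cdot)$ (again by $m \ge r'(n)$), and each step costs $2\varepsilon_{n-1,m}$. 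This gives $Q$ proportional to the bound on $|\cA_n|$.

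I expect this last step to be the main obstacle, together with the index bookkeeping between the $n$ and $n-1$ levels. The difficulty is that the path steps require $\vertiii{\cdot}$ to satisfy the triangle inequality, which holds for both choices, and that every such edge appears inside a single image $\tau_{n,m}(a)$.
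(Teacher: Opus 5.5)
Your proposal is correct and is essentially the paper's proof. Like the paper, you set $\bar\rho_n(b)=\rho_{n-1}(\First(\tau_{n-1}(b)))$ and show it is exactly constant (decisive case), or constant up to $O(|\cA_n|)\,\varepsilon_{n-1,m}$ (bounded case, via paths in $\Gamma_{X_{\btau}^{(n)}}(\varepsilon)$ whose edges occur in every $\tau_{n,m}(d)$ for $m\ge r'(n)$), on the right vertices of each component; you then build $c_n$ with \Cref{lem:cobord&rho} and compare with $\chi_n$ by subtracting two prefix estimates, using the compatibility $\chi_n=\chi_{n-1}\circ\tau_{n-1}$, which the paper uses implicitly and you rightly make explicit. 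The passage to a subsequence with stabilized extension graphs in the bounded case is unnecessary, and taken literally it would only produce $c_n$ along a subsequence of levels; your level-by-level choice of a representative right vertex in each component already yields an $m$-independent $c_n$ at every level, with the bound holding directly for each $m\ge r'(n)$.
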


\begin{proof}
We construct the letter-coboundaries $c_n$ using \Cref{lem:cobord&rho}.  
The key point is to show that there is a global constant $Q$ such that, for every $n \ge 1$ and any two vertices $b_R,b'_R$ of $\Gamma_{X_{\btau}^{(n)}}(\varepsilon)$ lying in the same connected component, the following holds:
\begin{equation}
    \label{eq:CLAIM:theo:EigCharac:FAR:Mult}
    \vertiii{\rho_n(b) - \rho_n(b')} 
    \le 
    Q\,\varepsilon_{n-1,r'(n)}.
\end{equation}

To prove \eqref{eq:CLAIM:theo:EigCharac:FAR:Mult}, we begin with the following observation.  
For any word $v$ occurring in $\tau_{n-1,m}(d)$ with $m \ge n$ and $d\in\cA_m$, we have 
$\chi_{n-1}(v) = \chi_{n-1}(u) - \chi_{n-1}(u')$ for suitable $u,u' \in \prefixes(\tau_{n-1,m}(d))$.
Thus,
$ \vertiii{ \rho_{n-1}(v_0) + \chi_{n-1}(v) - \rho_{n-1}(v_k) }
    \le 2\varepsilon_{n-1,m}$
for all $m \ge n \ge 1$ and all $v = v_0 \cdots v_k$ occurring in $\tau_{n-1,m}(d)$ for some $d \in \cA_m$.
Now consider $v_0\cdots v_k$ occurring in $\tau_{n,m}(d)$ for some $d\in\cA_m$.  
Define $\bar\rho_n(a) = \rho_{n-1}(\First(\tau_{n-1}(a)))$ for $a\in\cA_n$.  
Applying the previous inequality to the word 
$\tau_{n-1}(v_0\cdots v_{k-1})\,\First(\tau_{n-1}(v_k))$ yields
\begin{equation}
    \label{eq:00:theo:EigCharac:FAR:Mult}
    \vertiii{ \bar\rho_n(v_0) + \chi_n(v_0\cdots v_{k-1}) - \bar\rho_n(v_k) }
    \le 2\varepsilon_{n-1,m}
\end{equation}
for all $m \ge n \ge 1$ and all $v_0\cdots v_k$ occurring in $\tau_{n,m}(a)$.
Since $r'(n)$ is chosen so that every length-2 word $ab$ in $\cL(X_{\btau}^{(n)})$ appears in $\tau_{n,r'(n)}(d)$ for every $d \in \cA_{r'(n)}$, we obtain from \eqref{eq:00:theo:EigCharac:FAR:Mult} that
\begin{equation}
    \label{eq:01:theo:EigCharac:FAR:Mult}
    \vertiii{ \bar\rho_n(a) + \chi_n(a) - \bar\rho_n(b) }
    \le 2\varepsilon_{n-1,r'(n)}
    \quad\text{for all } ab \in \cL(X_{\btau}^{(n)}).
\end{equation}

We now prove \eqref{eq:CLAIM:theo:EigCharac:FAR:Mult} in the case in which $(|\cA_n| : n \geq 0)$ is bounded.
For any $ab,ab'\in\cL(X_{\btau}^{(n)})$ of length $2$, \eqref{eq:01:theo:EigCharac:FAR:Mult} shows that $\bar\rho_n(b)$ and $\bar\rho_n(b')$ lie within $4\varepsilon_{n-1,r'(n)}$ of each other.  
Thus, if $b_R$ and $b'_R$ have a common neighbor in $\Gamma_{\!X_{\btau}^{(n)}}(\varepsilon)$, then the same bound holds. 
By an inductive argument along a path inside each connected component of $\Gamma_{\!X_{\btau}^{(n)}}(\varepsilon)$, we get
\[
    \vertiii{ \bar\rho_n(b) - \bar\rho_n(b') }
    \le 4 |\cA_n|\, \varepsilon_{n-1,r'(n)}
\]
for all $b_R,b'_R$ in the same connected component.  
Since $(|\cA_n| : n \ge 0)$ is bounded, this proves \eqref{eq:CLAIM:theo:EigCharac:FAR:Mult} with $Q = 4\sup_n |\cA_n|$.

Now assume that $\btau$ is decisive.
For each $n \ge 1$, let $f_n\colon\cA_n\to\cA_{n-1}$ be such that $\tau_{n-1}(b)$ begins with $f_n(a)$ whenever $ab\in\cL(X_{\btau}^{(n)})$.  
By \Cref{lemma:decisive:same_first_letter_in_each_CC}, if $b_R$ and $b'_R$ lie in the same connected component of $\Gamma_{\!X_{\btau}^{(n)}}(\varepsilon)$, then $\tau_{n-1}(b)$ and $\tau_{n-1}(b')$ begin with the same letter, so $f_n(b)=f_n(b')$.  
Thus $\bar\rho_n(b)=\bar\rho_n(b')$, and \eqref{eq:CLAIM:theo:EigCharac:FAR:Mult} holds with $Q=0$.
\medskip

Having established \eqref{eq:CLAIM:theo:EigCharac:FAR:Mult}, we can define a map $\tilde\rho_n\colon\cA_n\to\R$ such that
\begin{enumerate}[label=(\roman*)]
    \item $\tilde\rho_n(b)=\tilde\rho_n(b')$ whenever $b_R,b'_R$ lie in the same connected component of $\Gamma_{X_{\btau}^{(n)}}(\varepsilon)$;
    \item $\vertiii{\tilde\rho_n(b)-\bar\rho_n(b)} \le Q \varepsilon_{n-1,r'(n)}$ for all $b\in\cA_n$.
\end{enumerate}
By (i) and \Cref{lem:cobord&rho}, there exists a letter-coboundary $c_n$ on $X_{\btau}^{(n)}$ such that $c_n(v_0\cdots v_{k-1}) = \tilde\rho_n(v_k)-\tilde\rho_n(v_0)$ for every $v_0\cdots v_k\in\cL(X_{\btau}^{(n)})$ with $k\ge1$.
Then  (ii) gives, for all such $v_0\cdots v_k$, that 
\[
    \vertiii{c_n(v_0\cdots v_{k-1}) - \bar\rho_n(v_k) + \bar\rho_n(v_0)}
    \le 2Q \varepsilon_{n-1,r'(n)},
\]
and thus 
\[
    \vertiii{ c_n(v_0\cdots v_{k-1}) - \chi_n(v_0\cdots v_{k-1}) }
    \le 
    \vertiii{ \chi_n(v_0\cdots v_{k-1}) - \bar\rho_n(v_k) + \bar\rho_n(v_0) }
    + 2Q \varepsilon_{n-1,r'(n)}.
\]
Now, if $v_0\cdots v_k$ occurs in $\tau_{n,m}(a)$ for some $m\ge r'(n)$ and $a \in \cA_m$, then \eqref{eq:00:theo:EigCharac:FAR:Mult} shows that $\bar\rho_n(v_k)-\bar\rho_n(v_0)$ is within $2\varepsilon_{n-1,m}$ of $\chi_n(v_0\cdots v_{k-1})$.  
Plugging this into the inequality above yields
\[
    \vertiii{ c_n(v_0\cdots v_{k-1}) - \chi_n(v_0\cdots v_{k-1}) }
    \le
    2\varepsilon_{n-1,m} + 2Q \varepsilon_{n-1,r'(n)}.
\]
Since $m\ge r'(n)$ implies $\varepsilon_{n-1,r'(n)} \le \varepsilon_{n-1,m}$, we get that \eqref{eq:statement:lemma:EigCharac:rho2cobs} holds with constant $2Q+1$.
\end{proof}

We remark that the next lemma is completely general and can be stated for any subshift.
We present it in the setting of directive sequences to make its later use more transparent.

\begin{lemma}
\label{lem:small_gamma=>morphism}
Let $\btau = (\tau_n \colon \cA_{n+1}^* \to \cA_n^* : n\geq0)$ be a directive sequence.
Fix $n \ge 0$, and suppose that $\gamma'_n \colon \cA_n^* \to \R$ is a morphism satisfying
\begin{equation}
\label{eq:hip:lem:small_gamma=>morphism}
    \|\gamma'_n(v)\| < 1/3
    \quad
    \text{for all $v \in \cL(X_{\btau}^{(n)})$.}
\end{equation}
Define the morphism $\gamma_n \colon \cA_n^* \to \R$ by $\gamma_n(a) = \{\gamma'_n(a)\}$ for each $a \in \cA_n$.
Then,
\begin{equation}
\label{eq:conclu:lem:small_gamma=>morphism}
    \gamma_n(v) = \{\gamma'_n(v)\}
    \quad
    \text{for all $v \in \cL(X_{\btau}^{(n)})$.}
\end{equation}
\end{lemma}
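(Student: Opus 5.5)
We argue by induction on the length of $v \in \cL(X_{\btau}^{(n)})$, using that the language is factorial. Since $\gamma_n$ is a morphism, any non-empty $v = v' a$ with $v' \in \cL(X_{\btau}^{(n)})$ and $a \in \cA_n$ satisfies $\gamma_n(v) = \gamma_n(v') + \gamma_n(a)$. For the empty word both sides of \eqref{eq:conclu:lem:small_gamma=>morphism} vanish. For a single letter the identity is exactly the definition of $\gamma_n$. So the only thing to establish is the inductive step: if $\gamma_n(v') = \{\gamma'_n(v')\}$, then $\gamma_n(v'a) = \{\gamma'_n(v'a)\}$.

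For the inductive step, put $x = \gamma'_n(v')$ and $y = \gamma'_n(a)$. By induction and the definition, $\gamma_n(v'a) = \{x\} + \{y\}$, and this number is congruent to $x+y = \gamma'_n(v'a)$ modulo $\Z$. Hence $\{x\}+\{y\}$ and $\{x+y\}$ differ by an integer. Now use \eqref{eq:hip:lem:small_gamma=>morphism} for the three words $v'$, $a$ and $v'a$, all of which lie in $\cL(X_{\btau}^{(n)})$. It gives $|\{x\}| = \|x\| < 1/3$, $|\{y\}| = \|y\| < 1/3$ and $|\{x+y\}| < 1/3$. Therefore $|\{x\}+\{y\}| < 2/3$, and so
\[
\bigl|\{x\}+\{y\}-\{x+y\}\bigr| < 2/3 + 1/3 = 1 .
\]
An integer of absolute value less than $1$ is zero. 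Hence $\gamma_n(v'a) = \{\gamma'_n(v'a)\}$, which completes the induction.

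There is no real obstacle. The one point to handle carefully is that the hypothesis must be applied to the prefix $v'$ and the letter $a$ as well as to $v'a$. This is legitimate because the level-$n$ language is closed under taking factors, and every letter of $\cA_n$ occurring in the language belongs to $\cL(X_{\btau}^{(n)})$. The threshold $1/3$ is exactly what makes the final inequality strict: $2/3 + 1/3 = 1$, but the bound is $< 1$ because each of the three estimates is strict.
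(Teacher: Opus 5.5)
Your proof is correct and is essentially the paper's argument. The paper writes $\gamma'_n(v) = w_n(v) + \{\gamma'_n(v)\}$ with $w_n(v)\in\Z$, and uses the same bound $1/3+1/3+1/3<1$ to show that both $w_n$ and $\{\gamma'_n(\cdot)\}$ are additive over every factorization $uv\in\cL(X_{\btau}^{(n)})$. It then sums over the letters of $v$. This is your induction, except that the paper allows arbitrary splittings $uv$ rather than only splitting off the last letter.
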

\begin{proof}
For each $v \in \cL(X_{\btau}^{(n)})$, there is a unique decomposition $\gamma'_n(v) = w_n(v) + \delta_n(v)$, with $w_n(v) \in \Z$ and $\delta_n(v) = \{\gamma'_n(v)\}$.
These decompositions define maps  $w_n \colon \cL(X_{\btau}^{(n)}) \to \Z$ and $\delta_n \colon \cL(X_{\btau}^{(n)}) \to \R$.
Note that, by definition, $\gamma_n(a) = \delta_n(a)$ for all $a \in \cA_n$.

We now show that
\begin{equation}
    \label{eq:1:lem:small_gamma=>morphism}
    w_n(uv) = w_n(u) + w_n(v)
    \quad\text{and}\quad
    \delta_n(uv) = \delta_n(u) + \delta_n(v)
    \qquad 
    \text{for all $uv \in \cL(X_{\btau}^{(n)})$.}
\end{equation}
Once this is proved, we can get \eqref{eq:conclu:lem:small_gamma=>morphism} as follows. 
For any $v = v_0 \cdots v_{k-1} \in \cL(X_{\btau}^{(n)})$, we compute using that $\gamma_n$ is a morphism:
\[  \delta_n(v) = 
    \sum_{0 \le i < k} \delta_n(v_i) = 
    \sum_{0 \le i < k} \gamma_n(v_i) =
    \gamma_n(v).
\]
Therefore, $\gamma_n(v) = \delta(v) = \{\gamma'_n(v)\}$ for all $v \in \cL(X_{\btau}^{(n)})$.

Let us prove \eqref{eq:1:lem:small_gamma=>morphism}.
By definition of $w_n$ and $\delta_n$, and using that $\gamma'_n$ is a morphism, we can write
\begin{equation}
    \label{eq:2:lem:small_gamma=>morphism}
    w_n(uv) + \delta_n(uv)
    = \gamma'_n(uv)
    = \gamma'_n(u) + \gamma'_n(v)
    = w_n(u) + \delta_n(u) + w_n(v) + \delta_n(v)
\end{equation}
whenever $uv \in \cL(X_{\btau}^{(n)})$.
Hence,
\[
    |w_n(uv) - w_n(u) - w_n(v)| \le
    \|\delta_n(uv)\| + \|\delta_n(u)\| + \|\delta_n(v)\| \lt 1,
\]
where in the last step we used the hypothesis \eqref{eq:hip:lem:small_gamma=>morphism}.
Since $w_n(uv) - w_n(u) - w_n(v)$ is an integer, it must be zero; thus, $w_n(uv) = w_n(u) + w_n(v)$.
Substituting this identity back into \eqref{eq:2:lem:small_gamma=>morphism} yields $\delta_n(uv) = \delta_n(u) + \delta_n(v)$, completing the proof of \eqref{eq:1:lem:small_gamma=>morphism} and hence of the lemma.
\end{proof}

The following lemma, which relies on Lemma \ref{lemma:EigCharac:rho2cobs}, is used to prove \Cref{prop:balance_char_Sadic}.

\begin{lemma}
\label{lem:balance=>small_abs_val}
Let $X$ be a subshift generated by a primitive, recognizable directive sequence 
$\btau = (\tau_n \colon \cA_{n+1}^* \to \cA_n^* : n \ge 0)$ 
that is either decisive or satisfies that $(|\cA_n| : n \ge 0)$ is bounded. 
Fix $\ell \ge 0$, $\alpha \in \R$, and an integer-valued morphism 
$f_\ell \colon \cA_\ell^* \to \R$ such that
\begin{equation}
\label{eq:hip:lem:balance=>small_abs_val}
    \sup\bigl\{
        |f_\ell(v) - \alpha h_\ell(v)| : 
        v \in \cL(X_{\btau}^{(\ell)})
    \bigr\}
    < \infty .
\end{equation}
Then, there exist letter-coboundaries $c_n \colon \cA_n^* \to \R$ in $X_{\btau}^{(n)}$, for each $n \ge \ell$, such that
\begin{equation}
\label{eq:conclu:lem:balance=>small_abs_val}
    \sup\bigl\{
        | f_\ell(\tau_{\ell,n}(v)) - \alpha h_n(v) + c_n(v) | :
        v \in \cL(X_{\btau}^{(n)})
    \bigr\}
\end{equation}
converges to $0$ as $n \to \infty$.
\end{lemma}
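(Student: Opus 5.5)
Set $\chi_n \coloneqq f_\ell\circ\tau_{\ell,n} - \alpha h_n$ for $n\ge\ell$ (and, after reindexing, treat $\btau$ as starting at level $\ell$). Then $\chi_n = \chi_\ell\circ\tau_{\ell,n}$, and by \eqref{eq:hip:lem:balance=>small_abs_val} the quantity $K\coloneqq\sup\{|\chi_\ell(v)| : v\in\cL(X_{\btau}^{(\ell)})\}$ is finite. The plan is to apply \Cref{lemma:EigCharac:rho2cobs} with $\vertiii{\cdot}=|\cdot|$ and these morphisms $\chi_n$. For this we need a family $\brho$ whose quantities $\varepsilon_{n,m}$ (computed with absolute values) tend to $0$ as $n\to\infty$, uniformly in $m$. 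The difficulty is that $\chi_n$ is only bounded, not small. The smallness has to come from the structure: $\chi_n(u)$ with $u\,\First(u)$ in the language is a sum of $\chi_\ell$ over a concatenation of return words at level $n$, and we expect these to become small.

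\textbf{Step 1.} Reduce to the homogeneous part via Gottschalk--Hedlund. Consider the locally constant map $\bar\chi_\ell(x)=\chi_\ell(x_0)$ on $X_{\btau}^{(\ell)}$. Its Birkhoff sums are $\chi_\ell(x_{[0,N)})$, which are bounded by $K$. By \Cref{theo:GH}, $\bar\chi_\ell=\xi\circ S-\xi$ for some continuous $\xi\colon X_{\btau}^{(\ell)}\to\R$, so $\chi_\ell(x_{[0,N)})=\xi(S^Nx)-\xi(x)$.

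\textbf{Step 2.} Define $\rho_n(a)$, for $n\ge\ell$ and $a\in\cA_n$, as $\xi(y)$ for some chosen $y\in\tau_{\ell,n}([a]_n)$, i.e. $y$ in the level-$\ell$ base of the tower indexed by $a$. Fix $v_0\cdots v_k$ prefix of some $\tau_{n,m}(a)$. Take a point $x\in X_{\btau}^{(n)}$ with $x_{[0,k]}=v_0\cdots v_k$ and set $z=\tau_{\ell,n}(x)$. Then
\[\chi_n(v_0\cdots v_{k-1})=\xi(S^{N}z)-\xi(z),\qquad N=|\tau_{\ell,n}(v_0\cdots v_{k-1})|,\]
with $z\in\tau_{\ell,n}([v_0]_n)$ and $S^Nz\in\tau_{\ell,n}([v_k]_n)$. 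Hence
\[|\rho_n(v_0)+\chi_n(v_0\cdots v_{k-1})-\rho_n(v_k)|\le 2\sup_{a\in\cA_n}\diam\,\xi\bigl(\tau_{\ell,n}([a]_n)\bigr).\]

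\textbf{Step 3 (main obstacle).} Show that $\diam\,\xi(\tau_{\ell,n}([a]_n))\to0$. The sets $\tau_{\ell,n}([a]_n)$ are bases of towers, and their points agree only to the right of $0$, not on a centered window, so uniform continuity of $\xi$ alone does not suffice. Two routes are available:
\begin{itemize}
\item In the decisive case, use \Cref{prop:decisive:dyn_inter}: the Kakutani--Rohklin covers generate the topology, and any two points of $\tau_{\ell,n}([a]_n)$ then agree on a growing centered window.
\item In the bounded-alphabet case, imitate the proof of \Cref{theo:EigCharac:Mult}. Points of the middle level $S^{\lfloor h/2\rfloor}$ of a level-$n$ tower agree on a large centered window, so the oscillation of $\xi$ there is small. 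Then transport this back to the base using $\xi(S^jz)=\xi(z)+\chi_\ell(z_{[0,j)})$, where the increment $\chi_\ell(\text{prefix of }\tau_{\ell,n}(a))$ is the same for all points of the tower.
\end{itemize}
The second route works for both cases and is the one I would use. It gives the vanishing diameter, hence $\varepsilon_{n,m}\le 2\eta_n$ with $\eta_n\to0$.

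\textbf{Step 4.} Apply \Cref{lemma:EigCharac:rho2cobs} to obtain letter-coboundaries $c_n$ with $|c_n(v)-\chi_n(v)|\le 2Q\eta_{n-1}$ for all $v$ occurring in some $\tau_{n,m}(a)$ with $m\ge r'(n)$. By primitivity, every $v\in\cL(X_{\btau}^{(n)})$ is of this form. Replacing $c_n$ by $-c_n$, which is again a letter-coboundary, matches the sign convention in \eqref{eq:conclu:lem:balance=>small_abs_val}, and the supremum there is at most $2Q\eta_{n-1}\to0$.
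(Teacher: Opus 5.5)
Your proposal is correct and follows the paper's proof: Gottschalk--Hedlund applied to $\chi_\ell = f_\ell - \alpha h_\ell$, then $\rho_n(a)$ chosen among the values of $\xi$ on $\tau_{\ell,n}([a]_n)$, then \Cref{lemma:EigCharac:rho2cobs} with $\vertiii{\cdot}=|\cdot|$. Your Step~3 supplies a justification the paper omits: the paper invokes only uniform continuity of $\xi$, which by itself does not control the oscillation on these one-sided bases, whereas your transport from the middle level of the tower (as in the proof of \Cref{theo:EigCharac:Mult}) does; replacing $c_n$ by $-c_n$ also correctly fixes the sign mismatch in the paper's final line.
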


\begin{proof}
Let $\chi_\ell \colon \cA_\ell^* \to \R$ be the morphism defined by 
$\chi_\ell(v) = f_\ell(v) - \alpha h_\ell(v)$ for $v \in \cA_\ell^*$, and let 
$\overline{\chi}_\ell \colon X_{\btau}^{(\ell)} \to \R$ be the continuous map 
$\overline{\chi}_\ell(x) = \chi_\ell(x_0)$ for $x \in X_{\btau}^{(\ell)}$.
For any $x \in X_{\btau}^{(\ell)}$ and $N > 0$, we have
\[
    \Bigl| \sum_{0 \le j < N} \overline{\chi}_\ell(S^j x) \Bigr| =
    |\chi_\ell(x_{[0,N)})| =
    \big| f_\ell(x_{[0,N)}) - \alpha h_\ell(x_{[0,N)}) \big|,
\]
which is uniformly bounded by Hypothesis \eqref{eq:hip:lem:balance=>small_abs_val}.
This implies, by the Gottschalk--Hedlund Theorem (see \Cref{theo:GH}), that
$\overline{\chi}_\ell = \xi_\ell - \xi_\ell \circ S$ for some continuous map 
$\xi_\ell \colon X_{\btau}^{(\ell)} \to \R$.

Recall that $[a]_n = \{x \in X_{\btau}^{(n)} : x_0 = a\}$ for $n \ge 0$ and $a \in \cA_n$.
For each $n \ge \ell$ and $a \in \cA_n$, we choose any value 
$\rho_n(a) \in \xi_\ell(\tau_{\ell,n}([a]_n))$.
Since $X_{\btau}^{(\ell)}$ is compact, $\xi_\ell$ is uniformly continuous, so there exists a sequence $(\varepsilon_n : n \ge \ell)$ converging to zero such that 
\begin{equation}
    \label{eq:1:lem:balance=>small_abs_val}
    |\xi_\ell(x) - \rho_n(a)| \le \varepsilon_n
\end{equation}
for all $n \ge \ell$, $a \in \cA_n$, and $x \in \tau_{\ell,n}([a]_n)$.

Now, for any $n \ge \ell$, $x \in X_{\btau}^{(n)}$, and $N>0$, we can compute
\[
    \chi_\ell(\tau_{\ell,n}(x_{[0,N)})) =
    \sum_{0 \le j < h_n(x_{[0,N)})} \hspace{-0.4cm} \overline{\chi}_\ell(S^j x) =
    \xi_\ell(\tau_{\ell,n}(S^N x))  -  \xi_\ell(\tau_{\ell,n}(x)).
\]
By \eqref{eq:1:lem:balance=>small_abs_val}, the last expression is within $2\varepsilon_n$ of 
$\rho_n(x_N) - \rho_n(x_0)$.
This shows that the morphism $\chi_n \coloneqq \chi_\ell \circ \tau_{\ell,n}$ satisfies $|\chi_n(x_{[0,N)}) - \rho_n(x_N) + \rho_n(x_0)| \le 2\varepsilon_n$ for all $x \in X_{\btau}^{(n)}$ and $N \ge 0$.
Therefore,
\begin{equation}
\label{eq:2:lem:balance=>small_abs_val}
    |\chi_n(v) - \rho_n(v_k) + \rho_n(v_0)|
    \le 2\varepsilon_n
    \quad
    \text{for every $v_0 \cdots v_k \in \cL(X_{\btau}^{(n)})$.}
\end{equation}
This allows us to apply \Cref{lemma:EigCharac:rho2cobs} with  the choice $\vertiii{\cdot} = |\cdot|$, 
the morphisms $(\chi_n : n \ge \ell)$ and the family $(\rho_n(a) : n \ge \ell,\; a \in \cA_n)$.
This yields letter-coboundaries $c_n \colon \cA_n^* \to \R$ in $X_{\btau}^{(n)}$, for each $n > \ell$, and a constant $Q>0$ such that
\begin{equation*}
    |c_n(v) - f_\ell(\tau_{\ell,n}(v)) + \alpha h_n(v)| =
    |c_n(v) - \chi_n(v)| \le Q \cdot 2\varepsilon_{m-1}
\end{equation*}
for all $v \in \cL(X_{\btau}^{(n)})$.
(The bound $2\varepsilon_{n-1}$ comes from \eqref{eq:2:lem:balance=>small_abs_val}.)
Since $(\varepsilon_n : n \ge \ell)$ converges to zero, \eqref{eq:conclu:lem:balance=>small_abs_val} follows from the last inequality.
\end{proof}

\subsection{Proofs of the main statements} \label{subsec:proofsFAR}
We   now have all the necessary material to  prove   in this order Theorems 
\ref{theo:EigCharac:decisive:Mult},  \ref{theo:EigCharac:decisive:positive}, Proposition \ref{cor:EigCharac:FAR:eigs&freqs}, Theorem \ref{theo:EigCharac:FAR:Mult},  and lastly  Proposition \ref{prop:balance_char_Sadic}.

\begin{proof}[Proof of \Cref{theo:EigCharac:decisive:Mult}.]
Assume that for each $n \ge 0$ there exists a letter-coboundary $c_n \colon \cA_n^* \to \R$ such that $(\varepsilon_n : n \ge 0)$ converges to $0$ as $n \to \infty$, where 
\[
  \varepsilon_n =
    \sup \Bigl\{
    \bigl\| c_n(u) - \alpha h_n(u)  \bigr\| :
    u \in \cL(X_{\btau}^{(n)}) \Bigr\}.
\]
We show that $\alpha$ is an eigenvalue  by verifying the criterion in \Cref{theo:EigCharac:Mult}.

By \Cref{lem:cobord&rho}, for every $n \ge 0$ there exists a map $\rho_n \colon \cA_n^* \to \R$ such that  $c_n(ab) = \rho_n(b) - \rho_n(a)$ for every length-$2$ word $ab \in \cL(X_{\btau}^{(n)})$.
Thus, for any $n \ge 0$, any $a \in \cA_{n+1}$, and any prefix $v_0 \cdots v_k \in \prefixes(\tau_n(a))$, we have
\[
\bigl\|\rho_n(v_0) + \alpha h_n(v_0 \cdots v_{k-1}) - \rho_n(v_k)\bigr\|
= \bigl\|c_n(v_0 \cdots v_{k-1}) - \alpha h_n(v_0 \cdots v_{k-1})\bigr\|
\le \varepsilon_n.
\]
Since $(\varepsilon_n)$ converges to $0$, Condition \eqref{eq:convergence:theo:EigCharac:Mult} of \Cref{theo:EigCharac:Mult} holds.
Therefore, $\alpha$ is an eigenvalue of $X$.
\smallskip

Conversely, assume that $\alpha$ is an eigenvalue of $X$.
By \Cref{theo:EigCharac:Mult}, there exists a family of real numbers $\brho = (\rho_n(a) : n \ge 0,\, a \in \cA_n)$ such that $(\varepsilon_n : n \ge 0)$ converges to $0$, where 
\[
\varepsilon_n \coloneqq 
    \sup \Bigl\{
    \bigl\| \rho_n(v_0) - \rho_n(v_k) 
        + \alpha h_n(v_0 \cdots v_{k-1}) \bigr\| :
        v_0 \cdots v_k \in \cL(X_{\btau}^{(n)})
        \Bigr\}.
\]
Define the morphism $\chi_n \colon \cA_n^* \to \R$ by $\chi_n = \alpha h_n$ for each $n \ge 0$, which satisfies
\begin{equation}
    \label{eq:1theo:EigCharac:decisive:Mult}
    \|\rho_n(v_0) + \chi_n(v_0\cdots v_{k-1}) - \rho_n(v_k)\| 
    \le \varepsilon_n 
    \quad
    \text{for all $v_0 \cdots v_k \in \cL(X_{\btau}^{(n)})$ and $n \ge 0$.}
\end{equation}
Then, applying \Cref{lemma:EigCharac:rho2cobs} with $\vertiii{\cdot} = \|\cdot\|$, the morphisms $\chi_n = \alpha h_n$, and the family $\brho$, we obtain a constant $Q > 0$ and, for each $n \ge 0$, a letter-coboundary $c_n \colon \cA_n^* \to \R$ in $X_{\btau}^{(n)}$ such that $\|c_n(v) - \alpha h_n(v)\| \le Q \varepsilon_{n-1}$ for all $v \in \cL(X_{\btau}^{(n)})$.
(The term $\varepsilon_{n-1}$ comes from \eqref{eq:1theo:EigCharac:decisive:Mult}.)
Since $(\varepsilon_n : n \ge 0)$ converges to $0$, Condition \eqref{eq:1:theo:EigCharac:decisive:Mult} of \Cref{theo:EigCharac:decisive:Mult} follows.
\end{proof}

\begin{proof}[Proof of \Cref{theo:EigCharac:decisive:positive}.]
We follow the approach used in the proof of \Cref{theo:EigCharac:decisive:Mult}, relying this time on \Cref{theo:EigCharac:positive} instead of \Cref{theo:EigCharac:Mult}.

Assume that Condition \eqref{eq:1:theo:EigCharac:FAR:positive} holds for a sequence of letter-coboundaries $(c_n \colon \cA_n^* \to \R : n \ge 0)$.  
By \Cref{lem:cobord&rho}, there exists for each $n \ge 0$ a map $\rho_n \colon \cA_n^* \to \R$ such that $c_n(ab) = \rho_n(b) - \rho_n(a)$ for every length-$2$ word $ab \in \cL(X_{\btau}^{(n)})$.
Then, for every $n \ge 0$, $a \in \cA_{n+2}$ and $v_0 \dots v_k \in \prefixes(\tau_{n,n+2}(a))$, we have
\[  \bigl\|\rho_n(v_0) + \alpha h_n(v_0 \cdots v_{k-1}) - \rho_n(v_k)\bigr\|
    = \bigl\|c_n(v_0 \cdots v_{k-1}) - \alpha h_n(v_0 \cdots v_{k-1})\bigr\|.
\]
Hence Condition \eqref{eq:1:theo:EigCharac:FAR:positive} is equivalent to Condition \eqref{eq:theo:EigCharac:positive} of \Cref{theo:EigCharac:positive}.  
By that theorem, $\alpha$ is an eigenvalue of $X$.
Note that the only hypothesis required here is the primitivity of $\btau$, which suffices to apply the relevant implication of \Cref{theo:EigCharac:positive}.
\medskip

Conversely, assume that $\alpha$ is an eigenvalue of $X$.
It is enough to show that for each \emph{odd} $n \ge 0$ there exists a coboundary  
$c_n \colon \cA_n^* \to \R$ in $X_{\btau}^{(n)}$ and a constant $\varepsilon_n > 0$ such that $\|c_n(v) - \alpha h_n(v)\| \le \varepsilon_n$ for all $a \in \cA_n,\ v \in \prefixes(\tau_{n,n+2}(a))$, and such that  the series $\sum_{\text{$n$ odd}} \varepsilon_n$ is finite.  
The same argument then applies to the even levels, yielding Condition \eqref{eq:1:theo:EigCharac:FAR:positive}, since the sum of two convergent series is again convergent.

We consider the contraction $\btau' = (\tau_{2\ell,2\ell+2} : \ell \ge 0)$ of $\btau$ along the even levels.  
By hypothesis, each $\tau_{2\ell}(a)$ sees every length-$2$ word of $\cL(X_{\btau}^{(2\ell)})$, so $\btau'$ is positive.  
We may therefore apply \Cref{theo:EigCharac:positive} to $\btau'$, obtaining a family $\brho = (\rho_{2\ell}(a) : a \in \cA_{2\ell},\, \ell \ge 0)$ such that the sequence $(\varepsilon_{2\ell} : \ell \ge 0)$ has finite sum, where
\[  \varepsilon_n =
    \max \bigl\{
        \|\rho_n(v_0) + \alpha h_n(v) - \rho_n(v_k)\| :
        a \in \cA_{n+4},\ v_0 \cdots v_k \in \prefixes(\tau_{n,n+4}(a))
        \bigr\}
\]
for each $n \ge 0$.

Now, the hypothesis ensures that $\tau_n(a)$ sees every length-2 word $ab \in \cL(X_{\btau}^{(n)})$ for each $a \in \cA_n$; equivalently, $r'(n) = n+1$ for all $n \ge 0$.
Hence, applying \Cref{lemma:EigCharac:rho2cobs} with  
$\vertiii{\cdot} = \|\cdot\|$, the morphisms $\chi_n \coloneqq \alpha h_n$, and the family $\brho$ (together with the directive sequence $\btau$), we obtain a constant $Q > 0$ and, for each \emph{odd} $n \ge 0$, a coboundary $c_n \colon \cA_n^* \to \R$ such that $\|c_n(v) - \alpha h_n(v)\| \le Q \varepsilon_{n-1}$ for every odd $n \ge 0$, $a \in \cA_{n+3}$, and $v \in \prefixes(\tau_{n,n+3}(a))$.
In particular, this holds for all $v \in \prefixes(\tau_{n,n+2}(a))$ and $a \in \cA_{n+2}$.
Therefore, since the sequence $(\varepsilon_{2\ell} : \ell \ge 0)$ has finite sum,
\[ \sum_{\text{$n$ odd}} \max\bigl\{
    \|c_n(v) - \alpha h_n(v)\| :
    a \in \cA_n,\, v \in \prefixes(\tau_{n,n+2}(a))
    \bigr\} \le 
    \sum_{\text{$n$ odd}} \varepsilon_{n-1} \lt \infty,
    \]
which establishes the claim.
\end{proof}

\begin{proof}[Proof of \Cref{cor:EigCharac:FAR:eigs&freqs}.]
The proof is based on applying \Cref{lemma:EigCharac:gamma2freqs&eig}.
Since $\alpha$ is an eigenvalue, \Cref{theo:EigCharac:decisive:Mult} yields letter-coboundaries $c_n \colon \cA_n^* \to \R$ in $X_{\btau}^{(n)}$ for each $n \ge 0$, such that
\[  \varepsilon_n \coloneqq 
    \sup\Bigl\{
        \bigl\| c_n(u) - \alpha h_n(u)\bigr\| :
        u \in \cL(X_{\btau}^{(n)})
    \Bigr\}
\]
converges to $0$ as $n \to \infty$.
Choose $n$ large enough so that $\varepsilon_n < 1/3$, and define a morphism $\gamma'_n \colon \cA_n^* \to \R$ by $\gamma'_n \coloneqq c_n - \alpha h_n$.
Then, $\|\gamma'_n(v)\| < 1/3$ for all $v \in \cL(X_{\btau}^{(n)})$.
This permits to apply \Cref{lem:small_gamma=>morphism}, which states that the morphism $\gamma_n \colon \cA_n^* \to \R$ given by $\gamma_n(a) = \{\gamma'_n(a)\}$ for $a \in \cA_n$, satisfies $|\gamma_n(v)| = \|\gamma'_n(v)\|$ for all $v \in \cL(X_{\btau}^{(n)})$.
This is precisely the hypothesis of \Cref{lemma:EigCharac:gamma2freqs&eig}.
Therefore, by that lemma, $\alpha = \sum_{a \in \cA_n} w_a\, \mu(B_n(a))$ for all invariant measures $\mu \in \cM(X,S)$, where the coefficients $w_a \coloneqq \alpha h_n(a) - c_n(a) - \gamma_n(a)$ are integers.
\end{proof}

We now turn to the proof of \Cref{theo:EigCharac:FAR:Mult} and \Cref{prop:balance_char_Sadic}.  
The final element we need is the following lemma, which is the only one relying solely on $(|\cA_n| : n \ge 0)$ being bounded.
Note that the relabeling step is only necessary when the alphabets $\cA_n$ are not all identical.

\begin{lemma}
\label{existence_limit_coboundary}
Let $\btau = (\tau_n \colon \cA_{n+1}^* \to \cA_n^* : n\geq0)$ be a directive sequence such that $(|\cA_n| : n \ge 0)$ is bounded.
For each $n \ge 0$, consider letter-coboundaries $c_n \colon \cA_n^* \to \R$ in $X_{\btau}^{(n)}$, and assume that 
\[
\sup\{|c_n(a)| : a \in \cA_n,\ n \ge 0\} < \infty.
\]
Then, after possibly relabeling the alphabets $\cA_n$, there exist an increasing sequence $(n_\ell : \ell \ge 0)$ with $\cA_{n_\ell} = \cA_{n_0}$ for all $\ell \ge 0$, and a morphism $c \colon \cA_{n_0}^* \to \R$, which is a letter-coboundary in $X_{\btau}^{(n_\ell)}$ for all $\ell \ge 0$, such that:
\begin{enumerate}
    \item $c \circ \tau_{n_\ell,n_k} = c$ for all $k > \ell \ge 0$.
    \item $\sup\bigl\{ |c(v) - c_{n_\ell}(\tau_{n_{\ell},n_{\ell+1}}(v))| : v \in \cL(X_{\btau}^{(n_{\ell+1})}) \bigr\}$ converges to $0$ as $\ell \to \infty$.
\end{enumerate}
\end{lemma}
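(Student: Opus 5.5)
The plan is a compactness argument on a finite-dimensional space, combined with a diagonal extraction that makes the limiting coboundary invariant under the compositions $\tau_{n_\ell,n_k}$.

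\emph{Step 1: relabel and extract.} Since $(|\cA_n|)_n$ is bounded, some cardinality $d$ occurs infinitely often. Along those indices we relabel each $\cA_n$ by a fixed alphabet $\cA$ with $|\cA|=d$. The relevant data at level $n$ are finite objects: the set $\cL_2(X_{\btau}^{(n)})$ of length-2 words, which determines the coboundary space by \Cref{lem:cobord&rho} and \Cref{thm:manifold}, and the vector $(c_n(a))_{a\in\cA}$, which lies in a bounded subset of $\R^{\cA}$. We pass to a subsequence along which:
\begin{itemize}
\item $\cL_2(X_{\btau}^{(n)})$ is constant;
\item $c_n\to c'$ pointwise.
\end{itemize}
For the first condition we use that there are only finitely many subsets of $\cA^2$. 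Because being a letter-coboundary is a closed linear condition determined by $\cL_2$ (via the maps $\rho$), the limit $c'$ is a letter-coboundary on every $X_{\btau}^{(n)}$ in the subsequence.

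\emph{Step 2: force invariance.} The maps $c'\circ\tau_{n_\ell,n_k}$ need not equal $c'$, so we work with the linear maps instead. Let $V$ be the finite-dimensional space of letter-coboundaries for the common $\cL_2$. By \Cref{lem:compositioncoboundary}, each $\tau_{n_\ell,n_k}$ induces a linear map $T_{\ell,k}\colon V\to V$, $c\mapsto c\circ\tau_{n_\ell,n_k}$. Since $V$ has bounded dimension, we can further refine the subsequence, by Ramsey or diagonal arguments over $\ell<k$, so that the images $T_{\ell,k}(V)$ stabilise to a subspace $W$ on which these maps act as a well-defined system.

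The natural candidate is then
\[ c := \lim_{k} c_{n_k}\circ\tau_{n_\ell,n_k}\]
(suitably normalised), or equivalently the limit of $c_{n_{\ell}}\circ\tau_{n_\ell,n_{\ell+1}}$. Item (2) is essentially the statement that $c$ is the limit of $c_{n_\ell}\circ\tau_{n_\ell,n_{\ell+1}}$ on the language, uniformly over $\cL(X_{\btau}^{(n_{\ell+1})})$.

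\emph{Step 3: from letters to words.} Uniformity over the language comes from the coboundary structure. For any two coboundaries $c,c''$ sharing the same $\cL_2$, \Cref{lem:cobord&rho} gives $c(v_0\cdots v_{k-1})=\rho(v_k)-\rho(v_0)$, and similarly for $c''$. Hence
\[ |c(v)-c''(v)|\le 2\max_a|\rho(a)-\rho''(a)|,\]
which is controlled by the letter values. So convergence on letters implies uniform convergence on words.

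\emph{Main obstacle.} The hardest step is achieving exact invariance $c\circ\tau_{n_\ell,n_k}=c$. The first thing to try is this: the maps $T_{\ell,k}$ on the $\rho$-space $\cF/\cF^0$ are induced by the first-letter maps $a\mapsto\First(\tau_{n_\ell,n_k}(a))$, viewed through connected components of $\Gamma(\varepsilon)$. Since these are maps between finite sets of components of bounded size, they take finitely many values. After refining, we can therefore assume that $\tau_{n_\ell,n_k}$ induces the same idempotent map $\pi$ on components for all $\ell<k$. One then replaces $c$ by $c\circ\pi$ and checks that the approximation in Item (2) survives this replacement.
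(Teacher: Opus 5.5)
Your proposal is correct and, once the detour about stabilising the images $T_{\ell,k}(V)$ is dropped, it is essentially the paper's proof. You stabilise the alphabet and the length-$2$ language, then use finiteness plus a Ramsey/diagonal extraction so that all first-letter maps $\First(\tau_{n_\ell,n_k}(\cdot))$, $\ell<k$, equal a single map $\pi$, idempotent by composing over $\ell<k<m$. Finally you build $c$ from the bounded limiting $\rho$ composed with $\pi$, so Item (1) is idempotence and Item (2) is convergence of the $\rho$'s.

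Read ``$c\circ\pi$'' on the $\rho$-side. The right object is the coboundary whose $\rho$-function is $\rho'\circ\pi$, i.e.\ the common value of $c'\circ\tau_{n_\ell,n_k}$ for $\ell<k$. It is not the letterwise composite of the morphism $c'$ with a letter map.
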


\begin{proof}
Since $(|\cA_n| : n \ge 0)$ is bounded, we may, after relabeling the alphabets $\cA_n$, select an increasing sequence $(n_\ell : \ell \ge 0)$ such that the alphabets $\cA_n$ and the extension graphs stabilize, \emph{i.e.,}
\begin{enumerate}
    \item[(i)] $\cA_{n_\ell} = \cA$ for all $\ell \ge 0$;
    \item[(ii)] $\Gamma_{\!X_{\btau}^{(n_\ell)}}(\varepsilon)$ equals a fixed graph $\Gamma$ for all $\ell \ge 0$.
\end{enumerate}
Then, applying \Cref{lem:cobord&rho} to levels $n_\ell$ gives, for each $\ell \ge 0$, a map $\rho_\ell \colon \cA \to \R$ satisfying  
$c_{n_\ell}(a) = \rho_\ell(b) - \rho_\ell(a)$ for all edges $(a_L,b_R)$ of $\Gamma$.

For $k \ge \ell \ge 0$, define $\First_{\ell,k} \colon \cA \to \cA$ by $\First_{\ell,k}(a) = \First(\tau_{\ell,k}(a))$, where $\First(t)$ denotes the first letter of a word $t$ and $\tau_{\ell,\ell}(a) = a$.
These maps satisfy $\First_{\ell,k} \circ \First_{k,m} = \First_{\ell,m}$ for all $m \ge k \ge \ell \ge 0$.
For each $\ell \ge 0$ and $a \in \cA$, the sequence $(\First_{\ell,k}(a) : k \ge \ell)$ is eventually constant; denote its eventual value by $\First_\ell(a)$.
Then, the set $\{ (\First_\ell(a))_{a \in \cA} : \ell \ge 0 \}$ is finite, so after passing to an inductively chosen subsequence we may assume that there exists $\First_\infty \colon \cA \to \cA$ such that
\begin{enumerate}
    \item[(iii)] $\First_{\ell,k}(a) = \First_\infty(a)$ for all $k > \ell \ge 0$ and $a \in \cA$.
\end{enumerate}
In particular,
\begin{equation}
    \label{eq:00:existence_limit_coboundary}
    \First_\infty \circ \First_{\ell,k} = \First_\infty
    \quad \text{for all $k \ge \ell \ge 0$,}
\end{equation}
since (iii) allows us to write, for any $k \ge m \ge \ell$,
\[
\First_\infty(\First_{\ell,k}(a)) =
\First_{\ell,m}(\First_{m,k}(a)) =
\First_{\ell,m}(a) =
\First_\infty(a).
\]

Next, we obtain uniform bounds on the values of $\rho_\ell$.
The hypothesis ensures that there exists $Q > 0$ that upper bounds $|c_n(a)|$ for all $n \ge 0$ and $a \in \cA_n$.
Since $\rho_\ell$ is defined up to an additive constant (see \Cref{thm:manifold}), we can fix any $\bar a \in \cA$ and normalize $\rho_\ell$ so that $\rho_\ell(\bar a) = 0$ for all $\ell \ge 0$.
Inspecting the extension graph $\Gamma_{\!X_{\btau}^{(n_\ell)}}(\varepsilon)$ then yields $|\rho_\ell(a)| \le |\cA|\,Q$ for all $\ell \ge 0$ and $a \in \cA$.

Thus, the family $\{\rho_\ell(a) : \ell \ge 0,\ a \in \cA\}$ is uniformly bounded.
Passing to a subsequence if needed, we may assume that there exists $\rho \colon \cA \to \R$ such that:
\begin{enumerate}
    \item[(iv)] For every $a \in \cA$, the sequence $(\rho_\ell(a) : \ell \ge 0)$ converges to $\rho(a)$.
\end{enumerate}

We now construct the limiting coboundary $c$.
On each connected component of $\Gamma$, every $\rho_\ell \circ \First_{\ell,k}$ is constant on its right-hand side, since $\rho_\ell$ is.
Taking the limit and using (iii) shows that $\rho \circ \First_\infty$ is also constant there.
Then, by \Cref{lem:cobord&rho}, the map $\rho \circ \First_\infty$ induces a morphism $c \colon \cA^* \to \R$ defined by 
$c(a) = \rho(\First_\infty(b)) - \rho(\First_\infty(a))$  
for every edge $(a_L,b_R)$ of $\Gamma$.
Property (ii) ensures that $c$ is a letter-coboundary in $X_{\btau}^{(n_\ell)}$ for all $\ell \ge 0$.
\smallskip 

We now verify Item (1).
Let $k > \ell \ge 0$ and let $(a_L,b_R)$ be an edge of $\Gamma$.
Unfolding the definitions gives the identity
\[
c(\tau_{n_\ell,n_k}(a)) =
\rho\big(\First_\infty(\First_{n_\ell,n_k}(b))\big)
-
\rho\big(\First_\infty(\First_{n_\ell,n_k}(a))\big).
\]
By \eqref{eq:00:existence_limit_coboundary}, $\First_\infty \circ \First_{n_\ell,n_k} = \First_\infty$, so
\[
c(\tau_{n_\ell,n_k}(a)) =
\rho(\First_\infty(b)) - \rho(\First_\infty(a)) = c(a).
\]

It remains to prove Item (2).
We set, for each $\ell \ge 0$, $\varepsilon_\ell = \max\{ |\rho(a) - \rho_\ell(a)| : a \in \cA \}$.
By (iv), $(\varepsilon_\ell : \ell \ge 0)$ tends to $0$ as $\ell \to \infty$.
Now, for any $v_0 \cdots v_l \in \cL(X^{(n_{\ell+1})}_{\btau})$ we can compute:
\[
c(v_0 \cdots v_{l-1}) -
c_{n_\ell} \circ \tau_{n_\ell,n_{\ell+1}}(v_0 \cdots v_{l-1})
=
\big(\rho(\First_\infty(v_l)) - \rho_{\ell,\ell+1}(v_l)\big)
-
\big(\rho(\First_\infty(v_0)) - \rho_{\ell,\ell+1}(v_0)\big).
\]
Since $\rho_{\ell,\ell+1} = \rho_\ell \circ \First_{\ell,\ell+1} =  \rho_\ell \circ \First_\infty$ by (iii), each term on the right-hand side has absolute value at most $\varepsilon_\ell$.
Thus ,
$|c(v) - c_\ell(\tau_{n_\ell,n_{\ell+1}}(v))| \le 2\varepsilon_\ell$  
for all $v \in \cL(X_{\btau}^{(n_\ell)})$, from which Item (2) follows
\end{proof}

\begin{proof}[Proof of \Cref{theo:EigCharac:FAR:Mult}.]
We prove $(1) \Rightarrow (2) \Rightarrow (3) \Rightarrow (1)$.

Assume first that $\alpha$ is an additive eigenvalue of $X$.
By \Cref{theo:EigCharac:decisive:Mult}, there exist letter-coboundaries  
$c_n \colon \cA_n^* \to \R$ in $X_{\btau}^{(n)}$ such that
\[
\varepsilon_n \coloneqq 
\sup \bigl\{
\| c_n(u) - \alpha h_n(u) \| :
u \in \cL(X_{\btau}^{(n)}) \bigr\}
\]
converges to $0$ as $n \to \infty$.
This condition depends only on the values of $c_n$ modulo $\Z$, so by \Cref{lem:cob a la host} we may assume that
\begin{equation}
\label{eq:0:theo:EigCharac:FAR:Mult}
\sup\{ |c_n(a)| : a \in \cA_n,\ n \ge 0 \} \le 1 < \infty.
\end{equation}
Observe that for any $m > n \ge 0$ and $u \in \cL(X_{\btau}^{(m)})$, we can estimate
\begin{equation}
\label{eq:1:theo:EigCharac:FAR:Mult}
\|c_n(\tau_{n,m}(u)) - c_m(u)\| \le 
\|c_n(\tau_{n,m}(u)) - \alpha h_n(\tau_{n,m}(u))\|  +
\|c_m(u) - \alpha h_m(u)\|  \le 
\varepsilon_n + \varepsilon_m.
\end{equation}
Now, since $(|\cA_n| : n \ge 0)$ is bounded and \eqref{eq:0:theo:EigCharac:FAR:Mult} holds, we may apply \Cref{existence_limit_coboundary} to the sequence $(c_n : n \ge 0)$.
We obtain an increasing sequence $(n_\ell : \ell \ge 0)$ with $\cA_{n_\ell} = \cA_{n_0}$ for all $\ell \ge 0$, and a morphism  
$c \colon \cA_{n_0}^* \to \R$ that is a letter-coboundary in $X_{\btau}^{(n_\ell)}$ for all $\ell \ge 0$, such that
\begin{enumerate}[label=(\arabic*)]
    \item $c = c \circ \tau_{n_\ell,n_k}$ for all $k > \ell \ge 0$;
    \item $\varepsilon'_\ell \coloneqq \sup\{ |c(v) - c_{n_\ell}(\tau_{n_\ell,n_{\ell+1}}(v))| : v \in \cL(X_{\btau}^{(n_\ell)}) \}$ tends to $0$ as $\ell \to \infty$.
\end{enumerate}

We now show that $c$ satisfies Item (2) of the theorem.
Given $m \ge n_1$, let $\ell(m)$ be the largest integer $\ell$ such that $n_{\ell+1} \le m$.
We claim that
\begin{equation}
\label{eq:claim:theo:EigCharac:FAR:Mult}
\|c \circ \tau_{n_0,m}(v) - \alpha h_m(v)\|
\le 
\varepsilon'_{\ell(m)} + \varepsilon_m + \varepsilon_{\ell(m)}
\quad\text{for all $m \ge n_1$ and $v \in \cL(X_{\btau}^{(m)})$.}
\end{equation}
Since $(\varepsilon'_\ell)$ and $(\varepsilon_m)$ both converge to $0$, this would prove Item (2).

Let us prove this claim. Fix $m \ge n_1$ and $v \in \cL(X_{\btau}^{(m)})$.
We abbreviate $u = \tau_{n_{\ell(m)+1},m}(v)$ and $\ell = \ell(m)$.
Then, by (i),
\[ c \circ \tau_{n_0,m}(v)
    = c \circ \tau_{n_0,n_{\ell+1}}(u)
    = c(u).
\]
Hence, by (ii)
\[  \|c \circ \tau_{n_0,m}(v) - c_{n_\ell}(\tau_{n_\ell,n_{\ell+1}}(u))\| = 
    \|c(u) - c_{n_\ell}(\tau_{n_\ell,n_{\ell+1}}(u))\| \le
    |c(u) - c_{n_\ell}(\tau_{n_\ell,n_{\ell+1}}(u))| 
    \le \varepsilon'_\ell.
\]
Since $c_{n_\ell}(\tau_{n_\ell,n_{\ell+1}}(u)) = c_{n_\ell}(\tau_{n_\ell,m}(v))$, the bound in \eqref{eq:1:theo:EigCharac:FAR:Mult} yields
\[  \|c_{n_\ell}(\tau_{n_\ell,n_{\ell+1}}(u)) - c_m(v)\|
    \le \varepsilon_{n_\ell} + \varepsilon_m.
\]
Combining these two bounds gives
\[  \|c \circ \tau_{n_0,m}(v) - c_m(v)\| \le 
    \varepsilon'_\ell + \|c_{n_\ell}(\tau_{n_\ell,n_{\ell+1}}(u)) - c_m(v)\| \le 
    \varepsilon'_\ell + \varepsilon_{n_\ell} + \varepsilon_m,
    \]
proving \eqref{eq:claim:theo:EigCharac:FAR:Mult} and thus Item (2).
\medskip

We continue with $(2) \Rightarrow (3)$.
Assume that there exist coboundaries $c_n \colon \cA_n^* \to \R$ in $X_{\btau}^{(n)}$, for each $n \ge 0$, such that
\[  \varepsilon_m \coloneqq 
    \sup\bigl\{
    \|c_n(\tau_{n,m}(u)) - \alpha h_m(u)\| :
    u \in \cL(X_{\btau}^{(m)}) \bigr\}
\]
converges to $0$ as $m \to \infty$.
By replacing $c_n$ and $n$ by $c_n \circ \tau_{n,m}$ and $m$ for some large $m > n$ if needed, we may assume $\varepsilon_m < 1/3$ for all $m \ge n$.
This enables us to apply, for each $m \ge n$, \Cref{lem:small_gamma=>morphism} with the morphism  
$\gamma'_m \coloneqq c_n \circ \tau_{n,m} - \alpha h_m$.
We deduce that the morphism $\gamma_m \colon \cA_m^* \to \R$ defined by  
$\gamma_m(a) = \{c_n \circ \tau_{n,m}(a) - \alpha h_m(a)\}$ for $a \in \cA_m$, satisfies 
$\gamma_m(v) = \{\gamma'_m(v)\}$ for all $v \in \cL(X_{\btau}^{(m)})$.
In particular, $|\gamma_m(v)| = \|\gamma'_m(v)\|$ for all such $v$, so, since $(\varepsilon_m)$ tends to $0$, Item (3) follows.
\medskip

Finally, we assume that Item (3) holds and prove that $\alpha$ is an eigenvalue of $X$.
By hypothesis, there exist $n \ge 0$ and a letter-coboundary $c_n$ such that the morphism  
$\gamma_n \colon \cA_n^* \to \R$, defined by $\gamma_n(a) = \{c_n(a) - \alpha h_n(a)\}$, satisfies
\[  \varepsilon_m \coloneqq 
    \sup\bigl\{ |\gamma_n(\tau_{n,m}(u))| : u \in \cL(X_{\btau}^{(m)}) \bigr\}
\]
converges to $0$ as $m \to \infty$.
By replacing $c_n$ and $n$ by $c_n \circ \tau_{n,m}$ and $m$ for some large $m > n$, we may assume $\varepsilon_n < 1/3$.
Thus, $|\gamma_n(u)| < 1/3$ for all $u \in \cL(X_{\btau}^{(n)})$.
We can thus apply \Cref{lem:small_gamma=>morphism} with $\gamma'_n \coloneqq \gamma_n$, obtaining 
$\gamma_n(v) = \{\gamma(v)\}$ for all $v \in \cL(X_{\btau}^{(n)})$.
This is exactly the hypothesis of \Cref{lemma:EigCharac:gamma2freqs&eig}, which then implies that $\alpha$ is an eigenvalue of $X$.
\end{proof}

We end this section with the proof of \Cref{prop:balance_char_Sadic}.

\begin{proof}[Proof of \Cref{prop:balance_char_Sadic}.]
We assume first that $f$ is balanced.
Set 
$\alpha \coloneqq \int f \,\mathrm{d}\mu 
    = \sum_{a \in \cA_\ell} f_\ell(a)\,\mu(B_{\ell}(a))$ and observe that for every $x \in X_{\btau}^{(\ell)}$ and $N>0$,
\[
  \sum_{0 \le k < h_\ell(x_{[0,N)})}
    f(S^k \tau_{0,\ell}(x)) 
    - \alpha\, h_\ell(x_{[0,N)})    
  = f_\ell(x_{[0,N)}) - \alpha\, h_\ell(x_{[0,N)}).
\]
The left-hand side is uniformly bounded because $f$ is balanced. Hence,
\[
  \sup\bigl\{
    |f_\ell(v) - \alpha h_\ell(v)| :
    v \in \cL(X_{\btau}^{(\ell)})
    \bigr\} < \infty.
\]
This is the hypothesis of \Cref{lem:balance=>small_abs_val}.
Thus, for each $n>\ell$, there exists a letter-coboundary 
$c_n \colon \cA_n^* \to \R$ in $X_{\btau}^{(n)}$ such that
\begin{equation}
    \label{eq:1:prop:balance_char_Sadic}
    \varepsilon_n \coloneqq 
    \sup\bigl\{ 
    | f_\ell(\tau_{\ell,n}(v)) - \alpha h_n(v) + c_n(v) |  :
    v \in \cL(X_{\btau}^{(n)}) 
    \bigr\}
\end{equation}
tends to $0$ as $n\to\infty$.

We now want to follow the strategy used in the proof of  the implication $(1)\Rightarrow (2)$ of \Cref{theo:EigCharac:FAR:Mult}.  
More precisely, we are going to apply \Cref{existence_limit_coboundary} to obtain a single letter-coboundary $c$ such that $c \circ \tau_{\ell,m}$ approximates $c_m$, and then derive \eqref{eq:hip_gamma:prop:balance_char_Sadic} from this.

Since $(|\cA_n|)$ is bounded and \eqref{eq:1:prop:balance_char_Sadic} holds, we can apply \Cref{existence_limit_coboundary} to obtain an increasing sequence $(n_\ell : \ell \ge 0)$ with $\cA_{n_\ell}=\cA_{n_0}$ for all $\ell$, and a morphism  
$c \colon \cA_{n_0}^* \to \R$, which is a letter-coboundary in each $X_{\btau}^{(n_\ell)}$, such that:
\begin{enumerate}[label=(\roman*)]
    \item $c = c\circ \tau_{n_\ell,n_k}$ for all $k>\ell\ge 0$;
    \item $\varepsilon'_\ell \coloneqq 
      \sup\bigl\{ |c(v) - c_{n_\ell}(\tau_{n_\ell,n_{\ell+1}}(v))| : 
        v\in \cL(X_{\btau}^{(n_\ell)}) \bigr\} \to 0$ as $\ell \to \infty$.
\end{enumerate}

For each $m \ge n_1$, we let $\ell(m)$ be the largest $\ell$ with $n_{\ell+1}\le m$.
We claim that
\begin{equation}
\label{eq:claim:prop:balance_char_Sadic}
\bigl| 
c \circ \tau_{n_0,m}(v)
- f_\ell(\tau_{\ell,m}(v)) 
+ \alpha\, h_m(v)
\bigr|
\le \varepsilon'_{\ell(m)} + \varepsilon_{n_{\ell(m)}}
\quad \text{for all $m\ge n_1$ and $v\in \cL(X_{\btau}^{(m)})$}.
\end{equation}
Since both $\varepsilon'_\ell \to 0$ and $\varepsilon_{n_{\ell}} \to 0$ as $\ell \to \infty$, \eqref{eq:hip_gamma:prop:balance_char_Sadic} follows from \eqref{eq:claim:prop:balance_char_Sadic}.

Let us prove \eqref{eq:claim:prop:balance_char_Sadic}.
Fix $m\ge n_1$ and $v\in \cL(X_{\btau}^{(m)})$.
Let $\ell=\ell(m)$ and set $u=\tau_{n_{\ell+1},m}(v)$.
By (i), $c\circ \tau_{n_0,m}(v) = c(u)$, so, by (ii),
\begin{equation}
\label{eq:2:prop:balance_char_Sadic}
\bigl|c\circ \tau_{n_0,m}(v) - c_{n_\ell}(\tau_{n_\ell,m}(v))\bigr|
 = \bigl|c(u) - c_{n_\ell}(\tau_{n_\ell,n_{\ell+1}}(u))\bigr|
\le \varepsilon'_\ell.
\end{equation}
Let now $u'=\tau_{n_\ell,m}(v)$.  
By \eqref{eq:1:prop:balance_char_Sadic}, $c_{n_\ell}(u')$ differs from
$f_\ell(\tau_{\ell,n_\ell}(u')) - \alpha h_{n_\ell}(u')$ by at most $\varepsilon_{n_\ell}$.
Using this to substitute $c_{n_\ell}(u')$ in \eqref{eq:2:prop:balance_char_Sadic} yields
\[
\bigl|
c\circ\tau_{n_0,m}(v)
 - f_\ell(\tau_{\ell,m}(v))
 + \alpha\, h_m(v)
\bigr|
\le \varepsilon'_\ell + \varepsilon_{n_\ell},
\]
proving \eqref{eq:claim:prop:balance_char_Sadic}.
\medskip

For the converse, suppose that there exist $n\ge \ell$, a letter-coboundary $c_n$ on $X_{\btau}^{(n)}$, and $\alpha\in\R$ such that the morphism $\gamma_n \colon \cA_n^* \to \R$, defined by $\gamma_n(a) = f_\ell(\tau_{\ell,n}(a)) - \alpha h_n(a) + c_n(a)$ for $a \in \cA_n$, satisfies that
\[
\varepsilon_m \coloneqq
\sup\bigl\{ |\gamma_n(\tau_{n,m}(u))| : 
    u\in \cL(X_{\btau}^{(m)}) \bigr\}
\]
tends to 0 as $m \to \infty$.
This yields, for any $x\in X_{\btau}^{(n)}$ and $N>0$,
\begin{equation*}
\Big|\sum_{0 \le k < h_n(x_{[0,N)})}
    f(S^k\tau_{0,n}(x)) 
 - \alpha\, h_n(x_{[0,N)})  \Big|
 = |\gamma_n(x_{[0,N)}) - c_n(x_{[0,N)})| \le 
 \varepsilon_n + |c_n(x_{[0,N)})|.
\end{equation*}
We can also provide   uniform  bounds for  $|c_n(x_{[0,N)})|$ as follows.
By \Cref{lem:cobord&rho}, there exists 
$\rho_n \colon \cA_n \to \R$ such that 
$c_n(a) = \rho_n(b) - \rho_n(a)$ for all $ab \in \cL(X_{\btau}^{(n)})$.
Hence
\[
|c_n(x_{[0,N)})|
 = |\rho_n(x_N) - \rho_n(x_0)|
 \le 2 \max\{|\rho_n(a)| : a \in \cA_n\}
 \eqqcolon Q_0.
\]
Therefore,
\begin{equation}
\label{eq:4:prop:balance_char_Sadic}
\Bigl|
\sum_{0 \le k < h_n(x_{[0,N)})}
    f(S^k\tau_{0,n}(x)) 
 - \alpha\, h_n(x_{[0,N)})
\Bigr|
\le Q_0 + \varepsilon_n
\end{equation}
for all $x\in X_{\btau}^{(n)}$ and $N>0$.
We use this to show that $f$ is balanced.

Let $y \in X$ and $M \ge 0$.
The ergodic sum $\sum_{0\le k<M} f(S^k y) - \alpha M$ may not have the form of that in \eqref{eq:4:prop:balance_char_Sadic}, but by extending this sum by at most  $H \coloneqq \max\{h_n(a) : a \in \cA_n\}$ terms on each side, we obtain one that does.
Each added term has absolute value at most $F \coloneqq \max\{|f_\ell(\tau_{\ell,n}(a))| : a \in \cA_n\} + |\alpha|$, so \eqref{eq:4:prop:balance_char_Sadic} gives the bound
\[
\Bigl|
\sum_{0\le k<M} f(S^k y) - \alpha M
\Bigr|
\le \varepsilon_n + Q_0 + 2HF.
\]
Thus, $f$ is balanced on $(X,S)$.
\end{proof}

\section{On the combinatorics of letter-coboundaries}
\label{sec:studyofcoboundaries}

We now  exploit the connection between letter-coboundaries and extension graphs.
This allows us to succinctly characterize every letter-coboundary that can appear in a subshift and to give natural conditions under which all such coboundaries are trivial.

\subsection{Discrepancy  of substitutive systems} \label{subsec:discrepancy_subst}

In this section, we derive from \Cref{prop:balance_char_Sadic} a characterization of primitive substitutions that generate a letter-balanced subshift.
This result provides an alternative to Adamczewski’s characterization~\cite{adam03,adam05};   prefix-suffix automata  used in  ~\cite{adam03,adam05} are replaced here  by extension graphs.

Let us note that the statements in this section follow from the results given  in \Cref{sec:carac_finite_rank}, and could be extended to primitive, recognizable directive sequences with bounded alphabets.
We refrain from doing it here to avoid further technical overhead.

Let us fix an aperiodic primitive substitution $\tau \colon \cA^* \to \cA^*$. 
See \Cref{susbsec:morphisms} for the definition  of  the (minimal) shift $X_{\tau}$.
Recall that its incidence matrix is the $\cA \times \cA$ matrix $M_\tau$ whose entry $M_\tau(b,a)$ counts the number of occurrences of $b$ in $\tau(a)$.
This matrix acts on $\R^\cA$ on the right: for any row vector $\vec{v}=(v_b)_{b\in\cA}\in\R^\cA$,
\[
  \vec{v} M_\tau = \bigl(\sum_{b\in\cA} v_b\, M_\tau(b,a) \bigr)_{a\in\cA}.
\]
Its {\em stable space} is the row-vector space
\[
  \vec{V}_{X_\tau} = \{\, \vec{v}\in\R^\cA : \vec{v} M_\tau^n \to 0 \text{ as } n\to\infty \,\}.
\]
Equivalently, $\vec V_{X_\tau}$ is the span of the left generalized eigenspaces of $M_\tau$ for eigenvalues $\eta$ with $|\eta|<1$.
In particular, for every $\vec{v}\in\vec V_{X_\tau}$, the convergence of  $\vec{v} M_\tau^n$ to the vector $\vec{0}$  is exponentially fast.

Next,  we define the {\em coboundary vector space} as
\[
  \vec{\cC}_{X_\tau} =
  \bigl\{ (c(a))_{a\in\cA}\in\R^\cA \mid
        c\colon \cA^*\to\R \text{ is a letter-coboundary on } X_\tau \bigr\}.
\]
For an illustration of the computations performed here, see  \emph{e.g.}
the example developed in \Cref{subsec:nontrivialdynamical}.

We first give a succinct description of $\vec{\cC}_{X_\tau}$ using \Cref{thm:manifold}.
For each connected component $K$ of $\Gamma_{X_\tau}(\varepsilon)$, let $\vec{\mathbf{1}}^R_K$ and  $\vec{\mathbf{1}}^L_K$ in $\R^\cA$ be  the vectors defined by
\[
  \vec{\mathbf{1}}^R_K(a) \!=\!
  \begin{cases}
    1 & \text{if } a_R \in K,\\
    0 & \text{otherwise,}
  \end{cases}
  \quad\text{and}\quad
  \vec{\mathbf{1}}^L_K(a) \!=\!
  \begin{cases}
    1 & \text{if } a_L \in K,\\
    0 & \text{otherwise.}
  \end{cases}
\]

\begin{proposition}
\label{subs:generators_for_coboundary_space}
The space $\vec{\cC}_{X_\tau}$ is generated by the row vectors
$\vec{\mathbf{1}}^R_K - \vec{\mathbf{1}}^L_K$, where $K$ ranges over the
connected components of $\Gamma_{X_\tau}(\varepsilon)$.
\end{proposition}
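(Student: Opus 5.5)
The plan is to use \Cref{thm:manifold}, which identifies $\cC_{X_\tau}$ with the image of the linear map $\eta\colon \cF_{X_\tau}\to\cC_{X_\tau}$, $\rho\mapsto c_\rho$. Since $\eta$ is onto, it suffices to compute the images of a spanning set of $\cF_{X_\tau}$. Recall that $\cF_{X_\tau}$ consists of the maps $\rho\colon\cA\to\R$ that are constant on each class of $\cP^R_{X_\tau}$. Such maps are exactly the linear combinations of the indicator maps $\rho_K\coloneqq \vec{\mathbf{1}}^R_K$, one for each connected component $K$ of $\Gamma_{X_\tau}(\varepsilon)$. Indeed, the sets $\{a : a_R\in K\}$ are precisely the elements of $\cP^R_{X_\tau}$. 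Moreover each such set is nonempty, because by minimality every letter has both a left and a right extension in $\Gamma_{X_\tau}(\varepsilon)$, so each component meets both $\cA_L$ and $\cA_R$. Hence $\vec{\cC}_{X_\tau}$ is spanned by the vectors $(c_{\rho_K}(a))_{a\in\cA}$, and the task reduces to computing these vectors.

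Fix a component $K$ and a letter $a$. Pick any $b$ with $ab\in\cL(X_\tau)$; such a $b$ exists by minimality. Then $(a_L,b_R)$ is an edge of $\Gamma_{X_\tau}(\varepsilon)$, so $a_L$ and $b_R$ lie in the same component. This gives
\[
c_{\rho_K}(a)=\rho_K(b)-\rho_K(a)=\vec{\mathbf{1}}^R_K(b)-\vec{\mathbf{1}}^R_K(a)=\vec{\mathbf{1}}^L_K(a)-\vec{\mathbf{1}}^R_K(a),
\]
because $\vec{\mathbf{1}}^R_K(b)=1$ exactly when $b_R\in K$, which happens exactly when $a_L\in K$. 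Therefore $\eta(\rho_K)=\vec{\mathbf{1}}^L_K-\vec{\mathbf{1}}^R_K$. This is the negative of the proposed generator, and signs do not affect the span. Consequently the vectors $\vec{\mathbf{1}}^R_K-\vec{\mathbf{1}}^L_K$ span $\vec{\cC}_{X_\tau}$.

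Two further points complete the picture. First, each such vector really lies in $\vec{\cC}_{X_\tau}$: this holds because it is $-\eta(\rho_K)$ and $\eta$ takes values in the letter-coboundaries. Second, regarding dimension, the vectors satisfy exactly one linear relation, $\sum_K(\vec{\mathbf{1}}^R_K-\vec{\mathbf{1}}^L_K)=\vec{1}-\vec{1}=0$. This relation corresponds to $\ker\eta=\cF^0_{X_\tau}$ and is consistent with the dimension count $r-1$ of \Cref{thm:manifold}; it is not needed for the generation claim itself.

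The only delicate step is the identity $\vec{\mathbf{1}}^R_K(b)=\vec{\mathbf{1}}^L_K(a)$ for every $ab\in\cL(X_\tau)$. It follows directly from the definition of the edges of the extension graph, together with minimality, which guarantees that every letter has a right extension. I therefore expect no real obstacle: the argument is bookkeeping on top of \Cref{thm:manifold}.
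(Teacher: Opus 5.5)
Your proposal is correct and follows the paper's proof: both use the surjectivity of $\eta$ from \Cref{thm:manifold}, the spanning set $\{\vec{\mathbf{1}}^R_K\}$ of $\cF_{X_\tau}$, and the computation $\eta(\vec{\mathbf{1}}^R_K)=\vec{\mathbf{1}}^L_K-\vec{\mathbf{1}}^R_K$ via an edge $(a_L,b_R)$. You are somewhat more explicit than the paper about the sign flip and about every component meeting both sides of the graph, which the paper leaves implicit.
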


\begin{proof}
Let $\cK$ be the set of connected components of 
$\Gamma_{X_\tau}(\varepsilon)$. Let $\cF_{X_\tau}, \cC_{X_\tau}$ and $\eta\colon\cF_{X_\tau}\to\cC_{X_\tau}$ as in \Cref{thm:manifold}.
Since any letter-coboundary is uniquely determined by its value on letters, we can identify $\vec{\cC}_{X_\tau}$ with
$\cC_{X_\tau}$.

 Observe that the space $\cF_{X_\tau}$ is generated by the vectors $\vec{\mathbf{1}}^R_K$, $K \in \cK$.
Now, for any any $ab \in \cL(X_\tau)$, we have
\[  \eta(\vec{\mathbf{1}}^R_K)(a) = 
    \vec{\mathbf{1}}^R_K(b) - \vec{\mathbf{1}}^R_K(a) = 
    \vec{\mathbf{1}}^L_K(a) - \vec{\mathbf{1}}^R_K(a).
\]
Hence, as $\eta$ maps $\cF_{X_\tau}$ onto $\vec{\cC}_{X_\tau}$, the vectors
$\vec{\mathbf{1}}^R_K - \vec{\mathbf{1}}^L_K$ generate $\vec{\cC}_{X_\tau}$.
\end{proof}

We now state the main result of this section.

\begin{theorem}
\label{balanced=>spaces_decomposition}
Let $\tau \colon \cA^* \to \cA^*$ be a primitive aperiodic substitution.
Then, $X_\tau$ is letter-balanced if and only if the vector-space sum $\vec V_{X_\tau}+\vec{\cC}_{X_\tau}$ has codimension $1$ in $\R^\cA$.
\end{theorem}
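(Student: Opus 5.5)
The plan is to show that $\vec V_{X_\tau}+\vec{\cC}_{X_\tau}$ always sits inside a fixed hyperplane. Letter-balancedness will then turn out to be equivalent to equality with that hyperplane.

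Let $\vec\mu\in\R^\cA$ be the column vector of letter frequencies $\mu([a])$, where $\mu$ is the unique invariant measure of $X_\tau$. It is a right Perron--Frobenius eigenvector of $M_\tau$ with eigenvalue $\lambda>1$, normalized so that $\sum_a\mu([a])=1$. Let $\vec\mu^\perp$ be the hyperplane of row vectors $\vec w$ with $\vec w\vec\mu=0$.

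\emph{Step 1: $\vec V_{X_\tau}+\vec{\cC}_{X_\tau}\subseteq\vec\mu^\perp$.} If $\vec v\in\vec V_{X_\tau}$, then $\lambda^n\,\vec v\vec\mu=\vec vM_\tau^n\vec\mu\to0$, which forces $\vec v\vec\mu=0$. For coboundaries, \Cref{lem:cob_zero_integral} gives $\sum_a c(a)\mu([a])=0$. Consequently the sum has codimension $1$ if and only if it equals $\vec\mu^\perp$. This is equivalent to requiring, for every letter $a$, that $\vec e_a-\mu([a])\vec{\mathbf 1}\in\vec V_{X_\tau}+\vec{\cC}_{X_\tau}$. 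These $|\cA|$ vectors span $\vec\mu^\perp$: each lies in $\vec\mu^\perp$, and together with $\vec{\mathbf 1}\notin\vec\mu^\perp$ they span $\R^\cA$.

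\emph{Step 2: codimension $1$ implies letter-balanced.} Fix $a$ and write $\vec e_a-\mu([a])\vec{\mathbf 1}=\vec s+\vec c$ with $\vec s$ stable and $c$ a letter-coboundary. For $x\in X_\tau$, the discrepancy $|x_{[0,N)}|_a-N\mu([a])$ equals $s(x_{[0,N)})+c(x_{[0,N)})$, where $s$ and $c$ are the morphisms defined by these vectors.
\begin{itemize}
\item The coboundary term is bounded by $2\max|\rho|$, using \Cref{lem:cobord&rho}.
\item For the stable term, decompose $x_{[0,N)}$ \`a la Dumont--Thomas, as in \eqref{eq:unroll_address} and the proof of \Cref{theo:EigCharac:positive}. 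This writes it as a concatenation $\tau^k(p_k)\cdots\tau(p_1)p_0$ of images of prefixes $p_j$ of images of letters, up to a bounded number of boundary pieces of the same kind.
\item Hence $|s(x_{[0,N)})|\le C\sum_j\|\vec sM_\tau^j\|$. This is a convergent geometric series, since the decay on $\vec V_{X_\tau}$ is exponential.
\end{itemize}
Therefore every letter has bounded discrepancy. One could alternatively invoke the converse direction of \Cref{prop:balance_char_Sadic}, but the direct computation is cleaner.

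\emph{Step 3: letter-balanced implies codimension $1$.} Apply \Cref{prop:balance_char_Sadic} to the constant directive sequence, which is recognizable by \Cref{rem:recog_substitution}. Take $f=\mathbf 1_{[a]}$ and $\ell=0$, so $f_0=\vec e_a$ and $\alpha=\mu([a])$. Note that $X_\tau^{(n)}=X_\tau$. The proposition gives $n$ and a coboundary $c_n$ such that $\vec w\coloneqq\vec e_aM_\tau^n-\alpha\vec{\mathbf 1}M_\tau^n+\vec c_n$ satisfies $\vec wM_\tau^k\vec e_b\to0$ for every letter $b$ (take $v=b$). Thus $\vec w\in\vec V_{X_\tau}$, and so $(\vec e_a-\mu([a])\vec{\mathbf 1})M_\tau^n\in\vec V_{X_\tau}+\vec{\cC}_{X_\tau}$ for all $a$; $n$ may be taken uniform in $a$ because the property persists for larger $n$. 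Hence $\vec\mu^\perp M_\tau^n\subseteq\vec V_{X_\tau}+\vec{\cC}_{X_\tau}$.

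The main obstacle is removing the factor $M_\tau^n$, since $M_\tau$ may be singular. Split $\R^\cA=\vec V_{X_\tau}\oplus U$, where $U$ is the sum of the left generalized eigenspaces for eigenvalues $|\eta|\ge1$. Then $U$ is $M_\tau$-invariant and $M_\tau$ is bijective on it. Also $\vec\mu^\perp$ is $M_\tau$-invariant, because $\vec\mu$ is a right eigenvector. Since $\vec V_{X_\tau}\subseteq\vec\mu^\perp$, we get $\vec\mu^\perp=(\vec\mu^\perp\cap U)\oplus\vec V_{X_\tau}$. Moreover $(\vec\mu^\perp\cap U)M_\tau^n=\vec\mu^\perp\cap U$, because $M_\tau$ maps this invariant subspace of $U$ injectively, hence bijectively, to itself. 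Therefore $\vec\mu^\perp\subseteq\vec\mu^\perp M_\tau^n+\vec V_{X_\tau}\subseteq\vec V_{X_\tau}+\vec{\cC}_{X_\tau}$, and with Step 1 this gives codimension exactly $1$.
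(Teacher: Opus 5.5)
Your proposal is correct and follows the paper's proof. The orthogonality to $\vec\mu$, the use of \Cref{prop:balance_char_Sadic} to get $\vec\mu^\perp M_\tau^n\subseteq\vec V_{X_\tau}+\vec{\cC}_{X_\tau}$, and the Dumont--Thomas bound for the converse are the paper's steps; your splitting $\R^\cA=\vec V_{X_\tau}\oplus U$ plays the role of the paper's Fitting decomposition $\vec\mu^\perp=I^\infty+K^\infty$, and your $\lambda^n\,\vec v\vec\mu\to0$ argument replaces its eigenspace-orthogonality argument. The only loose point is your description of the decomposition in Step 2: a general factor needs a whole chain of suffix pieces $\tau^j(s_j)$ as well as prefix pieces, not a bounded number of extra pieces, but each contributes at most $C\|\vec sM_\tau^j\|$, so your bound stands.
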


We begin with a simple technical lemma.

\begin{lemma}
  \label{lem:subs:balanced_space_is_orth_to_mu}
Let $\mu$ be the unique invariant measure on $X_\tau$, and let $\vec\mu=(\mu([a]))_{a\in\cA}\in\R^\cA$ be the corresponding letter-frequency column vector.
Then $\vec{V}_{X_\tau} + \vec{\cC}_{X_\tau} \subseteq \vec\mu^\perp$, {\em i.e.}, $\vec{v}\cdot \vec\mu=0$ for all $\vec{v}\in \vec{V}_{X_\tau}+\vec{\cC}_{X_\tau}$.
\end{lemma}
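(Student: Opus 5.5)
By linearity it suffices to show separately that $\vec v\cdot\vec\mu=0$ for every $\vec v\in\vec V_{X_\tau}$ and that $\vec c\cdot\vec\mu=0$ for every $\vec c\in\vec{\cC}_{X_\tau}$.

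The coboundary part is exactly \Cref{lem:cob_zero_integral}. Indeed, if $c$ is a letter-coboundary on the minimal subshift $X_\tau$, then
\[
\vec c\cdot\vec\mu=\sum_{a\in\cA}c(a)\,\mu([a])=0 .
\]
Nothing more is needed for this part.

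The stable space requires an invariance property of $\vec\mu$ under the incidence matrix. Since $X_\tau=X_{\btau}^{(n)}$ for the constant sequence $\btau=(\tau)$, I would apply \Cref{prop:measure_transfer} with $n=1$, using the fact that $X_\tau$ is uniquely ergodic. The level-$1$ measure is $\mu$ itself, and the transferred measure must also be $\mu$. By \eqref{eq:prop:measure_transfer:remark} this gives $\mu(B_1(a))=\mu([a])/\lambda$, where $\lambda=\sum_b|\tau(b)|\mu([b])$. Next I count the occurrences of each letter $b$ inside the towers. Since $\mu([b])=\sum_a M_\tau(b,a)\,\mu(B_1(a))$, we get $\vec\mu=\lambda^{-1}M_\tau\vec\mu$, that is, $M_\tau\vec\mu=\lambda\vec\mu$. (Alternatively, Perron--Frobenius for the primitive matrix $M_\tau$ identifies $\vec\mu$ as the right Perron eigenvector, with $\lambda>1$ the dominant eigenvalue.)

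The conclusion for $\vec V_{X_\tau}$ then follows directly. If $\vec v\in\vec V_{X_\tau}$, then
\[
\vec v\cdot\vec\mu=\lambda^{-n}\,(\vec v M_\tau^n)\cdot\vec\mu .
\]
The factor $\vec vM_\tau^n$ tends to $0$ and $\lambda^{-n}\le1$, so the right-hand side tends to $0$; since the left-hand side does not depend on $n$, it equals $0$.

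The only step with real content is establishing $M_\tau\vec\mu=\lambda\vec\mu$ with $\lambda\ge1$. The Perron--Frobenius route is the cleanest, and it gives $\lambda>1$ because $\tau$ is primitive and aperiodic, hence growing.
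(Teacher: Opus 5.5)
Your proof is correct, and it differs from the paper's only in how the stable-space half is handled. The coboundary half is identical: both you and the paper invoke \Cref{lem:cob_zero_integral}.

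\textbf{The paper's stable-space argument.} It places $\vec v$ in a left generalized eigenspace of $M_\tau$ for an eigenvalue $\eta$ with $|\eta|<1$, so that $\eta\neq\lambda$. It then uses the standard fact that left and right eigenvectors for distinct eigenvalues are orthogonal, with $\vec\mu$ taken as known to be the right Perron eigenvector.

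\textbf{Your stable-space argument.} You use the defining property $\vec v M_\tau^n\to 0$ directly, through the identity $\vec v\cdot\vec\mu=\lambda^{-n}(\vec vM_\tau^n)\cdot\vec\mu$. This needs only the relation $M_\tau\vec\mu=\lambda\vec\mu$ with $\lambda\ge 1$. You derive that relation inside the paper's own framework from \Cref{prop:measure_transfer}, rather than quoting Perron--Frobenius theory and the classical identification of letter frequencies with the Perron eigenvector.

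\textbf{What each route buys.} Yours is more self-contained and needs no Jordan-type decomposition of $\vec V_{X_\tau}$. It also handles the whole stable space at once. The paper's proof, strictly speaking, treats one generalized eigenspace at a time and writes ``eigenspace'' where ``generalized eigenspace'' is meant; linearity repairs this. The paper's version is shorter given standard spectral theory, and it displays the spectral separation $|\eta|<1\le\lambda$ explicitly.

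\textbf{Two small remarks.} First, on your measure-transfer route you do not need Perron--Frobenius even for $\lambda\ge 1$. Since $\tau$ is non-erasing, $\lambda=\sum_b|\tau(b)|\,\mu([b])\ge\sum_b\mu([b])=1$. Second, \Cref{prop:measure_transfer} requires the constant sequence $(\tau)$ to be recognizable. This holds by \Cref{rem:recog_substitution}, because $\tau$ is primitive and aperiodic, and you should say so.
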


\begin{proof}
It suffices to show that $\vec{v}\cdot\vec\mu = \vec{c}\cdot \vec{\mu} = 0$ for every $\vec{v}\in\vec V_{X_\tau}$ and every $\vec{c}\in\vec{\cC}_{X_\tau}$.
Let $c\colon\cA^*\to\R$ be  a letter-coboundary in $X_\tau$ and put $\bar c(x)=c(x_0)$ for $x \in X_\tau$. Then, by \Cref{lem:cob_zero_integral}, we have
\[
  (c(a))_{a\in\cA}\cdot \vec\mu = \int \bar c\,\mathrm d\mu = 0.
\]
Let now $\vec{v}\in\vec V_{X_\tau}$.
Since $\vec{v}$ lies in a left generalized eigenspace of $M_\tau$ for some eigenvalue $\eta$ with $|\eta|<1$, necessarily $\eta\neq \lambda$, where $\lambda$ is the Perron-Frobenius eigenvalue of $M_\tau$.
By a standard linear-algebra fact, the left eigenspace for $\eta$ is orthogonal to the right eigenspace for $\lambda$; the latter is spanned by $\vec\mu$.
Hence $\vec{v}\cdot \vec\mu=0$.
\end{proof}

\medskip

\begin{proof}[Proof of \Cref{balanced=>spaces_decomposition}]
We aim to derive the theorem from the results in \Cref{sec:carac_finite_rank}. 
We recall that  $\btau$ is the constant directive sequence $(\tau : n \ge 0)$, which is primitive and recognizable (see \Cref{rem:recog_substitution}). 
In this setup, $\tau_{n,m}$ simply denotes $\tau^{m-n}$ for all $m > n \ge 0$, and  $X_{\btau}^{(n)} = X_\tau$ for every $n \ge 0$.
Let $\vec{\mu} = (\mu([a]))_{a \in \cA} \in \R^\cA$ be the column vector of letter frequencies in $X_\tau$, where $\mu$ is the unique invariant measure of $X_\tau$.
For each $a \in \cA$, define $\mathbf{1}_a \colon \cA \to \R$ to be the indicator function of $a$, \emph{i.e.}, $\mathbf{1}_a(b) = 1$ if $b = a$ and $\mathbf{1}_a(b) = 0$ otherwise.
\smallskip

We now prove the theorem.
Assume first that $X_\tau$ is letter-balanced.
This means that, for each $a \in \cA$, the function $\bar{\mathbf{1}}_a \colon X_\tau \to \Z$ defined by $\bar{\mathbf{1}}_a(x) = \mathbf{1}_a(x_0)$, is balanced on $(X_\tau, S)$.
Thus, by \Cref{prop:balance_char_Sadic}, there exist integers $n_a \ge 0$ and letter-coboundaries $c_a \colon \cA \to \R$ in $X_{\btau}^{(n_a)} = X_\tau$ such that
\[
    \lim_{k \to \infty}
    \sup_{v \in \cL(X_\tau)}
    \left| \mathbf{1}_a(\tau^{n_a+k}(v)) - \alpha_a h_{n_a+k}(v) + c_a(\tau^k(v)) \right| = 0,
\]
where $\alpha_a = \int \bar{\mathbf{1}}_a \, \mathrm{d}\mu$. 
Without loss of generality, we may assume that $n_a = n$ for all $a \in \cA$, since increasing $n_a$ does not affect the convergence.
Let $\vec{c}_a = (c_a(b))_{b\in\cA}$ and $\vec{\mathbf{1}}_a = (\mathbf{1}_a(b))_{b\in\cA}$ denote the row vectors associated to $c_a$ and $\mathbf{1}_a$, respectively.
Then the convergence above implies that
\[
    \lim_{k \to \infty} \vec{\mathbf{1}}_a M_\tau^{n+k} - \alpha_a \vec{h}_{n+k} + \vec{c}_a M_\tau^k = 0,
\]
where $\vec{h}_n:= (h_n(a))_a.$
Equivalently, $\vec{\mathbf{1}}_a M_\tau^n - \alpha_a \vec{h}_n + \vec{c}_a \in \vec{V}_{X_\tau}$, and therefore $\vec{\mathbf{1}}_a M_\tau^n - \alpha_a h_n \in \vec{V}_{X_\tau} + \vec{\cC}_{X_\tau}$.
By letting $\vec{\mathbf{1}}\in \R^\cA$ be the row vector having all entries  equal to $1$, the last inclusion rewrites as
\begin{equation*}
    (\vec{\mathbf{1}}_a - \alpha_a  \vec{\mathbf{1}}) M_\tau^n \in \vec{V}_{X_\tau} + \vec{\cC}_{X_\tau}
    \enspace \text{for all $a \in \cA$.}
\end{equation*}
Note that since $\alpha_a=\vec{\mathbf{1}}_a \cdot \vec{\mu}$,  then $\vec{\mathbf{1}}_a - \alpha_a  \vec{\mathbf{1}}$ is the projection of $\vec{\mathbf{1}}_a$ onto the orthogonal space $\vec{\mu}^\perp$ of $\vec\mu$.
As the vectors $\vec{\mathbf{1}}_a$ generate $\R^\cA$, $\vec{\mu}^\perp M_\tau^n$ is contained in $\vec{V}_{X_\tau} + \vec{\cC}_{X_\tau}$, which implies that 
\[  I^\infty \coloneqq \bigcap_{m \ge 0} \vec{\mu}^\perp M_\tau^m 
    \subseteq \vec{V}_{X_\tau} + \vec{\cC}_{X_\tau}.
\]
Also, the definition of $\vec{V}_\tau$ ensures that 
\[  K^\infty \coloneqq 
    \bigl\{v \in \vec{\mu}^\perp : 
    \exists m \ge 0 \text{ s.t. } v \, M_\tau^m = 0 \bigr\}
    \subseteq \vec{V}_{X_\tau} \subseteq 
    \vec{V}_{X_\tau} + \vec{\cC}_{X_\tau}.
\]
Now, since $\vec{\mu}$ is a right-eigenvector of $M_\tau$, its orthogonal complement $\vec{\mu}^\perp$ is invariant under the right action of $M_\tau$.
This allows us to use  the Fitting lemma, yielding
\[  \vec{\mu}^\perp = 
    I^\infty + K^\infty \subseteq 
    \vec{V}_{X_\tau} + \vec{\cC}_{X_\tau}.
\]
By \Cref{lem:subs:balanced_space_is_orth_to_mu}, $\vec{V}_{X_\tau} + \vec{\cC}_{X_\tau}$ is contained in $\vec{\mu}^\perp$.
We conclude that $\vec{V}_{X_\tau} + \vec{\cC}_{X_\tau}=\vec{\mu}^{\perp}$, and thus it has codimension $1$ in $\R^{\cA}$.
\smallskip

Conversely, assume that $\vec{V}_{X_\tau} + \vec{\cC}_{X_\tau}$ has codimension 1.
By \Cref{lem:subs:balanced_space_is_orth_to_mu}, this space must coincide with $\vec{\mu}^\perp$.
Fix any $a \in \cA$, and consider $\vec{\mathbf{1}}_a - (\vec{\mathbf{1}}_a \cdot \vec{\mu}) \vec{\mathbf{1}}$, which lies in 
$\vec{\mu}^\perp$ and hence in $\vec{V}_{X_\tau} + \vec{\cC}_{X_\tau}$.
Therefore, we can write $\vec{\mathbf{1}}_a - (\vec{\mathbf{1}}_a \cdot \vec{\mu}) \vec{\mathbf{1}} = \vec{c} + \vec{v}$, where  the notation  $\vec{c} $ stands for $ (c(a))_{a\in\cA}$ for  $c$  letter-coboundary 
in $X_\tau$ and where  $\vec{v} \in \vec{V}_{X_\tau}$. 
Let $x \in X_\tau$, let $N \ge 1$, and let $\vec{u}_N$ denote the abelianization of the word $x_{[0,N)}$, \emph{i.e.}, $\vec{u}_N= (|\vec{u}_N|_a)_{a\in \cA}$. 
Then the scalar product $(\vec{\mathbf{1}}_a - (\vec{\mathbf{1}}_a \cdot \vec{\mu}) \vec{\mathbf{1}}) \cdot \vec {u}_N$ equals the discrepancy
\[
    \sum_{i=0}^{N-1} \bar{\mathbf{1}}_a(S^i x) - N \int \bar{\mathbf{1}}_a \, \mathrm{d}\mu.
\]
Since $\vec{v} M_\tau^n \to 0$ exponentially fast and $c$ is uniformly bounded on $\cL(X_\tau)$, classical arguments (\emph{e.g.}, via Dumont--Thomas decomposition \cite{Dumont-Thomas}) imply that this discrepancy is uniformly bounded over all $x \in X_\tau$ and $N \ge 1$; see also the proof of \Cref{cor:EigCharac:FAR:Mult:cocoboundary} below.
Thus, $\bar{\mathbf{1}}_a$ is balanced on $(X_\tau, S)$, and the subshift $X_\tau$ is letter-balanced.
\end{proof}

\subsection{Extension graphs, invariant measures and eigenvalues}\label{subsec:ExtgraphsInvmeasuresEig}

Let $X \subseteq \cA^\Z$ be a minimal subshift and $\mu \in \cM(X,S)$ be one of its invariant measures.
We recall that the factor complexity function of $X$ is  given by $p_X(n) = |\cL_n(X)|$,  and 
that $[u] = \{x \in X : x_{[0,|u|)} = u\}$ for $u \in \cL(X)$.
We  have considered so far the set $I(X,S)$   generated by  measures  of  cylinders endowed  with  an additive group structure. 
We now consider the $\Q$-vector space generated by the measures of cylinders $\mu([u])$ for $u \in \cL(X)$. 
For a set $J \subseteq \R$, we let $\dim_\Q J$ be the dimension of the $\Q$-vector space spanned by $J$.
Then, we set 
\[   d(\mu,n) = \dim_\Q(\{ \mu([u]) : u \in \cL_n(X)\}).    \]

Let $\cK_n$ denote the disjoint union of all the connected components of the extension graphs $\Gamma_X(u)$ for $u \in \cL_n(X)$ (see Section \ref{subsec:extensiongraphs}).

A classical computation shows that, for every $u \in \cL(X)$, one has (with the notation of Section \ref{subsec:extensiongraphs})
\begin{equation} 
    \label{eq:rem:sumofmeasures}
    \mu([u]) = 
    \sum_{a \in \cP_X^L(u)} \mu([au]) = 
    \sum_{b \in \cP_X^R(u)} \mu([ub]).
\end{equation}
These linear relations induce an upper  bound for $d(\mu,n)$  described in the  next lemma, based on a concept  similar to that of a {\em flow matrix},  as introduced  in \cite{andrieu-notes} for Rauzy graphs, developed here in  the language of extension graphs. See also \cite{Werneck} for further results in this direction.

\begin{lemma} \label{lem:dimfrequencies}
    Let $X$ be a transitive subshift and let $\mu\in\cM(X,S)$. 
    Then, for each $n\geq 1$, one has
    \begin{equation}\label{eq:dimfrequencies}
        d(\mu,n) \leq 
        p_X(n) - |\cK_{n-1}| + 1.
    \end{equation}
\end{lemma}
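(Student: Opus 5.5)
We bound $d(\mu,n)$ by the dimension of a space of real vectors cut out by linear relations with rational (indeed integer) coefficients. Indeed, the vector $(\mu([u]))_{u\in\cL_n(X)}$ lies in the real solution space $W$ of a homogeneous rational linear system. Any real solution vector of a rational homogeneous system is a real combination of rational basis vectors of $W$. Hence its coordinates span over $\Q$ a space of dimension at most $\dim_\R W$, plus one once we add the inhomogeneous normalization $\sum_u \mu([u])=1$. It thus suffices to exhibit enough independent rational relations among the $p_X(n)$ unknowns $\mu([w])$, $w\in\cL_n(X)$, so that the solution space of the homogeneous system has dimension at most $p_X(n)-|\cK_{n-1}|+1$.

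The relations come from \eqref{eq:rem:sumofmeasures} applied to each $u\in\cL_{n-1}(X)$, refined along connected components of $\Gamma_X(u)$. For a component $K$ of $\Gamma_X(u)$, set
\[
\ell(K)=\sum_{a_L\in K}\mu([au]),\qquad r(K)=\sum_{b_R\in K}\mu([ub]).
\]
Both equal $\sum_{(a_L,b_R)\in K}\mu([aub])$, since every edge lies in exactly one component. So $\ell(K)-r(K)=0$ is a rational relation among length-$n$ cylinder measures, one for each $K\in\cK_{n-1}$. This is the flow-matrix idea of \cite{andrieu-notes}: view the words of length $n$ as edges of the Rauzy graph on $\cL_{n-1}(X)$ (the word $w$ goes from its prefix to its suffix). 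The relation for $K$ says that flow entering the "vertex" $K$ equals flow leaving it, once each vertex $u$ is split into its components.

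Next comes counting. Form the directed graph $G$ whose vertices are the elements of $\cK_{n-1}$ and whose edges are the words $w\in\cL_n(X)$. The word $w=au'=u''b$ goes from the component of $\Gamma_X(u'')$ containing $b_R$ to the component of $\Gamma_X(u')$ containing $a_L$; this is well defined since $a_L\in L_X(u')$ and $b_R\in R_X(u'')$. Our relations are precisely the Kirchhoff conservation laws at the vertices of $G$. The space of flows satisfying them has dimension $|E|-|V|+(\text{number of connected components of }G)$, which here equals $p_X(n)-|\cK_{n-1}|+\kappa$. The main obstacle is to show $\kappa=1$, i.e.\ that $G$ is connected. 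For this we use transitivity. Take $x$ with dense orbit. Consecutive occurrences in $x$ link the edge $x_{[i,i+n)}$ to the edge $x_{[i+1,i+n+1)}$ through the vertex given by the component of $\Gamma_X(x_{[i+1,i+n)})$ containing $(x_i)_L$ and $(x_{i+n})_R$, which is shared by both. Every edge of $G$ appears along $x$ and every vertex carries some edge, so $G$ is connected.

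Finally, the frequency vector lies in the homogeneous solution space, of dimension $p_X(n)-|\cK_{n-1}|+1$. By the rational-basis remark, its coordinates span a $\Q$-space of dimension at most $p_X(n)-|\cK_{n-1}|+1$, which gives \eqref{eq:dimfrequencies}. The remaining care point is that rationality lets us pass from real dimension to $\Q$-dimension: the conservation equations have integer coefficients, so $W$ has a basis in $\Q^{\cL_n(X)}$.
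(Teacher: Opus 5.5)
Your proof is correct and is essentially the paper's argument in dual form. The paper's map $M\colon \Q^{\cK_{n-1}}\to\Q^{\cL_n(X)}$, $(q_K)\mapsto(q_{K_w}-q_{\bar K_w})_w$, is the transpose of your vertex--edge incidence matrix. Its identity $L\circ M=0$ is your Kirchhoff law at each component, and its $\dim\ker M=1$ is your $\kappa=1$, proved by the same walk along a transitive point. The only real difference is the linear algebra: the paper applies rank--nullity to the space $\ker L$ of rational relations, which contains $M(\Q^{\cK_{n-1}})$, while you pass through a rational basis of the real kernel. The ``plus one'' for the normalization in your first paragraph is unnecessary, since your homogeneous bound already gives the claim.
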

\begin{proof} Let $n\geq 1$ be fixed.
To each $w = w_0 w_1 \dots w_{n-1} \in \cL(X)$ of length $n$, we associate $p_w = w_0\ldots w_{k-n}$ and $s_w = w_1\ldots w_{n-1}$, both of length $n-1$.
Let $K_w$ and $\Bar{K}_w$ be the unique connected components in $\Gamma_X(s_w)$ and $\Gamma_X(p_w)$, respectively, such that $w_0$ appears on the left side of $K_w$ and $w_{n-1}$ appears on the right side of $\Bar{K}_w$.
Consider the linear transformation 

\[ M\colon \Q^{\cK_{n-1}}\to \Q^{\cL_n(X)},  \quad \  (q_K)_{K\in\cK_{n-1}} \mapsto
    (q_{K_w}-q_{\Bar{K}_w})_{w\in\cL_n(X)}.  \]
 
Consider also the linear transformation 
\[ L\colon \Q^{\cL_n(X)}\to \R, \quad  (q_w)_{w\in\cL_n(X)} \mapsto
    \sum_{w\in\cL_n(X)} q_w \, \mu([w]).   \]
Note that $d(\mu,n)=\rank(L)$. 

Let  $e_K$ be the canonical vector of $\Q^{\cK_{n-1}}$ having $1$ at position $K$ and $0$ elsewhere. Note also that if $K$ is a connected component of $\Gamma_X(u)$, with $u\in\cL_{n-1}(X)$, then, for any $w\in\cL_n(X)$, the $w$-th entry of $M(e_K)$ equals $1$ if $K=K_w$, $-1$ if $K=\Bar{K}_w$, and $0$ otherwise. 
Thus,
\[  L(M(e_K)) =
    \sum_{a \in K \cap \cA_L} \mu([au]) - 
    \sum_{b \in K \cap \cA_R} \mu([ub]) = 0,  \]
where the last step is due to \Cref{eq:rem:sumofmeasures}. 
This proves that $M(\Q^{\cK_{n-1}})\subseteq \ker L$. 
Hence, by the Rank-nullity Theorem,
\begin{align*}
d(\mu,n) &= 
p_X(n) - \dim_{\Q}\ker{L} \\ &\leq
p_X(n) - \dim_{\Q}M(\Q^{\cK_{n-1}}) = 
p_X(n) - |\cK_{n-1}| + \dim_{\Q}\ker{M}.
\end{align*}
We are going to prove that $\dim_{\Q}(\ker{M}) = 1$, from which the lemma follows. 

Observe that any $w \in \cL_n$ is represented exactly once in $L_{K_w}$, and exactly once in $R_{\Bar{K}_w}$. 
Hence, if $v = (1,\dots,1) \in \Q^{\cK_{n-1}}$, then by definition we have that $v \in \ker{M}$.
We will show that any $q \in \ker{M}$ is a scalar multiple of $v$. 
Let now $q \in \ker M$ and fix a transitive point $x\in X$. 
For each $\ell\in\Z$, let $K_\ell \colon = K_{x_\ell x_{\ell+1}\ldots x_{\ell+n-1}}$ and $\Bar{K}_\ell\colon =\Bar{K}_{x_\ell x_{\ell+1}\ldots x_{\ell+n-1}}$. 
Since $M(q)=0$, 
\[  q_{K_\ell} = q_{\Bar{K}_\ell} 
    \enspace \text{for all $\ell \in \Z$.}  \]
On the other hand, since $x_{\ell-1}$ and $x_{\ell+n-1}$ are located respectively on the left and right side of the same connected component of $\Gamma_X(x_{\ell}\ldots x_{\ell+n-2})$, $K_{\ell-1} = \Bar{K}_{\ell}$. 
Thus,
\[  q_{K_\ell} = q_{K_{\ell-1}} 
    \enspace \text{for all $\ell \in \Z$.}  \]
Since $x$  is a transitive point, every word of length $n$ appears in $x$, so $q_K = q_{K_0}$ for all $K \in \cK_{n-1}$. 
This proves that $q$ is a scalar multiple of $v$ and concludes the proof.
\end{proof}


We now use \Cref{lem:dimfrequencies} to provide an upper bound for the  dimension of the  group of  eigenvalues $E(X,S)$ involving the  cardinality of the  disjoint union $\cK_n$  of all the connected components of the  extension graphs $\Gamma_X(u)$ for $u \in \cL_n(X)$.
We recall that a directive sequence $\btau = (\tau_n : n \geq 0)$ is unimodular if each incidence matrix $M_{\tau_n}$ has determinant $\pm 1$. 

\begin{proposition}\label{prop:dimeigenvalues}
    Let $X \subseteq \cA^\Z$ be a minimal subshift and let $\mu \in \cM(X,S)$. 
    Then,
    $$\dim_\Q E(X,S) \leq 
    \liminf_{n\to\infty}
    (p_X(n) - |\cK_{n-1}| )+ 1.$$
    In particular, if $X$ is generated by a unimodular primitive directive sequence, then
    \begin{equation}\label{eq:dimEunimodular}
        \dim_\Q E(X,S) \leq |\cA|-r+1,
    \end{equation} 
    where $r$ is the number of connected components of the extension graph of the empty word.
\end{proposition}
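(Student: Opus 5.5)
The plan is to combine the inclusion $E(X,S)\subseteq I(X,S)$ recalled in Section~\ref{subsec:topologicalds} with Lemma~\ref{lem:dimfrequencies}. Fix $\mu\in\cM(X,S)$. Every continuous $f\colon X\to\Z$ is locally constant, so it depends only on the window $x_{[-k,k]}$ for some $k$. Shift-invariance of $\mu$ then lets us write $\int f\,d\mu$ as an integer combination of the measures $\mu([u])$ with $u\in\cL_{2k+1}(X)$. By \eqref{eq:rem:sumofmeasures}, the measure of any cylinder of length $m$ is an integer combination of cylinder measures of any length $n\ge m$. Hence
\[
E(X,S)\subseteq I(X,S)\subseteq \bigcup_{n}\Bigl\{\sum_{u\in\cL_n(X)} w_u\,\mu([u]) : w_u\in\Z\Bigr\},
\]
and these groups increase with $n$.

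Next take finitely many $\Q$-linearly independent eigenvalues $\alpha_1,\dots,\alpha_k$. Each lies in the group at some level, so all of them lie in the group at every sufficiently large $n$. This gives $k\le d(\mu,n)$ for all large $n$. Lemma~\ref{lem:dimfrequencies} bounds $d(\mu,n)\le p_X(n)-|\cK_{n-1}|+1$, so $k\le \liminf_n (p_X(n)-|\cK_{n-1}|)+1$. Since $k$ was an arbitrary count of independent eigenvalues, the first inequality follows.

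For \eqref{eq:dimEunimodular} I would not go through complexity. Instead I would use the $S$-adic description directly, via Proposition~\ref{cor:EigCharac:FAR:eigs&freqs} or Remark~\ref{rem:measures}. The hypotheses there need decisiveness or bounded alphabets; presumably $|\cA_n|=|\cA|$ is forced by unimodularity, since the incidence matrices are square, and recognizability would still have to be assumed. Unimodularity makes every $M_{0,n}$ invertible over $\Z$. Then the $\Z$-span of $(\mu(B_n(a)))_a$ equals the $\Z$-span of $(\mu([a]))_{a\in\cA}$, using \eqref{eq:prop:measure_transfer:remark} and \eqref{eq:mubn}: the vector of frequencies is $M_{0,n}$ applied to the vector of $\mu(B_n(\cdot))$, up to normalization. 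So every eigenvalue is a $\Z$-combination of the letter frequencies $\mu([a])$.

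It remains to show that these frequencies span a $\Q$-space of dimension at most $|\cA|-r+1$. By Theorem~\ref{thm:manifold}, the coboundary space has dimension $r-1$. Lemma~\ref{lem:cob_zero_integral} says each coboundary vector $\vec c$ satisfies $\vec c\cdot\vec\mu=0$. The generators $\vec{\mathbf 1}^R_K-\vec{\mathbf 1}^L_K$ are integer vectors, so these are $r-1$ independent rational linear relations among the frequencies. That yields $\dim_\Q\le |\cA|-(r-1)$.

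The main obstacle is the case of infinitely many alphabets or non-recognizable sequences. A more robust route would be to apply the first inequality at $n=1$ via $|\cK_0|=r$ and $p_X(1)=|\cA|$. However, that inequality involves a liminf rather than $n=1$, so I would argue instead that the whole group $E$ sits inside the span of letter frequencies, which is exactly what unimodularity provides.
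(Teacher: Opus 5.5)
Your first part is the paper's argument ($E(X,S)\subseteq I(X,S)$, the increasing groups spanned by $\mu([u])$ for $u\in\cL_n(X)$, then \Cref{lem:dimfrequencies}), written out in more detail. It is correct.

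Your second part is correct and takes a genuinely different route.

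\emph{How the paper argues.} It cites an external result: for a unimodular primitive directive sequence, the whole image subgroup $I(X,S)$ is generated by the letter frequencies $\mu([a])$. It then applies \Cref{lem:dimfrequencies} at $n=1$, using $p_X(1)=|\cA|$ and $|\cK_0|=r$.

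\emph{How you argue.} You prove the inclusion $E(X,S)\subseteq\{\sum_a w_a\mu([a])\}$ only for eigenvalues, and only with tools internal to the paper. You combine \Cref{cor:EigCharac:FAR:eigs&freqs} with the Kakutani--Rokhlin identity $\mu([a])=\sum_{b}|\tau_{0,n}(b)|_a\,\mu(B_n(b))$. This identity holds exactly, not merely up to normalization. Exactness is what makes the $\Z$-spans coincide, via the integer invertibility of $M_{0,n}$; a level-dependent scalar could otherwise inflate the union over $n$. You then bound $d(\mu,1)$ by the coboundary relations $(\vec{\mathbf 1}^R_K-\vec{\mathbf 1}^L_K)\cdot\vec\mu=0$. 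These are exactly the relations $L(M(e_K))=0$ in the proof of \Cref{lem:dimfrequencies} at $n=1$, so that step is the same content in different language.

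\emph{Trade-offs.} Your route avoids the external citation but rests on the heavy eigenvalue machinery (\Cref{theo:EigCharac:decisive:Mult} and its lemmas) and needs recognizability. The paper's route gives the stronger bound $\dim_\Q I(X,S)\le |\cA|-r+1$ and needs neither.

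\emph{The obstacle you flag is not real.} Unimodularity forces square incidence matrices, so $|\cA_n|=|\cA|$ for all $n$ and the alphabets are bounded. Invertible incidence matrices also give recognizability for aperiodic points; this is the result of Berth\'e--Steiner--Thuswaldner--Yassawi that the paper itself uses in the proof of \Cref{theo:Sadic_for_specular}. If $X$ is periodic, the inequality is trivial: then $\dim_\Q E(X,S)=1$, and $r\le|\cA|$ because every component of $\Gamma_X(\varepsilon)$ contains a right vertex.
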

\begin{proof}
    Since $E(X,S)$ is a subgroup of the image subgroup $I(X,S)$ by \cite{itzaortiz,CORTEZ_DURAND_PETITE_2016,ghh18}, we have $\dim_\Q E(X,S)\leq \dim_\Q I(X,S)$.
    For $n \ge 0$, let $I_n(X,S)$ be the $\Z$-span of
    \[  \{ \mu([u]) : u \in \cL_n(X) \}.  \]
    Then, $I_n(X,S) \subseteq I_{n+1}(X,S)$ and $I(X,S)$ is the union of all the $I_n(X,S)$ for $n \ge 0$ (see \Cref{rem:measures}).
    Since $d(\mu,n) = \dim_\Q(I_n(X,S))$, it follows that 
    \[  \dim_\Q I(X,S) = \lim_{n\to\infty} d(\mu,n). \]
    Combining this with the bound given by \Cref{lem:dimfrequencies} yields
    $$\dim_\Q(I(X,S)) \leq \liminf_{n\to\infty} (p_X(n)-|\cK_{n-1}|)+1.$$
    This proves the first statement. 
    If $X$ is generated by a unimodular primitive directive sequence, then
    \[  I(X,S) = 
        \bigcap_{\mu\in\cM(X,S)} 
        \Big\{ \sum_{a\in\cA} w_a \, \mu([a]) :
        (w_a)_{a\in\cA} \in \Z^{\cA} \Big\}, \]
    see \cite[Corollary 4.3]{unimodular} and the subsequent paragraphs.
    Thus, $\dim_\Q(I(X,S))\leq d(\mu,1)$. 
    Equation \eqref{eq:dimfrequencies} for $n = 1$ gives
    $$ d(\mu,1)\leq |\cA|-r+1. $$
    This proves the second statement and finishes the proof.
\end{proof}

\begin{remark}
    According to \cite[Theorem 9]{BDM10}, if $(X,T)$ is a minimal Cantor system of finite rank $d$ with $\ell$ ergodic measures, then $\dim_\Q E(X,S) \leq d - \ell + 1$. 
    Thus, both the number of ergodic measures and the rational dimension of their values bound the size of the group of eigenvalues. Note that in \Cref{eq:dimEunimodular}, while the left-hand side of the inequality is preserved under conjugacy, the right-hand side is not, so for a given minimal subshift, one can choose, among all the conjugate systems which are generated by a unimodular primitive directive sequence, the lowest possible bound. 
\end{remark}

\subsection{Extension graphs, factor complexity and eigenvalues}\label{subsec:eigen}

A further important relation between the dimension of the rational space of measures of letter-cylinders of a subshift and its  factor complexity,  due to Tijdeman \cite{tijdeman},  is described in  \Cref{thm:tijdeman} below.
We show in this section that it follows directly from Lemma~\ref{lem:dimfrequencies}. 
We also recover as a corollary of \Cref{lem:dimfrequencies} a result by Andrieu and Cassaigne \cite{andrieu-notes}, which links Tijdeman's theorem to the dendricity of a transitive subshift (see \Cref{def:dendric}). Tijdeman's original proof is based on studying linear relations in the Rauzy graph (with a different terminology though), and later Andrieu and Cassaigne refined the argument to obtain the conclusion on the dendricity. We follow here the same approaches, but use the language of extension graphs.

\begin{theorem}[\cite{tijdeman}] \label{thm:tijdeman}
Let $X \subseteq \cA^\Z$ be a transitive subshift, and let $\mu$ be one of its invariant measures. 
Denote by $t$ the dimension of the $\Q$-vector space generated by the set $\{\mu([a]) : a \in \cA\}$.
Then, one has 
\begin{equation} \label{eq:thm:tijdeman:1}
    p_X(n)\geq (n-1)(t-1)+|\cA|, \; \mbox{for all } n \geq 1.
\end{equation}
\end{theorem}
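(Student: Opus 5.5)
We derive the bound from \Cref{lem:dimfrequencies}, combined with a growth estimate relating $p_X(n+1)-p_X(n)$ to the components counted in $\cK_n$. Since $d(\mu,n)$ is nondecreasing in $n$ (each $\mu([u])$ with $|u|=n$ is a sum of measures of length-$(n+1)$ cylinders, and conversely $\mu([a])$ for letters lies in the span at every level), we have $t = d(\mu,1) \le d(\mu,n)$ for all $n\ge1$. The lemma applied at level $n$ then gives $|\cK_{n-1}| \le p_X(n) - t + 1$. The aim is to convert this into a lower bound on the first difference of $p_X$.

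The key combinatorial step is the identity, for a transitive subshift,
\[
p_X(n+1) - p_X(n) = \sum_{u \in \cL_n(X)} \bigl(|\Gamma_X(u)| - |R_X(u)|\bigr) \cdot 0 + \sum_{u\in\cL_{n-1}(X)} \bigl(|\Gamma_X(u)| - |R_X(u)|\bigr),
\]
or more cleanly
\[
p_X(n+1)-p_X(n) = \sum_{u\in\cL_{n-1}(X)}\bigl(|\Gamma_X(u)| - |L_X(u)| - |R_X(u)| + 1\bigr)
\]
summed appropriately, up to the standard bispecial bookkeeping. Equivalently, I would use the elementary count: the words of length $n+1$ correspond to edges of the graphs $\Gamma_X(u)$, $u\in\cL_{n-1}(X)$, while the words of length $n$ are counted twice, once as left vertices and once as right vertices. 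This gives
\[
p_X(n+1) = \sum_{u\in\cL_{n-1}} |\Gamma_X(u)|,\qquad p_X(n) = \sum_{u\in\cL_{n-1}} |L_X(u)| = \sum_{u\in\cL_{n-1}} |R_X(u)|.
\]
Since a graph with $V$ vertices and $k$ connected components has at least $V-k$ edges, and every vertex here has degree at least one, we get
\[
|\Gamma_X(u)| \ge |L_X(u)|+|R_X(u)| - \#\{\text{components of }\Gamma_X(u)\}.
\]
Summing over $u\in\cL_{n-1}(X)$ yields $p_X(n+1)\ge 2p_X(n) - |\cK_{n-1}|$.

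Combining this with the inequality from \Cref{lem:dimfrequencies} gives
\[
p_X(n+1)-p_X(n) \ge p_X(n) - |\cK_{n-1}| \ge t-1
\]
for all $n\ge1$. Telescoping from $p_X(1)=|\cA|$ (all letters occur, after restricting $\cA$ to letters in $\cL(X)$) yields $p_X(n)\ge |\cA| + (n-1)(t-1)$, which is the bound in \Cref{thm:tijdeman}.

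The main obstacle is the step $t\le d(\mu,n)$, together with the degree-one caveat in the edge-count bound. The first is immediate because each letter measure $\mu([a])$ is the sum of $\mu([w])$ over $w\in\cL_n(X)$ beginning with $a$, so these values lie in the span defining $d(\mu,n)$. The second is fine because every vertex of $\Gamma_X(u)$ is non-isolated by definition of $L_X(u)$ and $R_X(u)$. With these two points checked, no further estimate is needed.
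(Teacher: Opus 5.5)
Your argument is correct. Its skeleton matches the paper's: bound $t=d(\mu,1)\le d(\mu,n)$, plug into \Cref{lem:dimfrequencies}, lower-bound $|\cK_{n-1}|$, and telescope. The difference lies in how you lower-bound $|\cK_{n-1}|$.

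\begin{itemize}
\item The paper uses the trivial estimate $|\cK_{n-1}|\ge p_X(n-1)$, since each extension graph has at least one component. This gives $t-1\le p_X(n)-p_X(n-1)$ directly.
\item You count edges and vertices: $p_X(n+1)=\sum_{u\in\cL_{n-1}(X)}|\Gamma_X(u)|$ and $p_X(n)=\sum_u|L_X(u)|=\sum_u|R_X(u)|$. With the spanning-forest bound this gives $|\cK_{n-1}|\ge 2p_X(n)-p_X(n+1)$, hence $t-1\le p_X(n+1)-p_X(n)$. After an index shift the telescoping is the same.
\end{itemize}

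The paper's route is shorter and needs no edge counting. Yours is the same bookkeeping that underlies \Cref{lemma:extgraphs&multiplicity}. The two estimates are exact in complementary situations: yours when all extension graphs of words of length $n-1$ are forests, the paper's when they are all connected. For this theorem either suffices. Your degree-one caveat is unnecessary, since a finite graph with $V$ vertices and $k$ components always has at least $V-k$ edges.

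You should delete the two displayed ``identities'' that precede the elementary count. The first is meaningless. The second is false: its right-hand side equals $p_X(n+1)-2p_X(n)+p_X(n-1)$, which is the second difference (cf.\ \Cref{lemma:extgraphs&multiplicity}), not $p_X(n+1)-p_X(n)$. Neither plays any role in the inequality chain you actually use, so the proof stands without them.
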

\begin{proof}
For $j \geq 1$, we have $t=d(\mu,1)\leq d(\mu,j)$. 
Also, $|\cK_{j-1}| \geq p_X(j-1)$, as each extension graph corresponding to a word of length $j-1$ contains at least one connected component.
Combining these two bounds with the inequality given by \Cref{lem:dimfrequencies}, we obtain
\begin{equation} \label{eq:thm:tijdeman:2}
    t \leq  p_X(j) - |\cK_{j-1}(X)| + 1 \leq 
    p_X(j) - p_X(j-1) + 1, \; \text{for any $j \geq 1$.}
\end{equation}
Hence, $(n-1)t \leq p_X(n) - p_X(1) + (n-1)$ for all $n \geq 1$, from which \eqref{eq:thm:tijdeman:1} follows. 
\end{proof}

The following classical result relating the cardinalities of extensions will be used in the proof of \Cref{thm:tijdeman2}. 
 
\begin{lemma}[\cite{cassaigne}] 
    \label{lemma:extgraphs&multiplicity}
    Let $X \subseteq \cA^\Z$ be a subshift.
    For $w \in \cL(X)$, define
    \[  m_X(w) = |\Gamma_X(w)| - |L_X(w)| - |R_X(w)| + 1. \]
    Then, 
    \[  \sum_{w \in \cL_n(X)} m(w) = 
        p_X(n+2) -2 p_X(n+1) + p_X(n)
        \enspace \text{for any $n \geq 0$-}
        \]
\end{lemma}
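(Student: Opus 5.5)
The plan is to double count, over all words $w \in \cL_n(X)$, the extensions of $w$ by one letter and by two letters. Each edge $(a_L,b_R)$ of $\Gamma_X(w)$ corresponds to exactly one word $awb \in \cL_{n+2}(X)$. Conversely, every word of length $n+2$ decomposes uniquely as $awb$ with $|w| = n$, and such a word lies in $\cL(X)$ only if $w$ does. Hence
\[
\sum_{w \in \cL_n(X)} |\Gamma_X(w)| = p_X(n+2).
\]
In the same way, each $a_L \in L_X(w)$ corresponds to a unique word $aw \in \cL_{n+1}(X)$. Every word of $\cL_{n+1}(X)$ arises exactly once in this manner, by removing its first letter, which gives
\[
\sum_{w \in \cL_n(X)} |L_X(w)| = p_X(n+1).
\]
Removing the last letter instead gives the analogous identity $\sum_{w} |R_X(w)| = p_X(n+1)$.

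Finally, $\sum_{w \in \cL_n(X)} 1 = p_X(n)$. Summing the definition of $m_X(w)$ term by term then yields
\[
\sum_{w \in \cL_n(X)} m_X(w) = p_X(n+2) - 2p_X(n+1) + p_X(n).
\]

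The only point needing care is that the bijections above rely on the factorial property of $\cL(X)$. Every factor of a word of $\cL(X)$ is again in $\cL(X)$, so the middle word $w$ of $awb$, and likewise of $aw$ or $wb$, automatically belongs to $\cL_n(X)$. This holds because $\cL(X)$ is the set of factors of points of $X$.

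The case $n = 0$, with $w = \varepsilon$, fits the same counts, since $\cL_0(X) = \{\varepsilon\}$ and $p_X(0) = 1$. No step presents a real obstacle.
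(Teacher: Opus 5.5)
Your double-counting argument is correct and complete: summing the edges of $\Gamma_X(w)$, the left vertices, the right vertices and the constant $1$ over $w \in \cL_n(X)$ gives $p_X(n+2)$, $p_X(n+1)$, $p_X(n+1)$ and $p_X(n)$ respectively, and the only property of $\cL(X)$ this uses is that it is closed under taking factors. The paper does not prove this lemma itself but cites Cassaigne for it, and your argument is the standard proof of that result.
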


We now can state the following sufficient condition for dendricity.
\begin{theorem}[\cite{andrieu-notes}] \label{thm:tijdeman2}
Let $X \subseteq \cA^\Z$ be a transitive subshift and suppose that one of its invariant measures $\mu$ has rationally independent coordinates, \emph{i.e.}, the dimension of the $\Q$-vector space generated by the set $\{\mu([a]) : a \in \cA\}$ is $|\cA|$.
If 
\begin{equation} \label{eq:thm:tijdeman:3}
    p_X(n)= (|\cA|-1)n+1 \enspace \text{for all }n \geq 1,
\end{equation}
then $X$ is dendric.
\end{theorem}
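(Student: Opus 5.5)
We propose to combine the chain of inequalities from the proof of \Cref{thm:tijdeman} with \Cref{lemma:extgraphs&multiplicity}. Since $t = |\cA|$ by hypothesis, \eqref{eq:thm:tijdeman:2} gives, for every $j \ge 1$,
\[ |\cA| \le p_X(j) - |\cK_{j-1}| + 1 \le p_X(j) - p_X(j-1) + 1 . \]
Under \eqref{eq:thm:tijdeman:3} we have $p_X(j) - p_X(j-1) = |\cA| - 1$ for $j \ge 2$. For $j = 1$, using $p_X(0) = 1$ and $p_X(1) = |\cA|$, the difference is again $|\cA| - 1$. Hence all inequalities are equalities, and $|\cK_{j-1}| = p_X(j-1)$ for all $j \ge 1$. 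This means that the extension graph $\Gamma_X(w)$ of every word $w \in \cL(X)$, including the empty word, is connected.

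It remains to prove acyclicity. For a connected graph $\Gamma_X(w)$ the number of edges is at least the number of vertices minus one. This gives $m_X(w) = |\Gamma_X(w)| - |L_X(w)| - |R_X(w)| + 1 \ge 0$, with equality exactly when $\Gamma_X(w)$ is a tree.

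Now apply \Cref{lemma:extgraphs&multiplicity}. Since $p_X$ is affine on $\{0,1,2,\dots\}$ (the value $p_X(0) = 1$ also fits the formula), the second difference $p_X(n+2) - 2p_X(n+1) + p_X(n)$ vanishes for every $n \ge 0$. Therefore $\sum_{w \in \cL_n(X)} m_X(w) = 0$. As every summand is nonnegative, each $m_X(w) = 0$, so every $\Gamma_X(w)$ is a tree. Hence $X$ is dendric in the sense of \Cref{def:dendric}.

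One point needs care. \Cref{def:dendric} is stated for minimal subshifts, while here $X$ is only transitive. We would either note that the argument establishes the extension-graph property regardless, or observe that this is the convention intended in \cite{andrieu-notes}.

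The main (mild) obstacle is verifying the boundary cases: applying \eqref{eq:thm:tijdeman:2} at $j = 1$, which yields connectivity of $\Gamma_X(\varepsilon)$, and checking the second difference at $n = 0$. Both rely on the conventions $p_X(0) = 1$ and $p_X(1) = |\cA|$, the latter being \eqref{eq:thm:tijdeman:3} at $n = 1$. We would also check that every letter occurs, so that $|L_X(w)|, |R_X(w)| \ge 1$ and the vertex count is correct; this holds because $p_X(1) = |\cA|$.
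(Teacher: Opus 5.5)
Your proposal is correct and follows the paper's proof essentially step by step. The paper also uses \Cref{lem:dimfrequencies} with $d(\mu,n)\ge|\cA|$ and the complexity hypothesis to obtain $|\cK_{n-1}|\le p_X(n-1)$, hence connectivity of every extension graph, and then uses \Cref{lemma:extgraphs&multiplicity}, the vanishing second difference and the nonnegativity of $m_X(w)$ for connected graphs to conclude that every $\Gamma_X(w)$ is a tree; your extra care with the boundary cases $j=1$, $n=0$ and with the minimality convention in \Cref{def:dendric} refines that argument without departing from it.
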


\begin{proof}
Let us assume that \eqref{eq:thm:tijdeman:3} holds and the dimension of the $\Q$-vector space generated 
by the set $\mu([a]) : a \in \cA\}$ is $|\cA|$.
Fix $n\geq 1$. 
By \Cref{lem:dimfrequencies}, $|\cA|\leq p_X(n)-|\cK_{n-1}|+1.$ 
Hence 
$$|\cA|\leq p_X(n) -p_X(n-1)  + p_X(n-1)-|\cK_{n-1}|+1 = |\cA|-1+ p_X(n-1) -|\cK_{n-1}|+1.$$
This gives  $|\cK_{n-1}|\leq p_X(n-1) $, which means that all extension graphs of words of length $n-1$ are connected. Since this is true for all $n\geq 1$, we deduce that all extension graphs in $X$ are connected.

It remains to show that each extension graph is acyclic.
Observe that, by \Cref{lemma:extgraphs&multiplicity} and \eqref{eq:thm:tijdeman:3},
$$ \sum_{w\in\cL_n(X)} m_X(w) = p_X(n+2) - 2p_X(n+1) +p_X(n) = 0
    \; \text{ for all $n \geq 0$.} $$
Moreover, since the extension graph of every $w \in \cL(X)$ is connected, each term $m_X(w)$ in the previous summation is nonnegative; thus, $m_X(w) = 0$ for all $u \in \cL(X)$, as their sum is zero.
Using the classical result that a tree is a connected graph with exactly one  edge less than its number of vertices, we deduce that $\Gamma_X(u)$ is a tree.
Therefore, each extension graph in $X$ is a tree.
\end{proof}

\begin{remark}
    The converse of the previous theorem is false: codings of regular interval exchange transformations are dendric and may have rationally dependent letter frequencies.
\end{remark}

\subsection{Sufficient conditions for having trivial coboundaries}
\label{sec:studyofcoboundaries:conditions_for_trivial_only}
We now provide sufficient conditions for a minimal shift to have only trivial coboundaries. 
This is of particular interest in view of Theorems \ref{theo:EigCharac:FAR:Mult} and \ref{theo:EigCharac:decisive:positive}.

\begin{proposition}\label{prop:dim}
    Let $X \subseteq \cA^\Z$ be a minimal subshift such that there exists $\mu\in\cM(X,S)$ for which the vector $(\mu([a]))_{a\in\cA}$ has $\Q$-independent coordinates. 
    Then, any coboundary of $\cL(X)$ is trivial.
\end{proposition}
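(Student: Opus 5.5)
Let $c$ be a letter-coboundary on $X$; the goal is to show that $c(a)=0$ for all $a\in\cA$. The plan combines two facts already available. First, \Cref{lem:cob_zero_integral} gives the relation $\sum_{a\in\cA}\mu([a])\,c(a)=0$. Second, \Cref{subs:generators_for_coboundary_space} (whose proof uses only \Cref{thm:manifold} and is valid for any minimal subshift, not just substitutive ones) shows that the space $\vec{\cC}_X$ is spanned by the vectors $\vec{\mathbf 1}^R_K-\vec{\mathbf 1}^L_K$. These generators have integer entries. Hence $\vec{\cC}_X$ is a real vector space defined over $\Q$, with a basis of rational vectors $\vec c_1,\dots,\vec c_s$.

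Step 1: write $\vec c=\sum_i \lambda_i \vec c_i$ with $\lambda_i\in\R$. For each rational generator $\vec c_i$, \Cref{lem:cob_zero_integral} gives $\sum_a c_i(a)\mu([a])=0$, since $\vec c_i$ is itself a coboundary. The coefficients $c_i(a)$ are rational and the numbers $\mu([a])$ are $\Q$-linearly independent. Therefore $c_i(a)=0$ for every $a$, that is, $\vec c_i=0$. It follows that $\vec{\cC}_X=\{0\}$, so every coboundary is trivial.

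More directly, take a single connected component $K$ of $\Gamma_X(\varepsilon)$. The vector $\vec{\mathbf 1}^R_K-\vec{\mathbf 1}^L_K$ is the coboundary $\eta(\vec{\mathbf 1}^R_K)$. Applying \Cref{lem:cob_zero_integral} gives
\[
\sum_{a_R\in K}\mu([a])-\sum_{a_L\in K}\mu([a])=0 .
\]
By $\Q$-independence, every letter has coefficient $0$ here. So for each $a$, $a_R\in K$ if and only if $a_L\in K$; in other words, $K\cap\cA_R$ and $K\cap\cA_L$ coincide as subsets of $\cA$. \Cref{rem:connectedminimal} states that for a disconnected graph neither side of a component is contained in the other. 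Hence $\Gamma_X(\varepsilon)$ is connected, and \Cref{cor:connectedtrivial} concludes.

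The only point needing care is the rationality of the generators. This is immediate from the explicit $0/\pm1$ vectors, and it is what allows the $\Q$-independence of the measures to be applied: an arbitrary real coboundary could not be treated this way directly. I will use the second, component-based argument, since it avoids any basis bookkeeping.
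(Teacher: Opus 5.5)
Your proof is correct, and both variants work. Your second variant reaches connectivity of $\Gamma_X(\varepsilon)$ by a more hands-on route than the paper.

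The paper simply invokes \Cref{lem:dimfrequencies} with $n=1$. Since $d(\mu,1)=|\cA|=p_X(1)$, the bound $d(\mu,1)\le p_X(1)-|\cK_0|+1$ forces $|\cK_0|\le 1$, and \Cref{cor:connectedtrivial} finishes.

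If you unpack that lemma at $n=1$, it is your argument:
\begin{itemize}
\item The vectors $M(e_K)$ in its proof are exactly your integer vectors $\vec{\mathbf 1}^L_K-\vec{\mathbf 1}^R_K$. Note that this, not $\vec{\mathbf 1}^R_K-\vec{\mathbf 1}^L_K$, is $\eta(\vec{\mathbf 1}^R_K)$; the sign is harmless.
\item The identity $L(M(e_K))=0$ is your application of \Cref{lem:cob_zero_integral}.
\item The computation $\dim_\Q\ker M=1$ via a transitive point plays the role of your appeal to \Cref{rem:connectedminimal}: once $M(e_K)=0$, $e_K$ must be constant, so $K$ is the only component.
\end{itemize}

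So the mechanism is the same; only the packaging differs. The paper gets a rank--nullity count from a lemma it also uses for Tijdeman-type bounds at every length $n$. Your argument is self-contained and uses only invariance of $\mu$ and minimality.

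Your first variant is the most economical of all, because it never needs connectivity. It shows directly that $\vec{\cC}_X$ is spanned by rational vectors, each orthogonal to a vector with $\Q$-independent coordinates, so $\vec{\cC}_X$ is zero. You correctly identified rationality of the generators as the point that makes $\Q$-independence usable.
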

\begin{proof}
It follows from \Cref{lem:dimfrequencies} that $|\cK_0| \leq 1$, that is, $\Gamma(\varepsilon)$ is connected.
Therefore, by \Cref{cor:connectedtrivial}, every coboundary in $\cL(X)$ is trivial.
\end{proof}

We point out that in the setting of $S$-adic subshifts, sufficient conditions have been identified to get $\Q$-independent letter frequencies \cite{BST}. 
We mention one of them here.
The directive sequence $\boldsymbol{\sigma}$ is said to be \emph{algebraically irreducible} if, for each $k \in \mathbb{N}$, the characteristic polynomial of the incidence matrix $M_{k,\ell}$ of $\tau_{k,\ell}$ is irreducible for all sufficiently large~$\ell$.
In \cite[Lemma 4.2]{BST}, the authors prove that for an algebraically irreducible directive sequence $\boldsymbol{\sigma}$ such that $\cL(X_{\boldsymbol{\sigma}})$ is balanced on letters, the vector with letter frequencies $(\mu([a])_{a\in \cA}$ has $\Q$-independent coordinates, where $\mu$ is the unique invariant measure of $X_{\boldsymbol{\sigma}}$.  We also remark that there is an important family of $S$-adic subshifts whose letter frequencies are always $\Q$-independent, namely the family of subshifts obtained by {\em Arnoux-Rauzy} sequences (see \Cref{ex:modifiedAR}).

\begin{remark}\label{rem:irreducible_trivial}
    
Note that if $\btau$ is an algebraically irreducible directive sequence, then for any $k\in \N$ and all sufficiently large $\ell \gt k$, neither  $1$, nor $0$ is  an eigenvalue of the matrix $M_{k,l}$.
Now, in \Cref{finitary=>cobords_are_eigenvectors} below, we show that, for any letter-coboundary $c_n$ in level $X_{\btau}^{(n)}$, the vector $(c_n \circ \tau_{n,m}(a) : a \in \cA_m)$ is  either an eigenvector of $M_{n,m}$ with eigenvalue $1$, or  belongs to  the  kernel of  $M_{n,m}$  for  infinitely many  $m \ge n$.
Therefore, in the algebraically irreducible case, $c_n \circ \tau_{n,m}(a) \equiv 0$ for all large enough $m \gt n$.
This observation is used in \Cref{cor:EigCharac:FAR:Mult:cocoboundary} to simplify our criterion in \Cref{theo:EigCharac:FAR:Mult}; see $(P_2) \Rightarrow (P_3)$. Observe that  \Cref{finitary=>cobords_are_eigenvectors}
  is  an $S$-adic  counterpart of  \cite[Lemma 2.8]{mercat} which is stated for primitive substitutions.
\end{remark}

\begin{lemma}
\label{finitary=>cobords_are_eigenvectors}
Let $\btau = (\tau_n \colon \cA_{n+1}^* \to \cA_n^* : n \ge 0)$ be a primitive directive sequence for which $(|\cA_n| : n \ge 0)$ is bounded.
Fix $n \ge 0$ and a letter-coboundary $c_n \colon \cA_n^* \to \R$ in $X_{\btau}^{(n)}$.
Then, after possibly relabeling the alphabets $\cA_m$, there exists a strictly increasing sequence $(m_k)_{k \ge 0}$, with $m_0 > n$, such that $\cA_{m_k} = \cA_{m_\ell}$ and $c_n \circ \tau_{n,m_k} = c_n \circ \tau_{n,m_\ell}$ for all $\ell > k \ge 0$.

In particular, for all $k \ge 1$, the row vector $\vec{c}_{m_0} \coloneqq \big(c_n(\tau_{n,m_0}(a))\big)_{a \in \cA_n}$ is either  zero, or an eigenvector of $M_{m_0,m_k}$ associated to eigenvalue~1, or else  lies in the kernel of $M_{n,m_0}$.
\end{lemma}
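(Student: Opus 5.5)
The plan is to exploit the first-letter structure of letter-coboundaries, in the spirit of the proof of \Cref{existence_limit_coboundary}. By \Cref{lem:cobord&rho}, write $c_n(a)=\rho(b)-\rho(a)$ for $ab\in\cL(X_{\btau}^{(n)})$. For $m>n$, the morphism $c_n\circ\tau_{n,m}$ is a letter-coboundary on $X_{\btau}^{(m)}$ by \Cref{lem:compositioncoboundary}. It is determined explicitly by the first letters:
\[
c_n(\tau_{n,m}(a))=\rho(\First(\tau_{n,m}(b)))-\rho(\First(\tau_{n,m}(a)))
\quad\text{for } ab\in\cL(X_{\btau}^{(m)}).
\]
This holds because $\tau_{n,m}(a)\First(\tau_{n,m}(b))$ lies in $\cL(X_{\btau}^{(n)})$ and $c_n$ telescopes along it.

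Since $(|\cA_m|)$ is bounded, after relabeling I extract a subsequence along which three things stabilize:
\begin{enumerate}
\item the alphabet, so that $\cA_{m}=\cA$;
\item the extension graph $\Gamma_{X_{\btau}^{(m)}}(\varepsilon)=\Gamma$, since there are finitely many graphs on a bounded vertex set;
\item the first-letter map $\First_{n,m}\colon\cA\to\cA_n$, since there are finitely many maps between bounded finite sets.
\end{enumerate}
By the displayed formula, $c_n\circ\tau_{n,m}$ on letters depends only on $\First_{n,m}$ and on $\Gamma$, which determines which pairs $ab$ occur. So along this subsequence $(m_k)$, with $m_0>n$, all the coboundaries $c_n\circ\tau_{n,m_k}$ coincide. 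This gives the first statement.

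For the second statement, set $\vec c_{m_0}=(c_n(\tau_{n,m_0}(a)))_{a\in\cA}$. Abelianizing, $c_n\circ\tau_{n,m_k}=c_n\circ\tau_{n,m_0}\circ\tau_{m_0,m_k}$ gives the row-vector identity $\vec c_{m_k}=\vec c_{m_0}M_{m_0,m_k}$. The equality $\vec c_{m_k}=\vec c_{m_0}$ from the first part then yields $\vec c_{m_0}M_{m_0,m_k}=\vec c_{m_0}$. Hence either $\vec c_{m_0}=0$, or $\vec c_{m_0}$ is a left eigenvector of $M_{m_0,m_k}$ for the eigenvalue $1$. The third alternative in the statement, the kernel of $M_{n,m_0}$, reads as a degenerate case of the first: $\vec c_{m_0}=\vec c_n M_{n,m_0}$ vanishes when $\vec c_n$ lies in the left kernel of $M_{n,m_0}$. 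I would phrase it that way, since the argument already covers it.

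The main obstacle is making the relabeling compatible for the three stabilized objects at once. The alphabet, the graph $\Gamma$ and the map $\First_{n,m}$ must be identified via one bijection at each level, so the pigeonhole extraction has to be done on the triple (alphabet size, graph up to the chosen labeling, first-letter map). This is possible because the alphabet sizes are bounded, so there are finitely many such triples.
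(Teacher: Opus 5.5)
Your proof is correct and follows essentially the same route as the paper. Both arguments write $c_n$ through a potential $\rho$ from \Cref{lem:cobord&rho} and pass to a subsequence along which three things are constant: the alphabet, the set of length-$2$ words of $X_{\btau}^{(m_k)}$ (equivalently the graph $\Gamma$), and the first-letter map $\First\circ\tau_{n,m_k}$. They then read off $c_n(\tau_{n,m_k}(a))=\rho(p(b))-\rho(p(a))$ and deduce $\vec c_{m_0}M_{m_0,m_k}=\vec c_{m_0}$. As you observe, the dichotomy ``zero or eigenvector for eigenvalue $1$'' already proves the stated disjunction, so the kernel alternative needs no separate argument.
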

\begin{proof}
We use \Cref{lem:cobord&rho} to find a map $\rho_n \colon \cA_n \to \R$ such that $c_n(a) = \rho_n(b) - \rho_n(a)$ for all $ab \in \cL(X_{\btau}^{(n)})$ of length 2.
Since $(|\cA_n| : n \ge 0)$ is bounded, we may assume (after possibly relabeling the alphabets) that there exist an increasing sequence $(m_k : k \ge 0)$ such that $\cA \coloneqq \cA_{m_k}$ for every $k \ge 0$.
Recall that $\First(u)$ denotes the first letter of a non-empty word $u$.
Since the sets $\{\First \circ \tau_{n,m_k} \colon \cA \to \cA_n : k \ge 0\}$ and $\{\cL(X_{\btau}^{(m_k)}) \cap \cA^2 : k \ge 0\}$ are finite, by taking a subsequence if necessary, we may assume that there exist $p \colon \cA \to \cA_n$ and $\cL_2 \subseteq \cA^2$ such that
\begin{enumerate}[label=(\roman*)]
    \item the set of length-2 words in $\cL(X_{\btau}^{(n)})$ equals $\cL_2$ for all $k \ge 0$, and
    \item $p(a) = \First(\tau_{n,m_k}(a))$ for all $a \in \cA$ and $k \ge 0$.
\end{enumerate}
Let us show that $c_n \circ \tau_{n,m_k} = c_n \circ \tau_{n,m_\ell}$ for all $k,\ell \ge 0$.

Consider an arbitrary $a \in \cA$.
We can take $b \in \cA$ with $ab \in \cL(X_{\btau}^{(m_k)})$.
Using Item (ii), we can compute:
\[  c_n(\tau_{n,m_k}(a)) = 
    \rho_n(\First(\tau_{n,m_k}(b))) - 
    \rho_n(\First(\tau_{n,m_k}(a))) = 
    \rho_n(p(b)) -
    \rho_n(p(a)).
\]
Now, by Item (i), $ab$ also lies in $\cL(X_{\btau}^{m_\ell})$.
Therefore, the previous argument applies with $m_\ell$, yielding
\[  c_n(\tau_{n,m_\ell}(a)) = 
    \rho_n(p(b)) - \rho_n(p(a)) = 
    c_n(\tau_{n,m_k}(a)).
\]
Taking $\ell = 0$, we get that the vector $\vec{c}_{m_0}$ satisfies $\vec{c}_{m_0} = \vec{c}_{m_0} \, M_{m_0,m_k}$ for all $k \ge 1$.
So, either $\vec{c}_{m_0}$  is zero, or it lies in the kernel of $M_{n,m_0}$,  or else  $\vec{c}_{m_0}$ is an eigenvector of $M_{m_0,m_k}$ with eigenvalue 1.
\end{proof}

We now turn to sufficient conditions ensuring triviality of letter-coboundaries based on return words.
Recall that the {\em Parikh vector} of a word $u \in \cA^*$ is the element $\Vec{w} \in \Z^\cA$ whose $a$-th coordinate equals the number of times that $a$ occurs in $w$.
We denote by $\cR_X(u)$ the set of return words to a word $u \in \cA^* \setminus \{\varepsilon\}$ in a minimal subshift $X$, and write $\Vec{\cR}_X(u) = \{\Vec{w} : w \in \cR(u)\}$.

\begin{proposition} 
    \label{prop:generatingabelian}
Let $X \subseteq \cA^\Z$ be a minimal subshift.
If there exists a word $u \in \cL(X)$ such that the set of integer linear combinations of elements of $\Vec{\cR}_X(u)$ is a finite index subgroup of $\Z^\cA$, then any letter-coboundary of $\cL(X)$ is trivial.
\end{proposition}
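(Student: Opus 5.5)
Let $c\colon\cA^*\to\R$ be a letter-coboundary on $X$. By \Cref{lem:cobord&rho}, fix $\rho\colon\cA\to\R$ with $c(a)=\rho(b)-\rho(a)$ for every $ab\in\cL_2(X)$. Then for any word $v_0\cdots v_k\in\cL(X)$ we have $c(v_0\cdots v_{k-1})=\rho(v_k)-\rho(v_0)$. The plan is to show that $c$, viewed as the linear form $\vec w\mapsto\sum_a c(a)\vec w_a$ on $\Z^\cA$, vanishes on the subgroup generated by $\Vec{\cR}_X(u)$. Since that subgroup has finite index, the form then vanishes on all of $\Z^\cA$, hence $c(a)=0$ for every $a$.

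\textbf{Vanishing on return words to $u$.} Let $w\in\cR_X(u)$, so that $wu\in\cL(X)$ and $wu$ begins and ends with $u$. Set $a=\First(u)$. Then $wa$ is a word in $\cL(X)$ (a prefix of $wu$) whose first letter is $a$ (since $wu$ begins with $u$ and $w$ is non-empty) and whose last letter is $a$. Hence
\[
c(w)=\rho(a)-\rho(\First(w))=\rho(a)-\rho(a)=0 .
\]
Equivalently, $w$ is a concatenation of return words to the letter $a$, each killed by $c$ by \Cref{def: coboundary}. Because $c$ is a morphism, $c(w)$ depends only on the Parikh vector: $c(w)=\sum_{a}c(a)\,|w|_a=\langle\vec c,\vec w\rangle$. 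So $\langle\vec c,\vec w\rangle=0$ for all $\vec w\in\Vec{\cR}_X(u)$, and by linearity $\langle\vec c,\cdot\rangle$ vanishes on the $\Z$-span $G$ of $\Vec{\cR}_X(u)$.

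\textbf{Finite index.} If $[\Z^\cA:G]=N<\infty$, then $N e_a\in G$ for each canonical vector $e_a$. Hence $N c(a)=0$, so $c(a)=0$. Thus $c$ is trivial.

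The only point needing care is the identity $c(w)=0$ for return words to a word (not a letter). This is handled by the telescoping formula from \Cref{lem:cobord&rho}, using that $wu$ starts and ends with the same letter $\First(u)$. Minimality is used only to have \Cref{lem:cobord&rho} available.
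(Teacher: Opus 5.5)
Your proof is correct and follows essentially the same route as the paper: you factor $c$ through the abelianization, show it vanishes on the $\Z$-span of $\Vec{\cR}_X(u)$, and use finite index to conclude (you via Lagrange, $N e_a \in G$; the paper via the finite order of each coset). You also justify explicitly that $c$ kills return words to the word $u$, by the telescoping identity from \Cref{lem:cobord&rho} or equivalently by decomposing into return words to $\First(u)$. The paper's proof takes this step for granted.
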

\begin{proof}
Let $c \colon \cA^* \to \R$ be an arbitrary letter-coboundary in $X$.
We denote by $V$ the $\Z$-linear span of the abelianized return words $\Vec{\cR}_X(u)$, which has finite index in $\Z^\cA$ by hypothesis.
Since $c$ is a monoid morphism into an abelian group, there exists a factorization of $c$ through the abelianization of $\cA^*$, that is, there is a $\Z$-linear map $\tilde{c} \colon \Z^\cA \to \R$ such that $c(w) = \tilde{c}(\Vec{w})$ for all $u \in \cA^*$.
In particular, $\tilde{c}(\Vec{w}) = c(w) = 0$ for every $w$ in $\Vec{\cR}_X(u)$ (and thus in $V$).

Let now $z \in \Z^\cA$ be any vector.
Since $V$ has finite index in $\Z^\cA$, the element $z + V$ has finite order in the additive abelian group $\Z^\cA / V$, so there exists a positive integer $n \geq 1$ such that $n z \in V$.
We obtain $n \, \tilde{c}(z) = \tilde{c}(nz) = 0$, and thus $\tilde{c}(z) = 0$.
Therefore, $c(w) = \tilde{c}(\Vec{w}) = 0$ for every $w \in \cA^*$, so $c$ is trivial.
\end{proof}

In many cases of interest, the hypothesis of \Cref{prop:generatingabelian} is guaranteed by a stronger condition, that we now introduce.
The word monoid $\cA^*$ has a canonical embedding into the free group $\F_{\cA}$ with free generators $\cA$, so we consider $\cA^*$ to be a subset of $\F_\cA$ 
Then, if $X \subseteq \cA^\Z$ is a subshift and $u \in \cL(X)$, the set $\cR_X(u)$ of return words to $u$ in $X$ generates a subgroup of $\F_\cA$.
For certain classes of subshifts of interest, such as dendric subshifts 
and certain subshifts of geometric origin (such as the natural coding of an interval exchange transformation and hypercubic billiard words), the set of return words $\cR_X(u)$ generates the whole free group $\F_\cA$, for every $u \in \cL(X)$.
By mapping onto $\Z^\cA$ through the abelianization map, one gets that $\Vec{\cR}_X(u)$ generates $\Z^\cA$, and thus that the hypothesis of \Cref{prop:generatingabelian} holds.
This argument proves something slightly stronger:

\begin{corollary}
\label{cor:generatingfreegroup}
Let $X \subseteq \cA^\Z$ be a minimal subshift. 
If there exists $u \in \cL(X)$ such that the set of return words to $u$ in $X$ generates a finite index subgroup of $\F_\cA$, then the   letter-coboundaries of  $X$ trivial.
\end{corollary}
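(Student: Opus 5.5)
The plan is to reduce the corollary to \Cref{prop:generatingabelian} by passing from the free group to its abelianization. Fix $u \in \cL(X)$ such that the subgroup $H \subseteq \F_\cA$ generated by $\cR_X(u)$ has finite index in $\F_\cA$. Let $\pi \colon \F_\cA \to \Z^\cA$ be the abelianization homomorphism, which sends a word $w \in \cA^* \subseteq \F_\cA$ to its Parikh vector $\Vec{w}$ and is surjective.

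The first step is to identify $\pi(H)$. Since $\pi$ is a group homomorphism, the image of the subgroup generated by $\cR_X(u)$ is the subgroup of $\Z^\cA$ generated by $\pi(\cR_X(u)) = \Vec{\cR}_X(u)$. That subgroup is exactly the set of integer linear combinations of elements of $\Vec{\cR}_X(u)$, since the ambient group is abelian. So $\pi(H)$ is precisely the group $V$ appearing in \Cref{prop:generatingabelian}.

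The second step is to show that $\pi(H)$ has finite index in $\Z^\cA$. Write $\F_\cA = \bigcup_{i=1}^k g_i H$ with finitely many cosets. Applying the surjective map $\pi$ gives $\Z^\cA = \bigcup_{i=1}^k (\pi(g_i) + \pi(H))$. Hence $[\Z^\cA : \pi(H)] \le k < \infty$.

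Finally, invoke \Cref{prop:generatingabelian} with this word $u$: its hypothesis is exactly that the integer span of $\Vec{\cR}_X(u)$ has finite index in $\Z^\cA$, so every letter-coboundary of $X$ is trivial. There is no real obstacle. The only point needing care is that "generated subgroup" in $\F_\cA$ allows inverses, and this is harmless because $\pi$ turns inverses into negatives, which are already allowed in integer linear combinations.
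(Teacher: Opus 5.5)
Your proposal is correct and follows the paper's own route: push the finite-index subgroup generated by $\cR_X(u)$ through the surjective abelianization map $\F_\cA \to \Z^\cA$. Its image is the $\Z$-span of the Parikh vectors of return words to $u$, and it has finite index because cosets map onto cosets. You then conclude by \Cref{prop:generatingabelian}.
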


\Cref{cor:generatingfreegroup} together with \Cref{thm:manifold} lead us to the conclusion that in any minimal subshift $X\subseteq \cA^{\Z}$ for which the extension graph of the empty word has more than one connected component, none of the subgroups of $\F_{\cA}$ generated by return words in $\cL(X)$ is a finite index subgroup. Dendric subshifts correspond exactly to those minimal subshifts in which the set of return words to any word is a basis of the free group over the alphabet \cite{dendricity&return_words}. For these systems, one can construct a proper unimodular $S$-adic representation using return words (see \cite[Proposition 3.8]{unimodular}), so that the upper bound that we obtain in Equation $\eqref{eq:dimEunimodular}$ reduces to $\dim_{\Q}E(X,S)\leq |\cA|$.

A converse form of \Cref{cor:generatingfreegroup} holds for subshifts with a \emph{suffix-connected} language, as it is proved in \cite[Corollary 1.2]{goulet}, where the author shows that for such a subshift, the subgroups generated by return words are the whole free group if and only if the extension graph of the empty word is connected.


The following example shows that not even a weak converse of \Cref{cor:generatingfreegroup} is possible: there are aperiodic minimal subshifts for which all the   letter-coboundaries are trivial, and  such that none of the sets $\Vec{\cR}_X(u)$, for $u \in \cL(X)$, generates a finite-index subspace of $\Z^\cA$.

\begin{example}\label{ex:fibo}
    Let $\cA = \{0,1\}$ and $X \subseteq \cA^\Z$ be any minimal subshift. 
    We define $\cB = \{0,1,2,3,4,5\}$ and the substitution $\sigma \colon \cA^* \to \cB^*$ by
    \[  \sigma \colon \begin{cases}
            0 &\mapsto 01 \, 02 \, 03 \, 04 \, 055 \\
            1 &\mapsto 01 \, 02 \, 03 \, 055 \, 04
        \end{cases}     \]
    Let $Y$ the shift-orbit of $\sigma(X)$, which is a minimal aperiodic subshift.
    From $\sigma(0)$ and $\sigma(1)$ we see that $\cL(Y)$ contains $0a$ and $a0$ for every $a \in \{1,2,3,4,5\}$, and also $55$.
    Hence the extension graph of the empty word in $Y$ is connected, so  all the letter-coboundaries of $Y$ are trivial by \Cref{thm:manifold}.
    
    We claim that for every $u \in \cL(Y) \setminus \{\varepsilon\}$, the $\Z$-linear span $V_Y(u)$ of $\Vec{\cR}_Y(u)$ has infinite index in $\Z^{\cB}$.
    First, we note that if $u$ starts with the letter $a$, then any return word to $u$ is a concatenation of return words to $a$; so, $V_Y(u)$ is contained in $V_Y(a)$.
    Therefore, it suffices to show that $V_Y(a)$ has infinite index in $\Z^\cB$ for each $a \in \cB$.

    Since both $\sigma(0)$ and $\sigma(1)$ begin with $0$, every return word to $0$ occurs inside $\sigma(0)$ or $\sigma(1)$.
    This allows us to compute
    \[  \cR_Y(0) = \{01, 02, 03, 04, 055\}.  \]
    This set has only five elements, which is less than the dimension of $\Z^\cB$, so $V_Y(0)$ has infinite index in $\Z^\cB$.
    Now fix $a\in\{1,2,3,4,5\}$. In each of $\sigma(0)$ and $\sigma(1)$, the letter $a$ appears either once (if $a\in\{1,2,3,4\}$) or twice (if $a=5$). 
    Consequently, any return word to $a$ is contained in $\sigma(bb')$ for some $bb' \in \cL(X)$ with $b,b' \in \cA$. 
    There are at most four possibilities for $b,b' \in \cA$, so these contribute at most $4$ distinct return words.
    Additionally, there are no return words to $a$ occurring entirely inside $\sigma(0)$ or $\sigma(1)$ when $a\in\{1,2,3,4\}$, and exactly one such word when $a=5$, namely $5$. 
    Therefore $|\cR_Y(a)| \le 5$ for all $a$, and hence $\rank V_Y(a) \le 5 < 6$. 
    It follows that $V_Y(a)$ has infinite index in $\Z^{\cB}$.
\end{example}

The following theorem states that the set of abelianized return words to any word in the language of the subshift generated by a proper, unimodular and primitive directive sequence generates the group $\Z^\cA$, where $\cA$ is the alphabet of $X$.

\begin{theorem} 
    \label{thm:proper+unimod=>RW_gen_Zd}
Let $X$ be a subshift generated by a proper, unimodular and primitive directive sequence $\btau = (\tau_n \colon \cA_{n+1}^* \to \cA_n^* : n \geq 0)$.
Then, for any $n\geq 0$ and any $u$ in the language of $X_{\btau}^{(n)}$, the set of integer linear combinations of abelianized return words to $u$ in $X$ equals $\Z^{\cA_n}$.
In particular, each $X_{\btau}^{(n)}$ admits no nontrivial letter-coboundary.
\end{theorem}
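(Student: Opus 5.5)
The plan is to exhibit the abelianizations of the images $\tau_{n,m}(a)$, $a\in\cA_m$, as sums of abelianized return words to $u$ in $X_{\btau}^{(n)}$, and then to use unimodularity. Fix $n\ge0$ and a non-empty $u\in\cL(X_{\btau}^{(n)})$. Return words are taken in the level-$n$ subshift $X_{\btau}^{(n)}$, which is minimal because $\btau$ is primitive.

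\emph{Step 1 (a long common prefix containing $u$).} Since each $\tau_k$ is left-proper, there are letters $\ell_k\in\cA_k$ such that every $\tau_k(c)$ begins with $\ell_k$. Hence for $m>k>n$ all words $\tau_{n,m}(a)$, $a\in\cA_m$, begin with the common word $P_k\coloneqq\tau_{n,k}(\ell_k)$. The word $u$ lies in $\cL(X_{\btau}^{(n)})$, so it occurs in $\tau_{n,k'}(c)$ for some $k'$ and some $c\in\cA_{k'}$. By primitivity there is $k>k'$ such that every letter of $\cA_{k'}$, in particular $c$, occurs in $\tau_{k',k}(\ell_k)$. Then $u$ occurs in $P_k$. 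Fix such a $k$ and any $m>k$, and let $j$ be the position of the first occurrence of $u$ in $P_k$.

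\emph{Step 2 (images are concatenations of return words).} Take $y\in X_{\btau}^{(m)}$ and set $x=\tau_{n,m}(y)\in X_{\btau}^{(n)}$. Every block $\tau_{n,m}(y_i)$ starts with $P_k$, so $u$ occurs in $x$ at offset $j$ from the start of each block. Let $w_i$ be the factor of $x$ between the occurrence of $u$ at offset $j$ in block $i$ and the one in block $i+1$. The word $w_i$ is the suffix of $\tau_{n,m}(y_i)$ starting at position $j$, followed by the length-$j$ prefix of $\tau_{n,m}(y_{i+1})$. That prefix equals the length-$j$ prefix of $P_k$, which is also the length-$j$ prefix of $\tau_{n,m}(y_i)$. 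Therefore $\vec{w_i}=\vec{\tau_{n,m}(y_i)}$. Moreover $w_i\,u\in\cL(X_{\btau}^{(n)})$ begins and ends with occurrences of $u$, so $w_i$ factors as a concatenation of return words to $u$. By minimality every letter $a\in\cA_m$ appears as some $y_i$. Hence every column $M_{n,m}e_a$ (the Parikh vector of $\tau_{n,m}(a)$) lies in the $\Z$-span $V$ of $\Vec{\cR}_{X_{\btau}^{(n)}}(u)$.

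\emph{Step 3 (unimodularity and conclusion).} The matrix $M_{n,m}=M_{\tau_n}\cdots M_{\tau_{m-1}}$ has determinant $\pm1$, and it is square because unimodularity forces $|\cA_i|$ to be constant. Its columns therefore generate $\Z^{\cA_n}$. Since $V\subseteq\Z^{\cA_n}$ trivially, we get $V=\Z^{\cA_n}$. The final claim follows by applying \Cref{prop:generatingabelian} to the minimal subshift $X_{\btau}^{(n)}$: the index of $V$ in $\Z^{\cA_n}$ is $1$, so every letter-coboundary of $X_{\btau}^{(n)}$ is trivial.

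The main obstacle is Step 1, namely ensuring that $u$ sits inside the common prefix and not merely inside some image; the primitivity argument above handles it. Only left-properness is actually used. A minor bookkeeping point is that $\cA_m$ and $\cA_n$ have the same cardinality; this is implied by unimodularity.
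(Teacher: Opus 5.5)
Your proof is correct and follows essentially the same route as the paper. You use (left-)properness to get a long common prefix of the images $\tau_{n,m}(a)$ that contains $u$, observe that a cyclic rotation of each $\tau_{n,m}(a)$ is a concatenation of return words to $u$ with the same Parikh vector, and conclude from the unimodularity of $M_{n,m}$. The differences are cosmetic: you place $u$ inside the common prefix directly via primitivity rather than via uniform recurrence plus growth of the prefix length, and you read the rotated word off a point of $X_{\btau}^{(m)}$ rather than off a length-two word $ab$.
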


\begin{proof}
The directive sequence $\btau_n = (\tau_m : m > n)$ is proper, unimodular, primitive and generates $X_{\btau}^{(n)}$.
So, we may assume without loss of generality that $n = 0$.
Also, since $\btau$ is unimodular, we can assume that all the alphabets $\cA_n$ are equal, say to $\cA$.

Let $u \in \cL(X)$ be arbitrary.
The minimality of $X$ implies that there exists $L > 0$ such that $u$ occurs in any $w \in \cL(X)$ of length at least $L$.
Moreover, since $\btau$ is proper, we can find $n \geq 0$ and $w \in \cA_0^*$ such that 
\begin{enumerate}
    \item[(i)]  $w$ is a prefix of $\tau_{0,n}(a)$ for all $a \in \cA$,
    \item[(ii)] $w$ has length at least $L$, and thus contains an occurrence of $u$.
\end{enumerate}
Let $V$ be the $\Z$-linear span of $\Vec{\cR}_X(u)$.
As the matrix $M_{0,n}$ of $\tau_{0,n}$ is unimodular, $M_{[0,n}(\Vec{\cA}_n)$ generates $\Z^\cA$, so it is enough to prove that $V$ contains $M_{0,n}(\Vec{\cA}_n)$.

Let $a \in \cA$.
We take $b \in \cA$ such that $ab \in \cL(X_{\btau}^{(n)})$.
By Item (i), we can write $\tau_{0,n}(a) = w s_a$ and $\tau_{0,n}(b) = w s_b$ for some $s_a,s_b \in \cA^*$. 
Moreover, by Item (ii), $w = t u t'$ for some $t,t' \in \cA^*$.
Hence,
    $$ tut' s_a tut' s_b = w s_a w s_b = \tau_{0,n}(ab) \in \cL(X). $$
In particular, the word $p = u t' s_a t$ is a concatenation of return words to $u$.
Since $p$ has the same Parikh vector as $\tau_{0,n}(a)$, we conclude that $M_{0,n}(\Vec{a}) \in V$, completing the proof.

Finally, by result just proved, \Cref{prop:generatingabelian} implies that  the letter-coboundaries in $X_{\btau}$ are all trivial.
\end{proof}

We note that \Cref{thm:proper+unimod=>RW_gen_Zd} gives a necessary condition for a subshift to be generated by a proper, unimodular, and primitive directive sequence:  the letter-coboundaries must   all be trivial.
Now, if this condition is not met, then the subshift still might be {\em conjugate} to another subshift admitting such a representation. 
Since the extension graph of the empty word is not invariant under conjugacy, \Cref{thm:proper+unimod=>RW_gen_Zd} together with \Cref{cor:connectedtrivial} imply that the property of admitting a proper unimodular representation is not preserved neither.
See also Remark \ref{rem:notpreservedconnecetdgraph}.

\subsection{Back to eigenvalues}

We now gather previous results allowing to state simple sufficient and necessary conditions for $\alpha$ to be an additive eigenvalue.

\begin{corollary}
\label{cor:EigCharac:FAR:Mult:cocoboundary}
Let $X$ be a subshift generated by a primitive and recognizable directive sequence $\btau = (\tau_n \colon \cA_{n+1}^* \to \cA_n^*)_{n\geq0}$.
Suppose that $\btau$ is decisive or $(|\cA_n| : n \geq 0)$ is bounded.
Furthermore, suppose that, for every $n \geq 0$, one of the following conditions holds:
\begin{enumerate} [label=$(C_\arabic*)$]
    \item the extension graph of the empty word in $X_{\btau}^{(n)}$ is connected;
    \item  the vector $(\mu_n([a]) : a\in\cA_n)$ has $\Q$-independent coordinates for some  invariant measure $\mu_n$ of $X_{\btau}^{(n)}$;
    \item there exists a word $u\in\cL(X)$ such that the set of integer linear combinations of elements of $\vec{\cR}_X(u)$ is a finite index subgroup of $\Z^{\cA}$;
    \item  the directive sequence $\tau_n$ is left- or right-proper;
    \item  the sequence $(|\cA_n| : n \ge 0)$ is bounded and $1$ is not an eigenvalue of the matrix of $\tau_{n,m}$ for all large enough $m \gt n$.
\end{enumerate}
Then, for any $\alpha \in \R$,  the following implications hold: $(P_1) \Rightarrow (P_2) \Rightarrow (P_3)$ and $(P_2) \Rightarrow (P_4)$, where 
\begin{enumerate} [label=$(P_\arabic*)$]
    \item
    $\sum_{n\geq0} \max\bigl\{
    \|\alpha h_n(u) \| :
    a \in \cA_{n+1}, u \in \prefixes(\tau_n(a))
    \bigr\}  < \infty;$

    \item $\alpha$ is an additive eigenvalue of $X$;

    \item 
    $\max\big\{
    \|\alpha h_n(u) \| :
    a \in \cA_{n+1}, u \in \prefixes(\tau_n(a))
    \big\}$ converges to $0$ as $n \to \infty$;

    \item there exists $n \geq 0$ and $(w_a)_{a\in\cA_n} \in \Z^{\cA_n}$ such that $\alpha = w_a\, \mu(B_n(a))$ for all $\mu \in \cM(X,S)$.
\end{enumerate}
\end{corollary}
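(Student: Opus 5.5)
The plan is to reduce every implication to results already established, using the conditions $(C_1)$--$(C_5)$ only to kill the letter-coboundaries produced by \Cref{theo:EigCharac:decisive:Mult}. For $(P_2)\Rightarrow(P_4)$ nothing new is needed: \Cref{cor:EigCharac:FAR:eigs&freqs} applies directly, since $\btau$ is primitive, recognizable and decisive or of bounded alphabet rank.

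For $(P_2)\Rightarrow(P_3)$, I apply \Cref{theo:EigCharac:decisive:Mult} to get coboundaries $c_n$ on $X_{\btau}^{(n)}$ with $\sup_{u\in\cL(X_{\btau}^{(n)})}\|c_n(u)-\alpha h_n(u)\|\to0$. The task is then to show that $c_n(u)$ may be taken to be $0$ for $u\in\prefixes(\tau_n(a))$, at least modulo a vanishing error, level by level.
\begin{itemize}
\item Under $(C_1)$, \Cref{cor:connectedtrivial} gives $c_n\equiv0$.
\item Under $(C_2)$, \Cref{prop:dim} gives $c_n\equiv0$.
\item Under $(C_3)$, read at level $n$, \Cref{prop:generatingabelian} gives $c_n\equiv0$.
\item Under $(C_4)$, say $\tau_n$ is left-proper, I pass to the coboundary $c_n'=c_n\circ\tau_n$ on level $n+1$ instead. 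It is a letter-coboundary by \Cref{lem:compositioncoboundary}. By \Cref{lem:cobord&rho}, $c_n(\tau_n(v_0\cdots v_{k-1}))=\rho_n(\First\tau_n(v_k))-\rho_n(\First\tau_n(v_0))=0$, since every $\tau_n(b)$ begins with the same letter. The right-proper case is symmetric, using last letters: for $u$ a prefix of $\tau_n(a)$, $c_n(u)=\rho(\cdot)-\rho(\cdot)$ is controlled through the left vertex. A cleaner route here is to work with $\alpha h_{n}(\tau_n(u))$ at level $n+1$.
\item Under $(C_5)$, \Cref{finitary=>cobords_are_eigenvectors} shows that $c_n\circ\tau_{n,m}$ is eventually in $\ker M$ or fixed by $M$. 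Since $1$ is not an eigenvalue, $c_n\circ\tau_{n,m}$ vanishes on letters for large $m$, as in \Cref{rem:irreducible_trivial}.
\end{itemize}
In each case, for $u$ a prefix of $\tau_n(a)$ (so $u\in\cL(X_{\btau}^{(n)})$), I get $\|\alpha h_n(u)\|\le\|c_n(u)-\alpha h_n(u)\|+\|c_n(u)\|$, with the second term zero or absorbed. This gives $(P_3)$. I expect the main obstacle to be the careful index bookkeeping in the $(C_4)$ and $(C_5)$ cases, where triviality holds only after composing with $\tau_{n,m}$.

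For $(P_1)\Rightarrow(P_2)$, I take $\brho\equiv0$ and use the sufficiency half of \Cref{theo:EigCharac:positive}, which as remarked only needs primitivity. Its $n$-th term is a maximum over prefixes of $\tau_n\tau_{n+1}(a)$. Via the Dumont--Thomas decomposition, such a prefix is $\tau_n(u_{n+1})u_n$ with $u_{n+1}$ a prefix of $\tau_{n+1}(a)$ and $u_n$ a prefix of some $\tau_n(b)$. Hence $\|\alpha h_n(\cdot)\|\le \eta_{n+1}+\eta_n$, where $\eta_n$ denotes the summand in $(P_1)$, and this is summable. More directly, one can run the telescoping argument of the proof of \Cref{theo:EigCharac:positive} with $\rho\equiv 0$, bounding $\varepsilon_n(0)\le\sum_{i\ge n}\eta_i\to0$, and then invoke \Cref{theo:EigCharac:Mult}.
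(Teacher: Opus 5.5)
Your proposal follows the paper's proof. For $(P_1)\Rightarrow(P_2)$ the paper uses your second route, the Dumont--Thomas telescoping with $\rho\equiv0$ fed into \Cref{theo:EigCharac:Mult}; for $(P_2)\Rightarrow(P_3)$ it takes the coboundaries $c_n$ of \Cref{theo:EigCharac:decisive:Mult} and shows via $(C_1)$--$(C_5)$ that $c_n\circ\tau_{n,m(n)}\equiv0$ for some $m(n)>n$; and $(P_2)\Rightarrow(P_4)$ is \Cref{cor:EigCharac:FAR:eigs&freqs}.

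The index bookkeeping you flag is closed in the paper without any level-by-level matching of the conditions. Set $\varepsilon_n=\sup\{\|c_n(u)-\alpha h_n(u)\| : u\in\cL(X_{\btau}^{(n)})\}$ and fix $n$ with $\varepsilon_n<\varepsilon$. Then $c_n\circ\tau_{n,k}=(c_n\circ\tau_{n,m(n)})\circ\tau_{m(n),k}\equiv0$ for every $k\ge m(n)$. Hence $\|\alpha h_k(u)\|=\|\alpha h_n(\tau_{n,k}(u))-c_n(\tau_{n,k}(u))\|\le\varepsilon_n$ for all $u\in\cL(X_{\btau}^{(k)})$. The bound thus lands on the later levels $k$, not on level $n$ itself as your final inequality suggests.
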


We remark that Condition $(P_3)$ implies (via application of a triangular inequality) the classical condition for $\alpha$ to be an  eigenvalue (see also \Cref{eq:EigCharac:necessary:proper}):
\begin{equation*}
    \lim_{n\to\infty} \max\big\{\,\| \alpha h_n(a) \| : a \in \cA_n \big\} = 0.
\end{equation*}

\begin{proof}[Proof of \Cref{cor:EigCharac:FAR:Mult:cocoboundary}]
We first prove that $(P_1) \Rightarrow (P_2)$.  
For each $n \ge 0$, define
\[
    \varepsilon_n = \max\bigl\{ \|\alpha h_n(u)\| : a \in \cA_{n+1},\, u \in \prefixes(\tau_n(a)) \bigr\}.
\]
Assumption $(P_1)$ is equivalent to the series $\sum_{n \ge 0} \varepsilon_n$ being finite.
We aim to verify the condition in \Cref{theo:EigCharac:Mult} with $\rho_n(a) \equiv 0$, 
that is,
\begin{equation}
    \label{eq:0:cor:EigCharac:FAR:Mult:cocoboundary}
    \lim_{n \to \infty} \sup\bigl\{ \|\alpha h_n(v)\| : v \in \cL(X_{\btau}^{(n)}) \bigr\} = 0.
\end{equation}
Let us first consider a word $v$ that is a prefix of $\tau_{n,m}(a)$ for some $m > n$ and $a \in \cA_m$.  
We may decompose such a prefix as
\[
    v = \tau_{n,m-1}(v_{m-1})\, \tau_{n,m-2}(v_{m-2}) \cdots \tau_n(v_{n+1})\, v_n,
\]
where for each $n \le i < m$, $v_i a_i \in \prefixes(\tau_i(a_{i+1}))$ and $a_i \in \cA_i$ (with $a_m = a$).  
Using this decomposition, we estimate
\[
    \|\alpha h_n(u)\|
    \le \sum_{n \le i < m} \|\alpha h_i(v_i)\|
    \le \sum_{i \ge n} \varepsilon_i.
\]
Now let $v$ be any word occurring in $\tau_{n,m}(a)$ (not necessarily as a prefix).  
Then there exists $u \in \prefixes(\tau_{n,m}(a))$ such that $uv \in \prefixes(\tau_{n,m}(a))$.
Applying the previous estimate to both $u$ and $uv$, we obtain
\[
    \|\alpha h_n(v)\|
    \le \|\alpha h_n(uv)\| + \|\alpha h_n(u)\|
    \le 2 \sum_{i \ge n} \varepsilon_i.
\]

Since the series $\sum_{i \ge 0} \varepsilon_i$ converges, its tails $\sum_{i \ge n} \varepsilon_i$ tend to zero as $n \to \infty$.  
This proves \eqref{eq:0:cor:EigCharac:FAR:Mult:cocoboundary}, and thus, by \Cref{theo:EigCharac:Mult}, that $\alpha$ is an eigenvalue of $(X,S)$.
\smallskip

We now suppose that $\alpha$ is an eigenvalue of $X$ and prove $(P_3)$.
If $(C_1)$, $(C_2)$, $(C_3)$ or $(C_4)$ holds, then applying \Cref{theo:EigCharac:decisive:Mult}, we get a sequence of letter-coboundaries $(c_n)_{n\geq 0}$ on $X_{\btau}^{(n)}$ such that
\begin{equation}
    \label{eq:cor:EigCharac:FAR:Mult:cocoboundary}
    \varepsilon_n \coloneqq \sup\bigl\{ \|c_n(u) - \alpha h_n(u)\| : u \in \cL(X_{\btau}^{(n)}) \bigr\}
\end{equation}
converges to 0 as $n \to \infty$.
If, instead, $(C_5)$ holds, we can apply \Cref{theo:EigCharac:FAR:Mult}, which gives $n_0 \ge 0$ and a letter-coboundary $c_{n_0}$ in $X_{\btau}^{(n_0)}$ such that \eqref{eq:hip_cob:theo:EigCharac:FAR:Mult} goes to 0 as $n \to \infty$; equivalently, the sequence $(c_n \coloneqq c_{n_0} \circ \tau_{n_0,n} : n \gt n_0)$ satisfies that $(\varepsilon_n : n \ge n_0)$ converges to $0$ as $n \to \infty$.

Next, we prove that each of the conditions $(C_i)$ implies that, for every $n \ge 0$ there exists $m(n) \gt n$ such that $c_n \circ \tau_{n,m(n)} \equiv 0$.
If $(C_1)$ holds, then every letter-coboundary is trivial by \Cref{thm:manifold}. 
If $(C_2)$ holds, then every letter-coboundary is trivial by \Cref{prop:dim}.
If $(C_3)$ holds, then every letter-coboundary is trivial by \Cref{prop:generatingabelian}.
Suppose that $(C_4)$ holds.
We only treat the case in which $\tau_n$ is left-proper; the other is similar.
Observe that, for all $n$, $c_n \circ \tau_n$ is a letter-coboundary on $X_{\btau}^{(n+1)}$ by \Cref{lem:compositioncoboundary}.
Also, if $b$ is the common first letter of each $\tau_n(a)$, then each $\tau_n(a)$ is a concatenation of return words to $b$.
Hence, $c_n(\tau_n(a))=0$ for all $a\in\cA_{n+1}$, and thus $c_n\circ\tau_n\equiv 0$.
Suppose now that $(C_5)$ is satisfied. 
By hypothesis, $1$ is not an eigenvalue of $M_{n,m}$ for all large $m\gt n$, so by \Cref{finitary=>cobords_are_eigenvectors}, $c_n\circ \tau_{n,m}$ must be trivial. 

We now use the previous claim and the convergence of \eqref{eq:cor:EigCharac:FAR:Mult:cocoboundary} to obtain $(P_3)$.  
Let $\varepsilon > 0$, and choose $n \in \N$ such that $\varepsilon_n < \varepsilon$.
For $k \geq m(n)$, we have
\[  c_n \circ \tau_{n,k} 
    = c_n \circ \tau_{n,m(n)} \circ \tau_{m(n),k} 
    \equiv 0.   \]
Thus, for all $u \in \cL(X_{\btau}^{(k)})$, $\|\alpha h_k(u)\| \leq \varepsilon_n < \varepsilon$ since $\tau_{n,k}(u) \in \cL(X_{\btau}^{(n)})$.
This proves $(P_3)$.

The implication $(P_2) \Rightarrow (P_4)$ is direct from \Cref{cor:EigCharac:FAR:eigs&freqs}.
\end{proof}



\section{On  coboundaries of specular subshifts and  linear involutions}
\label{sec:specular}

In this section, we determine exactly when the rational eigenvalues of  a minimal specular subshift are all trivial.
The main result is established in \Cref{theo:eigs_specular}, and crucially relies on developing an $S$-adic structure for this class of subshifts (\Cref{theo:Sadic_for_specular}), as well as on the use of letter-coboundaries and an analysis of the evolution of extension graphs between the levels of the $S$-adic structure.

The study of  specular subshifts is motivated by  the generalization of interval exchanges as  linear involutions.  A linear involution is an injective piecewise isometry defined on a pair of intervals. This generalization of the notion of interval exchange allows one to
work with nonorientable foliations on nonorientable surfaces. They are  first return maps  on transversals to  foliations defined by quadratic differentials. Linear involutions were introduced by Danthony and Nogueira  in\cite{DanthonyNogueira1988,DanthonyNogueira1990}, generalizing
interval exchanges with flip(s) (these are interval exchange transformations which reverse orientation in at least one interval).  The study
of linear involutions was later developed by Boissy and Lanneau in \cite{BoissyLanneau2009}.  See also \cite{Gabre:2012,Skrip:2023,LMS:21} for later references.
We rely here on $S$-adic representations based on return words
(more precisely mixed return words  such as as defined below), and  not on (Rauzy) induction.  This combinatorial viewpoint leverages the fact that the morphisms  associated to return words  are well understood (see \Cref{specular:ret_word_real_iso}).

\subsection{Mixed return words}

Let $\cA$ be a finite alphabet and $\theta \colon \cA \to \cA$ be an involution, \emph{i.e.}, $\theta(\theta(a)) = a$ for every $a \in \cA$.
Consider the group $G_\theta$ defined by the presentation $G_\theta = \langle a \in \cA \mid a\theta(a) = \varepsilon \rangle$, so that $\theta(a) = a^{-1}$ in $G_\theta$ for each $a \in \cA$.
Such a group $G_\theta$ is called a \emph{specular group}.
It is known (see e.g.\ \cite{specular_TCS}) that every specular group is isomorphic to the free product $\Z^{*i} * (\Z/2\Z)^{*j}$, where $i$ is the number of letters $a \in \cA$ satisfying $\theta(a) \neq a$ and $j$ is the number of letters satisfying $\theta(a) = a$.
In a specular group $G_\theta$, a \emph{reduced word} is an element of $\cA^*$ containing no factor of the form $\theta(a)a$ or $a\theta(a)$ for any $a \in \cA$.
Any element of $G_\theta$ can be uniquely represented as a reduced word.
A subset $B$ of a specular group $G$ is called a \emph{monoidal basis} of $G$ if it is \emph{symmetric} (closed under taking inverses), generates $G$ as a monoid, and every product $b_1 b_2 \cdots b_m$ of elements of $B$ such that $b_k b_{k+1} \neq \varepsilon$ for $1 \leq k < m$ is different from the identity element $\varepsilon$.

Given an alphabet $\cA$, an involution $\theta$ on $\cA$, and the specular group $G_\theta$, a minimal subshift on $\cA$ is {\em specular}   relative to $\theta$   if its language is  a symmetric set of   reduced words in $G_\theta$ in which the extension graph of every non-empty word is a tree and the extension graph of the empty word has exactly two connected components \cite{specular_TCS}.
Hence, in a specular subshift $X$ with involution $\theta$, the reduced word $u^{-1}$ obtained from a word $u \in \cL(X)$ by taking inverse in $G_\theta$ belongs to $\cL(X)$.

\begin{example}\label{ex:specular}
    Consider the substitution $\sigma \colon \{a,b,c,d\}^* \to \{a,b,c,d\}^*$ given by 
    \[  \sigma \colon \begin{cases}
        a & \mapsto ab \\
        b & \mapsto cda \\
        c & \mapsto cd \\
        d & \mapsto abc 
        \end{cases}     \]
    It is shown in \cite{specular_TCS} that $X_\sigma$ is a specular subshift  relative to the involution $\theta(a) = c$, $\theta(b) = d$.
\end{example}

So far, given a subshift $X \subseteq \cA^\Z$ and $u \in \cA$, we have called a {\em return word} to $u$ in $X$ a word $w \in \cA^*$ such that $wu \in \cL(X)$ and $wu$ contains exactly two occurrences of $u$, one as a prefix and one as a suffix. 
In this section, we need different variations of this notion, which we now recall from \cite{specular_TCS}.
Let $U\subseteq \cL(X)$ be a nonempty subset.
A {\em complete return word} to the set $U$ in $X$ is a word in $\cL(X)$ with exactly two occurrences of (possibly different) words in $U$, one as a prefix and one as a suffix.
Suppose now that $X$ is a  minimal specular subshift with involution $\theta$, and consider $u \in \cL(X)$ such that $u$ and $u^{-1}$ {\em do not overlap}, \emph{i.e.}, no nonempty prefix of one of $u,u^{-1}$ is a suffix of the other.
To each complete return word $w$ to $\{u,u^{-1}\}$, we associate a word $N(w)$ as follows: 
if $w$ contains $u^{-1}$ as a prefix, we erase it; if $w$ contains $u$ as a suffix, we also erase it, and call the resulting word $N(w)$.
These two operations can be made in any order without effect on the final result since $u$ and $u^{-1}$ do not overlap. 
The set of {\em mixed return words} to $u$ in $X$ is the set of all words $N(w)$, where $w$ is a complete return word to $\{u,u^{-1}\}$ in $X$.
We denote by $\cR_X(u)$ and $\cR^{\mathrm{mixed}}_X(u)$ the set of usual and mixed return words to $u$ in $X$.
For $L \subseteq \cL(X)$, we denote by $\cR^{\mathrm{comp}}_X(L)$ the set of complete return words to $L$ in $X$.

Let $X\subseteq \cA^\Z$ be a minimal specular subshift with involution $\theta$. 
Since $\cL(X)$ is biextendable, every letter $a\in\cA$ appears exactly twice in the extension graph $\Gamma_X(\varepsilon)$: 
once as a vertex in $L_X(\varepsilon)$, and once as a vertex in $R_X(\varepsilon)$. 
A letter is said to be {\em even} if these two occurrences are in the same connected component of $\Gamma_X(\varepsilon)$, 
and {\em odd} otherwise. 
A word $w\in\cL(X)$ is {\em even} if it has an even number of odd letters, and {\em odd} otherwise. 
The {\em even subgroup} is the subgroup of $G_\theta$ generated by even words. 
It is an index 2 subgroup of $G_\theta$  by \cite[Theorem 8.1, Proposition 4.8]{specular_TCS}.

\begin{lemma}\cite[Proposition 8.3.10]{DP}
    \label{specular:symmetry_in_Gamma(esp)}
    Let $X$ be a minimal specular subshift with involution $\theta$ and denote by $\Gamma^0,\Gamma^1$ the two connected components of $\Gamma_X(\varepsilon)$.
    If $(a_L,b_R) \in \Gamma^i_X$, with $i \in \{0,1\}$, then $(b^{-1}_L,a^{-1}_R) \in \Gamma^{1-i}_X$.
\end{lemma}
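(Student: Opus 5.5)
The statement has two parts. First, $b^{-1}a^{-1}\in\cL(X)$, so $(b^{-1}_L,a^{-1}_R)$ is indeed an edge of $\Gamma_X(\varepsilon)$. Second, this edge lies in the component other than the one containing $(a_L,b_R)$. The plan is to get the first part from the symmetry of the language, and the second from a parity argument built on the tree property of extension graphs of letters.

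\emph{Step 1 (the edge exists).} If $ab\in\cL(X)$, then, since the language of a specular subshift is symmetric, $(ab)^{-1}=b^{-1}a^{-1}$ belongs to $\cL(X)$; this is a reduced word because $ab$ is reduced. Hence $(b^{-1}_L,a^{-1}_R)$ is an edge of $\Gamma_X(\varepsilon)$. The same reasoning shows that the map $\iota\colon(a_L,b_R)\mapsto(b^{-1}_L,a^{-1}_R)$ is an involutive bijection on the edge set of $\Gamma_X(\varepsilon)$.

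\emph{Step 2 ($\iota$ permutes the components).} Two edges sharing the left vertex $a_L$, say $(a_L,b_R)$ and $(a_L,b'_R)$, are sent to $(b^{-1}_L,a^{-1}_R)$ and $(b'^{-1}_L,a^{-1}_R)$, which share the right vertex $a^{-1}_R$. The symmetric statement holds for edges sharing a right vertex. Consequently $\iota$ maps each connected component onto a connected component. Since there are exactly two components, $\iota$ either fixes both or swaps them, and it suffices to exclude the first possibility.

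\emph{Step 3 (excluding the fixed case) — the main obstacle.} I would try a parity argument. In a specular subshift, for a letter $b$ the extension graph $\Gamma_X(b)$ is a tree. The words $ab\in\cL(X)$, read as left extensions of $b$, each connect the vertex $a_L$ to the vertex $b_R$ of $\Gamma_X(\varepsilon)$. I would use the known structure of specular languages, namely the fact that the even words form an index-$2$ subgroup (\cite[Theorem 8.1]{specular_TCS}), to define a labelling $\chi\colon\cA_L\cup\cA_R\to\{0,1\}$ equal to $0$ on the vertices of $\Gamma^0$ and $1$ on those of $\Gamma^1$. The goal is then to show that $\chi(a_L)+\chi(a^{-1}_R)=1$ for every letter $a$, that is, that $a_L$ and $a^{-1}_R$ always lie in different components. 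Granting this, the edge $(a_L,b_R)\in\Gamma^i$ has $\chi(a_L)=i$, so $\chi(a^{-1}_R)=1-i$, which places $\iota(a_L,b_R)$ in $\Gamma^{1-i}$.

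To prove the claim $\chi(a_L)\neq\chi(a^{-1}_R)$, I would argue by contradiction. Suppose $a_L$ and $a^{-1}_R$ lie in the same component. Following a path between them and applying $\iota$ produces a closed cycle. I would then pull this cycle back to the extension graph of a suitable nonempty word, using the fact that paths in $\Gamma_X(\varepsilon)$ lift through $\Gamma_X(c)$ for letters $c$, and contradict that graph being a tree. The alternative, if this route does not close, is simply to invoke the result as cited: it is \cite[Proposition 8.3.10]{DP}.
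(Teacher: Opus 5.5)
Your Steps 1 and 2 are correct. Symmetry of $\cL(X)$ makes $\iota$ an involutive automorphism of $\Gamma_X(\varepsilon)$. On vertices it acts by $x_L\mapsto x^{-1}_R$ and $y_R\mapsto y^{-1}_L$, so it exchanges left and right vertices and either preserves each of $\Gamma^0,\Gamma^1$ or swaps them. The paper itself gives no argument here and only cites \cite[Proposition 8.3.10]{DP}.

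The gap is Step 3, which is the only place where something actually has to be proved. The claim $\chi(a_L)\neq\chi(a^{-1}_R)$ is not a reduction. Since $\iota(a_L)=a^{-1}_R$, it is literally the assertion that $\iota$ swaps the components, that is, the lemma itself. Also, $\chi$ is just the component index and needs no input from \cite[Theorem 8.1]{specular_TCS}. The contradiction you sketch also does not exist. Suppose $a_L$ and $a^{-1}_R$ lie in a common component $K$, which is a tree, and let $P$ be the path joining them. Then $\iota(P)$ is a path from $a^{-1}_R$ to $a_L$, so by uniqueness of paths in a tree it is $P$ reversed. Your ``closed cycle'' is therefore $P$ traversed back and forth, which contradicts nothing, and there is no mechanism for lifting paths of $\Gamma_X(\varepsilon)$ into $\Gamma_X(c)$. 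Above all, your argument never uses that $\cL(X)$ consists of reduced words, and that is exactly what forces the swap.

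The observation $\iota(P)=P$ reversed is in fact the right engine. Write $P=v_0v_1\cdots v_{2k+1}$; its length is odd because the endpoints lie on opposite sides. Uniqueness gives $\iota(v_j)=v_{2k+1-j}$ for all $j$, so $\iota$ exchanges the two endpoints of the middle edge $\{v_k,v_{k+1}\}$. One endpoint is a left vertex $c_L$, so the other is $\iota(c_L)=c^{-1}_R$. This edge says $c\,\theta(c)\in\cL(X)$, which is not a reduced word, a contradiction. Equivalently, an automorphism of a finite tree that exchanges the two sides of the bipartition must invert an edge, and an inverted edge of $\Gamma_X(\varepsilon)$ is a factor $c\,\theta(c)$. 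Hence $\iota(\Gamma^i)=\Gamma^{1-i}$, and since $\iota(a_L,b_R)=(b^{-1}_L,a^{-1}_R)$ the lemma follows.

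This argument needs each component of $\Gamma_X(\varepsilon)$ to be a tree. That acyclicity is part of the definition of specular sets in \cite{specular_TCS} (tree sets of characteristic $2$) and is used elsewhere in the paper. The paper's stated definition, however, mentions only the number of components.
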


\begin{proposition}\cite[Theorem 6.12 and Theorem 6.17]{specular_TCS}
    \label{specular:mixed_gen_Gtheta}
    Let $X$ be a minimal specular subshift with involution $\theta$.
    For every $u \in \cL(X)$ such that $u,u^{-1}$ do not overlap, the set of mixed return words to $u$ in $X$ has $|\cA|$ elements, and is a symmetric monoidal basis of $G_\theta$.
\end{proposition}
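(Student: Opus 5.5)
The plan follows the strategy of \cite{specular_TCS}: first establish the combinatorial facts (symmetry and cardinality), and then the group-theoretic ones (freeness of products and generation). Throughout, let $U=\{u,u^{-1}\}$. Since $u$ and $u^{-1}$ do not overlap, every complete return word $w$ to $U$ decomposes uniquely as $w = p\,N(w)\,s$ with $p,s\in U$. The prefix part is the erased $u^{-1}$ when $p=u^{-1}$; when $p=u$ the prefix $u$ stays inside $N(w)$, and similarly on the right.

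\emph{Symmetry.} The language $\cL(X)$ is closed under inversion, and inversion exchanges $u$ and $u^{-1}$. Hence $w\mapsto w^{-1}$ is an involution of $\cR^{\mathrm{comp}}_X(U)$. Checking the four cases for $(p,s)\in U^2$, it exchanges the erasing rules, so $N(w^{-1})=N(w)^{-1}$ in $G_\theta$. Thus $\cR^{\mathrm{mixed}}_X(u)$ is symmetric.

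\emph{Cardinality.} I would count complete return words through the multiplicities $m_X(w)=|\Gamma_X(w)|-|L_X(w)|-|R_X(w)|+1$ of \Cref{lemma:extgraphs&multiplicity}. For nonempty $w$ the graph $\Gamma_X(w)$ is a tree, so $m_X(w)=0$. The graph $\Gamma_X(\varepsilon)$ is a forest with two components, so $m_X(\varepsilon)=-1$. Extend words $v$ containing no interior occurrence of $U$ letter by letter to the right. Each right-special step branches, and each branch either closes a complete return word or continues. Summing the branching, as in the proof of the return theorem for tree sets, gives
\[
|\cR^{\mathrm{comp}}_X(U)| \;=\; |U| + |\cA| - 1 + \sum_{w} m_X(w),
\]
where the sum runs over the words $w$ that are proper factors of elements of $\cR^{\mathrm{comp}}_X(U)$. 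Only $w=\varepsilon$ contributes, and it contributes $-1$. This yields $2+|\cA|-2=|\cA|$. The map $N$ is injective on complete return words, since $w$ is recovered from $N(w)$ together with the non-overlap property. Hence $|\cR^{\mathrm{mixed}}_X(u)|=|\cA|$.

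\emph{No trivial products.} Let $b_1\cdots b_m$ be a product of mixed return words with $b_kb_{k+1}\neq\varepsilon$ for all $k$. I would show that, after the cancellation at each junction, the concatenation $b_1\cdots b_m$ is (up to the erased copies of $u^{\pm1}$) a factor of a point of $X$. The intended mechanism is that consecutive mixed return words glue along an occurrence of $u$ or $u^{-1}$. The only case where gluing fails is $N(w)$ ending with the erased $u$ followed by $N(w')$ beginning with the erased $u^{-1}$, and this should correspond exactly to $b_{k+1}=b_k^{-1}$. Since every word of $\cL(X)$ is reduced in $G_\theta$, the product is a nonempty reduced word, hence $\neq\varepsilon$.

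\emph{Generation.} This is the step I expect to be the main obstacle. I would argue by induction on $|u|$. For $u$ a letter, one can check directly that the mixed return words generate $G_\theta$, using that the letters together with \Cref{specular:symmetry_in_Gamma(esp)} control the two components of $\Gamma_X(\varepsilon)$. For the inductive step, pass from $u$ to $ua$ (or $au$). Every mixed return word to $ua$ is a product of mixed return words to $u$, and conversely each mixed return word to $u$ should be expressible in terms of those to $ua$ and their inverses. The reason is that $\Gamma_X(u)$ is a tree: every right extension $a'$ of $u$ is linked to $a$ by a path of left extensions, and this path translates into a telescoping product in $G_\theta$. Freeness and the count $|\cA|$ then imply that the generated subgroup is a free-product factor of the right rank. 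The subtle point is the parity of letters relative to the even subgroup (of index $2$), which is where the two components of $\Gamma_X(\varepsilon)$ enter. If the direct inductive approach stalls, I would fall back on the index argument: show that the generated subgroup contains the even subgroup, and then exhibit an odd mixed return word.
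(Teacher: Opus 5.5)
\emph{Assessment.} The proposal has a genuine gap in the freeness step, and its generation step both falls short and ends with an invalid inference.

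\emph{What is sound.} Your symmetry and cardinality arguments work. The count is the standard branching computation for neutral sets and gives $|U|+|\cA|-2=|\cA|$. The map $N$ is injective thanks to the non-overlap hypothesis. Note that the paper does not reprove this statement; it quotes \cite{specular_TCS}.

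\emph{The gap in ``no trivial products''.} You single out one junction as the only failure: a factor ending with an erased $u$ followed by one beginning with an erased $u^{-1}$. This junction occurs for \emph{every} pair $b_k=N(w)$, $b_{k+1}=N(w'')^{-1}=N\bigl((w'')^{-1}\bigr)$ with $w,w''$ complete return words ending in $u$, because inversion exchanges ``ends with $u$'' and ``begins with $u^{-1}$''. Moreover $b_{k+1}\neq b_k^{-1}$ as soon as $w''\neq w$. There is also a mirror junction you did not list: $N(w)$ ending with a kept $u^{-1}$ followed by $N(w')$ beginning with a kept $u$. It creates the factor $u^{-1}u$ for every $w'\neq w^{-1}$ beginning with $u$.

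At the first type of junction, the product $N(w)N(w'')^{-1}$ loses the whole longest common suffix of $N(w)$ and $N(w'')$, and this can be all of $b_k$. For instance, if $w=u^{-1}yu$ and $w''=v\,y\,u$ with $v$ beginning with $u$, then $b_kb_{k+1}=v^{-1}$. This is a proper suffix of $b_{k+1}$, and it may cancel further against $b_{k+2}$, and so on. Hence the reduced form of $b_1\cdots b_m$ is neither a gluing of the pieces along occurrences of $u^{\pm1}$ nor a factor of a point of $X$. Even at non-cancelling junctions, an arbitrary sequence of mixed return words is not read along a point of $X$. So the reducedness of $\cL(X)$ gives you nothing. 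A direct argument would need quantitative control of this cascading cancellation, which you do not have.

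\emph{The generation step.} This is only a plan, and its closing inference runs the wrong way. Freeness together with the count $|\cA|$ cannot give generation, since $G_\theta$ has proper subgroups with monoidal bases of $|\cA|$ elements. For $\cA=\{a,\theta(a),b,\theta(b)\}$ one has $G_\theta\cong\F_2$, and $\{a^{\pm2},b^{\pm1}\}$ is a monoidal basis of the proper subgroup $\langle a^2,b\rangle$.

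\emph{How to repair it.} Reverse the order, as the paper does in \Cref{specular:ret_word_real_iso}: there, generation comes from a propagation argument on a generalized extension graph made of two trees exchanged by inversion, and injectivity comes from the Hopfian property. Concretely, after symmetry and cardinality, prove generation directly; freeness is then automatic by the following argument.
\begin{itemize}
\item Let $Y\subseteq G_\theta\cong\Z^{*i}*(\Z/2\Z)^{*j}$ be a symmetric generating set of $|\cA|=2i+j$ distinct elements.
\item Compare abelianizations after tensoring with $\Q$ and with $\Z/2\Z$. This shows that $Y$ consists of exactly $i$ pairs $\{y,y^{-1}\}$ with $y\neq y^{-1}$ and $j$ elements with $y=y^{-1}$.
\item The induced map $\Z^{*i}*(\Z/2\Z)^{*j}\to G_\theta$ is then a surjection between isomorphic finitely generated residually finite groups. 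Such groups are Hopfian, so the map is an isomorphism.
\item Normal forms in the free product then give the monoidal basis property.
\end{itemize}
This replaces your cancellation argument entirely.
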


Let $X$ be a minimal specular subshift and fix a nonempty word $u \in \cL(X)$.
By \Cref{specular:mixed_gen_Gtheta}, the set $\cR^\mathrm{mixed}_X(u)$ is symmetric and has exactly $|\cA|$ elements.
Thus there exists a bijection $\tau_u \colon \cA \to \cR^\mathrm{mixed}_X(u)$ satisfying $\tau_u(a^{-1}) = \tau_u(a)^{-1}$, which extends uniquely to a morphism $\tau_u \colon \cA^* \to \cA^*$ called the {\em mixed derived substitution} of $u$.
The condition $\tau_u(a^{-1}) = \tau_u(a)^{-1}$ ensures that $\tau_u$ descends to a group morphism $\tilde{\tau}_u \colon G_\theta \to G_\theta$, which is an isomorphism by \Cref{specular:mixed_gen_Gtheta}.
In the following lemma, we strengthen this property by showing that $\tau_u$ extends to an isomorphism $\hat{\tau}_u \colon \F_\cA \to \F_\cA$ of the underlying free groups.
This is crucial for developing our S-adic structure for minimal specular subshifts in \Cref{theo:Sadic_for_specular}.
We first introduce some terminology needed in the proof.

A set of words $Q$ is a {\em prefix code} (resp.\ {\em suffix code}) if no word in $Q$ is a prefix (resp.\ suffix) of another.
If $X$ is a subshift, a prefix code $Q$ is {\em $\cL(X)$-complete} if for every $u \in \cL(X)$ there exists $v \in Q$ such that one of $u, v$ is a prefix of the other; a suffix code $Q$ is {\em $\cL(X)$-complete} if the analogous condition holds with ``suffix'' in place of ``prefix''.
Given a subshift $X$, a word $w \in \cL(X)$, a suffix code $Q$, and a prefix code $Q'$, the generalized extension graph $\Gamma_{X,Q, Q'}(w)$ is the undirected bipartite graph whose vertex set is the disjoint union of a left copy $Q_L$ of $Q$ and a right copy $Q'_R$ of $Q'$, and whose edges are the pairs $(u_L, v_R) \in Q_L \times Q'_R$ such that $uwv \in \cL(X)$.
Since this graph is bipartite, there is no ambiguity in representing undirected edges as ordered pairs.

\begin{lemma}
\label{specular:ret_word_real_iso}
Let $X$ be a minimal specular subshift with involution $\theta$ and let $a \in \cL(X)$ be a letter with $a \neq \theta(a)$.
Then, the mixed derived substitution $\tau \colon \cA^* \to \cA^*$ of $a$ extends to an isomorphism $\hat{\tau} \colon \F_{\cA} \to \F_{\cA}$.
\end{lemma}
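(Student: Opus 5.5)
The plan is to reduce the statement to a generation claim in $\F_\cA$. By \Cref{specular:mixed_gen_Gtheta}, $\cR^{\mathrm{mixed}}_X(a)$ has exactly $|\cA|$ elements. Hence $\hat\tau(\cA)=\{\tau(b):b\in\cA\}$ is a set of $|\cA|$ elements of $\F_\cA$, where $|\cA|$ is the rank. Since free groups of finite rank are Hopfian, a generating set of cardinality equal to the rank is a free basis. So $\hat\tau$ is an automorphism as soon as the subgroup $H\le\F_\cA$ generated by the mixed return words to $a$ is all of $\F_\cA$. Note that $a$ and $a^{-1}=\theta(a)\neq a$ are distinct letters, so they trivially do not overlap, and $N(\cdot)$ is well defined. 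The whole proof is thus reduced to showing $\cA\subseteq H$, where inverses are now the free inverses in $\F_\cA$ and not the involution $\theta$. This distinction is the main difficulty: the equality $\langle \tau(\cA)\rangle=G_\theta$ does not transfer to $\F_\cA$.

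Next I would rewrite complete return words in terms of mixed ones. Set $U=\{a,\theta(a)\}$. Every complete return word $w$ to $U$ has the form $x\,N(w)\,y$ with $x\in\{a,\theta(a)\}$ and $y\in\{a,\theta(a)\}$, where $x$ is erased exactly when $x=\theta(a)$ and $y$ is erased exactly when $y=a$. Thus the four types of $w$ are $N(w)$, $aN(w)$, $N(w)\theta(a)$ and $aN(w)\theta(a)$, according to the types of the boundary occurrences. A bi-infinite point $x\in X$ factors uniquely as a concatenation of the segments between consecutive occurrences of letters of $U$.

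Reading such a factorization as a product in $\F_\cA$, I would prove by induction along the point the following claim. Let $p$ be the prefix of $x$ ending just before an occurrence of a letter of $U$, or just after it. Then $p$, read from a fixed occurrence, equals in $\F_\cA$ an element of $H$, multiplied by one of finitely many fixed correction factors from $\{\varepsilon,a,\theta(a),a\theta(a)\}$. The correction factor is determined by the type ($a$ or $\theta(a)$) of the occurrence and by its side. To control these corrections I would use the structure of $\Gamma_X(\varepsilon)$. By \Cref{specular:symmetry_in_Gamma(esp)}, an edge $(a_L,b_R)$ in one component corresponds to $(b^{-1}_L,a^{-1}_R)$ in the other. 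I expect this to show that the parity of the number of odd letters between occurrences fixes how $a$ and $\theta(a)$ alternate. Consequently the products $a\cdot(\cdots)\cdot\theta(a)$ telescope to elements of $H$, or to $H$ conjugated by $a$. The step I expect to be hardest, and which I would attempt first, is proving that $a\in H$ (equivalently $\theta(a)\in H$) in the free group. For this I would look for two complete return words of different types whose mixed parts coincide after cancellation. Their existence should follow from $\Gamma_X(a)$ and $\Gamma_X(\theta(a))$ being trees, which holds because $a$ is nonempty.

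Once $a,\theta(a)\in H$, every complete return word to $U$ lies in $H$. It then remains to show each letter $b$ lies in $H$. By minimality, $b$ occurs inside some complete return word $w=w_1 b w_2$. I would use the tree property of the generalized extension graphs $\Gamma_{X,Q,Q'}(b)$, with $Q$ the $\cL(X)$-complete suffix code of words ending at an occurrence of $U$ and $Q'$ the analogous prefix code. Following a path in this tree expresses $b$ as an alternating product $(q_1 b q'_1)(q_2 b q'_2)^{-1}(q_3 b q'_3)\cdots$ of complete return words. In this product the intermediate $q$'s and $q'$'s cancel, exactly as in the proof of the Return Theorem for tree sets. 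This gives $b\in H$ and concludes the proof.
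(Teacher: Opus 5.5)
Your reduction is correct and matches the paper's conclusion: by Hopficity it suffices to show that the subgroup $H\le\F_\cA$ generated by $\cR^{\mathrm{mixed}}_X(a)$ contains every letter. Neither of your two steps, however, establishes this.

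\textbf{The step $a,\theta(a)\in H$.} Here you only state an expectation. The existence of two complete return words of different types whose mixed parts differ by $a$ is not justified. The tree property of $\Gamma_X(a)$ and $\Gamma_X(\theta(a))$ concerns one-letter extensions of $a$ and $\theta(a)$, not complete return words. In general $a$ is reached from the generators of $H$ only through a chain of such relations, and organizing that chain is the actual content of the proof. A small slip: the four shapes of a complete return word are $N(w)$, $\theta(a)N(w)$, $N(w)a$ and $\theta(a)N(w)a$, since $N$ erases a leading $\theta(a)$ and a trailing $a$.

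\textbf{The step for the other letters $b$.} This fails as described. Walk along a path $q_1,q'_1,q_2,q'_2,\dots$ in $\Gamma_{X,Q,Q'}(b)$ and multiply the complete return words $q_ibq'_j$ alternately with inverses. You only obtain elements such as $q_1q_2^{-1}$, $q_1'^{-1}q'_2$ or $q_1bq'_k$; the outermost factors never cancel. Connectedness of $\Gamma_{X,Q,Q'}(b)$ only shows that all its left vertices lie in one coset $Hq$ and all its right vertices in one coset $q'H$. To isolate $b$ you need an \emph{anchor}: a left vertex and a right vertex already known to lie in $H$. Nothing in your argument provides one. Knowing $a,\theta(a)\in H$ supplies anchors only when $b$ can be immediately preceded and followed by letters of $\{a,\theta(a)\}$.

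\textbf{The missing idea.} The paper builds the anchors into the graph by working around the empty word rather than around $b$. Let $\Gamma$ be the generalized extension graph of $\varepsilon$ defined as follows.
\begin{enumerate}
\item Its left vertices are the nonempty strict prefixes $u$, and its right vertices the nonempty strict suffixes $v$, of complete return words to $\{a,\theta(a)\}$.
\item There is an edge $(u_L,v_R)$ whenever $uv\in\cL(X)$; such a $uv$ is then a complete return word.
\end{enumerate}
Let $\rho_L$ delete a leading $\theta(a)$ and $\rho_R$ delete a trailing $a$. For every edge, $\rho_L(u)\rho_R(v)=N(uv)\in H$. Hence membership of $\rho_L(u)$, respectively $\rho_R(v)$, in $H$ propagates along edges.

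Since extension graphs of nonempty words are trees and $\Gamma_X(\varepsilon)$ has two tree components, $\Gamma$ consists of exactly two trees. By the symmetry of \Cref{specular:symmetry_in_Gamma(esp)}, the anchors $\theta(a)_L$ and $a_R$, which satisfy $\rho_L(\theta(a))=\rho_R(a)=\varepsilon$, lie in different components. Therefore $\rho_L(u),\rho_R(v)\in H$ for every vertex. This gives at once $a=\rho_L(a)\in H$ and $\theta(a)=\rho_R(\theta(a))\in H$. It also gives $b=\rho_L(v)^{-1}\,N(vbv')\,\rho_R(v')^{-1}\in H$ for any complete return word $vbv'$. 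The specular structure, two components exchanged by the symmetry, is exactly what supplies the anchors your argument around $b$ lacks.
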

\begin{proof}
Let $Q^{\mathrm{p}}$ denote the set of nonempty strict prefixes of elements of $\cR^{\mathrm{comp}}_X(\{a,a^{-1}\})$, and let $Q^{\mathrm{s}}$ denote the set of nonempty strict suffixes of elements of $\cR^{\mathrm{comp}}_X(\{a,a^{-1}\})$.
By minimality of $X$, $Q^{\mathrm{p}}$ is a finite $\cL(X)$-complete suffix code and $Q^{\mathrm{s}}$ is a finite $\cL(X)$-complete prefix code.
Using the fact that $\Gamma_X(\varepsilon)$ consists of two trees and that $\Gamma_X(u)$ is a tree for every nonempty $u \in \cL(X)$, one can carry out the same strategy as in Section~8 of \cite{DolceP} to show that $\Gamma \coloneqq \Gamma_{X, Q^{\mathrm{s}}, Q^{\mathrm{p}}}(\varepsilon)$ consists of two trees $\Gamma^0$ and $\Gamma^1$.
Furthermore, \Cref{specular:symmetry_in_Gamma(esp)} implies that for every edge $(u_L,v_R)$,
\begin{equation}
    \label{eq:specular:ret_word_real_iso:1}
    (u_L,v_R) \in \Gamma^i 
    \ \Longleftrightarrow \
    (v^{-1}_L,u^{-1}_R) \in \Gamma^{1-i}.
\end{equation}

The main part of the proof is the claim below, which we state after introducing some notation.
For $u \in Q^{\mathrm{p}}$, which we know begins with either $a$ or $a^{-1}$, we define $\rho_L(u)$ by removing the initial letter $a^{-1}$ if present, and $\rho_L(u) = u$ otherwise. 
Symmetrically, for $v \in Q^{\mathrm{s}}$, we define $\rho_R(v)$ by removing the final letter $a$ if present, and $\rho_R(v) = v$ otherwise.
Observe that for any edge $(u_L,v_R) \in \Gamma$, we have $uv \in \cR_X^{\mathrm{comp}}(\{a,a^{-1}\})$ and $\rho_L(u)\, \rho_R(v) \in \cR_X^\mathrm{mixed}(a)$.

\textbf{Claim.}
Let $H$ be the subgroup of $\F_\cA$ generated by $\cR^\mathrm{mixed}_X(a)$.
Then $\rho_L(v), \rho_R(u) \in H$ for all $v_L \in Q^{\mathrm{p}}$ and $u_R \in Q^{\mathrm{s}}$.

{\em Proof of the claim.}
We say that a vertex of $\Gamma$ satisfies Property $(\Delta)$ if either it is a left vertex of the form $v_L$ with $\rho_L(v) \in H$, or it is a right vertex of the form $v_R$ with $\rho_R(v) \in H$.
Since the vertex set of $\Gamma$ is $(Q^{\mathrm{p}})_L \cup (Q^{\mathrm{s}})_R$, the claim is equivalent to the assertion that every vertex of $\Gamma$ satisfies $(\Delta)$.
It therefore suffices to establish the following two points:
\begin{enumerate}
    \item[(i)] each connected component of $\Gamma$ contains at least one vertex satisfying $(\Delta)$;
    \item[(ii)] if a vertex of $\Gamma$ is adjacent to a vertex satisfying $(\Delta)$, then it satisfies $(\Delta)$ as well.
\end{enumerate}
By minimality of $X$, we have $a^{-1} \in Q^{\mathrm{p}}$ and $a \in Q^{\mathrm{s}}$, and $\rho_L(a^{-1}) = \rho_R(a) = \varepsilon \in H$.
Thus $a^{-1}_L$ and $a_R$ both satisfy $(\Delta)$.
By \eqref{eq:specular:ret_word_real_iso:1}, $a_R$ and $a^{-1}_L$ belong to distinct connected components of $\Gamma$, hence~(i) holds.
For~(ii), take an edge $(u_L,v_R)$ in $\Gamma$ and assume that one of $\rho_L(u), \rho_R(v)$ lies in $H$. We show  that the other does as well.
Since $(u_L,v_R) \in \Gamma$, we have $uv \in \cR_X^\mathrm{comp}(\{a,a^{-1}\})$, so $\rho_L(u)\,\rho_R(v) \in \cR_X^\mathrm{mixed}(a) \subseteq H$.
Being $H$ a group and one of $\rho_L(u), \rho_R(v)$ lies in $H$, the other must as well.
This completes the proof of the claim.
\medskip

It remains to show that $H = \F_\cA$ (this implies that $\hat{\tau}_a$ is an isomorphism since $\F_\cA$ is Hopfian).
For this, it suffices to show that $H$ contains every letter of $\cA$.
Let $b \in \cA$ be arbitrary.
If $b = a$, then by minimality of $X$ there exists a return word $au \in \cR^\mathrm{comp}_X(\{a,a^{-1}\})$, so $(a_L, u_R) \in \Gamma$, and the claim gives $a = \rho_L(a) \in H$.
Similarly, if $b = a^{-1}$, there exists $ua^{-1} \in \cR^\mathrm{comp}_X(\{a,a^{-1}\})$, so $(u_L, a^{-1}_R) \in \Gamma$, and the claim gives $a^{-1} = \rho_R(a^{-1}) \in H$.
Finally, suppose $b \notin \{a, a^{-1}\}$.
By minimality, there exists $u \in \cR^\mathrm{comp}_X(\{a,a^{-1}\})$ containing an occurrence of $b$.
Since $u$ begins and ends with letters in $\{a, a^{-1}\}$, we can factorize $u = v\, b\, v'$ with $v \in Q^{\mathrm{p}}$ and $v' \in Q^{\mathrm{s}}$.
Then $\rho_L(v)\, b\, \rho_R(v') = \rho_L(\rho_R(u)) \in \cR_X^\mathrm{mixed}(a) \subseteq H$.
By the claim, $\rho_L(v), \rho_R(v') \in H$, so $b = \rho_L(v)^{-1}\, \rho_L(\rho_R(u))\, \rho_R(v')^{-1} \in H$.
\end{proof}

\subsection{Mixed derived subshifts and \texorpdfstring{$S$}{}-adic representation}

Let $X \subseteq \cA^\Z$ be a minimal specular subshift.
Given $u \in \cL(X)$, we let $\tau_u \colon \cA^* \to \cA^*$ be the mixed derived substitution of $u$ and define the {\em mixed derived subshift} of $u$ as the set $X_u$ of all sequences $y \in \cA^\Z$ such that $\tau_u(y) \in X$.
One easily verifies that $X_u$ is indeed closed and shift-invariant, and that $X$ is the shift-orbit of $\tau_u(X_u)$.

\begin{proposition}
    \label{specular:coding_is_specular}
    Let $X$ be a minimal specular subshift with involution $\theta$.
    For every letter $a \in \cL(X)$, with $a \neq \theta(a)$, the mixed derived subshift $X_a$ of $a$ is a minimal specular subshift.
\end{proposition}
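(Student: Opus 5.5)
We would verify in turn the four properties in the definition of a specular subshift for $X_a$: minimality, that $\cL(X_a)$ is a symmetric set of reduced words of $G_\theta$, that the extension graph of every nonempty word is a tree, and that $\Gamma_{X_a}(\varepsilon)$ has exactly two connected components. Throughout we use the map $\tau=\tau_a$ from \Cref{specular:ret_word_real_iso} and the fact that $X$ is the shift-orbit of $\tau(X_a)$. The key observation is that occurrences of $a$ and $a^{-1}$ in $x\in X$ cut $x$ uniquely into mixed return words. This holds because $a$ and $a^{-1}$ do not overlap, since $a\neq\theta(a)$ is a letter. By \Cref{specular:mixed_gen_Gtheta} these words form a monoidal basis, so the desubstitution $y$ with $\tau(y)=x$, once the cutting points are fixed, is unique. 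This gives recognizability of $\tau$ on $X_a$.

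\textbf{Minimality.} We would argue that a word $v$ lies in $\cL(X_a)$ exactly when $\tau(v)$, suitably extended on both sides by letters in $\{a,a^{-1}\}$, lies in $\cL(X)$. Uniform recurrence of $X$ then transfers to $X_a$: a long enough factor of $y\in X_a$ yields a long factor of $\tau(y)$, which contains every occurrence pattern of the form $u\,\tau(v)\,u'$, and unique cutting carries this back to an occurrence of $v$ in $y$.

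\textbf{Symmetry and reducedness.} Because $\tau(b^{-1})=\tau(b)^{-1}$ and $\cL(X)$ is closed under inversion, inverting a factor of $\tau(y)$ yields a factor of $\tau(y')$ with $y'\in X_a$, so $\cL(X_a)$ is symmetric. For reducedness, suppose $b\,b^{-1}$ occurs in $y$. Then $\tau(b)\tau(b)^{-1}$ would be a factor of a reduced word of $X$, but its middle is a cancellation $c\,\theta(c)$, which is impossible.

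\textbf{Extension graphs.} We would adapt the argument for derived sets of dendric and specular shifts, namely Section~8 of \cite{DolceP}, in the style of the proof of \Cref{specular:ret_word_real_iso}. For a nonempty $w\in\cL(X_a)$, the graph $\Gamma_{X_a}(w)$ should be isomorphic to a generalized extension graph $\Gamma_{X,Q^{\mathrm s},Q^{\mathrm p}}(\tau(w)\cdots)$. Here the left vertices are suffixes of mixed return words preceding $\tau(w)$ and the right vertices are prefixes following it. Such generalized graphs over finite complete codes are trees whenever all extension graphs of nonempty words in $X$ are trees, by the same inductive (code-extension) argument. For the empty word, $\Gamma_{X_a}(\varepsilon)$ corresponds to the graph $\Gamma$ already shown in the proof of \Cref{specular:ret_word_real_iso} to consist of two trees $\Gamma^0,\Gamma^1$. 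We would check that the correspondence preserves connected components, giving exactly two components.

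The main obstacle is making this last identification of $\Gamma_{X_a}(w)$ with a generalized extension graph precise in the mixed setting, where cutting points are occurrences of either $a$ or $a^{-1}$, with the appropriate letter erased. One must ensure that adjacency in $X_a$ (two consecutive mixed return words) corresponds bijectively to edges $(u_L,v_R)$ with $u\,\tau(w)\,v\in\cL(X)$, including the boundary letters $a^{\pm1}$. We would handle it by encoding each left vertex $b_L$ by the suffix $\rho_L^{-1}$-preimage of $\tau(b)$ in $Q^{\mathrm p}$, and each right vertex symmetrically, and then invoking \eqref{eq:specular:ret_word_real_iso:1}.
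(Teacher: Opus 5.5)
Your arguments for minimality, symmetry, laminarity and nonempty words are essentially those of the paper. Your recognizability argument, via the unique cutting at occurrences of $a^{\pm1}$ together with the monoidal-basis property of \Cref{specular:mixed_gen_Gtheta}, is a valid substitute for the paper's use of unimodularity and \cite{BSTY}. The gap is the empty word.

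The graph $\Gamma=\Gamma_{X,Q^{\mathrm s},Q^{\mathrm p}}(\varepsilon)$ from the proof of \Cref{specular:ret_word_real_iso} is not a model of $\Gamma_{X_a}(\varepsilon)$. An edge of $\Gamma$ is a single complete return word to $\{a,a^{-1}\}$ split at an interior position. An edge $(c_L,d_R)$ of $\Gamma_{X_a}(\varepsilon)$ is instead a pair of \emph{consecutive} complete return words sharing a cut letter, that is, a word $p\,a\,s$ (or $p'a^{-1}s'$) of $X$ where $pa$ and $as$ are complete return words.

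Your encoding is not defined on all vertices. Take a right vertex $d_R$ with $\tau_a(d)$ beginning with $a$: its $\rho_R$-preimages $\tau_a(d)$ and $\tau_a(d)a$ begin with $a$. However, strict suffixes of complete return words contain $a^{\pm1}$ only as their last letter, so neither preimage lies in $Q^{\mathrm s}$.

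The natural repair, $c_L\mapsto p_L$ and $d_R\mapsto s_R$, does not preserve components. In $\Gamma$, $p_L$ is adjacent to $a_R$ and $s_R$ is adjacent to $a_L$. Projecting $\Gamma$ onto $\Gamma_X(\varepsilon)$ (last letter of left vertices, first letter of right vertices) shows that $a_L$ and $a_R$ lie in different components of $\Gamma$ whenever $a$ is odd. Even for even $a$, a map that is not a graph isomorphism cannot transfer connectedness or acyclicity back to $\Gamma_{X_a}(\varepsilon)$. The relation \eqref{eq:specular:ret_word_real_iso:1} only exchanges the two components under inversion, so it does not help. The two components of $\Gamma$ are inherited from $\Gamma_X(\varepsilon)$, and they are not why $\Gamma_{X_a}(\varepsilon)$ has two components.

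The missing idea is to split $\Gamma_{X_a}(\varepsilon)$ according to the cut letter. In every edge $(c_L,d_R)$, exactly one of two conditions holds: ``$\tau_a(d)$ begins with $a$'' or ``$\tau_a(c)$ ends with $a^{-1}$''. At least one holds because cuts occur exactly before occurrences of $a$ and after occurrences of $a^{-1}$. Not both hold because $a^{-1}a\notin\cL(X)$. The first condition depends only on $d$ and the second only on $c$, so the type is constant on connected components. Hence $\Gamma_{X_a}(\varepsilon)$ is a disjoint union of two parts with no edges between them, both nonempty since $a$ and $a^{-1}$ occur in $X$.

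By recognizability, the part of type $a$ is isomorphic to the generalized extension graph of the \emph{nonempty} word $a$ in $X$. Its left vertices are the words $p$ with $pa$ a complete return word, and its right vertices are the words $s$ with $as$ a complete return word; the part of type $a^{-1}$ is handled the same way. These graphs are trees by the same argument from Section~8 of \cite{DolceP} that you use for nonempty $w$, because $\Gamma_X(u)$ is a tree for every nonempty $u$. So the empty word of $X_a$ is governed by the nonempty words $a$ and $a^{-1}$ of $X$, not by $\Gamma_X(\varepsilon)$ or $\Gamma$; this is Step~2 of the paper's proof.
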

\begin{proof}
Let $\tau_a \colon \cA_a^* \to \cA^*$ be the mixed derived substitution associated to the letter $a$.
For every $b \in \cA_a$, there are uniquely determined elements $\xi_L(b) \in \{\varepsilon,a^{-1}\}$ and $\xi_R(b) \in \{\varepsilon,a\}$ such that $\xi_L(b)\tau_a(b)\xi_R(b) \in \cR_X^{\mathrm{comp}}(\{a,a^{-1}\})$.
Explicitly, $\xi_L(b)=\varepsilon$ if $\tau_a(b)$ begins with $a$, and $\xi_L(b)=a^{-1}$ otherwise; 
similarly, $\xi_R(b)=\varepsilon$ if $\tau_a(b)$ ends with $a^{-1}$, and $\xi_R(b)=a$ otherwise.

It follows from recognizability of $\tau_a$ that
\begin{equation}
    \label{eq:specular:coding_is_specular:uniq_ext}
    \tau_a([w]) \subseteq S^{|\xi_L(b_0)|}[\xi_L(b_0)\tau_a(w)\xi_R(b_{k-1})]
    \enspace 
    \text{for all $w=b_0\cdots b_{k-1} \in \cL(X_a)$.}
\end{equation}
For a word $u$, let $\rho_L(u)$ be the word obtained from $u$ by deleting its first letter if this first letter is $a^{-1}$, and let $\rho_L(u)=u$ otherwise.
Similarly, let $\rho_R(u)$ be the word obtained from $u$ by deleting its last letter if this last letter is $a$, and let $\rho_R(u)=u$ otherwise.
Then $\rho_L(\rho_R(u)) = \rho_R(\rho_L(u)) \in \cR_X^{\mathrm{mixed}}(a)$ for every $u \in \cR_X^{\mathrm{comp}}(\{a,a^{-1}\})$.

We also define
\begin{equation}\label{eq:triangle}
\begin{aligned}
    \cR_X^{\mathrm{left}}(a) &\coloneqq 
    \{\xi_L(b)\rho_R(\tau_a(b)) : b \in \cA_a\} = 
    \{\rho_R(u) : u \in \cR^\mathrm{comp}_X(\{a,a^{-1}\})\} \\
    \cR_X^{\mathrm{right}}(a) &\coloneqq 
    \{\rho_L(\tau_a(b)) \xi_R(b) : b \in \cA_a\} = 
    \{\rho_L(u) : u \in \cR^\mathrm{comp}_X(\{a,a^{-1}\})\}.
\end{aligned}
\end{equation}
The set $\cR_X^{\mathrm{left}}(a)$ is a $G_L$-complete suffix code, and $\cR_X^{\mathrm{right}}(a)$ is an $G_R$-complete prefix code, where $G_L = (\cA^*\{a,a^{-1}\}) \cap \cL(X)$ and $G_R = (\{a,a^{-1}\}\cA^*) \cap \cL(X)$.

We now prove the proposition.
\Cref{specular:ret_word_real_iso} gives that $\tau_a$ has unimodular incidence matrix, hence $\tau_a$ is recognizable by \cite{BSTY} and $X_a$ inherits the minimality of $X$.
To prove that $X_a$ is specular, we need to check three conditions: 
laminarity, symmetry, and that $X_a$ is a tree condition of characteristic~$2$.
For laminarity, we notice that if $b\, b^{-1} \in \cL(X_a)$, then $\tau_a(b)\tau_a(b^{-1})$ (before reduction) belongs to $\cL(X)$, contradicting that $X$ is laminar. 
To show that $X_a$ is symmetric, consider an arbitrary $w = b_0 \dots b_{k-1} \in \cL(X_a)$ and observe that $\xi_L(b_0) \tau_a(w) \xi_R(b_{k-1}) \in \cL(X)$ by \eqref{eq:specular:coding_is_specular:uniq_ext}, which implies, since $X$ is symmetric, that 
\[ \xi_L(b_{k-1}^{-1}) \tau_a(w^{-1}) \xi_R(b_0^{-1}) = 
    \bigl(\xi_L(b_0) \tau_a(w) \xi_R(b_{k-1})\bigr)^{-1} \in \cL(X), 
\] 
where we used $\tau_a(t^{-1}) = \tau_a(t)^{-1}$ and analogous identities for $\xi_L$ and $\xi_R$. 
The unique $\tau_a$-desubstitution of this word is $w^{-1}$, so $w^{-1} \in \cL(X_a)$, proving symmetry.

It remains to prove that $X_a$ is a tree set of characteristic~2.

\emph{Step 1: nonempty words have tree extension graphs.}
By the definitions above, there are onto maps $\pi_L \colon \cA_L \to \cR_X^{\mathrm{left}}(a)$ and $\pi_R \colon \cA_R \to \cR_X^{\mathrm{right}}(a)$ given by $\pi_L(b_L) = \xi_L(b) \rho_R(\tau_a(b))$ and $\pi_R(b_R) = \rho_L(\tau_a(b))\xi_R(b)$.
Fix a nonempty word $w=b_0\cdots b_{k-1} \in \cL(X_a)$, and set $w'=\xi_L(b_0)\tau_a(w)\xi_R(b_{k-1})$ and $$\Gamma=\Gamma_{X,\cR_X^{\mathrm{left}}(a),\cR_X^{\mathrm{right}}(a)}(w').$$
For every edge $(c_L,d_R)$ of $\Gamma_{X_a}(w)$, the extended image $\pi_L(c_L)\, w'\, \pi_R(d_R)$ belongs to $\cL(X)$.
Thus, there is map $\pi_w \colon \Gamma_{X_a}(w) \to \Gamma$ defined by $(c_L,d_R) \mapsto (\pi_L(c_L),\pi_R(d_R))$ that preserves incidence, \emph{i.e.}, if two edges of $\Gamma_{X_a}(w)$ have a common vertex at a certain end, then their images by $\pi_w$ have common vertex at the same end.
Furthermore, $\pi_w$ is a bijection because edges of $\Gamma$ have the form $(\pi_L(b_L), \pi_R(b'_R))$ by 
\eqref{eq:triangle}, and the unique $\tau_a$-desubstitution of $\pi_L(b_L) \, w' \, \pi_R(b'_R)$ is $w$. 
This implies that $\Gamma_{X_a}(w)$ is a tree if and only if $\Gamma$ is a tree. 
We can show that $\Gamma$ is a tree by carrying out the same strategy as in Section~8 of \cite{DolceP}, using that extension graphs in $X$ of non-empty words are trees. 
Hence, $\Gamma_{X_a}(w)$ is a tree for non-empty words $w \in \cL(X_a)$.

{\em Step 2: $\Gamma_{X_a}(\varepsilon)$ has exactly two connected components, each  being a tree.} 
For any two edges $(c_L, d_R)$ and $(c_L, d'_R)$ of $\Gamma_{X_u}(\varepsilon)$ sharing a left vertex, it follows from the definitions that $\xi_L(d) = \xi_L(d')$, and that $\xi_R(c) = \varepsilon$ if and only if $\xi_L(d) = a^{-1}$. 
In particular, the values $\xi_R(c)$ and $\xi_L(d)$ are constant on the left and right sides of each connected component of $\Gamma_{X_a}(\varepsilon)$. 
This allows us to partition $\Gamma_{X_a}(\varepsilon)$ into 
\begin{align*} 
    K_\varepsilon &\coloneqq 
    \{(c_L, d_R) : \xi_L(d) = \varepsilon,\; \xi_R(c) = a\}, \\ 
    K_{a^{-1}} &\coloneqq 
    \{(c_L, d_R) : \xi_L(d) = a^{-1},\; \xi_R(c) = \varepsilon\}. 
\end{align*}

Note that 
 $\tau_a(b)$ ends with $a$ (resp. begins with $a^{-1}$) if $b_L$ (resp.\ $b_R$) is a vertex of $K_\varepsilon$ (resp.\ $K_{a^{-1}}$). 
So, there are no edges between $K_\varepsilon$ and $K_{a^{-1}}$ since for any such edge $(b_L,b'_R)$ the word $\tau_a(b) \tau_a(b')$ would, by \eqref{eq:specular:coding_is_specular:uniq_ext}, contain an occurrence of $a^{-1} a$ or $a a^{-1}$ at the concatenation position, which contradicts that $X$ is laminary. 
Thus, $\Gamma_{X_a}(\varepsilon)$ decomposes into two disconnected sets: 
$K_\varepsilon$ and $K_{a^{-1}}$. 
Set 
\[ \Gamma_\varepsilon = 
    \Gamma_{X,\,\cR^\mathrm{left}_X(a),\,\cR^\mathrm{right}_X(a)}(a) \quad \text{and} \quad
    \Gamma_{a^{-1}} = 
    \Gamma_{X,\,\cR^\mathrm{left}_X(a),\,\cR^\mathrm{right}_X(a)}(a^{-1}). 
\] 
By \eqref{eq:specular:coding_is_specular:uniq_ext}, there are maps 
\[  \pi'_\varepsilon \colon K_\varepsilon \to \Gamma_\varepsilon 
    \quad\text{and}\quad 
    \pi'_{a^{-1}} \colon K_{a^{-1}} \to \Gamma_{a^{-1}} 
\] 
given by $(b_L,b'_R) \mapsto (\pi_L(b_L), \pi_R(b'_R))$, where $\pi_L$ and $\pi_R$ are the onto maps given by $\pi_L(b_L) = \xi_L(b) \rho_R(\tau_a(b))$ and $\pi_R(b_R) = \rho_L(\tau_a(b))\xi_R(b)$ that were defined in Step 1.
The same recognizability argument used in Step 1 shows that the maps $\pi'_\varepsilon$ and $\pi'_{a^{-1}}$ are bijections that preserve incidence. 
Furthermore, following the strategy used in Section~8 of \cite{DolceP}, based on the fact that extension graphs in $X$ of nonempty words are trees, one can show that the graphs $\Gamma_{\varepsilon}$ and $\Gamma_{a^{-1}}$ are trees. 
Therefore, $K_\varepsilon$ and $K_{a^{-1}}$ are trees as well. 
We conclude that $\Gamma_{X_a}(\varepsilon)$ consists of two disconnected sets, $K_\varepsilon$ and $K_{a^{-1}}$, each of which is a tree. 
This completes the proof that $X_a$ is specular. 
\end{proof}

\begin{theorem}
\label{theo:Sadic_for_specular}
Let $X$ be a minimal aperiodic specular subshift with involution $\theta$ without fixed points.
Then there exists a recognizable, primitive directive sequence $\btau = (\tau_n \colon \cA_{n+1}^* \to \cA_n^* : n \ge 0)$ generating $X$ such that, for all $n \ge 0$:
\begin{enumerate}
    \item $X_{\btau}^{(n)}$ is a minimal specular subshift;
    \item $\tau_n$ is a composition of elementary substitutions and has a unimodular incidence matrix.
\end{enumerate}
\end{theorem}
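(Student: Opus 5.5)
The plan is to build $\btau$ by iterating mixed derived substitutions. Set $X^{(0)}=X$ and $\cA_0=\cA$. Since $\theta$ has no fixed points, every letter $a\in\cA$ satisfies $a\neq\theta(a)$. Given a minimal specular subshift $X^{(n)}$ with a fixed-point-free involution, we choose a letter $a_n$ occurring in $X^{(n)}$. We then let $\tau_n=\tau_{a_n}$ be its mixed derived substitution and $X^{(n+1)}=(X^{(n)})_{a_n}$ its mixed derived subshift.

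By \Cref{specular:coding_is_specular}, $X^{(n+1)}$ is again a minimal specular subshift. Its involution is the one transported by the bijection $\tau_{a_n}\colon\cA_{n+1}\to\cR^{\mathrm{mixed}}(a_n)$, which satisfies $\tau(b^{-1})=\tau(b)^{-1}$. One must check that this involution is still fixed-point free. A fixed point $b=b^{-1}$ would force $\tau(b)=\tau(b)^{-1}$ as a reduced word, which is impossible for a nonempty reduced word. Hence the induction continues. Since $X^{(n)}$ is the shift-orbit of $\tau_n(X^{(n+1)})$, iterating gives $X^{(n)}=X_{\btau}^{(n)}$. The one further point to verify is that every word of $\cL(X)$ occurs in some $\tau_{0,m}(b)$; this follows once the $\tau_{0,m}$ are everywhere growing, which is addressed below. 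This settles item (1).

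For item (2), \Cref{specular:ret_word_real_iso} shows that $\tau_n$ extends to an automorphism of $\F_{\cA}$, so its incidence matrix (its abelianization) lies in $GL(\Z)$ and is unimodular. Writing $\tau_n$ as a composition of elementary substitutions requires more than automorphism status. A positive automorphism, i.e.\ a morphism of $\cA^*$ inducing an automorphism of $\F_\cA$, need not factor into elementary positive automorphisms in general. The plan is to mimic the return-word decomposition of \cite{DolceP} for dendric shifts. One peels off the mixed return words by successive elementary Nielsen-type maps $b\mapsto ab$ or $b\mapsto ba$ (and permutations), using the tree structure of the generalized extension graph $\Gamma_{X,Q^{\mathrm{s}},Q^{\mathrm{p}}}(\varepsilon)$ from the proof of \Cref{specular:ret_word_real_iso}. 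The order of the peeling is dictated by a leaf-removal of the two trees $\Gamma^0,\Gamma^1$: each leaf corresponds to a prefix/suffix that is extended in a unique way, giving one elementary step. I expect this to be the main obstacle, since it requires a careful induction on the size of the prefix code $Q^{\mathrm{p}}$ while keeping the symmetry condition \eqref{eq:specular:ret_word_real_iso:1} under control.

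Recognizability then follows from unimodularity (hence injectivity on $\F_\cA$) together with aperiodicity via \cite{BSTY}, exactly as in the proof of \Cref{specular:coding_is_specular}. For primitivity, note that every mixed return word to $a_n$ begins or ends with $a_n^{\pm1}$. To obtain positivity of some $\tau_{n,m}$, choose the letters $a_n$ cyclically through the alphabet, for instance by always taking, at level $n$, a letter whose preimage under the accumulated bijections runs through all classes. Then, by minimality and aperiodicity, the lengths $|\tau_{0,n}(b)|$ grow and each letter of $\cA_n$ appears in every $\tau_{n,m}(b)$ for $m$ large. If this choice turns out to be delicate, a fallback is to pass to a contraction of the sequence, or to derive with respect to longer words $u$ whose mixed return words are long; the latter would need an analogue of \Cref{specular:coding_is_specular} for words rather than letters.
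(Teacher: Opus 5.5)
Your construction is the paper's. You iterate mixed derived substitutions, take item (1) from \Cref{specular:coding_is_specular}, unimodularity from \Cref{specular:ret_word_real_iso} and recognizability from \cite{BSTY}, and identify $X_{\btau}^{(n)}$ with the $n$-th derived shift. Your check that the transported involution stays fixed-point free (a nonempty reduced word cannot equal its own inverse) is a real detail the paper leaves implicit. It is what lets \Cref{specular:coding_is_specular} be reapplied at every level.

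For item (2), the paper reads it directly off \Cref{specular:ret_word_real_iso}. There, $\tau_n$ induces an automorphism of $\F_{\cA}$, so it is a product of elementary (Nielsen) automorphisms with unimodular abelianization. Only unimodularity is used later, in \Cref{theo:eigs_specular}. You are right that this gives no factorization into \emph{positive} elementary substitutions. But your leaf-removal programme for that stronger statement is only announced, not carried out, so as written that part is unproved; it is also not needed.

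Your primitivity paragraph does not contain an argument. The paper fixes a single letter $a$ at every level, asserts everywhere growth from aperiodicity, and gets primitivity from minimality of the levels. Cycling through letters plays no role in growth, and passing to a contraction cannot create growth.

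Your supporting remark is also false. A mixed return word need not begin or end with $a_n^{\pm1}$, since $N(a_n^{-1}va_n)=v$. By minimality, complete return words of the form $a_n^{-1}va_n$ always occur.

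These words $v$ are exactly what a growth argument must control. When you derive with respect to a word $u$, the cuts are the starts of occurrences of $u$ and the ends of occurrences of $u^{-1}$.
\begin{itemize}
\item Two cuts of the same type are far apart when $|u|$ is large, by aperiodicity and minimality.
\item A start of $u$ followed by an end of $u^{-1}$ is at distance at least $2|u|$.
\item An end of $u^{-1}$ followed by a start of $u$ only gives a word $v$ with $u^{-1}vu\in\cL(X)$, and $|v|$ is not bounded below in terms of $|u|$.
\end{itemize}
So your fallback of deriving with respect to long words does not by itself produce long mixed return words. The same issue arises for the compositions $\tau_{0,n}$: you must rule out, or avoid by the choice of labels, words $w^{-1}vw\in\cL(X)$ with $|w|$ large and $|v|$ bounded. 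Neither your proposal nor the paper's one-line assertion spells this out.
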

\begin{proof}
The S-adic structure is built by iterating mixed return words, following the standard approach; we refer the reader to \cite[Proposition~3.8]{unimodular} and \cite[Proposition~5.22]{bifix_decoding} for close constructions.
Fix a letter $a \in \cA$ and define inductively $X_0 = X$, $\tau_n \colon \cA^* \to \cA^*$ as the mixed derived substitution of $a$ in $X_n$, and $X_{n+1}$ as the mixed derived subshift of $a$ from $X_n$.
By \Cref{specular:coding_is_specular}, each $X_n$ is a minimal specular subshift, so this induction is well-defined.
Moreover, \Cref{specular:ret_word_real_iso} ensures that each $\tau_n$ is a composition of elementary morphisms (thus it has a unimodular incidence matrix).

Set $\btau = (\tau_n : n \ge 0)$.
By construction, $X_{\btau}^{(n)} \subseteq X_n$ for every $n \ge 0$, and minimality of $X_n$ forces $X_{\btau}^{(n)} = X_n$.
Since $X$ is aperiodic, $\btau$ is everywhere growing, hence primitive.
Finally, recognizability follows from \cite[Theorem~3.1]{BSTY}, using that each $\tau_n$ has a unimodular (hence invertible) incidence matrix.
\end{proof}

The following lemma  revisits \Cref{lem:compositioncoboundary}.

\begin{lemma}
    \label{map_between_cob_spaces:iso}
    Let $Y \subseteq \cB^\Z$ be a minimal subshift, let $\tau \colon \cB^* \to \cA^*$ be a morphism, and let $X$ be the shift-orbit closure of $\tau(Y)$.
    Then the map $\Phi \colon c \mapsto c \circ \tau$ is an $\R$-linear map from $\cC(X)$ to $\cC(Y)$.
    If the incidence matrix of $\tau$ has full rank, then $\Phi$ is injective; if furthermore $\Gamma_X(\varepsilon)$ and $\Gamma_Y(\varepsilon)$ have the same number of connected components, then $\Phi$ is an isomorphism.
\end{lemma}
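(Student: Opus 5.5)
The proof has three parts: well-definedness and linearity of $\Phi$, injectivity under the full-rank hypothesis, and a dimension count for the isomorphism.

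\emph{Well-definedness and linearity.} Since $X$ is the shift-orbit closure of $\tau(Y)$, the argument of \Cref{lem:compositioncoboundary} applies. If $u$ is a return word to a letter $b$ in $Y$, then $\tau(u)$ is a return word, or at least a concatenation of return words, to the first letter of $\tau(b)$ in $X$. Hence $c(\tau(u))=0$, and $c\circ\tau\in\cC(Y)$. The map $c\mapsto c\circ\tau$ is clearly $\R$-linear.

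\emph{Injectivity.} Identify a letter-coboundary $c$ on $X$ with its row vector $\vec c=(c(a))_{a\in\cA}$. Then $c\circ\tau$ corresponds to $\vec c\,M_\tau$, because $c(\tau(b))=\sum_a c(a)\,|\tau(b)|_a$. So $\Phi$ is the restriction to $\cC(X)$ of right multiplication by the $\cA\times\cB$ matrix $M_\tau$. I read ``full rank'' as rank $|\cA|$, so that right multiplication on row vectors in $\R^\cA$ is injective; this covers the square invertible case, which is how the lemma is used in the specular setting. Then $\vec c\,M_\tau=0$ forces $\vec c=0$, and $c$ is trivial.

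\emph{Surjectivity.} By \Cref{thm:manifold}, applied to the minimal (hence transitive) subshifts $X$ and $Y$, we have $\dim_\R\cC(X)=r_X-1$ and $\dim_\R\cC(Y)=r_Y-1$. Here $r_X$ and $r_Y$ are the numbers of connected components of $\Gamma_X(\varepsilon)$ and $\Gamma_Y(\varepsilon)$. If $r_X=r_Y$, then $\Phi$ is an injective linear map between finite-dimensional spaces of the same dimension, hence an isomorphism.

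The only delicate point is the return-word step in the first part, where $X$ is only the orbit closure of $\tau(Y)$ rather than exactly $\bigcup_k S^k\tau(Y)$. I would handle it with \Cref{lem:cobord&rho} instead. Take $\rho$ with $c(a)=\rho(a')-\rho(a)$ for all $aa'\in\cL(X)$. For $bb'\in\cL(Y)$, the word $\tau(b)\,\First(\tau(b'))$ lies in $\cL(X)$, and telescoping gives
\[
c(\tau(b))=\rho(\First(\tau(b')))-\rho(\First(\tau(b))).
\]
Thus $\rho\circ\First\circ\tau$ witnesses that $c\circ\tau$ is a letter-coboundary on $Y$, again by \Cref{lem:cobord&rho}.
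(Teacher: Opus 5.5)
Your proof is correct and is essentially the paper's. \Cref{lem:compositioncoboundary} (or your telescoping variant with $\rho\circ\First\circ\tau$) gives $\Phi(c)\in\cC(Y)$, injectivity follows because the Parikh vectors of the words $\tau(b)$ span $\R^\cA$ (your $\vec c\,M_\tau=0\Rightarrow\vec c=0$), and \Cref{thm:manifold} gives equal dimensions $r-1$. The orbit-closure concern is not a real issue: for non-erasing $\tau$ the union $\bigcup_{k\in\Z}S^k\tau(Y)=\bigcup_{0\le k<L}S^k\tau(Y)$, with $L=\max_b|\tau(b)|$, is already closed, so your extra argument is valid but unnecessary.
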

\begin{proof}
Let $c \in \cC(X)$. Then $\Phi(c)$ is a letter-coboundary of $Y$, \emph{i.e.}, $\Phi(c) \in \cC(Y)$, by  \Cref{lem:compositioncoboundary}.
That $\Phi$ is $\R$-linear is straightforward to verify.

If the incidence matrix of $\tau$ has full rank, then the Parikh vectors $\{\vec{\tau(b)} : b \in \cB\}$ span $\R^\cA$.
Since the value of a letter-coboundary on a word depends only on its Parikh vector, this spanning property implies that $\Phi$ has trivial kernel, hence is injective.
If additionally $\Gamma_X(\varepsilon)$ and $\Gamma_Y(\varepsilon)$ have the same number of connected components, then by \Cref{thm:manifold} the spaces $\cC(X)$ and $\cC(Y)$ have equal finite dimension over $\R$, and an injective linear map between finite-dimensional spaces of equal dimension is an isomorphism.
\end{proof}

\subsection{Spectral properties}

We now can describe rational eigenvalues 
of specular subshifts.  We will see below  that  the eigenvalue  $1/2$ which occurs when  every letter of $\cA$ is even  is associated  to a non-trivial coboundary.

\begin{theorem}
    \label{theo:eigs_specular}
    Let $X$ be a minimal specular subshift with involution without fixed points.
    Then,  the continuous additive rational eigenvalues of $X$ are trivial, \emph{i.e.},
    $E(X,S) \cap \mathbb{Q}= \mathbb{Z}$, unless every letter of $\cA$ is even, in which case $\frac{1}{2}\mathbb{Z} \subseteq E(X,S) \cap \mathbb{Q}$.
\end{theorem}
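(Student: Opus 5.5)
The plan has two parts: produce the eigenvalue $\frac12$ in the exceptional case, and rule out every other non-integer rational. Throughout I work with the $S$-adic representation of \Cref{theo:Sadic_for_specular}. It is recognizable and primitive. All alphabets equal $\cA$, so it has finite alphabet rank. Each $\tau_n$ has a unimodular incidence matrix, and every level $X_{\btau}^{(n)}$ is a minimal specular subshift, so $\Gamma_{X_{\btau}^{(n)}}(\varepsilon)$ has exactly two components $\Gamma^0_n,\Gamma^1_n$. Hence, by \Cref{thm:manifold}, each level carries a one-dimensional space of letter-coboundaries, generated by $c_{\rho}$ with $\rho=\boldsymbol{1}_{\{a:\,a_R\in\Gamma^1\}}$.

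\emph{Existence of the eigenvalue $\frac12$.} Put $\rho(a)=\frac12$ if $a_R\in\Gamma^1$ and $\rho(a)=0$ otherwise, and set $g(x)=\rho(x_0)\in\R/\Z$. For $ab\in\cL(X)$, the vertex $b_R$ lies in the component of $a_L$. In the parity situation singled out in the statement, $a_L$ and $a_R$ lie in different components; I will double-check against the paper's convention for ``even'' which case this is. In that situation the component of the right vertex switches at every letter. Hence $\rho(b)-\rho(a)\equiv\frac12 \pmod{\Z}$, so $g(Sx)=g(x)+\frac12$ and $g$ is continuous. Equivalently, the nontrivial coboundary $c_\rho$ satisfies $c_\rho(a)\equiv\frac12\equiv\frac12 h_0(a)\pmod\Z$, which is the situation of \Cref{theo:EigCharac:FAR:Mult}(2) with $n=0$ and $\alpha=\frac12$. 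This yields $\frac12\Z\subseteq E(X,S)\cap\Q$.

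\emph{Upper bound on rational eigenvalues.} Let $\alpha=p/q$ be an eigenvalue. By \Cref{theo:EigCharac:mot_retour}, $\|\alpha h_n(w)\|\to0$ uniformly over return words $w$ to letters in $X_{\btau}^{(n)}$. Since $\alpha h_n(w)\in\frac1q\Z$, for large $n$ we get $\alpha h_n(w)\in\Z$ exactly, and hence $\alpha h_n(V_n)\subseteq\Z$, where $V_n\subseteq\Z^{\cA}$ is the $\Z$-span of the Parikh vectors of return words. Two ingredients then finish the argument.
\begin{enumerate}
\item By unimodularity, $\vec h_n=\vec{\boldsymbol 1}M_{0,n}$ is a primitive integer vector, so $h_n(\Z^\cA)=\Z$.
\item $V_n$ has index at most $2$ in $\Z^\cA$. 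I would get this from \Cref{specular:mixed_gen_Gtheta} and \Cref{specular:ret_word_real_iso}: mixed return words generate $\F_\cA$, while ordinary return words to a letter generate the preimage of the even subgroup, which has index $2$. Abelianizing, $V_n$ contains the lattice of words with an even number of odd letters.
\end{enumerate}
From these, $h_n(V_n)\supseteq 2\Z$, and therefore $\alpha\in\frac12\Z$. It remains to exclude $\alpha=\frac12$ outside the exceptional case. If $\frac12$ is an eigenvalue, \Cref{theo:EigCharac:FAR:Mult}(2) provides a coboundary $c$, necessarily a multiple of $c_{\rho_m}$ at some level $m$, with $c(\tau_{m,k}(a))\equiv\frac12 h_k(a)\pmod\Z$ asymptotically. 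Comparing parities of $h_k(a)$ with the component-switching pattern of $c_{\rho}$, which is transported between levels by \Cref{map_between_cob_spaces:iso}, should force every letter to have the switching parity. Transporting that parity down to level $0$, via $\tau_{0,m}$ mapping $\Gamma_{X_{\btau}^{(m)}}(\varepsilon)$-components to components, then gives the exceptional case at level $0$.

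I expect the main obstacle to be ingredient (2) together with the final parity bookkeeping. Concretely, it means identifying the abelianized span of ordinary return words in a specular subshift with the even sublattice, and showing that the parity class is compatible across levels under $\tau_n$, so that $h_n$ is odd on some odd word exactly when the exceptional case fails. I would first try to establish this at level $0$ directly, using the index-$2$ even subgroup of $G_\theta$ and \Cref{specular:symmetry_in_Gamma(esp)}, and only then pass to levels via \Cref{lem:compositioncoboundary}.
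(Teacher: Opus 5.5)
Your construction of the eigenfunction for $\frac12$ is correct. Your reading of the exceptional case is also the one the paper's proof actually uses: every letter $a$ has $a_L$ and $a_R$ in different components of $\Gamma_X(\varepsilon)$, so the component switches at each step. The upper bound, however, rests on ingredient (2), and that claim is false.

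Each level $X_{\btau}^{(n)}$ is specular, so $\Gamma(\varepsilon)$ has two components there. By \Cref{thm:manifold} this gives a nontrivial letter-coboundary $c_n$ at each level. Since $c_n$ vanishes on every return word, $V_n\subseteq\ker\vec c_n$, which is a hyperplane. So $V_n$ has infinite index in $\Z^{\cA}$; this is the contrapositive of \Cref{prop:generatingabelian}, and the paper points it out after \Cref{cor:generatingfreegroup}.

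The index-$2$ result you have in mind concerns the even subgroup of $G_\theta$. The abelianization of $G_\theta$ is $\Z^{\cA/\theta}$, not $\Z^{\cA}$, and the result does not lift to $\F_{\cA}$ or to $\Z^{\cA}$. Concretely, \Cref{specular:symmetry_in_Gamma(esp)} yields $c(\theta(a))=c(a)$, and this is nonzero for any letter $a$ with $a_L,a_R$ in different components. Hence $2\vec e_a$ and $\vec e_a+\vec e_{\theta(a)}$ (the image of the relator $a\theta(a)$) lie in your ``even'' lattice but not in $V_n$. So $h_n(V_n)\supseteq 2\Z$ is unavailable, and $\alpha\in\frac12\Z$ does not follow.

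What return words actually give you is that $\alpha\vec h_n$ is integral on $V_n\subseteq\ker\vec c_n$. At best this yields $\alpha\vec h_n\in\Z^{\cA}+\R\,\vec c_n$, so the coboundary must be carried along rather than quotiented away. Your route could only be repaired by proving that $V_n$ equals the saturated lattice $\ker\vec c_n\cap\Z^{\cA}$, which is a nontrivial statement about specular subshifts not available here. Even then you would still need the pull-back below.

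The paper obtains the decomposition directly from \Cref{theo:EigCharac:FAR:Mult}: $\frac1q\vec h_n=\vec w_n+\vec v_n+\vec c_n$, with $\vec w_n$ integral and $\vec v_n\to\vec 0$. It then proceeds as follows.
\begin{enumerate}
\item It freezes the coboundary with \Cref{existence_limit_coboundary}.
\item Since $\vec h_n-q\vec w_n$ is integral, it forces $\vec v_n=\vec 0$ for large $n$.
\item It pulls back to level $0$. Unimodularity gives $\vec w_n M_{0,n}^{-1}\in\Z^{\cA}$. \Cref{map_between_cob_spaces:iso}, using the equal number of components at both levels, gives $c\in\cC(X)$ with $c\circ\tau_{0,n}=c_n$.
\end{enumerate}
The outcome is $\vec{\mathbf 1}=q\vec w+\vec c$, that is, $c(a)\equiv 1\pmod q$ for every letter. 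A letter with $a_L,a_R$ in the same component has $c(a)=0$, which forces $q=1$. This single identity also replaces your sketched parity bookkeeping for excluding $\frac12$. It is also where the transport between levels that worried you is actually handled.
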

\begin{proof}
It suffices to show that if $1/q \in E(X,S)$ for some positive integer $q$, then $q = 1$, unless every letter is even.
By \Cref{theo:Sadic_for_specular}, $X$ is generated by a primitive, recognizable directive sequence $\btau = (\tau_n \colon \cA^* \to \cA^*)_{n \ge 0}$ in which every level subshift $X_{\btau}^{(n)}$ is a minimal specular subshift and every incidence matrix $M_n$ is unimodular.
Let $\vec{h}_n = (h_n(a))_a=\bigl(|\tau_{[0,n)}(a)|\bigr)_{a \in \cA} \in \mathbb{R}^\cA$ denote the length vector at level $n$, so that $\vec{h}_0 = (1,\dots,1)$ and $\vec{h}_0 M_0 \cdots M_{n-1} = \vec{h}_n$.
Applying the eigenvalue criterion \Cref{theo:EigCharac:FAR:Mult} to the eigenvalue $1/q$ yields sequences of integer row vectors $(\vec{w}_n)_{n \ge 0}$ in $\mathbb{Z}^\cA$, real row vectors $(\vec{v}_n)_{n \ge 0}$ in $\mathbb{R}^\cA$ with $\vec{v}_n \to \vec{0}$, and letter-coboundaries $(c_n)_{n \ge 0}$ with $c_n \in \cC(X_{\btau}^{(n)})$, such that
\begin{equation}\label{eq:eig_criterion}
    \frac{1}{q}\,\vec{h}_n = \vec{w}_n + \vec{v}_n + \vec{c}_n
    \qquad \text{for all } n \ge 0,
\end{equation}
where $\vec{c}_n \coloneqq (c_n(a))_{a \in \cA}$.
By \Cref{existence_limit_coboundary}, after passing to a contraction and relabeling levels, we may assume that $c_n = c_1$ for all $n \ge 1$.
Multiplying~\eqref{eq:eig_criterion} by $q$ then gives
\begin{equation}\label{eq:eig_mod}
    \vec{h}_n - q\vec{w}_n = q\vec{c}_1 + q\vec{v}_n
    \qquad \text{for all } n \ge 1.
\end{equation}
The left-hand side is integer-valued, while the right-hand side has $q\vec{c}_1$ constant and $q\vec{v}_n \to \vec {0}$, so $\vec{v}_n = \vec{0}$ for all sufficiently large $n$; fix such an $n$.

Since the matrices $M_k$ are unimodular, the vector $\vec{w} \coloneqq \vec{w}_n\,(M_0 \cdots M_{n-1})^{-1}$ lies in $\mathbb{Z}^\cA$.
Furthermore, both $\Gamma_X(v)$ and $\Gamma_{X_{\btau}^{(n)}}(v)$ have exactly two connected components, so \Cref{map_between_cob_spaces:iso} and the unimodularity of $M_0 \cdots M_{n-1}$ together imply that  the map $c \mapsto c \circ \tau_{[0,n)}$ is an isomorphism,  \emph{i.e.,} $\cC(X) \xrightarrow{\sim} \cC(X_{\btau}^{(n)})$.
Hence there exists $c \in \cC(X)$ with $c \circ \tau_{[0,n)} = c_n$; we set $\vec{c} \coloneqq (c(a))_{a \in \cA}$ so that $\vec{c}\,M_0 \cdots M_{n-1} = \vec{c}_n$.
Substituting into~\eqref{eq:eig_mod} and using $\vec{v}_n = 0$ yields
\begin{equation}\label{eq:eig_final}
    \vec{h}_0 = q\vec{w} + \vec{c}.
\end{equation}
Reducing modulo $q$ gives $c(a) \equiv 1 \pmod{q}$ for all $a \in \cA$.
If $X$ contains an odd letter $a$, then the vertices $a_R$ and $a_L$ lie in the same connected component of $\Gamma_X(v)$, which forces $c(a) = 0$.
Combined with $c(a) \equiv 1 \pmod{q}$, this gives $q = 1$.

It remains to show that $\tfrac{1}{2}\Z \subseteq E(X,S) $ when every letter is even.
Let $\Gamma^0$ and $\Gamma^1$ be the two connected components of $\Gamma_X(v)$, and for each $a \in \cA$, let $\zeta(a) \in \{0,1\}$ be the index such that $a_R \in \Gamma^{\zeta(a)}$.
Since every letter is even, \Cref{specular:symmetry_in_Gamma(esp)} gives $\theta(a)_L \in \Gamma^{1-\zeta(a)}$ for every $a$.
Therefore, whenever $ab \in \cL(X)$, we have $a_L \in \Gamma^{1-\zeta(a)}$ and $b_R \in \Gamma^{\zeta(b)}$, and since $a_L$ and $b_R$ must lie in the same component, we get $\zeta(b) \equiv \zeta(a) + 1 \pmod{2}$.
Thus consecutive letters in any word of $X$ alternate between $\Gamma^0$ and $\Gamma^1$, and $\zeta$ lifts to an eigenfunction $\bar{\zeta} \colon X \to \R/\Z$ for the eigenvalue $1/2$. Remark that  the eigenvalue  $1/2$ is associated  to the non-trivial coboundary $\zeta$.
\end{proof}

We now refer to \cite{BoissyLanneau2009,BDDPRR:17} for   definitions  and general properties of    linear involutions.
Since the natural coding of a minimal linear involution without connection is a minimal specular subshift  \cite{BDDPRR:17}, and since the notions of even and odd letters coincide for specular subshifts and linear involutions, the following is an immediate consequence of \Cref{theo:eigs_specular}.
\begin{corollary}
    \label{theo:eigs_linear_involutions}
    Let $X$ be the natural coding of a minimal linear involution without connection.
    Then  the rational eigenvalues of $X$ are trivial, except when every letter is even, in which case $\{k/2 : k \in \mathbb{Z}\} \subseteq E(X,S) \cap \mathbb{Q}$.
\end{corollary}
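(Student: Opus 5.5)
The corollary should follow directly from \Cref{theo:eigs_specular}, once we check that its hypotheses hold for the natural coding. So the plan is to reduce to that theorem.

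First, let $T$ be a minimal linear involution without connection, and let $X$ be its natural coding over the alphabet $\cA$ of intervals. The coding is equipped with the involution $\theta$ exchanging the labels of the two copies of each interval, one in the top and one in the bottom line. By \cite{BDDPRR:17}, $X$ is a minimal specular subshift relative to $\theta$. We also need that $\theta$ has no fixed points. This holds because in the coding of a linear involution each letter $a$ and its inverse $\theta(a)$ label distinct intervals, corresponding to the two orientations of a band. Hence $\theta(a)\neq a$ for all $a\in\cA$.

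Second, we compare the two notions of parity. For a linear involution, a letter is \emph{even} or \emph{odd} according to whether the corresponding interval keeps or switches the side, top or bottom, of the pair of intervals. The two connected components of $\Gamma_X(\varepsilon)$ correspond to the two sides. This is because $ab\in\cL(X)$ means that the image of the $a$-interval lands on the side where $b$ starts, and these components are the ones used in \cite{BDDPRR:17}. Thus $a_L$ and $a_R$ lie in the same component exactly when the geometric parity of $a$ is even, so the two notions coincide, as the paper asserts.

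Finally, we apply \Cref{theo:eigs_specular} to $X$. If some letter is odd, this gives $E(X,S)\cap\Q=\Z$. If every letter is even, it gives $\frac12\Z\subseteq E(X,S)\cap\Q$, which is the claimed $\{k/2:k\in\Z\}$.

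The only nontrivial step is the identification of parities together with the fixed-point-freeness of $\theta$. I expect to settle both by citing \cite{BDDPRR:17,BoissyLanneau2009} for the dictionary between the geometric definition and the extension-graph components, and to argue them directly only if the citation does not cover them.
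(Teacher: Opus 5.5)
Your reduction is the paper's own argument. It takes specularity of the natural coding from \cite{BDDPRR:17}, identifies the two parity notions, and applies \Cref{theo:eigs_specular} directly. Your explicit check that $\theta$ has no fixed points supplies a hypothesis of that theorem which the paper leaves tacit.

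One caution, which concerns the paper's wording as much as your write-up. Your dictionary (even $\Leftrightarrow$ $T$ keeps the side $\Leftrightarrow$ $a_L,a_R$ in the same component) matches the literal definition in \Cref{sec:specular}. With that definition, two things go wrong:
\begin{itemize}
\item ``every letter is even'' is impossible for minimal $X$, by \Cref{rem:connectedminimal};
\item the case where $T$ switches sides on every interval lands in your branch ``some letter is odd, so $E(X,S)\cap\Q=\Z$''. In that case the side indicator yields a continuous eigenfunction for $\tfrac12$, so the branch's conclusion fails there.
\end{itemize}
The proof of \Cref{theo:eigs_specular} in fact uses the opposite convention. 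It derives $c(a)=0$ from $a_L,a_R$ lying in the same component, and it obtains $\tfrac12$ when consecutive letters alternate components. So when you settle the dictionary, match it to that usage rather than to the literal definition.

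Note also that the intervals labelled $a$ and $\theta(a)$ may lie on the same line. This contradicts your description of $\theta$ as pairing one interval in the top line with one in the bottom line.
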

It is worth noting that this corollary does not extend to irrational eigenvalues.
Indeed, there are numerous examples of linear involutions without connections admitting irrational eigenvalues; see e.g.\ \cite{specular_TCS}, or  Example \ref{example:linearinvolution}.
However, it is conjectured that \emph{almost all} linear involutions in a certain natural subclass are weakly mixing (\emph{i.e.}, admit no nontrivial eigenvalues), in the same spirit as the Avila--Forni theorem (see \cite{AvilaForni}) for interval exchange transformations.
Pursuing this conjecture lies beyond the scope of the present paper, as it would require a detailed understanding of the dynamics of the substitutions $\tau_n$ arising in the S-adic representations of \Cref{theo:Sadic_for_specular}, but partial results in this direction have been announced in \cite{Arbulu}, in which letter-coboundaries and the criteria developed in this paper play a crucial role.

We close this section with an example of a natural coding of a linear involution without connection that has an irrational eigenvalue, showing that \ref{theo:eigs_specular} and Corollary \ref{theo:eigs_linear_involutions} cannot be extended to cover irrational eigenvalues. THis example is  based on a skew product  by ${\mathbb Z}/2{\mathbb Z}$ of the Fibonacci shift. 

\begin{example}\label{example:linearinvolution}
Consider the substitutions
\[
    \tau: \begin{cases}
        a \mapsto abaab \\ b \mapsto aba
    \end{cases}
    \qquad
    \sigma: \begin{cases}
        a_0 \mapsto a_0 b_1 a_1 a_0 b_1 \\
        b_0 \mapsto a_0 b_1 a_1 \\
        a_1 \mapsto a_1 b_0 a_0 a_1 b_0 \\
        b_1 \mapsto a_1 b_0 a_0
    \end{cases}
\]
and let $X_\tau \subseteq \{a,b\}^\Z$ and $X_\sigma \subseteq \{a_0,a_1,b_0,b_1\}^\Z$ be the substitutive subshifts they generate.
The morphism $\pi \colon \{a_0,a_1,b_0,b_1\}^* \to \{a,b\}^*$ that drops the subscript (\emph{i.e.},\ $\pi(a_i) = a$, $\pi(b_i) = b$) satisfies $\tau \circ \pi = \pi \circ \sigma$, so it extends to a factor map $\pi \colon X_\sigma \to X_\tau$ that is everywhere 2-to-1.
Concretely,  the subshift $X_\sigma$ can be realized as the skew product of $X_\tau$  and ${\mathbb Z}/2{\mathbb Z}$ with respect to the morphism  $h:X_{\sigma}  \rightarrow {\mathbb Z}/2{\mathbb Z}$  given by $h(x) = 1$ if $x_0 = a$ and $h(x) = 0$ if $x_0 = b$, \emph{i.e.},     the dynamical system  acting on ${\mathbb Z}/2{\mathbb Z}\times X_{\sigma}$
as 
 $(g,x) \mapsto (gh(x_0),Sx)$   (see  \emph{e.g.} \cite{BertheGouletOuelletNybergBroddaPerrinPetersen2024}
 for more details and \cite[Section 4.4]{specular_conf}).

The subshift $X_\tau$ is the Fibonacci subshift (as $\tau$ is the cube of the Fibonacci substitution), which is minimal and aperiodic.
One checks that the subshift $X_\sigma$ is likewise minimal and aperiodic (with the substitution $\sigma$ being primitive); one can verify that it is specular with involution $\theta(a_i) = a_{1-i}$, $\theta(b_i) = b_{1-i}$, and in fact $X_\sigma$ is the natural coding of a linear involution.

Using   Theorem \ref{theo:EigCharac:FAR:Mult} (see also the machinery developed in the proof of \Cref{theo:eigs_specular}),  let us  check that
$
    \alpha \coloneqq \tfrac{1}{4}(\sqrt{5}-1)
$
is an eigenvalue of $X_\sigma$, associated to the letter-coboundary $c $  defined by
\[ c \colon \{a_0,a_1,b_0,b_1\}^* \to \mathbb{R}, \quad 
    c(a_0) = -\tfrac{1}{2}, \quad
    c(b_0) = 0, \quad
    c(a_1) = \tfrac{1}{2}, \quad
    c(b_1) = 0.
\]
We first use  Remark \ref{rem:constantK}  for checking that $c$ is indeed a coboundary 
by noticing  that the  set of  factors of $X_{\sigma}$ of length $2$ is equal to  $\{a_1 b_0,  b_1 a_1,  b_0 a_0,  a_0 a_1,  a_0 b_1,  a_1 a_0\}$, and thus that the graph  $\Gamma_X(\varepsilon)$ has two connected components.



We  then restate the problem in  terms of linear algebra  by   using  \Cref{theo:EigCharac:FAR:Mult} (which in the substitutive case is equivalent to \cite{Host86}).
 We recall that  $\vec{\mathbf 1}$ stands for  the row vector $(1,1,1,1)$.
    Then, the height vector $\vec h_n = (h_n(a) : a \in \cA)$ equals $\vec{\mathbf 1} \, M_\sigma^n$.
  It is sufficient  to find a decomposition of the form  $$\alpha\, \vec h_0 = \alpha \vec{\mathbf 1}= \vec v + \vec w + \vec c ,$$ with $\vec w \in \Z^4$ and $\vec v  M_{\sigma} ^n $ converging exponentially fast to $0$. The  fact that  the condition  on $\vec {v}$ is sufficient for    guaranteeing  Item (2) of  \Cref{theo:EigCharac:FAR:Mult}      works   as in  the proof of
\Cref{balanced=>spaces_decomposition}, for example by using the Dumont-Thomas decomposition \cite{Dumont-Thomas} or the prefix-suffix automaton of $\tau$ \cite{CanSie}. By ordering  the alphabet as $a_0,a_1,b_0,b_1$,  a suitable decomposition is provided by 
$$ \alpha \vec{\mathbf 1} = \vec{w} + \vec{c} + \vec{v},$$
where 
$\vec{w}=(1,0,0,0)$ is an integer vector, $\vec{c}=(-1/2,1/2,0,0)$ is a coboundary vector  and $\vec{v}=( 1/4(\sqrt{5}-3),1/4(\sqrt{5}-3),1/4(\sqrt{5}-1),1/4(\sqrt{5}-1) )$ is a  left eigenvector associated to the  contracting eigenvalue $2 -\sqrt 5$   of the incidence matrix of $\sigma$.

In particular, since $E(X_\tau) = 2\alpha\Z + \Z$, the specular subshift $X_\sigma$ has strictly more eigenvalues than the Fibonacci subshift $X_\tau$.

Note that $X_{\sigma}$ is not balanced by  Theorem \ref{balanced=>spaces_decomposition}. Indeed,  $\Gamma_{X_\sigma}(\varepsilon)$ consists of two connected components, so the letter-coboundary space  $\vec{\cC}_\sigma$ (as defined in Section \ref{subsec:discrepancy_subst}) has dimension 1, but the matrix of $\sigma$ has two eigenvalues of modulus 1, hence $\vec{V}_\sigma + \vec{\cC}_\sigma$ has codimension at least 2.
\end{example}

\section{An automatic word in the rational base \texorpdfstring{$\tfrac{3}{2}$}{3/2}}
\label{sec:TM}

In this section, we  show how  our methods can be applied   to  handle  in details a natural example of   an infinite alphabet-rank example.
We consider the subshift $X$ generated by the so-called $\tfrac{3}{2}$--Thue--Morse word $\mathbf{t} \in \{0,1\}^\N$ and determine its group of additive eigenvalues; see \Cref{TM32:main} for the precise statement.
The analysis relies crucially on \Cref{theo:EigCharac:decisive:Mult} and on a study of letter-coboundaries, which we develop in \Cref{TM32:ext_graphs_are_connected}.

Rational bases were introduced by Akiyama, Frougny and Sakarovitch \cite{Akiyama}, which led to the notion of automatic words in such bases as a variation of the classical setting of automatic words in an integer base.
A general theory of such words has not yet been developed, so here we treat only the case of the $\tfrac{3}{2}$--Thue--Morse word, a representative example of an automatic word in a rational base, building on recent work \cite{TM32}.
The methods presented here are, however, quite general, provided one can establish basic results such as the minimality of the associated dynamical system.

The classical Thue--Morse word is generated by the Thue--Morse substitution. Our analysis of the $\tfrac{3}{2}$--Thue--Morse word  also rests on a description in terms of substitutions. 
For automatic words in an integer base, such descriptions are classical: these words can even be characterized as codings of fixed points of constant-length substitutions.  
In the rational-base setting, Rigo and Stipulanti~\cite{MichelManon} obtained an analogous description based on what we call here \emph{constant-length skew-substitutions}.
Our plan for determining the eigenvalues of $X$ is therefore to first translate the skew-substitution formalism into the S-adic terminology of the present paper (\Cref{TM32:Sadic_generates+minimality}), and then to apply \Cref{theo:EigCharac:decisive:Mult} to the resulting S-adic structure.
The last step requires passing to the $2$-block presentation to ensure decisiveness and some understanding of the language of $X$ (\Cref{TM32:ext_graphs_are_connected}).

\subsection{Skew-substitutions}
We briefly describe the notion of a skew-substitution, restricted to the context needed here.
A \emph{skew-substitution} over the alphabet $\cA$ is a map $\sigma \colon \Z/2\Z \times \cA^* \to \cA^*$ such that
\[	\sigma(i, a_0a_1 \dots a_{k-1}) = 
	\sigma(i, a_0) \, \sigma(i+1, a_1) \, 
	\dots \, \sigma(i+k-1, a_{k-1})
\]
for every $a_0a_1 \cdots a_{k-1} \in \cA^*$.
Equivalently, $\sigma(i, a_0a_1 \dots a_{k-1})$ is obtained by applying $\sigma(i+j \bmod 2, a_j)$ to each letter $a_j$, alternating between the two maps according to the parity of $j+i$.
In particular, $\sigma$ is uniquely determined by its values $\sigma(i,a)$ for $i \in \Z/2\Z$ and $a \in \cA$.
As with ordinary substitutions, $\sigma$ extends canonically to act on one-sided infinite words $x \in \cA^\N$.
We say that $\sigma$ is of \emph{constant length} (\emph{uniform} in some references) of \emph{type $(2,1)$} if $|\sigma(0,a)| = 2$ and $|\sigma(1,a)| = 1$ for every $a \in \cA$.

The $\tfrac{3}{2}$--Thue--Morse word $\mathbf{t}$ is defined by letting $\mathbf{t}_n$ be the digit-sum of the expansion of $n$ in base $\tfrac{3}{2}$, taken modulo $2$ (with the  expansion in base $\tfrac{3}{2}$ introduced in \cite{Akiyama}).
Rigo and Stipulanti~\cite{MichelManon} described $\mathbf{t}$ in terms of a uniform skew-substitution of type $(2,1)$.

\begin{theorem}
	\label{TM32:as_skew}
	Define $\cA = \{0,1\}$ and let $\sigma \colon \Z/2\Z \times \cA^* \to \cA^*$ be the uniform skew-substitution of type $(2,1)$ given by 
	\[  \sigma(0,\cdot) : \begin{cases}
			0 & \mapsto 00 \\
			1 & \mapsto 11
		\end{cases} \qquad 
		\sigma(1,\cdot) : \begin{cases}
			0 & \mapsto 1 \\
			1 & \mapsto 0
		\end{cases}     \]
	Then the $\tfrac{3}{2}$--Thue--Morse word is the unique infinite word $\mathbf{t} \in \{0,1\}^\N$ beginning with $0$ satisfying $\sigma(0, \mathbf{t}) = \mathbf{t}$.
\end{theorem}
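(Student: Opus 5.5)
The plan is to derive the theorem from the definition of $\mathbf{t}$ through base-$\tfrac32$ expansions, as in Akiyama--Frougny--Sakarovitch. Recall that every $n\ge 0$ has a unique expansion $n=\sum_{i} \tfrac{1}{2}d_i(\tfrac32)^i$ with digits $d_i\in\{0,1,2\}$. This expansion is produced by the recursion: write $2n=3m+d$ with $d\in\{0,1,2\}$; the last digit of $n$ is $d$ and the remaining digits form the expansion of $m$. Consequently $\mathbf{t}_n\equiv \mathbf{t}_m+d \pmod 2$. The integers $n$ with a given "parent" $m$ are exactly those with $2n\in\{3m,3m+1,3m+2\}$. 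When $m$ is even, these are $n=3m/2$ and $n=3m/2+1$, with digits $0$ and $2$. When $m$ is odd, the only such $n$ is $(3m+1)/2$, with digit $1$.

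\emph{Step 1 (block structure).} I will check that as $m$ runs over $0,1,2,\dots$, the sets of children $\{n : \lfloor 2n/3\rfloor = m\}$ form consecutive blocks partitioning $\N$, in increasing order. Even $m$ have blocks of length $2$ and odd $m$ have blocks of length $1$, matching type $(2,1)$ with parity index $m \bmod 2$. Since $0$ and $2$ are even digits, each child of an even $m$ satisfies $\mathbf{t}_n=\mathbf{t}_m$, which gives $\sigma(0,a)=aa$. The digit $1$ is odd, so for odd $m$ we get $\mathbf{t}_n=1-\mathbf{t}_m$, which gives $\sigma(1,a)=1-a$. Concatenating the blocks yields $\sigma(0,\mathbf{t})=\mathbf{t}$, starting the parity count at $0$ for $m=0$. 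Moreover $\mathbf{t}_0=0$ because $0$ has the empty expansion.

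\emph{Step 2 (uniqueness).} Let $x$ be any fixed point beginning with $0$. Then $x_{[0,1]}=\sigma(0,x_0)=00$, and inductively the prefix of $\sigma(0,x_{[0,k)})$ has length strictly greater than $k$ once $k\ge 2$. Indeed, the length of $\sigma(0,x_{[0,k)})$ is $\lceil 3k/2\rceil > k$ for $k\ge1$. So $x_{[0,k)}$ determines a strictly longer prefix, and by induction $x$ is unique.

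\emph{Main obstacle.} The delicate point is the digit recursion: verifying that the greedy/modular recursion $2n=3m+d$ indeed characterizes the AFS expansion, including its uniqueness. I would cite \cite{Akiyama} for this. After that, the block bookkeeping in Step 1 is a routine check that child blocks are contiguous, which follows from $\lfloor 2n/3\rfloor$ being nondecreasing and surjective. Alternatively, the statement can simply be taken from \cite{MichelManon}.
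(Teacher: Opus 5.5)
Your proof is correct, but it takes a different route from the paper. The paper gives no argument for this statement: it attributes the skew-substitution description of $\mathbf{t}$ to Rigo and Stipulanti \cite{MichelManon} and moves on.

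Your derivation is self-contained:
\begin{itemize}
\item The recursion $2n=3m+d$ gives $\mathbf{t}_n\equiv\mathbf{t}_m+d \pmod 2$ with $m=\lfloor 2n/3\rfloor$.
\item The fibres of $n\mapsto\lfloor 2n/3\rfloor$ are consecutive blocks. They have length $2$ for $m$ even (digits $0,2$, hence $\sigma(0,a)=aa$) and length $1$ for $m$ odd (digit $1$, hence $\sigma(1,a)=1-a$).
\item Reading these blocks in order is exactly $\sigma(0,\mathbf{t})$.
\item Uniqueness follows from $|\sigma(0,u)|=\lceil 3|u|/2\rceil>|u|$ for nonempty $u$.
\end{itemize}
This is essentially the breadth-first reading of the Akiyama--Frougny--Sakarovitch tree that underlies the cited result. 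So you are reproving the fact rather than invoking it; what this buys is that the section no longer rests on an external reference for its starting point.

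The step you flag as the main obstacle is not really one. In \cite{Akiyama} the base-$\tfrac32$ expansion is defined by the division algorithm $2N_i=3N_{i+1}+a_i$, so your recursion is the definition. Uniqueness among all digit strings also takes only two lines. If $n=\tfrac12 d_0+\tfrac32 m$ with $m$ a dyadic rational, then $3m=2n-d_0\in\Z$ forces $m\in\Z$ and $d_0\equiv 2n\pmod 3$. You then induct using $m<n$ for $n\ge1$.

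One small point: the block of $m=0$ contains $n=0$ itself, so the recursion carries no information there and $\mathbf{t}_0=0$ must be fixed separately, as you correctly do.
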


From a skew-substitution one can define a natural S-adic structure that simulates it, as follows.
Consider a uniform skew-substitution $\sigma \colon \Z/2\Z \times \cA^* \to \cA^*$ of type $(2,1)$.
Define the map $P \colon \Z \to \Z$ by $P(j) = \lceil \tfrac{3j}{2} \rceil$, that is, $P(2k) = 3k$ and $P(2k+1) = 3k+2$.
Then, for any infinite word $x \in \cA^\N$ satisfying $\sigma(0,x) = x$, an occurrence of a word $u$ in $x$ at a position congruent to $j$ modulo $2^{n+1}$ gives rise to an occurrence of $\sigma(j \bmod 2, u)$ in $x$ at a position congruent to $P(j)$ modulo $3 \cdot 2^{n-1}$. This property will be used freely in what follows.

Let $\cA_n = \cA \times \Z/2^n\Z$, regarded as an alphabet.
We define the (ordinary) substitution $\tau_n \colon \cA_{n+1}^* \to \cA_n^*$ as follows.
Given $a \in \cA$ and $j \in \Z/2^{n+1}\Z$, write $\sigma(j \bmod 2, a) = b_0 b_1 \dots b_{k-1}$.
Then we set $\tau_n((a,j)) = (b_0, P(j))(b_1, P(j)+1) \cdots (b_{k-1}, P(j)+k-1)$.
This yields a directive sequence $\btau = (\tau_n)_{n \ge 0}$, which we call the \emph{directive sequence modelling $\sigma$}.

It follows from the definitions that
\begin{equation}
	\label{TM32:eq:Sadic_models_skew}
	\tau_{[0,n)}((a,j)) = 
	\sigma(P^{n-1}(j) \bmod 2, \sigma(P^{n-2}(j) \bmod 2, \dots \sigma(j \bmod 2, a) \dots ))
\end{equation}
for every $a \in \cA$, $j \in \Z/2^n\Z$, and $n \ge 1$.

The following two lemmas are proved straightforwardly by induction, so we omit their proofs.

\begin{lemma}
	\label{TM32:nLevel_is_just_a_coloring}
Let $y \in X_{\btau}^{(n)}$,  $x \in \cA^\Z$ and $z \in (\Z/2^n\Z)^\Z$ be the sequences defined by $y_k = (x_k, z_k)$ for all $k \in \Z$.
Then $x \in X$ and $z$ is periodic with period $2^n$.
\end{lemma}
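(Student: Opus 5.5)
We argue by induction on $n$, carrying along a slightly stronger hypothesis: for every word $w\in\cL_{\btau}^{(n)}$, say $w=(x_0,z_0)(x_1,z_1)\cdots(x_{k-1},z_{k-1})$, the first letters $x_0\cdots x_{k-1}$ form a factor of $\mathbf t$, occurring at a position congruent to $z_0$ modulo $2^n$, and $z_{i+1}=z_i+1$ in $\Z/2^n\Z$. The statement then follows. Every factor of $y\in X_{\btau}^{(n)}$ lies in $\cL_{\btau}^{(n)}$, so consecutive second coordinates of $y$ increase by $1$ modulo $2^n$; hence $z$ is $2^n$-periodic. Also every factor of $x$ is a factor of $\mathbf t$, so $x\in X$, where $X$ denotes the subshift generated by $\mathbf t$ (two-sided words all of whose factors occur in $\mathbf t$).

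Since $\cL_{\btau}^{(n)}$ consists of factors of $\tau_{n,m}((a,j))$, it is enough to prove the hypothesis for the words $\tau_{n,m}((a,j))$ themselves, with $m>n$; factors then inherit it. We prove this by descending induction from level $m$ to level $n$ along $\tau_{n,m}=\tau_n\circ\cdots\circ\tau_{m-1}$.

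\textbf{Base step.} At level $m$, the single letter $(a,j)$ corresponds to an occurrence of $a$ in $\mathbf t$ at a position $\equiv j \pmod{2^m}$. Such an occurrence exists because $\mathbf t$ contains both letters along every residue class modulo $2^m$.

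\textbf{Inductive step.} Suppose the word $(a_0,j)(a_1,j+1)\cdots$ at level $p+1$ records an occurrence of $u=a_0a_1\cdots$ in $\mathbf t$ at a position $\equiv j \pmod{2^{p+1}}$. Applying $\tau_p$ letter by letter, the image of $(a_i,j+i)$ starts at second coordinate $P(j+i)$. Since $P(j+i+1)-P(j+i)=|\sigma(j+i \bmod 2,a_i)|$, the second coordinates increase by exactly $1$ across block boundaries, and by definition they also increase by $1$ inside each block. The first coordinates form $\sigma(j\bmod 2,u)$. By the property stated before the lemma, this word occurs in $\mathbf t$ at a position $\equiv P(j) \pmod{3\cdot 2^{p-1}}$. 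We need the congruence modulo $2^p$, and $3\cdot 2^{p-1}$ is not a multiple of $2^p$. So we instead use the exact identity $P(i)=\lceil 3i/2\rceil$ for actual positions: if $u$ occurs at position $N$ with $N\equiv j \pmod{2^{p+1}}$, then $\sigma(N\bmod 2,u)$ occurs at position $P(N)$ in $\mathbf t=\sigma(0,\mathbf t)$. Moreover, $N\equiv j \pmod{2^{p+1}}$ implies $P(N)\equiv P(j) \pmod{3\cdot 2^p}$, and in particular $P(N)\equiv P(j)\pmod{2^p}$.

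\textbf{Main obstacle.} The main care is needed at exactly this last point, in checking that $P$ is well defined from $\Z/2^{p+1}\Z$ to $\Z/2^p\Z$. Write $j'=j+2^{p+1}t$. Then $P(j')=P(j)+3\cdot 2^p t$, which is congruent to $P(j)$ modulo $2^p$, so the map is well defined. The rest of the argument is bookkeeping. In the base case, if a residue were missing for one letter, we can restrict to the letters that actually occur in the language; this does not affect the conclusion.
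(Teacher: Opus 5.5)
Your argument is correct and is the straightforward induction the paper alludes to when it omits the proof. Consecutive second coordinates come from $P(i+1)-P(i)=|\sigma(i \bmod 2,a)|$. The first-coordinate claim propagates because an occurrence of $u$ at position $N$ in $\mathbf t$ gives an occurrence of $\sigma(N\bmod 2,u)$ at position $P(N)$ in $\mathbf t=\sigma(0,\mathbf t)$, with $P(N)\equiv P(j) \pmod{2^p}$. The base case is exactly \cite[Proposition~14]{TM32}, which the paper also uses.

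Since that proposition covers every residue class, your closing fallback about restricting to letters that occur is unnecessary. It is also not justified as stated, because $\cL_{\btau}^{(n)}$ is built from the images $\tau_{n,m}(b)$ of all letters $b\in\cA_m$.
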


\begin{lemma}
	\label{TM32:image_lengths}
Let $n \ge 1$ and let $u \in \cL(X_{\btau}^{(n)})$ satisfy $|u| \equiv 0 \pmod{2^n}$.
Then $|\tau_{[0,n)}(u)| = \bigl(\tfrac{3}{2}\bigr)^n |u|$.
\end{lemma}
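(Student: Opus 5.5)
The plan is to argue by induction on $n$, using the fact that the length of $\tau_{[0,n)}$ applied to a letter $(a,j)$ depends only on $j$. Formula \eqref{TM32:eq:Sadic_models_skew} expresses $\tau_{[0,n)}((a,j))$ as an iterated application of the maps $\sigma(i,\cdot)$. Since $|\sigma(0,b)|=2$ and $|\sigma(1,b)|=1$ for every letter $b$, the length of $\sigma(i,w)$ is determined by $i$ and by $|w|$ alone; the letters of $w$ play no role. Hence $h_n((a,j))=:\ell_n(j)$ depends only on $j \in \Z/2^n\Z$. Moreover, by \Cref{TM32:nLevel_is_just_a_coloring}, the second coordinates along any $u \in \cL(X_{\btau}^{(n)})$ are consecutive residues modulo $2^n$. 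This is immediate from the definition of $\tau_n$, which assigns consecutive indices $P(j), P(j)+1,\dots$, together with the fact that $P(j+1)=P(j)+|\sigma(j \bmod 2,a)|$. So if $|u|$ is a multiple of $2^n$, then $|\tau_{[0,n)}(u)| = (|u|/2^n)\sum_{j \in \Z/2^n\Z}\ell_n(j)$. It therefore suffices to prove
\[ L_n \coloneqq \sum_{j\in\Z/2^n\Z} \ell_n(j) = 3^n \]
for every $n \ge 1$, since then $|\tau_{[0,n)}(u)| = 3^n|u|/2^n = (\tfrac32)^n|u|$.

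To compute $L_n$, I will factor $\tau_{[0,n)} = \tau_{[0,n-1)}\circ\tau_{n-1}$. Take the word $W$ consisting of the letters $(a_j,j)$ for $j = 0,\dots,2^n-1$ at level $n$, with arbitrary first coordinates. Because $\ell_n$ does not depend on the letters, $L_n = |\tau_{[0,n)}(W)|$ can be computed formally for any such word, even one not in the language.

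The first step applies $\tau_{n-1}$. The letters with even $j$ contribute length $2$ and those with odd $j$ contribute length $1$, and the indices $P(j)$ run through consecutive values starting at $0$. So $\tau_{n-1}(W)$ is a word of length $3\cdot 2^{n-1}$ whose second coordinates run consecutively modulo $2^{n-1}$ through exactly $3$ full periods.

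The second step applies $\tau_{[0,n-1)}$ using the induction hypothesis. Each full period contributes $L_{n-1}$, so $L_n = 3L_{n-1}$. The base case is $L_1 = \ell_1(0)+\ell_1(1) = 2+1 = 3$, which gives $L_n = 3^n$.

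The main point of care is the index bookkeeping modulo $2^{n}$ versus $3\cdot 2^{n-1}$. I need to check that $P$ is compatible with reduction, namely that $P(j+2^{n})\equiv P(j) \pmod{2^{n-1}}$, which holds because $P(j+2^n)=P(j)+3\cdot 2^{n-1}$. This is what guarantees that the images of consecutive level-$n$ indices are consecutive level-$(n-1)$ indices, including across the wrap-around; it is exactly the property already used to make $\tau_n$ well defined.
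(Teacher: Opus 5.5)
Your proof is correct and is essentially the induction the paper has in mind (the paper omits the proof, saying only that it follows straightforwardly by induction). The facts you isolate are exactly the ones needed: consecutive second coordinates (including across the wrap-around, via $P(j+2^n)=P(j)+3\cdot 2^{n-1}$) and image lengths $2$ and $1$ at even and odd indices; you could shorten the argument slightly by applying the induction hypothesis directly to $\tau_{n-1}(u)\in\cL(X_{\btau}^{(n-1)})$, which has length $\tfrac32|u|$, a multiple of $2^{n-1}$, instead of introducing $L_n$.
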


From now on, $\sigma$ denotes the skew-substitution of \Cref{TM32:as_skew} and $\btau = (\tau_n \colon \cA_{n+1}^* \to \cA_n^* : n \ge 0)$ the directive sequence modelling it.
Let also $X \subseteq \cA^\Z$ be the subshift generated by the $\tfrac{3}{2}$--Thue--Morse word $\mathbf{t}$.

\begin{lemma}
	\label{TM32:Sadic_generates+minimality}
The subshift $X$ is generated by $\btau$.
Moreover, the $2^n$-th power $S^{2^n}$ of the shift acts minimally on $X$ for every $n \ge 0$.
\end{lemma}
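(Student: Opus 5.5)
We split the statement into two parts: first $X = X_{\btau}$, then minimality of $S^{2^n}$ on $X$. The second part implies primitivity-type properties of $\btau$ that we do not need separately.

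\textbf{Generation.} Let $\pi \colon \cA_0^* \to \cA^*$ forget the (trivial) second coordinate, so $\cA_0 = \cA \times \{0\}$ is identified with $\cA$. The key identity is \eqref{TM32:eq:Sadic_models_skew}, combined with the fixed-point relation $\sigma(0,\mathbf t) = \mathbf t$.

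To show $\cL(X_{\btau}) \subseteq \cL(X)$, we argue by induction on $n$. Every letter $(a,j) \in \cA_n$ that occurs in $\cL_{\btau}^{(n)}$ corresponds to an occurrence of $a$ in $\mathbf t$ at a position congruent to $j$ modulo $2^n$. Applying $\sigma$ at the correct parity moves this occurrence to position $P(j)$ modulo $3\cdot 2^{n-1}$, using the property stated before the definition of $\tau_n$. Hence $\tau_{0,n}((a,j))$ occurs in $\mathbf t$, and so it lies in $\cL(X)$.

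For the reverse inclusion, we iterate $\mathbf t = \sigma(0,\mathbf t)$. This shows that the prefix of $\mathbf t$ of length roughly $(3/2)^n$ equals $\tau_{0,n}$ applied to a prefix of $\mathbf t$ coloured by positions modulo $2^n$. That coloured prefix is a word of the level-$n$ language, and since $(3/2)^n \to \infty$, every factor of $\mathbf t$ is covered. Some care is needed with indices here: at position $0$ the colouring is $j=0$, and $P(0) = 0$, so $\tau_n((0,0))$ starts with $(0,0)$. Therefore $\tau_{0,n}((0,0))$ is a prefix of $\mathbf t$ whose length tends to infinity, and every factor of $\mathbf t$ is a factor of one of these prefixes.

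\textbf{Minimality of $S^{2^n}$.} Here we must show that every factor $u$ of $\mathbf t$ occurs with bounded gaps at positions in every residue class modulo $2^n$. Our approach is to prove a stronger statement: for every $u$ and every residue $r$ modulo $2^n$, the set of occurrences of $u$ at positions $\equiv r \pmod{2^n}$ is syndetic. This is equivalent to minimality of the $\Z$-action generated by $S^{2^n}$, provided $X$ is minimal; we expect $X$ to be minimal by \cite{TM32}.

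The plan for the stronger statement uses the skew-substitution structure. It is known that $\mathbf t$ is uniformly recurrent; we would take this from \cite{TM32}, or reprove it via the block structure. Given that, it suffices to show that for each $n$ the pair (letter, position mod $2^n$) recurs in every residue class. Using the position map $P$, which sends $j$ to a value modulo $3\cdot 2^{n-1}$ and which is surjective onto residues mod $2^{n-1}$ after reduction, we can transport occurrences at one residue to occurrences at others. Combined with uniform recurrence of the image words, this gives syndeticity in each class.

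An alternative route is a spectral one. If $S^{2^n}$ were not minimal, $X$ would have a rational eigenvalue $k/2^n \notin \Z$. We would then rule this out via Lemma \ref{TM32:image_lengths}: heights of length-$2^n$ blocks are $3^n$, which is coprime to $2$. This could be turned into a contradiction through Theorem \ref{theo:EigCharac:mot_retour}, once we exhibit return words whose heights are odd multiples of powers of $3$.

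We expect the main obstacle to be exactly this residue-class syndeticity, since it requires real combinatorial information about $\mathbf t$ rather than formal manipulation. Our first attempt will be the transport argument along $P$, checking that the parity alternation in $\sigma(1,\cdot)$, which swaps $0$ and $1$, does not trap occurrences in a single residue class.
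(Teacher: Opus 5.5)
Your inclusion $X\subseteq X_{\btau}$, via the prefixes $\tau_{0,n}((0,0))$ of $\mathbf t$, is exactly the paper's argument. Everything else rests on a fact you never establish. In the inclusion $X_{\btau}\subseteq X$ you assert that every letter $(a,j)$ of the level-$n$ language ``corresponds to an occurrence of $a$ in $\mathbf t$ at a position congruent to $j$ modulo $2^n$''. Now $(a,j)$ is the first letter of $\tau_n((a,2i))$ whenever $3i\equiv j \pmod{2^n}$, so every letter of $\cA_n$ belongs to $\cL^{(n)}_{\btau}$. Your assertion therefore says precisely that each letter of $\{0,1\}$ occurs in $\mathbf t$ in every residue class modulo $2^n$, for every $n$. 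Equivalently, every iterated skew image $\sigma(i_1,\sigma(i_2,\dots\sigma(i_n,a)\dots))$ occurs in $\mathbf t$. This is not a formal consequence of $\sigma(0,\mathbf t)=\mathbf t$, and no induction on $n$ gives it. Indeed, $\mathbf t_{3i}=\mathbf t_{3i+1}=\mathbf t_{2i}$ and $\mathbf t_{3i+2}=1-\mathbf t_{2i+1}$, so the letters of $\mathbf t$ along a residue class modulo $2^n$ are read off from residue classes modulo $2^{n+1}$. Information only flows from finer to coarser moduli.

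The same issue defeats your minimality argument. You correctly flag residue-class occurrence as the obstacle, and your reduction is fine: given minimality of $X$, it suffices that every letter (hence every factor, via the $\tau_{0,m}$-images) occurs in every residue class modulo $2^N$ for all $N$. But transport along $P$ goes the wrong way. $P$ induces a $2$-to-$1$ map $\Z/2^{n+1}\Z\to\Z/2^n\Z$, sending occurrences at residue $j$ modulo $2^{n+1}$ to occurrences of $\sigma(j \bmod 2,\cdot)$ at residue $P(j)$ modulo $2^n$. It never creates occurrences at a finer modulus, so the argument has no base case.

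The spectral route is circular. \Cref{theo:EigCharac:mot_retour} needs $X=X_{\btau}$ with $\btau$ primitive, which already uses the missing fact. Moreover, by \Cref{TM32:image_lengths} a level-$n$ return word $w$ has $h_n(w)=3^n|w|/2^n$. This is odd only when $|w|/2^n$ is odd, i.e.\ when a letter recurs at a distance $\equiv 2^n \pmod{2^{n+1}}$, which is again residue-class information about $\mathbf t$.

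The paper does not derive this fact from the substitution either. It imports it from \cite{TM32}: the occurrence in $\mathbf t$ of all iterated skew images, and \cite[Proposition~14]{TM32}, which states that every factor of $\mathbf t$ occurs in every residue class modulo $2^n$. You need to cite these results or give a genuine proof of them, for instance via the base-$\tfrac32$ numeration system that defines $\mathbf t$.
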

\begin{proof}
We define the words $u_n \in \cA^*$ by $u_0 = 0$ and $u_{n+1} = \sigma(0, u_n)$ for $n \ge 0$.
Note that $|u_n| \to \infty$ and $n \to \infty$, and that $u_n$ is a prefix of $\mathbf{t}$ for every $n \ge 0$ since $\mathbf{t} = \sigma(0, \mathbf{t})$.
So, since \eqref{TM32:eq:Sadic_models_skew} ensures that $u_n = \tau_{[0,n)}((0,0))$, we deduce that $X_{\btau}$ contains $X$.

Let us now prove that $X_{\btau}$ is contained in $X$.
In \cite{TM32} it is proved that every word of the form $\sigma(i_1 \bmod 2, \sigma(i_2 \bmod 2, \dots \sigma(i_n \bmod 2, a) \dots))$, with $n \ge 1$, $a \in \cA$, and $i_1, \dots, i_n \in \{0,1\}$, occurs in $\mathbf{t}$.
In view of~\eqref{TM32:eq:Sadic_models_skew}, this is equivalent to saying that $\tau_{[0,n)}(a,j)$ occurs in $\mathbf{t}$ for every $n \ge 1$ and $(a,j) \in \cA_n$.
Therefore $X$ contains $X_{\btau}$, so $X = X_{\btau}$.
Finally, \cite[Proposition 14]{TM32} also ensures that every word occurring in $\mathbf{t}$ does so at all residue classes modulo $2^n$, for every $n \ge 0$; 
this gives the minimality of $(X, S^{2^n})$ for all $n \ge 0$.
\end{proof}

\subsection{A decisive \texorpdfstring{$S$}{S}-adic representation}

In order to apply \Cref{theo:EigCharac:decisive:Mult}, we need a directive sequence for $X$ that is recognizable, primitive, and decisive.
Recognizability of $\btau$ can be verified directly.
Unfortunately, $\btau$ is not decisive.
We address decisiveness by performing a block-code construction on $\sigma$ analogous to that used for ordinary substitutions in \Cref{subsec:block_presentations}.

We define the {\em centered $2$-block presentation} of $\sigma$ as follows.
Let $L$ denote the set of words of length $5$ in $\cL(X)$; 
one verifies that $L$ has 26 elements.
We identify $L$ with the English alphabet $\cB \coloneqq \{a,b,c,\dots,z\}$ via the lexicographic order.
We define a skew-substitution $\sigma' \colon \Z/2\Z \times \cB^* \to \cB^*$ as follows.
For $i \in \Z/2\Z$ and $b \in \cB$ encoding the word $a_{-2}a_{-1}a_0a_1a_2 \in \cA^5$, compute $w \coloneqq \sigma(i-2 \bmod 2, \, a_{-2}a_{-1}a_0a_1a_2)$ and read it through a sliding window of width $5$ (two symbols to the left, the current symbol, two to the right), starting at position $|\sigma(i-2 \bmod 2, \, a_{-2}a_{-1})| $ (where $\sigma(i \bmod 2, a_0)$ begins in $w$) and ending at position $|\sigma(i-2 \bmod 2, \, a_{-2}a_{-1}a_0)| - 1$ (where it ends).
This produces a word of length $|\sigma(i \bmod 2, a_0)|$ over $\cA^5$, which we then encode into $\cB$ letterwise.
The result is $\sigma'(i,b) \in \cB^{|\sigma(i \bmod 2,\, a_0)|}$, and the map $\sigma'$ so obtained is a uniform skew-substitution of type $(2,1)$ (since it has the same image lengths as $\sigma$).
Explicitly, one computes:

\begin{equation*}
    \sigma'(0, \cdot) = \left\{
    \begin{array}{lll}
    \begin{aligned}
    a &\mapsto ip \\
    b &\mapsto iq \\
    c &\mapsto hn \\
    d &\mapsto ho \\
    e &\mapsto my \\
    f &\mapsto lw \\
    g &\mapsto lx \\
    h &\mapsto bc \\
    i &\mapsto bd \\
    \end{aligned}
    &
    \begin{aligned}
    j &\mapsto gl \\
    k &\mapsto gm \\
    l &\mapsto fj \\
    m &\mapsto fk \\
    n &\mapsto up \\
    o &\mapsto uq \\
    p &\mapsto tn \\
    q &\mapsto to \\
    r &\mapsto yw \\
    \end{aligned}
    &
    \begin{aligned}
    s &\mapsto yx \\
    t &\mapsto oc \\
    u &\mapsto od \\
    v &\mapsto nb \\
    w &\mapsto sl \\
    x &\mapsto sm \\
    y &\mapsto rj \\
    z &\mapsto rk \\
    \end{aligned}
    \end{array}
    \right.
    \qquad
    \sigma'(1, \cdot) = \left\{
    \begin{array}{lll}
    \begin{aligned}
    a &\mapsto e \\
    b &\mapsto e \\
    c &\mapsto g \\
    d &\mapsto g \\
    e &\mapsto a \\
    f &\mapsto d \\
    g &\mapsto d \\
    h &\mapsto w \\
    i &\mapsto w \\
    \end{aligned}
    &
    \begin{aligned}
    j &\mapsto t \\
    k &\mapsto t \\
    l &\mapsto v \\
    m &\mapsto v \\
    n &\mapsto e \\
    o &\mapsto e \\
    p &\mapsto g \\
    q &\mapsto g \\
    r &\mapsto d \\
    \end{aligned}
    &
    \begin{aligned}
    s &\mapsto d \\
    t &\mapsto w \\
    u &\mapsto w \\
    v &\mapsto z \\
    w &\mapsto t \\
    x &\mapsto t \\
    y &\mapsto v \\
    z &\mapsto v \\
    \end{aligned}
    \end{array}
    \right.
\end{equation*}

We set $\cB_n = \cB \times \Z/2^n\Z$ and let $\btau' = (\tau'_n \colon \cB_{n+1}^* \to \cB_n^*)_{n \ge 0}$ be the directive sequence modeling $\sigma'$.
The centered $2$-block presentation of $\mathbf{t}$, denoted $\mathbf{t}'$, satisfies $\sigma'(0, \mathbf{t}') = \mathbf{t}'$.
We let $X'$ be the subshift generated by $\mathbf{t}'$; 
then, $X'$ is conjugate to $X$ and thus these two systems have the same eigenvalue group.
By \Cref{TM32:Sadic_generates+minimality}, $X'$ is generated by $\btau'$.
Note that $\btau'$ is recognizable but not everywhere growing; it does, however, satisfy the weaker property
\begin{equation*}
	\min\bigl\{
	|\tau'_{[0,n)}(ab)| : 
	ab \in \cL(X_{\btau'}^{(n)}) \cap \cB_n^2
	\bigr\} \to \infty 
	\enspace \text{as } n \to \infty.
\end{equation*}
An argument similar to that of \Cref{prop:decisive:suff_conds} then shows that the Kakutani--Rokhlin partitions associated to $\btau'$ generate the topology of $X'$ (it is here that the $2$-letter sliding block code is needed in place of the $1$-letter one used in \Cref{prop:decisive:suff_conds}).
This together with recognizability is sufficient for applying the necessary condition in \Cref{theo:EigCharac:decisive:Mult}, as a careful reading of its proof reveals.

The final ingredient needed to prove \Cref{TM32:main} is the following combinatorial computation on the language $X'$. 
It is based on a variant of extension graphs, constructed for each fixed step $\ell \ge 1$ from the non-adjacent pairs of letters $(\mathbf{t}'_k, \mathbf{t}'_{k+\ell})$ in $\mathbf{t}'$.
To understand letter-coboundaries in the S-adic structure given by $\boldsymbol{\tau}'$, it suffices to show that the graph $E_\ell$ obtained from these pairs is connected when $\ell$ is a power of $2$. 
However, proving this requires an induction over all steps $\ell \ge 1$.

\begin{lemma}
	\label{TM32:ext_graphs_are_connected}
For $\ell \ge 1$, define the undirected bipartite graph $E_\ell$ with vertex sets $\cB_L = \{a_L : a \in \cB\}$ and $\cB_R = \{b_R : b \in \cB\}$, and with an edge $(a_L, b_R)$ whenever $\mathbf{t}'_k = a$ and $\mathbf{t}'_{k+\ell} = b$ for some $k \in \N$.
Then $E_\ell$ is connected for every $\ell \ge 9$.
\end{lemma}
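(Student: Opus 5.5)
We will prove connectivity of $E_\ell$ by strong induction on $\ell$, exploiting the self-similarity $\sigma'(0,\mathbf{t}')=\mathbf{t}'$. The basic mechanism is the following. Suppose $\mathbf{t}'_k=a$ and $\mathbf{t}'_{k+\ell}=b$. Applying $\sigma'$ to the factor of $\mathbf{t}'$ between these positions, the letters $a$ and $b$ are sent to images starting at positions $P(k)$ and $P(k+\ell)$ in $\mathbf{t}'$. The gap between these positions is $\lceil 3(k+\ell)/2\rceil-\lceil 3k/2\rceil$, which equals $3\ell/2$ if $\ell$ is even. If $\ell$ is odd, the gap is $(3\ell\pm1)/2$, depending on the parity of $k$.

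Hence every edge $(a_L,b_R)$ of $E_\ell$, together with the parity of $k$, produces edges of $E_{\ell'}$ for $\ell'\in\{3\ell/2,\,(3\ell\pm1)/2\}$. Within an image of length two, it also produces shifted edges for $\ell'\pm1$. The edges are between letters of $\sigma'(i,a)$ and $\sigma'(j,b)$.

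\textbf{Step 1: base cases.} For $9\le \ell\le L_0$, for some small $L_0$ (say up to about $20$), we check connectivity of $E_\ell$ by direct computation. We use that the language of $X'$ is known explicitly: its length-$(\ell+1)$ factors can be enumerated from the $26$-letter skew-substitution $\sigma'$. By \Cref{TM32:Sadic_generates+minimality}, every factor occurs at every residue modulo $2^n$, so all parities are realized. Only the pairs (first letter, last letter) of factors of length $\ell+1$ are needed.

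\textbf{Step 2: inductive step.} Every $\ell'>L_0$ can be written as $\lceil 3\ell/2\rceil$ or $\lfloor 3\ell/2\rfloor$ for some $\ell\ge 9$ with $\ell<\ell'$. We then show that connectivity of $E_\ell$ implies connectivity of $E_{\ell'}$ through a graph map. Define $\phi(a)$ to be the first letter of $\sigma'(i,a)$, with the parity $i$ adjusted so that $\phi$ is compatible with edges. An edge $(a_L,b_R)$ of $E_\ell$ then gives the edge $(\phi(a)_L,\phi(b)_R)$ of $E_{\ell'}$.

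\textbf{Step 3: connecting the fibres.} The images of $\phi$ cover only part of $\cB$, and letters arising as second letters of $\sigma'(0,\cdot)$ must be attached separately. We attach them using adjacent-offset edges: since both parities of $k$ occur, the same pair gives edges in $E_{\ell'}$ at offsets differing by one. We also use that the two letters of $\sigma'(0,a)$ are determined by $a$, which lets an edge of $E_{\ell'}$ be transported to the second letters. In addition, each letter of $\cB$ must be checked to have a preimage under the relevant first- or second-letter map for some parity.

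This covering and fibre-connection argument is the main obstacle. The map $\phi$ alone is far from surjective (for instance, $\sigma'(1,\cdot)$ takes only the values $a,d,e,g,t,v,w,z$). So we will need a finite verification on the $26$ letters showing that the union of the three edge families transports connectivity for both residue classes of $\ell$. The threshold $9$ presumably appears because below it this transport fails for some residue.
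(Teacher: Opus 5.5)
Your proposal has a genuine gap in Steps 2--3. The step you flag as ``the main obstacle'' is left as an unspecified finite verification, and the mechanism you sketch there cannot work as stated.

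Fix $\ell'=3m+j$ with $j\in\{0,1,2\}$. Desubstitute a pair of positions $K$, $K+\ell'$ of $\mathbf t'=\sigma'(0,\mathbf t')$ according to $K \bmod 3$: position $K$ is the first letter of some $\sigma'(0,a)$, the second letter of some $\sigma'(0,a)$, or the letter $\sigma'(1,a)$. This shows that $E_{\ell'}$ is the union of exactly three edge families, which are the paper's recurrences:
\begin{itemize}
\item for $j=0$, all three come from $E_{2m}$;
\item for $j=1$, they come from $E_{2m}$ and $E_{2m+1}$;
\item for $j=2$, they come from $E_{2m+1}$ and $E_{2m+2}$.
\end{itemize}
So for $j\neq 0$ no single source graph suffices, contrary to Step~2. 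For instance, even using both parities of $k$ and both letters of the length-two images, $E_{2m+1}$ yields only two of the three families of $E_{3m+1}$. Their left vertices (second letters of $\sigma'(0,\cdot)$ and values of $\sigma'(1,\cdot)$) miss $f,h,i,r,s,u$. Edges of $E_{3m+1}$ realized with $K\equiv 0 \pmod 3$ come only from pairs at the even distance $2m$. Your ``adjacent-offset'' and ``transport to second letters'' devices in Step~3 do not supply these edges.

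The missing idea is short. Let $F_0$, $F_1$, $F_2\subseteq\cB$ be, respectively, the sets of first letters of $\sigma'(0,\cdot)$, second letters of $\sigma'(0,\cdot)$, and values of $\sigma'(1,\cdot)$.
\begin{itemize}
\item In each of the three recurrences, the three families have left vertex sets exactly $F_0,F_1,F_2$, and right vertex sets a permutation of these.
\item Each family is the image of a connected source graph under a vertex map sending edges to edges, hence is connected. This uses that every factor of $\mathbf t'$ occurs at both parities, so every edge of the source is realized at the parity required.
\item From the table, $F_0\cap F_1=\{b,l,m,n,o,y\}$, $F_0\cap F_2=\{g,t\}$ and $F_1\cap F_2=\{d,w\}$, while $F_0\cup F_1\cup F_2=\cB$.
\end{itemize}
Hence the union of the three families is a connected subgraph of $E_{\ell'}$ containing all $52$ vertices. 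So connectivity of $E_{2m},E_{2m+1},E_{2m+2}$ implies connectivity of $E_{3m},E_{3m+1},E_{3m+2}$ for every $m$. Strong induction then closes with computed base cases $9\le\ell\le 14$.

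This transport is uniform in $m$, so the threshold $9$ cannot come from a failure of transport, as you guessed; it only reflects the base range.

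With this step supplied, your route is lighter than the paper's. The paper uses the same recurrences, but verifies by computer that $(E_\ell)_\ell$ is eventually periodic with period $27$ from $\ell=9$, and then checks connectivity of $27$ graphs. The repaired induction needs only six base cases and a check of three intersections. As written, however, your proposal does not contain the inductive step.
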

\begin{proof}
Suppose $(a_L, b_R) \in E_\ell$, so there exists a word $u$ of length $\ell+1$ beginning with $a$ and ending with $b$ that occurs in $\mathbf{t}'$ at some position $k$.
By \cite[Proposition~14]{TM32}, every subword of $\mathbf{t}$ occurs at both even and odd positions; since $\mathbf{t}'$ is a sliding block code of $\mathbf{t}$, the same holds for $\mathbf{t}'$.
In particular, $u$ occurs at both even and odd positions of $\mathbf{t}'$.
Since $\sigma'(0, \mathbf{t}') = \mathbf{t}'$, the words $\sigma'(i \bmod 2, u)$ also occur in $x$, and one deduces that
$\bigl(\sigma'(i \bmod 2, a)[r]_L,\; \sigma'(i+\ell \bmod 2, b)[s]_R\bigr) \in E_{\lfloor (3\ell+i)/2 \rfloor + s - r}$
for every $(a_L, b_R) \in E_\ell$, $i \in \Z/2\Z$, and valid indices $r, s$.
Inverting this argument via instead a desubstitution yields recurrence relations expressing $E_\ell$ in terms of $E_{\ell'}$ for smaller values $\ell'$.
To make this precise, let us denote
\[	\Phi(E_h;\, p, p', i, i') = 
	\bigl\{
	\bigl(\sigma'(i \bmod 2, a)[p]_L,\; \sigma'(i' \bmod 2, b)[p']_R\bigr)  :
	(a_L, b_R) \in E_h \bigr\};
\]
then, the recurrences take the explicit form: for every $\ell \ge 1$,
\begin{enumerate}
	\item $E_{3\ell} = \Phi(E_{2\ell};0,0,0,0) \cup \Phi(E_{2\ell};1,1,0,0) \cup \Phi(E_{2\ell};0,0,1,1)$;
	\item $E_{3\ell+1} = \Phi(E_{2\ell}; 0,1,0,0) \cup \Phi(E_{2\ell+1};1,0,0,1) \cup \Phi(E_{2\ell+1};0,0,1,0)$;
	\item $E_{3\ell+2} = \Phi(E_{2\ell+1};0,0,0,1) \cup \Phi(E_{2\ell+2};1,0,0,0) \cup \Phi(E_{2\ell+1};0,1,1,0)$.
\end{enumerate}
Since the language of $\mathbf{t}'$ is decidable, a computer program can compute $E_\ell$ for any given $\ell$.
This and the formulas above permit to verify that the sequence $(E_\ell)_{\ell \ge 1}$ is eventually periodic with period $27$, starting from $\ell = 9$.
A direct check then confirms that $E_\ell$ is connected for every $9 \le \ell < 9 + 27$, and the lemma follows thanks to the eventual periodicity of $(E_\ell)_{\ell \ge 1}$.
\end{proof}

\begin{theorem}
	\label{TM32:main}
The subshift $X$ generated by the $\tfrac{3}{2}$--Thue--Morse word has additive eigenvalue group
\[	E(X,S) = \bigl\{ \tfrac{k}{3^n} :  k\in\Z,\, n \ge 0 \bigr\}.	\]
\end{theorem}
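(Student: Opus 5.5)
We prove the two inclusions separately: first that every $k/3^n$ is an eigenvalue, then that no other real number is. Throughout we work with the conjugate subshift $X'$ and the directive sequence $\btau'$ modelling $\sigma'$; it has the same eigenvalue group as $X$, it is recognizable, and its Kakutani--Rohklin partitions generate the topology.

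\emph{Lower bound} $\{k/3^n\}\subseteq E(X,S)$. We exhibit explicit continuous eigenfunctions. By \Cref{TM32:nLevel_is_just_a_coloring}, each point of $X_{\btau}^{(n)}$ carries a $2^n$-periodic colouring $z\in(\Z/2^n\Z)^\Z$, and the towers of $\cP_n$ are indexed by residues $j$. A point $x\in S^kB_n((a,j))$ has a well-defined continuous invariant $\phi_n(x)=P^{(n)}(j)+k \bmod 3^n$, where $P^{(n)}(j)$ is the position modulo $3^n$ at which $\tau_{0,n}((a,j))$ starts. This uses \Cref{TM32:image_lengths} together with the relation $P(j+2^{n+1})=P(j)+3\cdot 2^n$. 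The key check is that $\phi_n(Sx)=\phi_n(x)+1$ also across tower tops. This follows because positions of level-$n$ letters at consecutive residues map under $P^{(n)}$ to positions differing by $h_n$. Hence $\phi_n/3^n$ is an eigenfunction for the eigenvalue $1/3^n$. Equivalently, one verifies the criterion of \Cref{theo:EigCharac:Mult} with $\rho_n((a,j))=P^{(n)}(j)/3^n$: for $u\in\cL(X_{\btau}^{(n)})$ starting at residue $j$, the quantity $h_n(u)$ equals $P^{(n)}(j+|u|)-P^{(n)}(j)$ modulo $3^n$, so the expression in \eqref{eq:convergence:theo:EigCharac:Mult} vanishes identically for $\alpha=1/3^n$ and for large enough levels.

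\emph{Upper bound}. Let $\alpha\in E(X,S)$. Applying the necessary direction of \Cref{theo:EigCharac:decisive:Mult} to $\btau'$ yields letter-coboundaries $c_n$ on $X_{\btau'}^{(n)}$ with $\sup_u\|c_n(u)-\alpha h_n(u)\|\to0$. We then show that the $c_n$ are trivial modulo the colouring. A length-$2$ word $(a,j)(b,j+1)$ at level $n$ comes from a pair of letters of $\mathbf t'$ at distance $\ell\approx 2^n$ apart in the level-$0$ word, namely the positions $P^{(n)}(j)$ and $P^{(n)}(j+1)$. Using \Cref{TM32:ext_graphs_are_connected}, we deduce that the extension graph of the empty word of $X_{\btau'}^{(n)}$ is connected, at least after grouping letters by their residue component. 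Combined with minimality of $S^{2^n}$ from \Cref{TM32:Sadic_generates+minimality}, this forces $\rho_n$ to depend only on the residue $j$. Then \Cref{thm:manifold} shows that $c_n$ is a coboundary induced purely by the periodic colouring.

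Next we evaluate the criterion on return-type words. Take $u\in\cL(X_{\btau'}^{(n)})$ with $|u|=2^n$, returning to the same residue; by \Cref{TM32:image_lengths}, $h_n(u)=3^n$. The coboundary then contributes $0$ on $u$, so $\|3^n\alpha\|\to0$. Taking words of length $2^{n+m}$ in the same way gives $\|3^{n+m}\alpha\|\le\varepsilon_n$ for all $m$, uniformly in $m$. Finally we use \Cref{cor:EigCharac:FAR:eigs&freqs}: $\alpha$ is an integer combination of the measures $\mu(B_n(a))$. By unique ergodicity and the equidistribution of residues, these measures lie in $\Z[1/6]$, more precisely in $2^{-n}3^{-n}\Z$. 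So $\alpha$ is rational with denominator dividing $6^n$. For a rational $\alpha=p/(2^s3^t)$ in lowest terms with $s\ge1$, $\|3^N\alpha\|\ge 2^{-s}$ for every $N$, which contradicts $\|3^N\alpha\|\to0$. Hence $s=0$.

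The main obstacle is the step reducing the $c_n$ to colouring-only coboundaries: one must translate connectivity of $E_\ell$ for $\ell\ge9$ into connectivity of level-$n$ extension graphs. It is also delicate to justify using \Cref{theo:EigCharac:decisive:Mult} and \Cref{cor:EigCharac:FAR:eigs&freqs} without $\btau'$ being everywhere growing. I would first try the simplest route, showing directly that $\|3^N\alpha\|\to0$ via \Cref{theo:EigCharac:mot_retour} applied to return words of length $2^N$ in $X_{\btau}^{(N)}$, and use the coboundary analysis only for the rationality argument.
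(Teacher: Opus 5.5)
Your lower bound is correct and is the paper's argument: $\phi_n$ is the ``position modulo $3^n$'' map, and $\phi_n^{-1}(0)=\bigcup_{a}\tau'_{[0,n)}([(a,0)])$ is the clopen set $U_n$ whose $3^n$ translates form the paper's cyclic partition. The ``equivalently'' via \Cref{theo:EigCharac:Mult} is not available as stated. Since $P(-1)=-1$ in $\Z/2^n\Z$, every letter $(a,2^n-1)$ has height $1$ at all levels, so $\btau$ is not primitive. The direct construction does not need it.

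The upper bound has a genuine gap where you make the $c_n$ ``colouring-only''. The extension graph of the empty word of $X_{\btau'}^{(n)}$ is \emph{not} connected within a residue class. By \Cref{TM32:nLevel_is_just_a_coloring}, its edges are $((a,j)_L,(b,j+1)_R)$ with $ab\in\cL_2(X')$. Because $X'$ is a $2$-block presentation, a left vertex coding $w_0\cdots w_4$ is adjacent only to right vertices coding $w_1\cdots w_5$. Each component therefore carries a fixed overlap word $w_1\cdots w_4$, and by minimality of $S^{2^n}$ there are at least $p_X(4)\ge 5$ components per residue. So, by \Cref{thm:manifold}, the coboundary space is large and $\rho_n$ genuinely depends on the letter. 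For a residue-$0$ word $u$ of length $2^n$ that starts with $(a,0)$ and is followed by $(b,0)$, you get $c_n(u)=\rho_n((b,0))-\rho_n((a,0))$, which need not vanish. Hence ``the coboundary contributes $0$ on $u$'' and $\|3^n\alpha\|\to0$ are unfounded.

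You also misread $E_\ell$. It says nothing about length-$2$ words at level $n$: the images $\tau'_{[0,n)}((a,j))$ and $\tau'_{[0,n)}((b,j+1))$ start $h_n(j)$ apart, not about $2^n$ apart. Since the first coordinate of a level-$n$ point is itself a point of $X'$, the relevant edges of $E_{2^n}$ are the pairs $(x_{j2^n},x_{(j+1)2^n})$. These are the first letters of consecutive residue-$0$ blocks $y_{[j2^n,(j+1)2^n)}$ at level $n$.

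The missing idea is to use $E_{2^n}$ only \emph{approximately}, as the paper does. On such blocks $h_n=3^nj$ and $c_n=\rho_n((x_{j2^n},0))-\rho_n((x_0,0))$. So the criterion gives $\rho_n((b,0))-\rho_n((a,0))\approx 3^n\alpha$ modulo $1$ along every edge of $E_{2^n}$. Two right vertices with a common left neighbour then have nearly equal $\rho_n$. Connectivity of $E_{2^n}$, with the error small compared to $1/|\cB|$, makes $\rho_n((\cdot,0))$ nearly constant. Plugging back gives $\|3^nj\alpha\|<1/3$ for all $j\in\Z$ at one fixed level $n$, which forces $3^n\alpha\in\Z$.

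Further issues:
\begin{enumerate}
\item At level $n$, a word of length $2^{n+m}$ has $h_n=3^n2^m$ by \Cref{TM32:image_lengths}, not $3^{n+m}$.
\item Your measure step is unjustified. You give no argument for unique ergodicity, and $\mu(B_n((b,j)))$ is $3^{-n}$ times a conditional letter frequency that has no reason to be dyadic.
\item That step is also unnecessary. If $\|3^N\alpha\|<1/6$ for all $N\ge N_0$, then $\{3^{N+1}\alpha\}=3\{3^N\alpha\}$, which stays bounded only if $3^{N_0}\alpha\in\Z$.
\item Your fallback via \Cref{theo:EigCharac:mot_retour} meets the same obstructions. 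The sequence $\btau$ is not primitive, and its partitions are not shown to generate the topology. For $\btau'$ you would need some letter of $\cB$ to recur in $\mathbf t'$ at distance exactly $2^N$, which is not established.
\end{enumerate}
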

Note that we expect $X$ to be an almost 2-to-1 extension of the $3$-adic odometer~\cite[Lemma 9 and Proposition 15]{TM32}.
\begin{proof}
The inclusion $\frac{1}{3^n} \in E(X,S)$ can be verified by showing that $U_n \coloneqq \bigcup_{a \in \cB} \tau'_{[0,n)}([(a,0)])$ is a clopen cyclic partition of $X'$ of size $3^n$.
Alternatively, in \cite{TM32} the authors prove that the map $\pi \colon X \to \cA^\Z$ defined by $\pi(x)_k = (x_{k+1} + x_k) \bmod 2$ is a factor map onto a subshift $Y$ that can be identified as the subshift generated by a $(9,4)$-Toeplitz word in the sense of \cite{CassaigneKarhumaki}.
In particular, $Y$ is a Toeplitz subshift whose maximal equicontinuous factor is $\Z_3$ \cite{CassaigneKarhumaki}.
It follows that the maximal equicontinuous factor of $X$ itself factors onto $\Z_3$, which is equivalent to $E(X,S)$ containing $\frac{k}{3^n}$ for all $k \in \Z$ and $n \ge 0$.

We now prove the reverse inclusion. 
Let $\alpha \in E(X,S)$; we show that $\alpha = \frac{k}{3^n}$ for some $k \in \Z$ and $n \ge 0$.
As explained in the paragraph before \Cref{TM32:ext_graphs_are_connected}, the necessary condition of \Cref{theo:EigCharac:decisive:Mult} holds for $\btau'$ and $\alpha$, hence, there exist a level $n \ge 10$ and a letter-coboundary $c \colon \cB_n^* \to \R$ in $X_{\btau'}^{(n)}$ such that
\begin{equation}
	\sup\bigl\{ \|c(u) - \alpha\, h_n(u)\| : 
	u \in \cL(X_{\btau'}^{(n)}) \bigr\}  
	< \frac{1}{6|\cB|}.
\end{equation}
\Cref{thm:manifold} further yields a map $\rho \colon \cB_n \to \R$ such that $c(ab) = \rho(b) - \rho(a)$ for every $ab \in \cL(X_{\btau'}^{(n)})$.

Fix $y \in X_{\btau'}^{(n)}$ with $y_0 = (x_0, 0)$.
By \Cref{TM32:nLevel_is_just_a_coloring}, there exists $x \in X'$ such that $y_k = (x_k, k \bmod 2^n)$ for all $k \in \Z$.
Then $c(y_{[0,\, j2^n)}) = \rho((x_{j2^n}, 0)) - \rho((x_0, 0))$ for all $j \in \Z$, and substituting into the bound above gives
\[	\|\rho((x_{j2^n},0)) - \alpha\, h_n(y_{[0,\,j2^n)}) - \rho((x_0,0))\|
	= \|c(y_{[0,\,j2^n)}) - \alpha\, h_n(y_{[0,\,j2^n)})\|
	< \frac{1}{6|\cB|}.
\]
By \Cref{TM32:image_lengths}, $h_n(y_{[0,\, j2^n)}) = |\tau'_{[0,n)}(y_{[0,\, j2^n)})| = 3^n j$, so the above simplifies to
\begin{equation}
	\label{TM32:eq:rho_approx}
	\|\rho((x_{j2^n},0)) - 3^n j\alpha - \rho((x_0,0))\| < \frac{1}{6|\cB|}
\end{equation}
for all $j \in \Z$.
Taking differences between consecutive values of $j$ yields
\[	\|\rho((x_{(j+1)2^n},0)) - \rho((x_{j2^n},0)) - 3^n \alpha\| <  \frac{1}{3|\cB|}
\]
for all $j \in \Z$.
Equivalently, $\|\rho((a,0)) - \rho((b,0)) - 3^n \alpha\| < \tfrac{1}{3|\cB|}$ for every edge $(a_L, b_R) \in E_{2^n}$.
In particular, whenever two letters $b, b' \in \cB$ share a common left neighbor in $E_{2^n}$, one has $\|\rho((b',0)) - \rho((b,0))\| < \tfrac{2}{3|\cB|}$.
Since $E_{2^n}$ is connected by \Cref{TM32:ext_graphs_are_connected}, iterating this estimate along any path gives $\|\rho((b',0)) - \rho((b,0))\| < \tfrac{1}{3}$ for all $b, b' \in \cB$.
Combining this uniform bound on $\rho$ with~\eqref{TM32:eq:rho_approx} gives $\|3^n j\alpha\| < \tfrac{1}{3}$ for all $j \in \Z$, which forces $\alpha = p/3^n$ for some $p \in \Z$.
\end{proof}

\section{Further examples}\label{sec:examples}

In this section we provide several examples that   prove  among others the sharpness of the hypothesis in our results, as developed in \Cref{subsec:ex:nocoboundary}   which illustrates the fact   the existence of an eigenvalue does not imply the existence of a letter-coboundary satisfying the convergence
from \Cref{theo:EigCharac:FAR:Mult}. \Cref{subsec:nontrivialdynamical} shows that for any minimal substitutive subshift with an irrational eigenvalue, there exists a substitution generating a subshift conjugate to the original one in which all letter-coboundaries associated to that irrational eigenvalue are non-trivial.  \Cref{subsec:Itau} provides an example  of application of \Cref{cor:EigCharac:FAR:eigs&freqs} which relates  eigenvalues and measures of the  base  atoms of  the towers. \Cref{ex:sturmian_avec_cobord}  and \Cref{ex:CM} aim to illustrate the difference between coboundaries which are strictly combinatorial and those which are associated to eigenvalues in the sense of \Cref{theo:EigCharac:decisive:Mult,theo:EigCharac:FAR:Mult,theo:EigCharac:decisive:positive}, clearly showing that either type may or may not be trivial in general.  In particular, \Cref{subsec:nontrivial}
illustrates the fact that non-trivial coboundaries can appear in minimal subshifts without having
any impact on eigenvalues.
\Cref{subsec:nonproper}  shows that non-proper directive sequences can also have trivial coboundaries only.
We end this section 
with a characterization of  the group of additive eigenvalues of  systems  generated by directive sequences of constant length and finite alphabet rank,  and show in particular that only rational eigenvalues are possible, a result which was proved in \cite{BMY} by very different methods.

\subsection{Eigenvalues without coboundaries}
\label{subsec:ex:nocoboundary}

In this section we exhibit primitive and recognizable directive sequences  (with unbounded alphabet rank) generating  shifts  having an additive eigenvalue that admits no coboundary characterizing  this  eigenvalue.
This proves the fact that Theorems \ref{theo:EigCharac:FAR:Mult} and \ref{theo:EigCharac:decisive:positive} cannot be extended to all primitive and recognizable directive sequences, since for some directive sequences, the existence of an eigenvalue does not imply the existence of a letter-coboundary satisfying the convergence from \Cref{theo:EigCharac:FAR:Mult}.

The example is based on the following construction. Let $\cA = \{0,1\}$, $Y \subseteq \cA^\Z$ be any Sturmian subshift, and $\sigma \colon \cA^* \to \cA^*$ be the period-doubling substitution $\sigma(0) = 01$, $\sigma(1) = 00$.
We can view the  (direct) product system $(X_{\sigma} \times Y, S\times S)$ as a subshift over the alphabet $\cA\times\cA$.
We start with a technical lemma which  guarantees minimality of this subshift.

\begin{lemma}
    \label{lem:sturmXperdoubl_is_minimal}
    Let $\cA = \{0,1\}$, $Y \subseteq \cA^\Z$ be any Sturmian subshift, and $\sigma \colon \cA^* \to \cA^*$ be the period-doubling substitution $\sigma(0) = 01$, $\sigma(1) = 00$.
    The product system $(X_{\sigma} \times Y, S\times S)$ is minimal.
\end{lemma}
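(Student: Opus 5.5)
The plan is to use the standard criterion that the product of two minimal systems is minimal if and only if their maximal equicontinuous factors are disjoint in the appropriate sense. Concretely, I would use the spectral form: for minimal $X_1,X_2$ where one of them (here $Y$) is distal or an isometric extension, minimality of $X_1\times X_2$ holds as soon as $E(X_1,S)\cap E(X_2,S)=\Z$. Since that general statement needs care, I would instead argue directly with the almost one-to-one odometer and rotation structure.

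First I would record the two eigenvalue groups. The period-doubling subshift $X_\sigma$ is a Toeplitz subshift. It is an almost one-to-one extension of the dyadic odometer $\Z_2$, so $E(X_\sigma,S)=\Z[1/2]$, and its maximal equicontinuous factor map $\pi_1\colon X_\sigma\to\Z_2$ is injective on a residual set. The Sturmian $Y$ is an almost one-to-one extension of the irrational rotation $R_\beta$ on $\T$, via $\pi_2$. The product of equicontinuous factors $\Z_2\times\T$ with the map $(z,t)\mapsto(z+1,t+\beta)$ is a minimal rotation on a compact abelian group. This holds because its dual group character condition reduces to: $k/2^n + m\beta\in\Z$ forces trivial characters, which is true since $\beta$ is irrational.

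Next I would lift minimality. Let $Z\subseteq X_\sigma\times Y$ be a nonempty closed invariant set. Its image under $\pi_1\times\pi_2$ is closed and invariant, hence everything by minimality of the group rotation. Choose a point $(z,t)$ with singleton fibers for both $\pi_1$ and $\pi_2$. The non-singleton fibers are countable unions of orbits in each factor, namely the $\pi_1$-preimages of the orbit of the non-Toeplitz positions, and the discontinuity orbit of the rotation coding. So the set of good pairs is the complement of a countable union of "orbit $\times$ group" and "group $\times$ orbit" sets, hence nonempty and in fact dense. The fiber of such a good $(z,t)$ in $X_\sigma\times Y$ is a single point $p$, and $p\in Z$. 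Because good pairs are dense and the fibers over them are singletons, upper semicontinuity of fibers gives that the points over good pairs are dense in $X_\sigma\times Y$. More simply, every point of $X_\sigma\times Y$ is a limit of points of the form $(x,y)$ with $x,y$ having singleton fibers: approximate each coordinate separately by such points, then adjust so the pair is jointly good. Since all such points lie in $Z$, we get $Z=X_\sigma\times Y$.

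The main obstacle is that last density step. Singleton-fiber points are dense in each factor separately, but I need density of pairs $(x,y)$ whose image $(\pi_1x,\pi_2y)$ avoids the bad set. I would handle it with Baire: the good set in each factor is a dense $G_\delta$, so their product is a dense $G_\delta$ in $X_\sigma\times Y$. The residual property of fibers gives $Z\supseteq$ this dense set, and closedness concludes.
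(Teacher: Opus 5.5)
Your proof is correct, and it takes a genuinely different route from the paper, which gives two arguments. The first is a one-liner: $X_\sigma$ and $Y$ are almost one-to-one extensions of the dyadic odometer and of an irrational rotation, hence point-distal with no common nontrivial eigenvalue, and a disjointness theorem from Ellis's book is quoted. The second is purely combinatorial. Left-properness and primitivity of $\sigma$ give $n$ and a word $p$ such that $pu$ is a common prefix of $\sigma^n(0)$ and $\sigma^n(1)$, so $u$ occurs in any $x \in X_\sigma$ along a whole residue class modulo $2^n$. A theorem of Balkov\'a et al.\ on return words in Sturmian shifts then guarantees that the occurrences of any $v$ in any $y \in Y$ meet every residue class modulo $2^n$.

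You start from the same structural input as the paper's first proof, but you replace the quoted disjointness theorem by a self-contained lifting argument. Minimality of the rotation by $(1,\beta)$ on $\Z_2 \times \T$ is a one-line character computation. Hence every nonempty closed invariant $Z$ projects onto $\Z_2\times\T$, and so contains every point lying over a pair of singleton fibers. This gives a short, elementary proof of a general principle: a product of minimal almost one-to-one extensions is minimal as soon as the product of the bases is. The price is relying on the (standard) almost one-to-one structure of both factor maps. The paper's combinatorial proof avoids structure theory altogether, but depends on the Balkov\'a et al.\ result.

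Two remarks on the write-up. First, the ``standard criterion'' in your opening paragraph is false in that generality. For a minimal weakly mixing $X$, the maximal equicontinuous factors of $X$ and $X$ are trivially disjoint, yet $X \times X$ contains the diagonal. Also, $Y$ is not distal, so your ``spectral form'' would not apply either; fortunately you never use either claim.

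Second, the step you single out as the main obstacle is not one. Since $(\pi_1\times\pi_2)^{-1}(z,t) = \pi_1^{-1}(z) \times \pi_2^{-1}(t)$, the points lying over good pairs form exactly $G_1 \times G_2$, where $G_i$ is the set of points with a singleton $\pi_i$-fiber. Each $G_i$ is nonempty and $S$-invariant, because the fiber over a translate is the $S$-image of the fiber; hence it is dense by minimality of the factor. So $G_1\times G_2$ is dense without any Baire argument or ``adjustment''. Your intermediate sentence invoking only upper semicontinuity would, by itself, also need irreducibility of each $\pi_i$, so it is best replaced by this observation.
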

\begin{proof}
    We give two proofs: a short one using machinery from topological dynamics, and a combinatorial one inspired by  the recent analysis of finite skew-products of minimal subshifts from \cite{densityLanguages}; see also \cite{Balkova2016}.
    
    It is well known that $Y$ and $X_\sigma$ are an almost 1-to-1 extensions of their maximal equicontinuous factors, which are an irrational one-dimensional rotation and an odometer.
    In particular, they are point-distal systems \cite{ellis} and do not share any non-trivial eigenvalues.
    Therefore, by a classic result in topological dynamics \cite[Chapter 11]{ellis}, $Y$ and $X_\sigma$ are disjoint, {\em i.e.}, the product system $(X_{\sigma} \times Y, S\times S)$ is minimal.
    \medskip
    
    We now give a second proof, of a combinatorial nature.
    Let $z = (x,y) \in X_\sigma \times  Y$, $v \in \cL(Y)$ and $u \in \cL(X_\sigma)$ be arbitrary.
    It is enough to show that there exists $i \in \Z$ such that $y_{[i,i+|v|)} = v$ and $x_{[i,i+|u|)} = u$, as this would show that orbit in $(Y \times X_\sigma, S \times S)$ is dense.
    
    First, we use the primitivity and left-properness of $\sigma$ to find $n \geq 1$ and $p \in \cA^*$ such that $p u$ is a prefix of both $\sigma^n(0)$ and $\sigma^n(1)$.
    Next, we take a $\sigma^n$-factorization $(x',k)$ of $x$ in $X_\sigma$. 
    Since $|\sigma^n(0)| = |\sigma^n(1)| = 2^n$, we have that $S^{j2^n-k}x = \sigma^n(S^j x)$ for any $j \in \Z$.
    Hence, $x_{[j2^n-k,j2^n-k+2^n)} = \sigma^n(x'_j)$, and thus
    \begin{equation}
        \label{eq:lem:sturmXperdoubl_is_minimal:0}
        x_{[j2^n-k+|p|,j2^n-k+|p|+|u|)} = u
        \enspace \text{for all $j \in \Z$.}
    \end{equation}
    We now relate the previous recurrence times of $u$ in $x$ with the recurrence times of $v$ in $y$ using a result of Balkova {\em et al.}
    Consider the morphism $\varphi \colon \cA^* \to \Z/2^n\Z$ defined by $\varphi(0) = \varphi(1) = 1$.
    Since $Y$ is Sturmian, \cite[Theorem 3.3]{Balkova2016} ensures that the set $\cR_Y(v)$ of return words to $v$ in $Y$ satisfies 
    \begin{multline}
        \label{eq:lem:sturmXperdoubl_is_minimal:1}
        \{ j \in \Z/2^n\Z : \text{$y_{[i,i+|v|)} = v$ for some $i \in 2^n\Z + j$} \} \\ = 
        \{ \varphi(y_{[0,i)}) : i \geq 1,\, y_{[i,i+|v|)} = v\} = \Z/2^n\Z.
    \end{multline}
    Hence, there exists $i \geq 1$ such that $y_{[i,i+|v|)} = v$ and $i \in 2^n\Z + |p|+k$.
    Using \eqref{eq:lem:sturmXperdoubl_is_minimal:0} with $j \coloneqq (i-|p|-k)/2^n \in \Z$ gives $x_{[i, i+|u|)} = u$.
    As $y_{[i,i+|v|)} = v$, the proof is complete.
\end{proof}

\begin{theorem}
    \label{thm:NoCobsInftyRank}
    There exist a recognizable and primitive directive sequence $\btau = (\tau_n \colon \cA_{n+1}^* \to \cA_n^*)_{n\geq0}$ and an eigenvalue $\alpha$ of $X_{\btau}$ such that, for any choice of letter-coboundaries $c_n \colon \cA_n^* \to \R$ in $X_{\btau}^{(n)}$, for $n \geq 0$,
    \begin{equation} \label{eq:statement:thm:NoCobsInftyRank}
        \liminf_{n\to\infty} \max_{a\in\cA_n} \|c_n(a) - \alpha h_n(a)\| > 0.
    \end{equation}
    In particular, the criteria of Theorems \ref{theo:EigCharac:FAR:Mult} and \ref{theo:EigCharac:decisive:positive} are not satisfied for any sequence of coboundaries.
\end{theorem}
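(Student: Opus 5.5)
We will build the directive sequence from the minimal product system $Z \coloneqq X_\sigma \times Y$ of \Cref{lem:sturmXperdoubl_is_minimal}. The eigenvalue we target is $\alpha = 1/2$, which comes from the period-doubling factor $X_\sigma$. The intuition is that a coboundary criterion needs the level-$n$ towers to see the parity of positions through the extension graph of the empty word. We will construct $\btau$ so that every level subshift $X_{\btau}^{(n)}$ has a connected extension graph $\Gamma(\varepsilon)$. By \Cref{cor:connectedtrivial}, all letter-coboundaries $c_n$ are then trivial, and \eqref{eq:statement:thm:NoCobsInftyRank} reduces to
\[
\liminf_n \max_{a\in\cA_n}\|h_n(a)/2\|>0,
\]
that is, at every large level some tower has odd height.

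For the construction we take a proper (hence recognizable) $S$-adic representation of $Z$, for instance one built from return words to nested cylinders as in \Cref{rem:anyminimal}. We then modify it so that at each level the heights $h_n(a)$ include both odd and even values, and so that the language of length-$2$ words at level $n$ connects $\Gamma(\varepsilon)$. The Sturmian coordinate is what makes this possible. Return times to a cylinder of $Z$ are governed by the return times in $Y$, and these occur in all residue classes modulo $2^n$, by the result of Balkov\'a \emph{et al.}\ used in the lemma. So we can choose base cylinders $B_n$ (cylinders $[u]\times[v]$ of $Z$) whose return words include words of odd length. Taking the level-$n$ letters to be return words to the base, $h_n(a)$ equals the length of the return word, so odd heights appear. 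We will additionally make the morphisms $\tau_n$ non-left-proper and non-decisive, for example by prepending a letter-permuting step. The purpose is to ensure that every pair $ab$ of consecutive return words occurs in the derived sequence, which makes $\Gamma(\varepsilon)$ connected. This step has to be non-decisive, since \Cref{theo:EigCharac:decisive:Mult} forces the alphabets to be unbounded and the sequence to be non-decisive in any counterexample.

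It remains to check that $1/2$ is an eigenvalue. This is automatic: $Z$ factors onto $X_\sigma$, whose odometer factor $\Z_2$ gives all dyadic eigenvalues, and $E$ is a conjugacy invariant, so $1/2\in E(X_{\btau},S)$. Recognizability and primitivity of the return-word sequence follow from minimality and aperiodicity (\cite{BSTY}). With all $c_n$ trivial, $\|c_n(a)-\alpha h_n(a)\| = 1/2$ for every odd-height letter $a$, which gives the $\liminf$ bound $1/2$. The final sentence of the theorem then follows because both criteria imply $\max_a\|c_n(a)-\alpha h_n(a)\|\to0$ (\Cref{rem:FAR_crit=>Host}).

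The main obstacle is reconciling two requirements at every level $n$: connectedness of $\Gamma_{X^{(n)}_{\btau}}(\varepsilon)$ and the presence of an odd height. Connectedness requires the derived sequences to mix all return words freely. An odd height requires using a cylinder whose return words are not all even, even though the $X_\sigma$ coordinate forces long-range parity structure. Our first attempt will be to show that the extension graph of the derived sequence of $Z$ to $[u]\times[v]$ is connected by exploiting the Sturmian rigidity: consecutive return words $ab$ should be realizable for all admissible pairs thanks to the independence coming from the disjointness of $Y$ and $X_\sigma$. If this fails, we will instead enlarge the alphabet by coding pairs (block presentations) to force connectivity, at the cost of infinite alphabet rank.
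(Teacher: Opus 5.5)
There is a genuine gap: with $\alpha=1/2$ the object you are trying to build cannot exist. Your reduction is correct as far as it goes. If every $\Gamma_{X_{\btau}^{(n)}}(\varepsilon)$ is connected, then all $c_n$ are trivial and you would need an odd height at every large level. The problem is that connectedness and an odd height are incompatible at large levels once $1/2$ is an eigenvalue. Here is why.

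\begin{itemize}
\item By \Cref{theo:EigCharac:mot_retour}, $\|h_n(w)/2\|\to0$ uniformly over all words $w=u_0\cdots u_{k-1}$ with $u_0\cdots u_{k-1}u_0\in\cL(X_{\btau}^{(n)})$.
\item These numbers lie in $\{0,1/2\}$, so for large $n$ they are all exactly $0$. Hence $a\mapsto h_n(a)/2 \bmod \Z$ kills every return word to a letter.
\item \Cref{lem:cob a la host} then lifts it to a real letter-coboundary $c_n$ with $c_n(a)-h_n(a)/2\in\Z$ for all $a\in\cA_n$.
\item If $\Gamma_{X_{\btau}^{(n)}}(\varepsilon)$ is connected, $c_n\equiv 0$ by \Cref{cor:connectedtrivial}, so every $h_n(a)$ is even.
\end{itemize}

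The obstacle you flag in your last paragraph is therefore an impossibility, not a difficulty. This is also why return words to small cylinders $[u]\times[v]$ of $X_\sigma\times Y$ all have even length. The continuous $1/2$-eigenfunction is constant on such cylinders, whatever residues the Sturmian coordinate visits, so the Balkov\'a et al.\ result cannot produce odd heights there.

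The same argument works for any rational $\alpha=p/q$ and any primitive recognizable $\btau$, decisive or not, of any alphabet rank. The values $\alpha h_n(w)$ lie in $\frac1q\Z$, so ``small'' forces ``zero''. One then gets letter-coboundaries with $\|c_n(u)-\alpha h_n(u)\|=0$ for all large $n$. So no rational eigenvalue can ever witness \eqref{eq:statement:thm:NoCobsInftyRank}.

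The missing idea is to use an irrational eigenvalue. Then the relations $\|\rho_n(b)-\rho_n(a)+\alpha h_n(a)\|\le\varepsilon_n$ along edges of $\Gamma(\varepsilon)$ give only approximate constancy of $\rho_n$. The errors accumulate along paths, and paths can be arbitrarily long when alphabets are unbounded; this is exactly what bounded rank or decisiveness controls in \Cref{lemma:EigCharac:rho2cobs}.

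The paper's construction goes as follows.
\begin{itemize}
\item The eigenvalue $\alpha$ is the parameter of the Sturmian factor $Y$, chosen so that $\|2^n\alpha\|$ stays bounded away from $0$.
\item The level-$n$ alphabet is $\cA\times\cL_{2^n}(Y)$. The morphism is $\tau_n(a,uv)=(a_0,u)(a_1,v)$, where $\sigma(a)=a_0a_1$ and $|u|=|v|=2^n$.
\item All heights equal $2^n$.
\item By the minimality of \Cref{lem:sturmXperdoubl_is_minimal}, the level-$n$ graph $\Gamma(\varepsilon)$ is built from $\Gamma_{X_\sigma}(\varepsilon)$ and the extension graph of the dendric $2^n$-power shift of $Y$. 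Both are connected, so the level graph is connected.
\item Hence every $c_n$ is trivial, and $\max_a\|c_n(a)-\alpha h_n(a)\|=\|2^n\alpha\|$, which does not tend to $0$.
\end{itemize}

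This needs no return-word bases, no parity tricks, and no ad hoc non-decisive modification. The unbounded alphabets $\cL_{2^n}(Y)$ and non-decisiveness come for free. Your proposal also never actually defines the ``letter-permuting step'' meant to force connectedness, but the decisive flaw is the rational choice of $\alpha$.
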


\begin{proof}
Let $\cA = \{0,1\}$, $Y \subseteq \cA^\Z$ be any Sturmian subshift, and $\sigma \colon \cA^* \to \cA^*$ be the period-doubling substitution $\sigma(0) = 01$, $\sigma(1) = 00$.
By \Cref{lem:sturmXperdoubl_is_minimal}, the product system $(X_\sigma \times Y, S \times S)$ is minimal; in particular,
\begin{equation}
    \label{eq:1:thm:NoCobsInftyRank}
    \text{$\cL_k(X_{\sigma} \times Y) = \cL_k(X_\sigma) \times \cL_k(Y)$ for $k \geq 0$.}
\end{equation}
We are going to use this property to obtain an S-adic structure for $X_\sigma \times Y$.

Fix, for every $n \geq 0$, a bijection $\xi_n \colon \cA \times \cL_{2^n}(Y) \to \cC_n$ between $\cA \times \cL_{2^n}(X_\sigma)$ and an alphabet $\cC_n$.
We define a morphism $\tau_n \colon \cC_{n+1}^* \to \cC_n^*$ as follows.
For $(a,w) \in \cA \times \cL_{2^{n+1}}(Y)$, we write $\sigma(a) = a_0 a_1$ and $w = u v$, where $|u| = |v| = 2^n$ and $u,v \in \cL_{2^n}(Y)$, and then set $\tau_n(\xi_{n+1}(a,w)) = \xi_n(a_0, u) \xi_n(a_1, v)$.
Since $\sigma$ is recognizable over $X_\sigma$, it is not difficult to check that $\btau \coloneqq (\tau_n : n \geq 0)$ is recognizable.
Moreover, we have that
\begin{equation}
    \label{eq:2:thm:NoCobsInftyRank}
    \tau_{0,n}(\xi_n(a,w)) = (\sigma^n(a), w)
    \enspace \text{for all $(a,w) \in \cA \times \cL_{2^n}(Y)$,}
\end{equation}
so $\btau$ is everywhere growing and generates  $X_\sigma \times Y$ (for every  word $(v,v')$ occurring in   $X_\sigma \times Y$, there exist $n $, $a $ and $w$ such that $(v,v')$ occurs in $(\sigma^n(a), w)$). 
Since  $X_\sigma \times Y$ minimal, we also have that $\btau$ is primitive.

We now show that each $X_{\btau}^{(n)}$ admits the trivial coboundary only.
In view of \Cref{thm:manifold}, it is enough to show that $\Gamma_{X_{\btau}^{(n)}}(\varepsilon)$ is connected.
Observe that, by \eqref{eq:1:thm:NoCobsInftyRank}, two vertices $(a_L,u)_L$ and $(b_R,v)_R$ of $\Gamma_{X_{\btau}^{(n)}}(\varepsilon)$ are connected if and only if $a_L$ and $b_R$ are connected in $\Gamma_{X_{\sigma}}(\varepsilon)$ and $u_L$ and $v_R$ are connected in $\Gamma_{Y_n}(\varepsilon)$, where 
\[ Y_n = \{(y_{[k\cdot 2^n,(k+1)\cdot 2^n)})_{k \in \Z} : y \in Y\} \]
is the $2^n$-power shift of $Y$.
Now, a direct computation shows that $\Gamma_{X_\sigma}(\varepsilon)$ is connected.
Also, since $Y$ is a dendric subshift \cite[Example 3.2]{dendric}, $Y_n$ is dendric as well \cite[Theorem 3.13]{dendric}, so $\Gamma_{Y_n}(\varepsilon)$ is connected.
We deduce that $\Gamma_{X_{\btau}^{(n)}}(\varepsilon)$ is connected, from which it follows that $X_{\btau}^{(n)}$ admits no non-trivial  coboundary.

We now prove that $X_\sigma \times Y$ satisfies the conclusion of the theorem.
Observe that $(Y,S)$ is a dynamical factor of $(X_\sigma \times Y, S\times S)$.
Thus, if $\alpha$ is the parameter of  the Sturmian subshift $Y$ (see Definition \ref{def:dendric}), then $\alpha$ is an additive eigenvalue of $X_\sigma \times Y$.
We may choose $\alpha$ such that $(\|2^n \alpha\| : n \geq 0)$ stays bounded away from $0$, for instance, by ensuring that both digits $0$ and $1$ occur in the binary expansion of $\alpha$ with bounded gaps.

With the aim of obtaining a contradiction, assume that there exist coboundaries $c_n \colon \cA_n^* \to \R$ in $X_{\btau}^{(n)}$ for which \eqref{eq:statement:thm:NoCobsInftyRank} fails to hold.
Since each $X_{\btau}^{(n)}$ admits no non-trivial coboundary, we have that 
\[ \liminf_{n \to \infty} \| \alpha h_n(a) \| = 0. \]
Now, from \eqref{eq:2:thm:NoCobsInftyRank} we see that $h_n(a) = 2^n$, so $\| 2^n \alpha \| \to 0$ as $n \to \infty$.
This contradicts the  above assumption on  $\alpha$, proving \eqref{eq:statement:thm:NoCobsInftyRank}.
By combining this with \Cref{rem:FAR_crit=>Host}, we conclude that the criteria of Theorems \ref{theo:EigCharac:FAR:Mult} and \ref{theo:EigCharac:decisive:positive} cannot be satisfied for any sequence of coboundaries.
\end{proof}
Note that the construction developed above    provides    everywhere growing and primitive  $S$-adic expansions to  minimal shifts  distinct from  the expansions discussed in Remark \ref{rem:anyminimal}.

\subsection{Non-trivial letter-coboundaries}
\label{subsec:nontrivialdynamical}

The following result shows that for any minimal substitutive subshift with an irrational eigenvalue, there exists a substitution generating a subshift conjugate to the original one in which all letter-coboundaries associated to that irrational eigenvalue are non-trivial.

\begin{proposition} 
    \label{prop:exs_nontrival_cobs}
    Let $\tau \colon \cA^* \to \cA^*$ be a primitive substitution.
    Suppose that $X_\tau$ has an irrational eigenvalue $\alpha$.
    Then, there exists a primitive substitution $\sigma \colon \cB^* \to \cB^*$ generating a subshift conjugate to $X$ and $a \in \cA$ such that $\alpha |\sigma^n(a)|$ stays bounded away from $\Z$ as $n \to \infty$.
    
    In particular, if $n \geq 0$ and $c_n$ is any letter-coboundary in $X_\sigma$ satisfying Condition \eqref{eq:hip_cob:theo:EigCharac:FAR:Mult} of \Cref{theo:EigCharac:FAR:Mult}  with $\btau = (\tau : n \geq 0)$ and $\alpha$,
    then $c_n$ is nontrivial.
\end{proposition}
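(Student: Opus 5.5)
The plan is to construct $\sigma$ by modifying $\tau$ through a conjugacy that inserts a marker letter, so that some letter's image length under iteration has a controlled non-integer contribution of $\alpha$. The simplest concrete choice is a \emph{doubling-of-a-letter} construction: fix $b\in\cA$ and introduce a new alphabet $\cB=\cA\cup\{\bar b\}$. Each occurrence of $b$ in a point of $X_\tau$ is recoded as $b\bar b$ via the morphism $\phi(b)=b\bar b$, $\phi(a)=a$ otherwise. The subshift $Y$ obtained as the shift-orbit closure of $\phi(X_\tau)$ is the tower (suspension) of $X_\tau$ over the cylinder $[b]$ with height $2$. This is a first-return construction, not a conjugacy, so the plan must be adjusted: I would rather use a recoding that \emph{is} a conjugacy, namely a sliding block code. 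A preferable choice is the centered $k$-block presentation of Section~\ref{subsec:block_presentations}, combined with a rotation $\sigma$ of the iterate $\tau^p$, i.e.\ a substitution of the form $\sigma(a)=w_a^{-1}\tau^p(a)w_{a'}$ (a conjugate substitution), which generates a conjugate (in fact equal up to shift) subshift while changing image lengths by bounded amounts.

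The key quantitative input is the following. Since $\alpha$ is an eigenvalue and $\btau=(\tau)$ is recognizable and primitive with bounded alphabet, \Cref{theo:EigCharac:FAR:Mult} together with Host's theorem gives a coboundary $c$ with $\alpha|\tau^n(a)|-c(\tau^{n}(a))\to 0 \pmod 1$, and by \Cref{finitary=>cobords_are_eigenvectors} we may take $c\circ\tau^p=c$. Hence $\alpha|\tau^n(a)|\to c(a)\pmod 1$. The goal is to produce $\sigma$ in which the limiting value $c_\sigma(a)$ is nonzero modulo $1$ for some letter. I would construct $\sigma$ on $\cB=\cA^{[1]}$ (the $1$-block presentation, which is conjugate) and then conjugate it by a fixed word so that some image $\sigma^n(a)$ is $\tau^{np}$-image shifted by a word $u$ with $\alpha|u|\notin\Z$ \emph{in the limit}. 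Concretely: pick a letter $a$ and a nonempty word $u$ such that $\sigma(a)=u^{-1}\tau^p(a)u'$ with $|u'|-|u|=m$ fixed. Then $|\sigma^n(a)|=|\tau^{pn}(a)|+m_n$, where $m_n$ is a telescoped sum of length differences along the orbit of first/last letters. Since $\alpha$ is irrational, $\{k\alpha\}$ is never $0$ for $k\neq0$, so by choosing the conjugating words so that the correction $m_n$ is eventually a fixed nonzero integer $m$ for $a$ along a periodic first-letter orbit, we get $\alpha|\sigma^n(a)|\to c(a)+m\alpha\pmod1$. Since $c(a)$ takes finitely many values, we may choose $m$ with $c(a)+m\alpha\notin\Z$, using irrationality.

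The final assertion then follows from \Cref{rem:FAR_crit=>Host}: Condition~\eqref{eq:hip_cob:theo:EigCharac:FAR:Mult} forces $\|c_n(\sigma^m(a))-\alpha|\sigma^{n+m}(a)|\|\to0$. If $c_n$ were trivial, this would give $\|\alpha|\sigma^{k}(a)|\|\to0$, contradicting boundedness away from $\Z$.

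The main obstacle is making the conjugation rigorous: I need $\sigma$ to be a genuine primitive substitution whose subshift is conjugate to $X_\tau$ (a shifted-image substitution generates the same language only if the removed prefixes are consistent, e.g.\ when $\tau^p$ is left-proper on a suitable block presentation), and I need the correction $m_n$ to stabilize to a prescribed nonzero value. I would first try working on a return-word (derived) substitution of $X_\tau$, which is conjugate-friendly and proper, and then apply a single non-proper rotation.
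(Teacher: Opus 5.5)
You have not actually constructed $\sigma$, and the mechanism you sketch cannot deliver a bounded correction $m$.

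Conjugating $\tau^p$ by a fixed common prefix $w$ leaves the incidence matrix unchanged. So $|\sigma^n(a)|=|\tau^{pn}(a)|$ and $m_n=0$.

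Suppose instead the removed and appended words depend on the letter, i.e.\ $\tau^p(u)\,\psi(x)=\psi(\First(u))\,\sigma(u)$ whenever $ux\in\cL(X_\tau)$ with $x$ a letter. This already forces $\psi$ to be constant on the right-hand sides of the connected components of $\Gamma_{X_\tau}(\varepsilon)$. Iterating gives, for any letter $d$ with $ad\in\cL(X_\tau)$,
\[
|\sigma^n(a)| = |\tau^{pn}(a)| + \sum_{j=0}^{n-1}\Bigl(\bigl|\tau^{p(n-1-j)}\bigl(\psi(\First(\sigma^{j}(d)))\bigr)\bigr| - \bigl|\tau^{p(n-1-j)}\bigl(\psi(\First(\sigma^{j}(a)))\bigr)\bigr|\Bigr).
\]
The words moved at earlier levels are re-substituted at every later level, so $m_n$ is not a telescoping sum of bounded differences. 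In general it grows exponentially; in the construction below it equals $\sum_{j<n}|\tau^j(a)|$. Nothing lets you prescribe an eventual constant value $m$, so the two points you list as ``the main obstacle'' are the whole content of the proposition.

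The missing idea is that the correction need not be bounded: only $\alpha$ times it modulo $1$ matters, and this is controlled by making $\tau$ proper.

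The paper first replaces $\tau$, up to conjugacy, by a proper substitution with common first letter $a$. It does this via Durand--Host--Skau, then a block presentation so that no word $bb$, with $b$ a letter, lies in $\cL(X_\tau)$. For proper $\tau$ the associated coboundary is trivial. The decomposition $\alpha\vec{\mathbf 1}=\vec w+\vec v$ from \Cref{theo:EigCharac:FAR:Mult} then makes $\|\alpha|\tau^j(a)|\|$ decay exponentially, and after passing to a power, $\sum_{j\ge1}\|\alpha|\tau^j(a)|\|<\tfrac12\|\alpha\|$.

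One then moves the leading $a$ of the images of the letters following $a$ onto the end of $\tau(a)$:
\begin{itemize}
\item $\sigma(a)=\tau(a)a$;
\item $\sigma(b)=u_b$ if $ab\in\cL(X_\tau)$ and $\tau(b)=au_b$;
\item $\sigma(b)=\tau(b)$ otherwise.
\end{itemize}
This gives $\sigma^n(a)=\tau^n(a)\tau^{n-1}(a)\cdots\tau(a)a$, hence
\[
\alpha|\sigma^n(a)|\equiv\alpha+\sum_{j=1}^{n}\alpha|\tau^j(a)|\pmod{\Z},
\]
which stays at distance at least $\tfrac12\|\alpha\|$ from $\Z$.

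So the irrational offset is your $m\alpha$ with $m=1$, contributed by a single letter. The unbounded remainder is harmless because its terms are summably small modulo $1$. Without properness they may converge to nonzero coboundary values, and the sum would then not converge modulo $1$.

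Your consistency worry is genuine here as well. For this letterwise rotation to satisfy the compatibility identity above, every letter that can follow $a$ must be preceded only by $a$. That identity is what gives the same language and the formula for $\sigma^n(a)$, and the condition has to be secured by the choice of presentation.

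Your final deduction from \Cref{rem:FAR_crit=>Host} is the same as the paper's.
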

\begin{proof}
We start with some simplifications.
Since any minimal substitutive subshift is generated, up to a conjugacy, by a proper primitive substitution \cite{DURAND_HOST_SKAU_1999}, we may assume that $\tau$ is proper.
Since $X_\tau$ aperiodic (as $\alpha$ is irrational), there exists $k \geq 1$ such that $a^k \notin \cL(\tau)$ for all $a \in \cA$.
This implies that the centered $k$-block presentation $\tau^{[k]} \colon (\cA^{[k]})^* \to (\cA^{[k]})^*$ of $\tau$ (see Section \ref{subsec:block_presentations}) satisfies that $a \notin E^R_{X_{\tau^{[k]}}}(a)$ for all $a \in \cA^{[k]}$.
Now, it is known that $\tau^{[k]}$ inherits from $\tau$ the fact  that $\tau^{[k]}$ is primitive, that some power of $\tau^{[k]}$  also inherits  properness, and that $\tau^{[k]}$ generates a subshift conjugate to $X_\tau$ by a sliding block code of radius 0 (see \Cref{subsec:block_presentations} or \cite[Section 2.4.5]{DP}).
So, we may assume without loss of generality that $\tau$ itself satisfies that $a \notin E^R_{X_{\tau}}(a)$ for all $a \in \cA$, that is,
 there exists no letter $b$  such that $baa \in  \cL(\tau)$.

As $\tau$ is proper, there is a letter $a \in \cA$ such that $\tau(b)$ starts with $a$ for each $b \in \cA$.
We need one additional simplification.
Since $\alpha$ is an eigenvalue of $X_{\tau}$ and $\tau$ is primitive and proper, we can use \Cref{theo:EigCharac:FAR:Mult}  to obtain a decomposition  of the form $\alpha \vec{\mathbf 1} = \vec w + \vec v$, where $\vec w, \vec v \in \R^\Z$ satisfy that $\vec w M_\tau^n \in \Z^\cA$ for some $n \geq 0$ and $\vec v M_\tau^m$ converges exponentially fast to $0$.
Note that $\vec w + \vec v \notin \Z^\cA$ as $\alpha$ is irrational, so $\|\vec w + \vec v\| > 0$.
Hence, up to considering a power of $\tau$, we have that
\begin{equation}  \label{eq:exs_nontrival_cobs:acc_conv}
    \text{$\vec w M_\tau \in \Z^\cA$ and $\sum_{j \geq 1} \| \vec v M_\tau^j\| < \frac{1}{2}\|\vec w + \vec v\|$.}
\end{equation}

Let us now construct the new substitution $\sigma$.
For $b \in E^R_{X_\tau}(a)$, we can write $\tau(b) = a u_b$.
Then, we define $\sigma \colon \cA^* \to \cA^*$ by
\begin{equation} \label{eq:exs_nontrival_cobs:defi_sigma}
    \sigma(b) =\begin{cases}
        \tau(a) a   & \text{if $b = a$} \\
        u_b   & \text{if $b \in E^R_{X_\tau}(a)$} \\
        \tau(b) & \text{otherwise}.
    \end{cases}
\end{equation}
Note that this definition is consistent and that $\sigma$ is non-erasing since $a \notin E^R_{X_\tau}(a)$.  Otherwise, let $b \in E^R_{X_\tau}(a)$ be  such that  $u_b$
is equal to  the empty  word. Since $ab  \in   \cL(\tau)$,  there exists a letter $c$  (possibly equal to $a$ or $b$) such that
$abc \in   \cL(\tau)$,  which implies  $aaa  \in   \cL(\tau)$, since the image of $c$ starts with $a$ by properness. 
We also define $\psi \colon \cA \to \cA$ by $\psi(b) = a$ for $b \in E^R_{X_\tau}(a)$, and $\psi(c)$ as the empty word for $c \in \cA \setminus E^R_{X_\tau}(a)$.
An inductive argument shows that
\begin{equation*}
    \tau(c_0 c_1 \dots c_{\ell-1}) \, \psi(c_\ell) = 
        \psi(c_0)\, \sigma(c_0 c_1 \dots c_{\ell-1})
        \enspace \text{for all $c_0 c_1 \dots c_\ell \in \cL(\tau)$ with $\ell \geq 1$.}
\end{equation*}
It follows from this that $\sigma$ is primitive and generates the same subshift as $\tau$.

We now prove that $\alpha |\sigma^n(a)|$ stays bounded away from $\Z$.
By \eqref{eq:exs_nontrival_cobs:defi_sigma}, $\sigma^n(a) = \tau^n(a) \tau^{n-1}(a) \dots \tau(a) a$, and thus  $|\sigma^n(a)| = \sum_{0 \leq j \leq n} |\tau^n(a)|$.
With this, the decomposition $\alpha \vec{\mathbf 1} = \vec w + \vec v$ and the left-hand side of \eqref{eq:exs_nontrival_cobs:acc_conv}, we compute the following:
\[  \alpha |\sigma^n(a)| = 
    \sum_{0 \leq j \leq n} \alpha \vec{\mathbf 1} M_\tau^j \  \tr{\vec{\mathbf{1}}_a} = 
    \Big( \vec w + \vec v + \sum_{1\leq j\leq n}  \vec{v} M_\tau^j  \Big) \, \tr{\vec{\mathbf{1}}_a}
    \pmod{\Z}.
    \]
Therefore, by the right-hand side of \eqref{eq:exs_nontrival_cobs:acc_conv}, $\| \alpha |\sigma^n(a)| \| \geq \frac{1}{2}\|\vec w + \vec v\| > 0$ for every $n \geq 1$.

Finally, if $n \geq 0$ and $c_n$ is a coboundary in $X_\sigma$ satisfying Condition \eqref{eq:hip_cob:theo:EigCharac:FAR:Mult} of \Cref{theo:EigCharac:FAR:Mult}, then $\bigl\|\alpha |\sigma^m(a)| - c_n \tau_{n,m}(a)\bigr\|$ converges to $0$ as $m \to \infty$ by \Cref{rem:FAR_crit=>Host}, so $c_n \not\equiv 0$.
\end{proof}


\begin{example}
\label{ex:sturmian_avec_cobord}
The construction used in the proof of \Cref{prop:exs_nontrival_cobs} gives the following concrete example.
It is obtained from the Sturmian subshift of parameter $1 - \sqrt{2}$ (see Definition \ref{def:dendric}).

Let $\tau \colon \cA^* \to \cA^*$ be the primitive substitution defined on $\cA = \{0,1,2,3\}$  by
\[ \tau \colon \begin{cases}
        0 &\mapsto 0 2 \\
        1 &\mapsto 0 1 \\
        2 &\mapsto 0 1 3 \\
        3 &\mapsto 3 0 2.
    \end{cases}     \]
Let us  show  that $\alpha = 1 - \sqrt{2}$ is an eigenvalue of  the minimal subshift $X_\tau$ associated to a nontrivial coboundary, 
by verifying Item (3) of \Cref{theo:EigCharac:FAR:Mult}.
Let us  again put this condition in matricial form: it is sufficient to finding row vectors $\vec{c}, \vec{v}, \vec{w}$  and  a non-negative integer $n$ such that 
 $$\alpha \vec{\mathbf 1}  M_\tau^n=\alpha  \vec {h}_n= \vec{v} + \vec{w} + \vec{c} ,$$
  where $\vec{w}$ has integer entries, 
  $\vec{c}$  in the letter-coboundary vector space $\vec{\cC}_{X_\tau}$ (as  defined in  \Cref{subsec:discrepancy_subst}), and 
  $\sup\bigl\{\|\vec{v} \cdot M_\tau^m {\vec{u} }: u \in \cL(X_\tau)\| \bigr\}$ tends to 0 as $m \to \infty$ (where $\vec{u}$ stands for the vector whose entries count the number of occurrences of each  given letter in $u$).
Again  by using   for example the Dumont-Thomas decomposition \cite{Dumont-Thomas}, the later condition  is implied by    the condition  $\vec{v} \, M_\tau^m$ converges exponentially fast to the vector  $\vec{0}$ as $m \to \infty$, which 
  is equivalent to $\vec{v}$ lying in the span of all the eigenspaces of $M_\tau$ associated to eigenvalues of modulus strictly smaller than $1$.

Let us now  detail how to  verify these conditions.
The matrix of $\tau$ is
\[  M_\tau = \begin{bmatrix}
        1 & 1 & 1 & 1 \\
        0 & 1 & 1 & 0 \\
        1 & 0 & 0 & 1 \\
        0 & 0 & 1 & 1 
    \end{bmatrix}.   \]
Its left  action on row vectors is diagonalizable; a basis of  left row eigenvectors is given by
\begin{equation}
    \label{eq:ex:sturmian_avec_cobord:eigenvectors}
    \begin{cases}
    \vec{u}  &= (1 + \sqrt{2}, 1, \sqrt{2}, 1)   \\   
    \vec{u'}  &= (\sqrt{2}, -1, -2, \sqrt{2}-1)  \\
    \vec{z}  &= (0, 1, 0, -1)   \\  
    \vec{z'} &=(-1, 1, 1, 0)      
    \end{cases}
\end{equation}
with eigenvalues $\lambda = 1 + \sqrt{2}$, $\alpha = 1 - \sqrt{2}$, $1$ and $0$, respectively.
The only contracting direction is generated by the vector  $\vec{u'}$.

Next, we compute the letter-coboundaries.
One can check that $\cL(X_\tau) \cap \cA^2 = \{01,02,13,20,30\}$, so the extension graph $\Gamma_{X_\tau}(\varepsilon)$ has three connected components, whose vertices on the right-hand side of the graph are $\{0_R\}$, $\{1_R,2_R\}$ and $\{3_R\}$.
By \Cref{rem:constantK} and \Cref{thm:manifold}, we deduce that a morphism $c \colon \cA^* \to \R$ is a letter-coboundary in $X_\tau$ if and only if there exists $\xi_0,\xi_1 \in \R$ such that 
\[  c(0) = -\xi_0, \, 
    c(1) = \xi_1, \, 
    c(2) = \xi_0
    \enspace \text{and} \enspace
    c(3) = \xi_0-\xi_1.
    \]
Thus, the coboundary vector space $\vec{\cC}_{X_\tau}$ is spanned by the row vectors $(-1,0,1,1)$ and $(0,1,0,-1)$.
It is easy to check that $\vec{\cC}_{X_\tau}$  is  then exactly the span of the row eigenvectors $\vec{z}$ and $\vec{z'}$ defined in \Cref{eq:ex:sturmian_avec_cobord:eigenvectors}.
Moreover, the coboundaries appearing in \Cref{theo:EigCharac:FAR:Mult} can always be assumed to be of the form $c \circ \tau_{n,m}$ (by increasing $n$ if necessary), so, since $\vec{z'} \, M_\tau = 0$, we can narrow down the useful vectors in  $\vec{\cC}_{X_\tau}$ to those in the span of $\vec{z}$.
Consequently, the relevant decompositions for $\alpha \vec{h}_n$ have the form 
\[  \text{$\alpha \vec{h}_n = \xi \, \vec{z} + \xi' \, \vec{u'} + \vec{w}$, with $\xi, \xi' \in \R$ and $\vec{w} \in \Z^\cA$} .\] 
Furthermore, if such a decomposition exists, then there a decomposition  with $\xi$ and $\xi'$ belonging to the field $\Q(\alpha)$,   which can be  found computationally.
In our case we find  the following  solution with $n=0$:
\begin{align*}
    \alpha \, \vec{h}_0 =  \alpha\, \vec{\mathbf 1}=
    -\tfrac{1}{\sqrt{2}} \, \vec{z} +
    (-1 + \tfrac{1}{\sqrt 2}) \, \vec{u'} + 
    (0,0,-1,-1)
\end{align*}
yields that  the coboundary $c \colon \cA^* \to \R$ given by ${\vec c}=-\tfrac{1}{\sqrt{2}}$
satisfies that $\| \alpha h_n(a) - c \circ \tau^n(a) \|$
converges to $0$ exponentially fast; therefore, $\alpha$ is an eigenvalue of $X_\tau$.
Remark that $c \circ \tau = c$, which is the condition found in Host's criterion \cite{Host86} (c.f. \Cref{existence_limit_coboundary}).
This, together with $c(3) = 1/\sqrt{2} \neq 0$, gives
\[  \lim_{n\to\infty} \| \alpha h_n(3) \| = \|1/\sqrt{2}\| > 0, \]
which shows that {\em any}  coboundary satisfying Condition \eqref{eq:hip_cob:theo:EigCharac:FAR:Mult} of \Cref{theo:EigCharac:FAR:Mult} for the eigenvalue  $\alpha$ must be non-trivial.

We remark that $X_\tau$ is conjugate to the Sturmian subshift of parameter $\sqrt{2}-1$. This is in accordance   with  \Cref{balanced=>spaces_decomposition} from which we deduce that
$X_{\tau}$ is letter-balanced.
\end{example}

\begin{remark}
    \label{rem:notpreservedconnecetdgraph}
    \Cref{prop:exs_nontrival_cobs} and the previous example shows that the number of connected components of the extension graph of the empty word is not preserved under conjugacy. 
    Indeed, $\Gamma_{X_\tau}(\varepsilon)$ is not connected since $X_\tau$ admits nontrivial letter-coboundaries, while the Sturmian subshift of parameter $1-\sqrt{2}$, to which $X_\tau$ is conjugate, has only connected extension graph \cite[Example 3.2]{dendric}.
    Thus, it is possible to have two conjugate systems, one   with a connected extension  graph associated to the empty word, and the other one with a disconnected one. This is also the case of the substitution from Example \ref{ex:extension_graph_CM}
    which  generates  a shift which is conjugate to  a Sturmian shift (see \cite{BCB19} for more details). 
\end{remark}

\subsection{On eigenvalues and  measures}  \label{subsec:Itau}


In this section, we give  a natural example of  a subshift where $I(\btau)$ (as defined in \Cref{eq:Itau}) is strictly contained in $I(X_{\btau},S)$  (see also Remark \ref{rem:measures}).
 

\begin{example}\label{ex:TM:freqs&eigs}

    Let $\cA = \{0,1\}$ and $\tau \colon \cA^* \to \cA^*$ be the Thue--Morse substitution, that is,
    \[   \tau \colon \begin{cases} 
            0 & \mapsto 01 \\
            1 & \mapsto 10
        \end{cases}     \]
    It is well-known \cite{Dekking1978} (see also  below)  that
    \[  E(X_\tau,S) = \{ k/2^n : k \in \Z,\, n \geq 0\}.  \]
    It is also known \cite{Dekking1992} that if $\mu$ is the unique invariant measure of $(X_\tau,S)$, then the frequency $\mu([v])$ of a factor $v$ with length $n\geq 2$ is either $\frac{1}{6}2^{-m}$ or $\frac{1}{3}2^{-m}$, where $m$ is such that $2^m<n\leq 2^{m+1}$, while $\mu([0])=\mu([1])=1/2$. This implies that
    \[  I(X_\tau, S) = 
        \Big\{ \frac{k}{3\cdot 2^n} : k \in \Z, \, n \geq 0 \Big\}.   \]
    Therefore, $E(X_\tau,S) \subsetneq I(X_\tau,S)$ since frequencies of words of length at least $2$ do not belong to $E(X_\tau,S)$. Note that thanks to  Gottschalk-Hedlund's Theorem (see \Cref{theo:GH}), this implies that $(X_{\tau},S)$ is not balanced on factors of length at least  than  or equal to  $2$ (see also \cite[Corollary 4.10]{BCB19}).
       
    On the other hand,  the measure of $B_n(a) $ is known to be $\mu(B_n(a)) = 1/2^{n+1}$; thus, with the notation from (\ref{eq:Itau}), 
    \[  I(\btau) = \{ k/2^n : k \in \Z,\, n \geq 0 \}, \]
    with  $\btau$    being  the constant directive sequence $\btau = (\tau : n \geq 0)$. 
    In particular, we recover that 
    \[  E(X_\tau,S) = I(\btau) \subsetneq I(X_\tau,S). \]
    This comes from   \Cref{cor:EigCharac:FAR:eigs&freqs}, by noticing that the substitution $\tau$ being primitive implies that $\btau$ is (strongly) primitive and recognizable (see  also \Cref{rem:recog_substitution}).
    We remark that the factor $1/3$ in $I(X_\tau,S)$ appears in measures of cylinders of words of length 2, such as $[00]$, which has measure $1/6$.
    Similarly, $\mu(\{\tau^n(x) \in X_\tau : x_0 = x_1 = 0\}) = 1/3\cdot 2^{n+1}$.  
    These sets are not required to compute $I(\btau)$.
\end{example}

\subsection{Non-trivial  coboundary  without eigenvalues}\label{subsec:nontrivial}

As  proved in \Cref{thm:proper+unimod=>RW_gen_Zd}, the language of a minimal subshift generated by a proper unimodular directive sequence admits only trivial coboundaries. 
Non-trivial coboundaries can appear in minimal subshifts without having any impact on eigenvalues, even in the case of non-proper directive sequences admitting  non-trivial eigenvalues, as the following example illustrates. 
This example has been constructed by J. Cassaigne and M. Minervino in order to provide an example of a subshift that is balanced on factors and that admits  $1$ as an eigenvalue of its incidence matrix, as explained in \cite[Example 2.7]{BCB19} (see also \Cref{subsec:discrepancy_subst}).

\begin{example}
    \label{ex:CM}

    We consider the substitution
    $\sigma$ from Example \ref{ex:extension_graph_CM},
    together with the   constant directive sequence $\bsigma= (\sigma : n \geq 0)$.
    
    We recall that its extension graph $\Gamma_{X_{\bsigma}}(\varepsilon)$ has two connected components.
    By \Cref{thm:manifold} (see \Cref{rem:constantK}),  a morphism $c \colon \cA^* \to \R$ is a letter-coboundary in $X_\sigma$ if and only if there exists $\xi \in \R$ such that
    \[  c(1) = -c(0) = \xi
        \enspace \text{and} \enspace
        c(2) = 0.   \]
    In particular, $X_{\bsigma}$ admits non-trivial  letter-coboundaries.

    Let us check that the group of eigenvalues $E(X_{\bsigma},S)$ of $X_{\bsigma}$ is $\Z +  \frac{1}{2}(3 - \sqrt{5}) \Z$,  and moreover that any letter-coboundary associated to $ \frac{1}{2}(3 - \sqrt{5})$ is trivial.
    In other words, the non-trivial  letter-coboundaries of  $X_{\bsigma}$   have no effect on the spectrum of the system.

    The  incidence matrix of  the substitution $\sigma$ is 
    \[  M_\sigma = \begin{bmatrix}
            2 & 0 & 1 \\
            1 & 1 & 1 \\
            0 & 1 & 1
        \end{bmatrix}  . \]
The substitution $\sigma$  is primitive and its determinant is $1$. In particular $X_{\bsigma}$ is minimal and  recognizable  (see \Cref{rem:recog_substitution}), which allows us to use \Cref{cor:EigCharac:FAR:eigs&freqs}.
    Moreover primitive substitutive subshifts are uniquely ergodic, so $X_{\bsigma}$ supports a unique invariant measure $\mu$.
    We deduce that $$E(X_{\bsigma}, S) \subseteq 
   I(\bsigma)=  \bigcup_{n \geq 0} \big\{ \sum_{a\in\cA} w_a \, \mu(B_n(a)) : (w_a)_{a\in\cA} \in \Z^{\cA} \big\}. 
  $$
    Furthermore, one can compute $\mu(B_n(a))$ as follows.
    Let $\vec u = (u_a)_{a\in\cA} \in \R^{\cA}$ be the Perron-Frobenius eigenvector of $M_\sigma$, normalized so that $\sum_{a\in\cA} u_a = 1$, and $\lambda$ be its Perron-Frobenius eigenvector; then, 
    \[  \mu(B_n(a)) = \frac{1}{\lambda^n} u_a
        \enspace \text{for all $n \geq 0$ and $a \in \cA$.}   \]
    This formula is classical; it is also a consequence of \Cref{prop:measure_transfer}.
    One has 
    \[  \lambda = \frac{1}{2}\big(3 + \sqrt{5}\big)
        \enspace \text{and} \enspace
        \vec u = \bigl( \tfrac{1 + \sqrt{5}}{2}, \tfrac{1 + \sqrt{5}}{2}, 1 \bigr).
        \]
    Note that we  recover the  result of   Lemma \ref{lem:cob_zero_integral}.
    Routinary computations then show that 
    \[  I(\bsigma) = \{k + \ell \bar{\lambda} : k,\ell \in \Z\} \eqqcolon \Z + \bar{\lambda} \Z,  \] 
    where $\bar{\lambda} \coloneqq 1/\lambda = \frac{1}{2}(3 - \sqrt{5})$ is the Galois conjugate of $\lambda$.
    Thus, $E(X_{\bsigma},S) \subseteq \Z + \bar{\lambda} \Z$.
    Let us show that, in fact, $E(X_{\bsigma},S) = I(\bsigma)$.
   Since $E(X_{\bsigma},S)$  is an additive group containing $1$, it is enough to show that $\bar{\lambda} \in E(X_{\bsigma},S)$.

    For that purpose, we will apply \Cref{theo:EigCharac:decisive:positive}. Decisiveness comes from Lemma \ref{lemma:decisive:same_first_letter_in_each_CC}.
    One also checks that every word of  length $2$  in $\cL(X_{\bsigma})$ occurs in $\sigma^3(a)$ for every  letter $a$ (see Figure \ref{fig:extensiongraph}). 
    Let us show  that $$\sum _n \max\{ \| \alpha h_n(v) \| : a \in \cA,\, v \in \prefixes(\sigma^6(a))\}$$  is finite.
    To this end, it is enough to prove that $\| \alpha h_n(a) \|$ converges exponentially fast to $0$ as $n$ tends to infinity, for every $a \in \cA$.
    
    As before, we restate the problem in  terms of linear algebra.
    We recall that  $\vec{\mathbf 1}$ stands for  the row vector $(1,1,1) \in \Z^{\cA}$.
    Then, the height vector $\vec h_n = (h_n(a) : a \in \cA)$ equals $\vec{\mathbf 1} \, M_\sigma^n$.
    So, we look for a decomposition of the form  $$\bar{\lambda} \, \vec h_n = \vec v_n + \vec w_n,$$ with $\vec w_n \in \Z^\cA$ and $\vec v_n$ converging exponentially fast to $0$.
    We  use the decomposition
    \[  \bar{\lambda} \, \vec{\mathbf 1} = 
        (\bar{\lambda}, \bar{\lambda}, \bar{\lambda}-1) +
        (0,0,1).
        \]
    Let $\vec v = (\bar{\lambda}, \bar{\lambda}, \bar{\lambda}-1)$ and $\vec w = (0,0,1)$.
    Then, $\vec v$ is a left-eigenvector of $M_\sigma$ of eigenvalue $\bar{\lambda}$. This implies
    \[  \bar{\lambda} \, \vec{\mathbf 1} \, M^n_\sigma = 
        \bar{\lambda}^n \, (\bar{\lambda}, \bar{\lambda}, \bar{\lambda}-1) +
        (0,0,1) M_\sigma^n.
        \]
    Since $|\bar{\lambda}| < 1$, the first term on the right-hand side converges exponentially fast to $0$ as $n \to \infty$, and the second term has integer entries.
    Thus, we can apply \Cref{theo:EigCharac:decisive:positive} to the trivial coboundary $c \equiv 0$, showing that $\bar{\lambda}$ is an eigenvalue of $X_\sigma$.

\end{example}

\subsection{Non-proper directive sequences with trivial coboundaries}
\label{subsec:nonproper}
As stressed in  Theorem \ref{theo:EigCharac:proper}, if a directive sequence is left-proper, then the sequence of coboundaries associated to any eigenvalue can be chosen to be trivial. 
Although any minimal subshift has a proper $S$-adic representation \cite{GPS92,DDMP}), properness is not a property  easily obtained by simple manipulations of a given directive sequence (see  the proof of \cite[Corollary 1.4]{espinoza22}), and in many cases the most natural directive sequence generating a minimal $S$-adic subshift is not proper.
However, as the following examples show, there are cases were all letter-coboundaries associated to a directive sequence are trivial, even if the sequence is not proper.

\begin{example}\label{ex:fabien}
    Consider the substitutions $\sigma$ and $\tau$ defined on $\cA=\{a,b,c\}$ by
    \[  \sigma \colon \begin{cases}
            a & \mapsto acb \\
            b & \mapsto bab \\
            c & \mapsto cbc
        \end{cases}
        \enspace \text{and} \enspace
        \tau \colon \begin{cases}
            a & \mapsto abc \\
            b & \mapsto acb \\
            c & \mapsto aac
        \end{cases}
        \]
    Let $\bsigma$ be the directive sequence consisting of runs of $\sigma$ that grow by one each time, with a single $\tau$ after each run; that is,
    \[  \bsigma = \sigma, \tau, \ \sigma,\sigma,\tau,\ \sigma,\sigma,\sigma,\tau, \dots     \]
    This directive sequence, introduced by F. Durand in  \cite{durand}, is primitive and recognizable, but neither left- nor right-proper. 
    As it is shown in \cite[Example 3.3]{BCBY}, for all $n \geq 0$, the extension graph $\Gamma_{X_{\bsigma}^{(n)}}(\varepsilon)$ of the empty word in the language of the level-$n$ subshift $X_{\bsigma}^{(n)}$ is connected. 
    Thus, any sequence of coboundaries associated to $\bsigma$ is trivial. 
    We remark that the eigenvalues of $X_{\btau}$ are described in \cite[Section 6]{BCBY}: they are $p/3^n$ for $n \geq 0$ and $p \in \Z$.
\end{example}

\begin{example}\label{ex:modifiedAR}
Consider the substitutions $\tau_0, \tau_1,\tau_2 \colon \cA^* \to \cA^*$ over on $\cA=\{a,b,c\}$ defined by
\[ \tau_0 \colon \begin{cases}
    0 & \mapsto 0 \\
    1 & \mapsto 10  \\
    2 & \mapsto 02
    \end{cases}
    \quad
    \tau_1 \colon \begin{cases}
    0 & \mapsto 10 \\
    1 & \mapsto 1  \\
    2 & \mapsto 21
    \end{cases}
    \quad
    \tau_2 \colon \begin{cases}
    0 & \mapsto 02 \\
    1 & \mapsto 21  \\
    2 & \mapsto 2
    \end{cases}
    \]

These substitutions correspond to modifications of the classical Arnoux-Rauzy substitutions $\sigma_0$, $\sigma_1$ and $\sigma_2$ (see \cite{ar91} for details), where the order of letters in some of the images has been changed. 
For any letter $a \in \{0,1,2\}$, the incidence matrix $M_{\tau_a}$ is the same as that of $\sigma_a$: $M_{\sigma_a}$.
Thus, a unimodular matrix, but none of the $\tau_a$ is left- nor right-proper.

Let $\bs = (\sigma_n)_{n\geq 0}$ be any directive sequence such that $\sigma_n \in \{\sigma_0,\sigma_1,\sigma_2\}$ for all $n$ and each of the $\sigma_a$ appears infinitely often. 
Such a directive sequence $\bs$ is called an Arnoux-Rauzy directive sequence, and the subshift generated by $\bs$ is an Arnoux-Rauzy subshift on a three-letter alphabet.
This subshift is  uniquely ergodic, see for instance \cite[Theorem 4.10]{unimodular}. 
According to \cite[Theorem 5.7]{BD14}, this implies that the cone
$$ C_{\bs}^{(0)} = \bigcap_{n\geq 1}M_{0,n}^{\bs} \R_+^3$$
is one-dimensional, where $M^{\bs}_{0,n}$ stands for the incidence matrix of the composition $\sigma_{0,n}$.

Take now a directive sequence $\bt=(\tau_n)_{n\geq 0} \in \{\tau_0,\tau_1,\tau_2\}^\N$, with each $\tau_a$ appearing infinitely often. 
Then, $\bt$ is primitive and recognizable, since the matrices $M_{\tau_i}$ are unimodular (see for instance \cite[Theorem 4.6]{BSTY}). 
It is clear that there exists an Arnoux-Rauzy directive sequence $\bs$ whose incidence matrices coincide with those of $\bt$, and then the cone
$$ C_{\bt}^{(0)} = \bigcap_{n\geq 1} M^{\bt}_{0,n}\R_+^3, $$
where $M^{\bt}_{0,n}$ is the incidence matrix of $\tau_{0,n}$, is exactly the same as $C_{\bs}^{(0)}$, and thus one-dimensional as well. 
Moreover, by the unimodularity, the subshift generated by $\bt$ is also uniquely ergodic, and $\mu_{\btau}([a]) = \mu_{\bsigma}([a])$ for every $a \in \cA$, with $\mu_{\btau}$ and $\mu_{\bsigma}$ the unique invariant measures of $X_{\btau}$ and $X_{\bsigma}$, respectively.
Arnoux-Rauzy subshifts are known to have rationally independent letter frequencies (see \cite[Theorem 2]{andrieu}), so one deduces that the collection $\{\mu_{\btau}([a]) : a \in \cA\}$ is rationally independent. 
We conclude, using \Cref{prop:dim}. that $X_{\bt}$ admits only trivial letter-coboundaries.

Let $n\geq 1$. 
Note that the subshift $X_{\bt}^{(n)}$ is generated by the shifted directive sequence $(\tau_k : k\geq n)$, whose incidence matrices are the same as the those of $(\sigma_k : k \geq n)$. 
Since the latter is also an Arnoux-Rauzy directive sequence, the argument carried out above shows that $X_{\bt}^{(n)}$ is uniquely ergodic and has rationally independent letter frequencies. 
By \Cref{prop:dim}, we conclude that $X_{\bt}^{(n)}$ has only trivial coboundaries. 
Thus, any sequence of coboundaries associated to $\bt$ is trivial.
\end{example}

\subsection{Eigenvalues of constant-length directive sequences}
\label{subsec:constantlength}
It follows from \Cref{cor:EigCharac:FAR:eigs&freqs} that, for a minimal subshift $X$ generated by a recognizable and primitive directive sequence of \emph{alphabet rank $r$}, \emph{i.e.}, $ \liminf |\cA_n |= r$, the rational dimension of its eigenvalues group is at most $r-1$.
In \Cref{prop:cnst_len&rat_spectrum} below, we consider the situation of constant-length directive sequences of finite alphabet rank, and show using \Cref{theo:EigCharac:Mult} that only rational eigenvalues are possible.
This was proved in \cite{BMY} by very different methods.
It can also be seen as an analogue of \cite{durand_frank_maass_toeplitz}, in which it was proved that Toeplitz subshifts of finite Toeplitz rank have only rational {\em measurable} eigenvalues.
In fact, our proof closely follows the one in \cite{durand_frank_maass_toeplitz}.

A directive sequence  $\btau$ is said to be of \emph{constant length}  if  each $\tau_n$  has  constant length, that is,
there exists $\ell_n$ such that
$|\tau_n(a)|=\ell_n$, for all letter $a \in \cA_{n+1}.$ We then set 
$|\tau_n|:=\ell_n$, and thus $|\tau_{0,n}|=\ell_0 \cdots \ell_{n-1}$.

\begin{proposition}
\label{prop:cnst_len&rat_spectrum}
Let $X$ be a subshift generated by a primitive and recognizable directive sequence $\btau = (\tau_n \colon \cA_{n+1}^* \to \cA_n^* : n \geq 0)$.
Assume that $\btau$ is of constant length  and has alphabet rank equal to $r$.
Then, there exists an integer $1 \leq q \leq r$, coprime with $|\tau_{0,n}|$ for every $n \geq 1$, such that 
\[  E(X,S) = \Big\{ \frac{p}{q|\tau_{0,n}|} : p \in \Z, \, n \geq 1 \Big\}.  \]
\end{proposition}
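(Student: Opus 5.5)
Set $L_n \coloneqq |\tau_{0,n}|$, so that $h_n(u) = L_n|u|$ for every $u\in\cA_n^*$. I would prove the two inclusions separately, and use \Cref{theo:EigCharac:Mult} for the reverse one, as announced before the statement.

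\emph{The values $1/L_n$ are eigenvalues.} This is the easy inclusion. By recognizability, $\cP_n$ is a clopen partition, and each tower has height exactly $L_n$. The levels $\bigcup_a S^kB_n(a)$, for $0\le k<L_n$, therefore form a clopen cyclic partition of size $L_n$, which is permuted cyclically by $S$. Hence $1/L_n\in E(X,S)$.

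\emph{Every eigenvalue has the form $p/(qL_n)$ with $q\le r$.} Let $\alpha$ be an eigenvalue and take $\brho$ from \Cref{theo:EigCharac:Mult}, with $\varepsilon_n(\brho)\to0$. Pass to levels $n_k$ with $|\cA_{n_k}|=r$. Since $\|\alpha L_n|u| + \rho_n(u_0)-\rho_n(u_k)\|\le\varepsilon_n$ for all $u_0\cdots u_k\in\cL(X^{(n)}_{\btau})$, applying this with $k=1,2,\dots$ along a single point shows the following. The values $\rho_n(x_j)$ agree, up to error $j\varepsilon_n$ modulo $1$, with $\rho_n(x_0)-j\alpha L_n$. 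Only $r$ distinct values $\rho_n(a)$ are available, so by pigeonhole, among $j=0,\dots,r$ two of them coincide. This gives some $1\le j\le r$ with $\|j\alpha L_n\|\le 2\varepsilon_n$; more precisely, the set $\{\rho_n(a)\}$ is, up to $O(r\varepsilon_n)$, stable under translation by $\alpha L_n$. So $\theta_n\coloneqq\alpha L_n \bmod 1$ satisfies $\|q_n\theta_n\|\to0$ for some $q_n\le r$.

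I then use $\theta_{n+1}=\ell_n\theta_n$. The aim, following \cite{durand_frank_maass_toeplitz}, is to show that $\theta_n$ is eventually exactly a rational $p_n/q_n$ with $q_n\le r$. Write $\theta_n=p_n/q_n+\eta_n$ with $|\eta_n|\le 2\varepsilon_n/q_n$, and compare with level $m>n$. Then $\theta_m = L_m/L_n\,\theta_n$. If $\eta_n\ne0$, the quantity $(L_m/L_n)\eta_n$ eventually grows past $1/(2r^2)$ but must stay small. This contradicts the approximation at level $m$, using that the denominators of rationals with denominator at most $r$ are separated by at least $1/r^2$. Hence $\eta_n=0$ for large $n$, so $\alpha L_n\in\frac1{q}\Z$ with $q\le r$, i.e.\ $\alpha\in \frac{1}{qL_n}\Z$.

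\emph{Uniform $q$ and coprimality.} Among all eigenvalues, $q$ is the lcm of the reduced denominators coprime to all $L_n$. The previous step shows that this is bounded by $r$. Factors of $q$ shared with some $L_n$ are absorbed into $L_n$, which gives $q$ coprime with every $|\tau_{0,n}|$. Finally, $1/q$ itself is an eigenvalue: $E$ is a group, and combining $p/(qL_n)$ with $1/L_n$ via Bézout yields it. This gives equality.

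\emph{Main obstacle.} The hardest step is the no-drift argument turning $\|q_n\theta_n\|\to0$ into exact rationality. The error $\eta_n$ is multiplied by $\ell_n\ge2$ at each step, while $\varepsilon_n\to0$ only without a rate. I would handle this by working with the full approximate-translation-invariance of the finite set $\{\rho_n(a)\}$ and comparing levels $n$ and $m$ directly, rather than only using $\|j\alpha L_n\|$. I would also need care that small errors at level $n$ propagate only through the prefix sums of $\tau_{n,m}$, which are controlled by $\varepsilon_n$ itself.
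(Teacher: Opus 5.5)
Your first inclusion and the pigeonhole bound are fine. The step you single out as the main obstacle, however, is a genuine gap, and with the information you retain it cannot be closed.

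After pigeonholing you keep only two facts: at each level $m$, $\|j_m\alpha L_m\|\le 2\varepsilon_m$ for some $j_m\le r$, and $L_{m+1}=|\tau_m|\,L_m$. When the lengths $|\tau_m|$ are unbounded (the statement allows this, and any contraction produces it), these facts do not force $\alpha L_n$ to become rational. For instance, with $L_m=m!$ and $\alpha=e$ one has $\|L_m\alpha\|<1/m$, yet $\alpha\notin\Q$.

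Concretely, your assertion that $(L_m/L_n)\eta_n$ ``must stay small'' is not justified. The level-$m$ bound only says it is close modulo $1$ to some rational with denominator at most $r^2$. A single large factor $|\tau_m|$ can carry it from below $1/(2r^2)$ straight into a neighbourhood of such a rational, or of an integer. Your argument does work when $\sup_m|\tau_m|<\infty$, with the threshold chosen below $1/(2r^2\sup_m|\tau_m|)$, but not in general.

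The missing idea is that no comparison between levels is needed. In \Cref{theo:EigCharac:Mult} the supremum runs over words $u_0\cdots u_k$ of \emph{every} length. Fixing $x\in X^{(n)}_{\btau}$, you therefore get $\|\rho_n(x_0)-\rho_n(x_k)+k\alpha L_n\|\le\varepsilon_n$ for all $k\ge0$ simultaneously, with no accumulation in $k$; your ``error $j\varepsilon_n$'' discards exactly this.

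Consequently the whole orbit $\{k\alpha L_n \bmod 1 : k\ge 0\}$ lies in the $\varepsilon_n$-neighbourhood of a set of at most $r$ points. For small $\varepsilon_n$ this neighbourhood misses an arc of length at least $1/r-2\varepsilon_n$, so the orbit is not dense and $\alpha L_n=p/q$ is rational. Since $\frac1q\Z/\Z$ has largest gap $1/q$, one then gets $q\le r$. This single-level argument is the paper's proof.

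Separately, your ``absorption'' step for coprimality fails: a prime factor of $q$ shared with the $L_n$ can be absorbed only if its valuation in $L_n$ is unbounded. For example, take $i\mapsto ii'$ on $\{a,b,c,d\}$ as $\tau_0$, followed by the constant substitution $a\mapsto abc$, $b\mapsto bcd$, $c\mapsto cda$, $d\mapsto dab$. This is constant-length, primitive and recognizable, with $r=4$ and $L_n=2\cdot 3^{n-1}$. Yet $1/4\in E(X,S)$, because the letter classes $\{a,c\}$, $\{a',c'\}$, $\{b,d\}$, $\{b',d'\}$ encode the position modulo $4$; this forces $q=2$.

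The paper's proof does not establish the coprimality clause either; it ends with ``$E(X,S)$ is an additive group''. What actually holds is a single $q\le r$ with $E(X,S)=\{p/(qL_n)\}$, where $q$ is coprime to $|\tau_n|$ for all large $n$.
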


\begin{proof}
Set $q_n = |\tau_{0,n}|$ for $n \geq 1$.
By contracting $\btau$, we may assume without loss of generality that $r = \sup\bigl\{|\cA_n| : n \geq 1\bigr\}$.

First, it is folklore that $p/q_n$ is an eigenvalue of $(X,S)$ for every $n \ge 1$ and $p \in \Z$.
We include a brief proof for the readers' convenience.

Since $\btau$ is recognizable, the collection $\{S^k B_n(a) : a \in \cA_n, \, 0 \leq k \lt |\tau_n(a)| \}$ is a clopen partition of $X$.
So, since $\btau$ is of constant length, the set $B_n = \bigcup_{a \in \cA_n} B_n(a)$ yields the clopen partition $\cP_n = \{ S^k B_n : 0 \leq k \lt q_n \}$ of $X$, which is cyclic in the sense that $S^{q_n} B_n = B_n$.
This allows us to define a continuous map $g_n \colon X \to \R/\Z$ by $g(x) = kp/q_n$, where $0 \leq k \lt q_n$ is the  unique integer such that $x \in S^k B_n$.
Then, $g(S x) = g(x) + p {\pmod{\Z}}$ for every $x \in X$, and $g_n$ is an eigenfunction of $X$ with eigenvalue $p/q_n$.

Next, we show that every eigenvalue of $X$ has the form $\frac{p}{q\,q_{0,n}}$ for some $n \geq 1$, $p \in \Z$ and $1 \leq q \leq r$.
Assume that $\alpha \in \R$ is an eigenvalue of $X$.
Using \Cref{theo:EigCharac:Mult}, we obtain a family of real numbers $\brho = (\rho_n(a) : n \geq 0,\, a \in \cA_n)$ such that $\varepsilon_n(\brho)$ (see \Cref{defi_eps_n})
converges to $0$ as $n$ tends to infinity.
Now, since $\btau$ is of constant length, $h_n(v) = q_n |v|$ for all $v \in \cA_n^*$, hence
\[  \big \| \rho_n(v_0) - \rho_n(v_k) + kq_n\alpha \big\| \leq \varepsilon_n (\brho) \]
for all $v_0 \cdots v_k \in \cL(X_{\btau}^{(n)})$.
This implies that, for every $n \ge 0$ and $a_n \in \cA_n$ fixed, each element of the orbit $\cO_n \coloneqq \{ kq_n \alpha  \pmod{\Z}: k \in \Z\}$ lies within distance $\varepsilon_n(\brho)$ from the set $\Gamma_n \coloneqq \{\rho_n(b) - \rho_n(a_n) : b \in \cA_n\}$. Choose $n \geq 1$ large enough that $1/r - \varepsilon_n > 1/(r+1)$.
As $\Gamma_n$ has at most $r$ elements, the complement $(\R/\Z) \setminus \cO_n$ contains a non-empty interval $I$ of length at least $1/r - \varepsilon_n$, which implies  that $q_n \alpha$ is rational,  since otherwise   this would contradict   the  equidistribution of  the  sequence $(k q_n\alpha)_k$. 
Write $q_n \alpha = p/q$ with $p \in \Z$, $q \ge 1$, and $\gcd(p,q) = 1$.
We then have $\cO_n = \{ k/q : k \in \Z \} \pmod{\Z}$, so the largest interval contained in $(\R/\Z) \setminus \cO_n$ has length $1/q$.
It follows that $1/q \geq |I| > 1/(r+1)$, and therefore $q \leq r$.
We have proved that every eigenvalue $\alpha$ of $X$ has the form $\frac{p}{q\,q_n}$, with $q \leq r$.
Since $E(X,S)$ is an additive group, the conclusion of the proposition follows.
\end{proof}

We remark that in \Cref{prop:cnst_len&rat_spectrum} the finite alphabet rank hypothesis cannot be avoided. 
Indeed, consider a minimal subshift $X$ with infinitely many rational eigenvalues and at least one irrational eigenvalue (for instance, a symbolic extension of a product of an odometer and an irrational rotation). 
By \Cref{prop:cnst_len:sadic_repr} below, $X$ is generated by a primitive and recognizable directive sequence of constant length. 
Thus, the hypotheses of \Cref{prop:cnst_len&rat_spectrum} are satisfied, except for the finite alphabet rank condition, yet the conclusion cannot hold, since $X$ has irrational eigenvalues.

\begin{proposition}
    \label{prop:cnst_len:sadic_repr}
    Let $X$ be a minimal subshift with infinitely many rational eigenvalues.
    Then, $X$ is generated by a primitive, recognizable directive sequence of constant length.
\end{proposition}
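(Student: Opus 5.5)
The plan is to use the rational eigenvalues to build a nested sequence of cyclic clopen partitions, and to read a constant-length $S$-adic representation off the return structure of their bases. Since $E(X,S)$ is a group containing infinitely many rationals, its rational part is a subgroup of $\Q$ containing $\Z$ that is not cyclic. Hence there is a strictly increasing divisibility chain $1=q_0\mid q_1\mid q_2\mid\cdots$ with $1/q_n\in E(X,S)$ and $q_n\to\infty$. For each $n$ we fix a continuous eigenfunction $g_n$ with eigenvalue $1/q_n$, normalized so that $g_n$ takes values in $\frac1{q_n}\Z/\Z$. This is possible because $q_n g_n$ is a continuous $S$-invariant function, hence constant by minimality, so a rotation fixes it.

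We set $C_n=g_n^{-1}(0)$. This is a clopen set with $S^{q_n}C_n=C_n$, and $\{S^kC_n: 0\le k<q_n\}$ is a partition of $X$. We choose the $g_n$ compatibly, taking $g_{n}=(q_{n+1}/q_{n})\,g_{n+1}$ up to rotation, so that $C_{n+1}\subseteq C_n$. We also want the partitions to separate points. For this we pass to a subsequence and replace the letters by blocks: at level $n$, the letter of a point $x\in C_n$ is the word $x_{[-k_n,\,q_n+k_n)}$ for some $k_n$ growing slowly. This is a centered block presentation, as in \Cref{subsec:block_presentations}, applied with window lengths $q_n$. The alphabet is then $\cA_n=\{x_{[0,q_n)}\text{ with context}: x\in C_n\}$, which is finite. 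The morphism $\tau_n\colon\cA_{n+1}^*\to\cA_n^*$ cuts a $q_{n+1}$-block into $q_{n+1}/q_n$ consecutive $q_n$-blocks, each carrying the induced context. It therefore has constant length $\ell_n=q_{n+1}/q_n$, and $\tau_{0,n}$ sends each letter to a word of length $q_n$.

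The remaining properties follow from this construction. First, $X$ is generated by $\btau$: the level-$n$ subshift is the $q_n$-power block recoding of $(C_n,S^{q_n})$, and \eqref{eq:Sadic:connection_levels} matches the tower structure. Second, recognizability holds because the set $B_n(a)$ equals $C_n\cap[a]$, and the $S^kC_n$ are disjoint. This disjointness is exactly the partition property required in \Cref{def:recognizable}, and it uses that the eigenfunction levels are disjoint. Third, primitivity follows from minimality of $(C_n,S^{q_n})$. A minimal subshift may fail to be totally minimal, but a suitable subset of $C_n$ is minimal under $S^{q_n}$. To secure this we refine $C_n$ to an $S^{q_n}$-minimal component of $C_n$; the number of such components divides $q_n$ and corresponds to eigenvalues, so after passing to a further subsequence we may assume each $(C_n,S^{q_n})$ is minimal. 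Then every $q_n$-block appears in every $q_m$-block for $m$ large, by uniform recurrence.

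The main obstacle is making recognizability and the generation of $X$ compatible. The letters have to be genuine blocks, so that $\tau_{0,n}(x)$ recovers points exactly and $\tau_{0,n}$ is a substitution, while at the same time $C_{n+1}\subseteq C_n$ must hold. The first thing to try is the direct choice: take letters to be the plain words $x_{[0,q_n)}$, with no context, for $x\in C_n$. Then verify that the sets $B_n(a)$ coincide with $C_n\cap[a]$. This needs the fact that, for $y$ in the level-$n$ subshift, the point $\tau_{0,n}(y)$ lies in $C_n$; that follows since $C_n$ is $S^{q_n}$-invariant and the concatenation $\tau_{0,n}(y)$ is just $x$ read in blocks. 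Under this choice recognizability reduces to disjointness of the $S^kC_n$, and no context padding is needed at all.
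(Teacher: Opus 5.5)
Your final ``direct choice'' is the paper's construction: the return time to $C_n$ is identically $q_n$, so the words $x_{[0,q_n)}$ with $x\in C_n$ are exactly the return words to $C_n$ that the paper uses as letters. The context padding is therefore superfluous, and starting the chain at $q_0=1$ even makes level $0$ coincide with $X$ cleanly.

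Two justifications need fixing. First, no refinement of $C_n$ is needed for primitivity, and shrinking $C_n$ would actually break the cyclic partition: any nonempty closed $S^{q_n}$-invariant $F\subseteq C_n$ gives $X=\bigcup_{0\le k<q_n}S^kF$ by minimality of $X$, hence $F=C_n$, so $(C_n,S^{q_n})$ is automatically minimal. Second, $\tau_{0,n}(y)\in C_n$ for $y$ in the level-$n$ subshift does not follow from ``reading $x$ in blocks'', which presupposes the conclusion; it follows from $C_m\subseteq C_n$ and the closedness of $C_n$, since each window $\tau_{0,n}(y_{[-M,M]})$ occurs in some $x'_{[0,q_m)}$ with $x'\in C_m$ at a position divisible by $q_n$.
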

\begin{proof}
The hypothesis allows us to choose a sequence of positive integers $(q_n)_{n\ge 0}$ such that $1 < q_n < q_{n+1}$, $q_n \mid q_{n+1}$, and $1/q_n$ is an eigenvalue of $X \subseteq \cA^\Z$ for every $n \ge 0$.
Then,  similarly as  in the proof of \Cref{prop:cnst_len&rat_spectrum},  there exist clopen sets $B_n \subseteq X$ such that $\{S^k B_n : 0 \le k < q_n\}$ is a partition of $X$ and $S^{q_n}(B_n)=B_n$.
We may assume that $B_{n+1} \subseteq B_n$.

Let $\cR_n$ denote the set of return words to $B_n$ in $X$, that is, $\cR_n$ consists of all words $x_{[i,j)}$ with $x \in X$ and $i<j$ such that $S^i x \in B_n$, $S^j x \in B_n$, and $S^k x \notin B_n$ for every $i<k<j$.
Note that $|u|=q_n$ for every $u \in \cR_n$.
Fix any bijection $\sigma_n \colon \cA_n \to \cR_n$ from a finite alphabet $\cA_n$ onto $\cR_n$.
This map extends uniquely to a substitution $\sigma_n \colon \cA_n^* \to \cA^*$.

Since $B_{n+1} \subseteq B_n$, for every $a \in \cA_{n+1}$ the word $\sigma_{n+1}(a)$ is uniquely expressed as a concatenation of words of the form $\sigma_n(b)$, with $b \in \cA_n$.
This defines a map $\tau_n \colon \cA_{n+1} \to \cA_n^*$ such that $\sigma_{n+1}(a)=\sigma_n(\tau_n(a))$.
Since $|\sigma_k(b)|=q_k$ for every $k \ge 0$ and $b \in \cA_k$, we have $|\tau_n(a)|=q_{n+1}/q_n$ for every $n \ge 0$ and $a \in \cA_{n+1}$.

Set $\btau=(\tau_n)_{n\ge 0}$.
Then $\btau$ is a constant-length directive sequence.
It is not difficult to see, from the fact that the $\sigma_n$ are defined via return times to a clopen set, that $\btau$ is recognizable.
Now, $\btau$ is everywhere growing (since $q_n \to \infty$ as $n \to \infty$), so $X_{\btau}$ is non-empty.
Furthermore, $\tau_{0,n}(a)=\sigma_{n-1}(a) \in \cL(X)$ for every $a \in \cA_n$ and $n \ge 1$, hence $X_{\btau} \subseteq X$.
By the minimality of $X$, it follows that $X_{\btau}=X$.
Since $X$ is minimal and $\btau$ is recognizable, $\btau$ must be primitive.
\end{proof}

\printbibliography

\end{document}